\numberwithin{equation}{section} 
\newtheorem{defn}{Definition} \numberwithin{defn}{section} 
\newtheorem{thm}{Theorem} \numberwithin{thm}{section} 
\newtheorem{lem}{Lemma} \numberwithin{lem}{section} 
\newtheorem{prop}{Proposition} \numberwithin{prop}{section} 
\numberwithin{equation}{section} 
\theoremstyle{remark} 
\newtheorem{rem}{Remark}
\newcommand{\notation}[3]{ $#1$ &\parbox{.75\textwidth}{#2\dotfill}&\pageref{#3}}
\newcommand{\set}[1]{ \{#1\} } 
\newcommand{\bset}[1]{ [#1] } 
\newcommand{\bbset}[1]{\big[#1\big]} 
\newcommand{\Bbset}[1]{\Big[#1\Big]} 
\newcommand{\Bset}[1]{\left[#1\right]} 
\newcommand{\pset}[1]{ (#1) } 
\newcommand{\bpset}[1]{\big(#1\big)} 
\newcommand{\Bpset}[1]{\Big(#1\Big)} 
\newcommand{\Pset}[1]{ \left(#1\right) } 
\newcommand{\norm}[1]{ \|#1\| } 
\newcommand{\Bnorm}[1]{\Big \|#1\Big\| } 
\newcommand{\abs}[1]{ |#1| } 
\newcommand{\Abs}[1]{\left |#1\right| }
\newcommand{\cp}[1]{,_{#1}}
\def\dist{\operatorname{dist}} 
\def\supp{\operatorname{spt}} 
\def\Sup{\sup_{t\in[0,T]}}
\def\p{\partial}  
\def\hd{\bar{\partial}} 
\def\Div{\operatorname{div}} 
\def\curl{\operatorname{curl}}
\def\err{\check{b}}
\def\berr{\bar{b}}
\def\rerr{\varrho \err}
\def\vc{\check{v}} 
\def\ec{\check{\zeta}_{\epsilon}} 
\def\nc{\check{n}} 
\def\Ac{\check{A}} 
\def\ac{\check{a}} 
\def\gc{\check{g}} 
\def\Jc{\check{J}}
\def\hc{\check{h}}
\def\Kc{\check{\mathcal{K}}}
\def\JTc{\check{\mathcal{J}}_{t}}
\def\Htc{\check{H}_\epsilon}
\def\jtt{\check{\jmath}}
\def\htt{\check{\mathscr{V}}_t}
\def\NB{\ddddot{\mathcal{N}}_0} 
\def\PB{\ddddot{\mathcal{P}}}
\def\RB{\ddddot{\mathcal{R}}} 
\def\EE{E^{\mu}(t)}
\def\ME{\ddot{\mathcal{N}}_0}
\def\PE{\ddot{\mathcal{P}}}
\def\RE{\ddot{\mathcal{R}}} 
\def\EK{E^\kappa (t)} 
\def\NK{\mathcal{N}_0}
\def\PK{{\mathcal{P}}}
\def\RK{\mathcal{R}} 
\def\EZ{\mathscr{E}(t)} 
\def\MZ{\mathscr{M}_0}
\def\ES{E^\sigma\!(t)}
\def\MS{\dddot{\mathcal{N}}_0}
\def\PS{\dddot{\mathcal{P}}}
\def\RS{\dddot{\mathcal{R}}}
\def\vz{\breve{v}} 
\def\ez{\breve{\eta}} 
\def\nz{\breve{n}} 
\def\Az{\breve{A}} 
\def\az{\breve{a}} 
\def\gz{\breve{g}} 
\def\Jz{\breve{J}}
\def\fz{\breve{f}}
 \def\ellz{\breve{\ell}} 
\def\elz{\breve{l}} 
\def\k{\mathbf{k}} 
\def\Fz{\breve{F}}
\def\vt{\tilde{v}} 
\def\et{\tilde{\eta}} 
\def\nt{\tilde{n}} 
\def\At{\tilde{A}} 
\def\at{\tilde{a}} 
\def\gt{\tilde{g}} 
\def\Jt{\tilde{J}}
\def\LM{\Lambda_{\mu}}
\def\LE{\Lambda_{\epsilon}}
\def\EM{E^\epsilon (t)} 
\def\MM{\dot{\mathcal{N}}_0}
\def\PM{\dot{\mathcal{P}}}
\def\RM{\dot{\mathcal{R}}}
\def\vr{\mathring{v}} 
\def\rro{\mathring{\varrho}}
\def\er{\mathring{\zeta}_{\epsilon}} 
\def\Ar{\mathring{A}_{\epsilon}} 
\def\ar{\mathring{a}_{\epsilon}} 
\def\Jr{\mathring{J}_{\epsilon}}
\def\gr{\mathring{g}_{\epsilon}} 
\def\nr{\mathring{n}_{\epsilon}} 
\def\gfrk{\mathring{\mathfrak{g}}_{\epsilon,\local}}
\def\Kr{\mathring{\mathcal{K}}}
\def\hrm{\mathring{h}^{\mu}}
\def\hr{\mathring{h}}
\def\br{\mathring{b}}
\def\crr{c^{\mu}(t)}
\def\Aru{\bset{\Ar}}
\def\Am{\check{A}_{\epsilon}}
\def\am{\check{a}_{\epsilon}} 
\def\Jm{\check{J}_{\epsilon}}
\def\gm{\check{g}_{\epsilon}} 
\def\nm{\check{n}_{\epsilon}}
\def\Amu{\bset{\Am}} 
\def\amu{\bset{\am}}
\def\Hm{\check{H}_{\epsilon}}
\def\gfk{\check{\mathfrak{g}}_{\epsilon}}
\def\gbfk{\bar{\mathfrak{g}}_{\epsilon,\local}}
\def\Hk{\tilde{H}_{\kappa}}
\def\Htk{\tilde{H}_{\kappa}}
\def\JT{\check{\mathcal{J}}_t}
\def\ebm{\bar{\zeta}_{\epsilon}} 
\def\Abm{\bar{A}_{\epsilon}} 
\def\Abmu{\bset{\bar{A}_{\epsilon}}} 
\def\abm{\bar{a}_{\epsilon}} 
\def\Jbm{\bar{J}_{\epsilon}}
\def\gbm{\bar{g}_{\epsilon}} 
\def\gbmu{\bset{\bar{g}_{\epsilon}}} 
\def\nbm{\bar{n}_{\epsilon}}
\def\CT{\mathcal{C}_T(\MB)} 
\def\XT{\mathbf{X}_T}
\def\MB{\mathcal{M}} 
\def\vb{ \bar{v}}
\def\Jb{\bar{J}}
\def\rb{\bar{\varrho}}
\def\Kb{\bar{\mathcal{K}}}
\def\Gb{\bar{\mathcal{G}}}
\def\hb{\bar{h}}
\def\jb{\bar{\jmath}}
\def\Gre{\mathbf{\Gamma}}
\def\gre{\mathbf{g}}
\def\epr{{\boldsymbol{\epsilon}}}
\def\mur{{\boldsymbol{\mu}}}
\def\tre{\boldsymbol{\theta}}
\def\Tre{\boldsymbol{\tau}_\alpha}
\def\Hre{\boldsymbol{H}}
\def\Kre{\boldsymbol{K}}
\def\kre{\boldsymbol{k}}
\def\jre{\boldsymbol{j}}
\def\kre{\boldsymbol{k}}
\def\Are{\boldsymbol{\mathscr{A}}}
\def\phrep{\boldsymbol{\varphi}}
\def\JRE{\boldsymbol{\mathfrak{J}}}
\def\dre{\boldsymbol{d}}
\def\fre{\boldsymbol{f}}
\def\Jre{\boldsymbol{J}}
\def\vre{\boldsymbol{\rho}_0}
\def\bv{\boldsymbol{v}}
\def\bV{\boldsymbol{V}}
\def\rre{\boldsymbol{\varrho}_0}
\def\Ore{\mathbf{\Omega}}
\def\Gre{\mathbf{\Gamma}}
\def\gre{\mathbf{g}}
\def\epr{{\boldsymbol{\epsilon}}}
\def\lmr{{\boldsymbol{\lambda}}}
\def\mur{{\boldsymbol{\mu}}}
\def\ere{\boldsymbol{\eta}}
\def\tre{\boldsymbol{\theta}_\local}
\def\Nre{\boldsymbol{N}}
\def\Tre{\boldsymbol{\tau}_\alpha}
\def\Hre{\boldsymbol{H}}
\def\jre{\boldsymbol{j}}
\def\Kre{\boldsymbol{K}}
\def\re{\check{\varrho}}
\def\hbe{\bar{h}^{\mu}}
\def\j{\mathfrak{j}} 
\def\I{\mathcal{I}} 
\def\II{\mathfrak{i}} 
\def\i{\mathfrak{r}}
\def\ellt{\tilde{\ell}} 
\def\elt{\tilde{l}}
\def\counter{l} 
\def\local{{l}} 
\def\UL{U_\local} 
\def\VL{\mathcal{B}_\local} 
\def\VLP{\VL^+} 
\def\DL{\mathcal{D}_\local} 
\def\thetal{\theta_\local}
\def\xl{\xi_\local}
\title[Compressible Euler with   surface tension and the zero surface tension limit]{Well-posedness of the
  free-boundary compressible 3-D Euler equations with surface tension and the zero surface tension limit}
\author[D. Coutand]{Daniel Coutand} \address{CANPDE, Maxwell Institute for Mathematical Sciences and department of Mathematics, Heriot-Watt University, Edinburgh, EH14 4AS, UK} \email{D.Coutand@ma.hw.ac.uk}
\author[J. Hole]{Jason Hole} 
\author[S. Shkoller]{Steve Shkoller} \address{Department of Mathematics, University of California, Davis, CA 95616, USA}
\email{jhole@math.ucdavis.edu} \email{shkoller@math.ucdavis.edu}
\subjclass{35L65, 35L70, 35L80, 35Q35, 35R35, 76B03} \keywords{Compressible flow, vacuum, free boundary problems, surface tension}
\ifpdf \DeclareGraphicsRule{*}{mps}{*}{} \fi
\begin{document}
\begin{abstract}
We prove that the 3-D compressible Euler equations with surface tension along the  moving free-boundary 
are well-posed.   Specifically, we consider isentropic dynamics and consider an equation of state, modeling a liquid, 
given by Courant and Friedrichs \cite{CF48} as 
$p(\rho) =  \alpha  \rho^ \gamma - \beta$ for consants $\gamma >1$ and
$ \alpha , \beta > 0$.   The analysis is made difficult by two competing nonlinearities associated with the potential energy: {\it compression} in the bulk, 
and {\it surface area dynamics} on the free-boundary.   Unlike the analysis of the incompressible Euler equations, wherein boundary regularity controls
regularity in the interior, the compressible Euler equation require the additional analysis of  nonlinear wave equations generating sound waves.   An
existence theory is developed by a specially chosen parabolic regularization together with the vanishing viscosity method.   The artificial parabolic term
is chosen so as to be asymptotically consistent with the Euler equations in the limit of zero viscosity.    Having solutions for the positive surface tension
problem, we proceed to obtain a priori estimates which are independent of the surface tension parameter.  This requires choosing initial data which
satisfy the Taylor sign condition.  By passing to the limit of zero surface tension, we  prove the well-posedness of the compressible Euler system
without surface on the free-boundary, and without derivative loss.
\end{abstract}

\maketitle {\small  
  \tableofcontents}

\section{Introduction} \label{sec:introduction}

\subsection{The compressible Euler equations in Eulerian variables } 

The  compressible Euler equations with moving free-boundary are given by the following system:
\begin{subequations}
	\label{prob: Euler} 
	\begin{alignat}
		{4} \label{momentum}
		\partial_t (\rho u)+ \operatorname{div} ( \rho u \otimes u + p \operatorname{Id} ) &=0&\quad&\text{in }\Omega(t),\\
		\label{mass} 
		\partial_t\rho +\Div(\rho u)&=0&\quad&\text{in }\Omega(t),\\
		\label{Laplace-Young} p&= \sigma H(t)&&\text{on }\Gamma(t),\\
		\label{boundary-moves-with-velocity} \mathcal{V}(\Gamma(t))&=u\cdot n(t) &&\\
		\label{initial velocity} (u,\rho)&=(u_0,\rho_0)&&\text{on } \Omega(0),\\
		\label{fix domain} \Omega(0)&=\Omega \,,
	\end{alignat}
\end{subequations}
where  $ \Omega (t)$ denotes  an open and  bounded subset  of $\mathbb{R}^3$,   $\Gamma(t)=
\partial\Omega(t)$ is the moving free-boundary, and  $t \in [0,T]$ denotes time. 
We use the notation $\mathcal{V}(\Gamma(t))$ for  the normal velocity of boundary $\Gamma(t)$, which is  equal to the normal component of the
fluid velocity $u \cdot n$,  where
$n(t)$ is the outward-pointing unit normal  to $\Gamma(t)$,
$u=\pset{u_1,u_2,u_3}\label{n:u}$ denotes the 
velocity field, $p$ denotes the  pressure, and
$\rho\label{n:rho}$ denotes the density.

The first two equation are conservation laws for momentum and mass.  The boundary condition 
 \eqref{Laplace-Young} is often referred to as the  Laplace-Young
 condition,  stating that the fluid stress is proportional to the  mean curvature $H(t)$ of the moving surface,
 the proportionality constant defining the surface tension parameter $  \sigma $.  The last two equations
 provide the initial conditions for the dynamics.
 
In order to model the motion of a compressible liquid, we use the equation-of-state given by
Courant and Friedrichs \cite{CF48} as
\begin{align}
	\label{isentropic} p(x,t)=\alpha \rho(x,t)^\gamma-\beta\ \ \ \text{for}\ \gamma>1, 
\end{align} 
where $\alpha>0$ and $\label{n:beta}\beta>0$. For convenience, we set
$\alpha=1$. \footnote{Using \eqref{isentropic}, liquid water is modeled 
  using the values $\gamma=7$, $\alpha=3001$ and $
  \beta=3000$.}

Using the equation of state   (\ref{isentropic}), the momentum equations (\ref{momentum}) and
Laplace-Young boundary condition (\ref{Laplace-Young}) are equivalently written as 
\begin{subequations}
  \begin{alignat}
    {2} \rho[\p_tu +(u\cdot D)u]+D\rho^{\gamma}&=0&\quad&\text{in}\ \Omega(t), \\
    \rho^\gamma &=\beta+\sigma H&&\text{on}\ \Gamma(t).
  \end{alignat}
\end{subequations}

We assume  that the initial  density function is strictly positive and that
\begin{align*}
	\rho_0 \ge \lambda>0\ \ \text{in}\ \overline{\Omega}	.
 \end{align*}
In the absence of surface tension,   we further require the initial pressure function $p_0$ to
satisfy the {\it Taylor sign condition} (see, for example,  \cite{Taylor1950} and \cite{Ra1878}), given by 
\begin{align*}
0<\nu\le -\frac{\p p_0}{\p N}\ \ \text{on } \Gamma \,,
\end{align*}
where $N$ denotes the outward unit normal to $\Gamma$.    This is equivalent to
\begin{align}
  \label{TSC}
  0<\nu\le -\frac{\p \rho_0^\gamma}{\p n}\ \ \text{on } \Gamma \,.
\end{align}

\subsection{Prior results on the Euler equations with moving free-boundary} \label{sec:backgr-probl-prior}  

\subsubsection{The incompressible setting} There has been a recent
explosion of interest in the analysis of the free-boundary
incompressible Euler equations, particularly in irrotational form,
that has produced a number of different methodologies for obtain- ing
a priori estimates. The accompanying existence theories have relied
mostly on the Nash–Moser iteration to deal with derivative loss in
linearized equations when arbitrary domains are considered, or on
complex analysis tools for the irrotational problem with infinite
depth. We refer the reader to \cite{CS07 ,AM09, CS12,D.L05,H.L05,SZ08,
  W97,W99, ZZ08} for a partial list of papers on this topic.

\subsubsection{The compressible setting} The mathematical analysis of
moving hypersurfaces in the multidimensional compressible Euler
equations began with the existence and stability of the shock-front solution
initiated in \cite{M84} and extensively studied by \cite{FM,GM91,GMWZ,M01} 
(see the references in these articles for a thorough review of the literature in this area.)
More delicate than the non-characteristic case of the shock-front solution, the characteristic
boundary case is encountered in the study of vortex sheets or current
vortex sheets. This class of problems has been studied by \cite{CW08,CSP08,CSP09,T05,T9} (and see the references therein);
the linearization of the vortex-sheet problem produces 
derivative loss, similar to that experienced by
many authors in the incompressible flow setting (both irrotational
flows and flows with vorticity).

The  problem of the expansion of a compressible gas with the so-called physical vacuum singularity has been
studied in \cite{CLS09,CS11, JM10, CS10, JM11},  and is degenerate because of the vanishing of the density
function on the moving free-boundary.

For the model of a compressible liquid,  considered in this paper, the Euler equations are  uniformly hyperbolic thanks to the equation-of-state (\ref{isentropic}).  In the
absence of surface tension, an existence theory was  given in 
\cite{H.L05.1} using Lagrangian coordinates and  a Nash-Moser construction, but the estimates had derivative loss.   Using 
the theory of symmetric hyperbolic systems, the paper
\cite{T09} gave a different proof for the existence of solutions, also with derivative loss.     We prove well-posedness for the motion of
a compressible liquid with and without surface tension, and with no derivative loss.   We also establish asymptotic limit of zero surface tension.

\subsection{Fixing the domain and Lagrangian
  variables} \label{sec:fixing-doma-lagr} 
To transform the system (\ref{prob: Euler}) into Lagrangian variables, we let $\label{n:eta}\eta(x,t)$ denote the flow of a fluid particle $x$ at time $t$. Thus, 
\begin{align*}
	\p_t \eta=u\circ \eta\ \text{for}\ t>0\ \ \text{and}\ \ \eta(x,0)=x 
\end{align*}
where $\circ$ denotes composition, so that $[u\circ
\eta](x,t)=u(\eta(x,t),t)$. The \textit{flow map} $\eta$ induces
the following Lagrangian variables on $\Omega$:
\begin{alignat*}
	{2} v&=u\circ\eta\ &\label{n:Lagrangian variables}& \text{(\textit{Lagrangian velocity})},\\
	f&= \rho\circ \eta\ && \text{(\textit{Lagrangian density})},\\
	A&= [D\eta]^{-1}\ &&\text{(\textit{inverse of the deformation tensor})},\\
	J&= \det D\eta\ &\quad&\text{(\textit{Jacobian determinant})},\\
	a&= JA\ && \text{(\textit{cofactor matrix of the deformation tensor})}. 
\end{alignat*}
Using the notation defined below in Section~\ref{sec:notat-part-diff}, the Lagrangian version of equations (\ref{prob: Euler}) on the fixed domain $\Omega$ is given by 
\begin{subequations}
	\label{prob: pre-Lagrangian} 
	\begin{alignat}
		{4} f v^i_t+A_i^k f^\gamma\cp{k} &=0&\qquad&\text{in } \Omega\times(0,T],\\
		f_t+fA_i^j v^i\cp{j}&=0&&\text{in}\ \Omega\times(0,T],\label{L mass}\\
		f^\gamma &=\beta+\sigma H(\eta)&&\text{on}\ \Gamma\times(0,T],\\
		(\eta,v,f)|_{t=0}&=(e,u_0,\rho_0)&&\text{on}\ \Omega, 
	\end{alignat}
\end{subequations}
where $\Gamma=\p\Omega$ and $\label{n:e} e(x)=x$ denotes the identity
map on $\Omega$.

Since $J_t=a_i^jv^i\cp{j}$ by (\ref{formula: J_t}), we multiply \eqref{L mass} by $J$ and integrate in time for the identity 
\begin{align}
	\label{Lagrangian density} f=\rho_0 J^{-1}. 
\end{align} 
For $\sigma\ge0$ and $\gamma=2$, we use the identity (\ref{Lagrangian
  density}) for the Lagrangian density $f$ and
equivalently write the compressible Euler equations
\eqref{prob: pre-Lagrangian} 
as \begin{subequations}
	\label{Euler} 
	\begin{alignat}
		{4} \rho_0v^i_t+a_i^k(\rho_0^2J^{-2})\cp{k}&=0&\qquad&\text{in}\ \Omega\times(0,T],\label{Euler.m}\\
		\rho_0^2 J^{-2}&=\beta +\sigma H(\eta)&&\text{on}\ \Gamma\times(0,T],\label{Euler.bc}\\
		(\eta,v)|_{t=0}&=(e,u_0)&&\text{on}\ \Omega.
	\end{alignat}
\end{subequations}
For $\sigma>0$, we shall refer to the equations \eqref{Euler} 
  as the \textit{surface tension problem}.
 
For reference, we record that the momentum equations
\eqref{Euler.m} are equivalent to
\begin{align}
  \label{coupled equation}
v_t^i +2 A_i^k (\rho_0J ^{-1})\cp{k}=0\ \  \ \ \ \text{in } \Omega\times(0,T].
\end{align}

\begin{rem}
  The identity (\ref{Lagrangian density}) establishes that in
  Lagrangian variables the initial density $\rho_0$ is a parameter in
  the system of compressible Euler equations.
\end{rem}

\subsection{The higher-order energy functions
$E(t)$ and $\EZ$} \label{sec:higher-order-energy} 
While the physical energy  $\int_{\Omega} \bset{\rho_0 \frac{1}{2}\abs{v}^2 +
\rho_0^2 J ^{-1}+ \beta  J} +\sigma \mathcal{A} (t)$, $ \mathcal{A} (t)$ denoting the surface area of $\Gamma(t)$,
is a conserved quantity, it is far too weak for the  energy estimates methodology that we
employ.
  We instead define the higher-order energy functions $E(t)$ and $\EZ$ to
respectively correspond with the surface tension problem 
and the zero surface tension limit. Although neither $E(t)$ or $\EZ$ is
conserved, we will establish that each of $\Sup E(t)$ and  $\Sup\EZ$ is bounded on a
sufficiently small time-interval of existence $[0,T]$.

\subsubsection{The higher-order energy function for $\sigma>0$}
\label{sec:energy-funct-surf}

We define  \label{n:E}
the energy function $E(t)$ as
\begin{align}
	\label{s.energy function} E(t) &=1+\sum_{a=0}^5\norm{\p^a_t\eta(t)}_{5-a}^2+\abs{v_{ttt}\cdot n(t)}_1^2+\sum_{a=0}^2\abs{ \hd^2\!\p^a_t v\cdot n(t)}_{2.5-a}^2. 
\end{align}
We let $\label{n:M0}M_0\ge0$ denote a generic constant given by a polynomial function $P$ of $E(0)$:
\begin{align} 
	\label{s.con.M0} M_0=P\Pset{E(0)}. 
\end{align}
\subsubsection{The higher-order    energy function for $\sigma=0$}
We define the energy function $\EZ$ as
\begin{align}
	\label{z.energy function} 
        \begin{aligned}[b]
          \EZ =1 +
          \sum_{a=0}^7\norm{\p^a_t\eta(t)}_{4.5-\frac{1}{2}a}^2 
          + \sum_{a=0}^5\norm{\p^a_tJ(t)}_{4.5-\frac{1}{2}a}^2 
&+
\norm{\p^{6}_tJ(t)}_{1}^2.
        \end{aligned}
\end{align}
We let $\label{n:zM0} \MZ\ge0$ denote a generic constant given by a polynomial function $P$ of $\mathscr{E}(0)$:
\begin{align} 
	\label{z.con.M0} \MZ=P\Pset{\mathscr{E}(0)}. 
\end{align}

Section~\ref{sec:notation} explains the notation in \eqref{s.energy
  function} and \eqref{z.energy function}.
\begin{rem}\label{scaling}Corresponding with $\sigma=0$, time-derivatives in the higher-order energy function $\EZ$ 
  scale like one-half a space-derivative.
Since the scalar product of the momentum equations
\eqref{Euler.m} and the tangential vector $\eta \cp{\alpha}$
yields the identity $\rho_0v_t\cdot\eta
  \cp{\alpha} =-J(\rho_0^2 J ^{-2}) \cp{\alpha}$ in $\Omega$, the
  Laplace-Young boundary condition \eqref{Euler.bc} with $\sigma=0$
 provides that 
  \begin{align*}
    v_t\cdot\eta \cp{\alpha}=0\ \  \ \text{on } \Gamma.
  \end{align*}
Differentiating this
  identity with respect to time shows that %  $v_{tt}\cdot \eta
  % \cp{\gamma} = -v_t \cdot v \cp{\gamma} $, which is equivalently
  % written as $v _{tt}\cdot \eta \cp{\alpha} = - v_t \cdot n v
  % \cp{\alpha}\cdot n$.
  $\p_t$ scales like $(\p_x)^{1/2}$ in the zero
  surface tension limit of \eqref{Euler}.
\end{rem}
\subsection{The initial data
  $(\rho_0,u_0,\Omega)$} \label{sec:s.comp-cond} 
We assume that $\rho_0$
  and $u_0$ are given and sufficiently smooth. 
We require that the initial density $\rho_0$ satisfy \begin{align}
  \label{l.boundedness.r0}
    \rho_0\ge 2\lambda &>0\ \ \ \text{in } \overline{\Omega}. 
\end{align}
 Using the identities $\eta(0)=e$ and $J_t=a_r^sv^r\cp{s}$, we let
  $J_1$ be defined by
  \begin{align*}
    J_1&=\p_t\pset{J^{-2}}(0)=-2\Div u_0.
  \end{align*}
  We let $v_1$ and $v_2$ be the vectors
respectively given by
  \begin{align*}
    v_1^i=-2\p_i\rho_0\ \ \text{and}\ \
    v_2^i=-\rho_0^{-1}\bset{\p_i\pset{\rho_0^2J_1} + \p_ta_i^k(0)
      \pset{\rho_0^2}\cp{k}},
  \end{align*}
  where $\p_ta(0)$ is a smooth function of $Du_0$. 
We define, as a function of $\rho_0$ and $u_0$,
  \begin{align*}
    J_a=\p^a_t(J^{-2})|_{t=0} \ \ \text{ for $a=0,1,2,3.$}
  \end{align*}

\subsubsection{The case of positive  surface tension}
\label{sec:surf-tens-probl-1y} 
For $\sigma>0$, we assume that $\Omega$ is an $H^5$-class domain and
that $\rho_0$ and $u_0$ are in $H^4(\Omega)$. We define, as a function of $\rho_0$ and $u_0$,
  \begin{align*}
H_a=\p^a_t
    H(\eta)|_{t=0} \ \ \text{ for $a=0,1,2,3.$}
  \end{align*}
We require that the initial data satisfy the
following compatibility conditions:
	\begin{alignat}
		{2}\label{s.compatibility conditions}  \rho_0^2J_a&=\p^a_t\beta + \sigma H_a&\ \ &\text{on $\Gamma$ for $a=0,1,2,3.$} 
	\end{alignat}
For  $\beta$ as in \eqref{isentropic} and $\lambda$ as in
\eqref{l.boundedness.r0}, we  note that $a=0$ in
\eqref{s.compatibility conditions} yields
  \begin{align*}
\sigma H_0\ge 4 \lambda^2- \beta
.
  \end{align*}

\subsubsection{The case of zero surface tension}   
For $\sigma=0$, we assume that $\Omega$ is an $H^{4.5}$-class domain
and that    $\rho_0$ is in $H^{4.5}(\Omega)$ and $u_0$ is in
$H^4(\Omega)$.

We require that $\rho_0$ and $\Omega$ satisfy the
Taylor sign condition:
\begin{align}
  \label{Taylor}
  0< 2\nu \le -\frac{1}{\sqrt{g}} N^j a_i^j a_i^k \pset{\rho_0^2J ^{-2}}\cp{k}\big|_{t=0}\ \ \ \text{on } \Gamma.
\end{align}
We require that the initial data satisfy the
following compatibility conditions:
 	\begin{align}
		\label{z.compatibility conditions} 
\rho_0^2J_a=\p^a_t \beta \ \ \ \text{on } \Gamma\ \text{for }
a=0,\dots,6.
	\end{align}

        \begin{rem} 
          The twice-mean-curvature function $H(\eta)$ is not present in
          the zero surface tension limit of \eqref{Euler}.  Accordingly, there
          is no restriction on the curvature of the 
          initial surface $\Gamma$.
        \end{rem}

\subsection{Main Results} \label{sec:main-result} The main results of
this paper are the existence and uniqueness of solutions to the surface tension
problem  and its zero surface tension limit.
\begin{thm}
	[Existence and uniqueness for $\sigma>0$]\label{thm.s.main} Suppose that the initial data $(\rho_0,u_0, \Omega)$ verify
	\begin{enumerate}
		\item $M_0= P(E(0)) < \infty $,
		\item the lower-bound condition \eqref{l.boundedness.r0}, and 
		\item the compatibility conditions \eqref{s.compatibility conditions}. 
	\end{enumerate}
	Then for some $T>0$, there exists a solution to  $\eqref{Euler}$ on the
        time-interval $[0,T]$ such that  $\rho(t) \ge
 \lambda $ in  $\overline{\Omega}(t)$, $ \sigma H(t)>  -\beta$ on 
        $\Gamma(t)$, and 
	\begin{align*}  
		\Sup E(t)\le 2M_0. 
	\end{align*}
	
	Furthermore, the solution is unique if the initial data
is such that
	\begin{align*}
	\sum_{a=0}^6\norm{\p^a_t\eta(0)}_{6-a}^2 + \abs{v_{tttt}\cdot n(0)}_1^2 + \sum_{a=0}^3\abs{ \hd^2\!\p^a_t v\cdot n(0)}_{3.5-a}^2 < \infty.
	\end{align*}
\end{thm}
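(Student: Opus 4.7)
My plan is to prove existence by the parabolic vanishing-viscosity regularization signaled in the Introduction, and uniqueness by a difference estimate that explains the extra-regularity hypothesis. I would introduce a family of $\epsilon$-regularized Lagrangian systems in which the velocity is smoothed by horizontal mollifiers near $\Gamma$ (and by convolution in the interior) and an artificial $\epsilon$-order dissipative term is added to the Laplace--Young condition so that it becomes quasi-parabolic in $\eta_\epsilon$. The regularization is chosen to be asymptotically consistent with \eqref{Euler}, in the sense that the test-function identities used to define $E(t)$ remain available after smoothing, with commutators of lower order in $E^\epsilon(t)$. For fixed $\epsilon > 0$, existence follows by a fixed-point argument: freeze the coefficients built from a candidate velocity and solve the resulting linear problem, which is parabolic on $\Gamma$ and hyperbolic in $\Omega$ with smooth coefficients; contraction in a suitably weak norm yields a unique $v_\epsilon$ at the regularity level of $E^\epsilon(t)$.

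The heart of the proof is deriving $\epsilon$-independent a priori estimates for $E^\epsilon(t)$. For $a = 0,1,\dots,5$, I differentiate the momentum equation \eqref{Euler.m} in time $a$ times, test against $\partial_t^{a+1}\eta$, and integrate by parts. The interior piece yields the natural kinetic-type energy $\tfrac{1}{2}\tfrac{d}{dt}\!\int_\Omega \rho_0|\partial_t^{a+1}\eta|^2$ together with compression terms that are controlled using the identity \eqref{Lagrangian density}; the boundary piece, produced by \eqref{Euler.bc}, gives the mean-curvature quadratic form that dominates $\sigma |\hd^2\partial_t^a v\cdot n|_{2.5-a}^2$ plus commutator errors. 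Tangential difference quotients together with elliptic regularity for the Laplace--Young condition then upgrade boundary regularity to the interior norms $\|\partial_t^a \eta\|_{5-a}$. All error terms are bounded polynomially in $E^\epsilon(t)$ by Sobolev multiplication and trace inequalities, giving an inequality
\[
  E^\epsilon(t) \le M_0 + T\,P(\Sup E^\epsilon(t)),
\]
with $P$ independent of $\epsilon$, from which a continuity argument produces $\Sup E^\epsilon(t) \le 2M_0$ on a short time interval.

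Uniform bounds together with Aubin--Lions compactness deliver convergence of the regularized solutions to a solution of \eqref{Euler} inheriting $\Sup E(t) \le 2M_0$; the pointwise bound $\rho(t) \ge \lambda$ then follows from continuity of $\rho_0 J^{-1}$ and \eqref{l.boundedness.r0}, while $\sigma H(t) > -\beta$ follows from the preserved identity \eqref{Euler.bc} together with the $a = 0$ case of \eqref{s.compatibility conditions}. For uniqueness I would form $w = \eta^{(1)} - \eta^{(2)}$ for two solutions and run analogous energy estimates on $w$; because the quasilinear difference loses one derivative relative to the solutions themselves, the stated extra-regularity assumption on the initial data is exactly what is needed to close a Gronwall inequality on $w$ and force $w \equiv 0$.

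The main obstacle is the uniform estimate: one must simultaneously exploit the bulk hyperbolic structure (which, via the equation of state, furnishes the $\rho_0|\partial_t^{a+1}\eta|^2$ energy) and the boundary elliptic/parabolic structure (which, via the mean curvature, furnishes boundary regularity), while keeping all commutators between the horizontal mollifiers, time derivatives, and tangential derivatives absorbed in lower-order pieces of $E^\epsilon(t)$. Striking this balance drives both the choice of the asymptotically consistent regularization in Step~1 and the need for the particular anisotropic weights built into the definition \eqref{s.energy function}.
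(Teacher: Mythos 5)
Your high-level plan (regularize, get uniform a priori estimates for $E(t)$, pass to the limit, then uniqueness via a difference estimate) matches the paper's stated strategy, and the final assertions about $\rho(t)\ge\lambda$ and $\sigma H(t)>-\beta$ are handled essentially as you describe. But there is a genuine gap in the core of the proposal, precisely where compressibility enters.

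You claim that after the energy estimate on $\partial_t^a$ of the momentum equation, ``tangential difference quotients together with elliptic regularity for the Laplace--Young condition then upgrade boundary regularity to the interior norms $\|\partial_t^a\eta\|_{5-a}$.'' That is the mechanism that works for the incompressible free-boundary Euler equations, where the velocity is divergence-free and interior regularity is slaved to boundary regularity through an elliptic system. For the compressible problem this fails: the abstract of the paper itself flags that, unlike the incompressible case, one must also resolve the nonlinear wave (sound) dynamics in the bulk, and boundary elliptic regularity alone does not control $D\operatorname{div}\eta$. The paper handles this by splitting the interior estimate into three pieces, estimated by three distinct mechanisms, and then invoking Proposition~\ref{prop:Hodge}: the curl of $\partial_t^a\eta$ is controlled via the Lagrangian vorticity equation $\curl_\eta v_t=0$ (a transport, not elliptic, structure); the divergence/gradient of $J$ is controlled from the momentum equation rewritten in the form $f+\kappa f_t=g$ with $f=DJ$, via Lemma~\ref{kappa-indep: f+kappa f_t}; and the normal trace comes from the boundary energy. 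Your proposal never introduces the Hodge decomposition or separate curl/divergence estimates, so the bootstrap from the energy inequality to $\|\partial_t^a\eta\|_{5-a}$ is not closed.

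This gap also propagates backward into the choice of regularization. The paper's artificial viscosity $-\kappa\,\At_i^k(\rho_0\Jt_t)_{,k}$ in the momentum equation is engineered so that the Lagrangian vorticity equation $\curl_{\et}\vt_t=0$ is preserved exactly (so the curl estimates are $\kappa$-independent) and so the momentum equation retains the $f+\kappa f_t=g$ structure for $DJ$ (so the divergence estimates survive the limit). A generic mollifier on the velocity plus an $\epsilon$-order damping in the Laplace--Young condition, as you propose, would destroy the homogeneous vorticity equation and would not give that algebraic structure, so your commutators would not in fact be lower order in $E^\epsilon(t)$. Related to this, the regularized problem in the paper is degenerate parabolic (damping only through $\Div$ and on the boundary), so existence for fixed $\kappa$ cannot be obtained directly by a fixed point on a parabolic system; the paper has to route through the identity $-\Delta=\curl\curl-D\Div$ and two further smoothing layers (the $\kappa\epsilon$- and $\mu$-problems) before a contraction argument applies. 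Your one-step fixed-point sketch glosses over this. Your uniqueness sketch, on the other hand, is consistent in spirit with the paper's Section~\ref{sec:exitence-uniqueness}, including the rationale for the extra initial regularity.

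To repair the proposal: identify the conserved Lagrangian curl and design the bulk regularization so it preserves it; derive a separate $L^\infty_t L^2_x$ estimate for $D\partial_t^a J$ from the regularized momentum equation using a damped-transport lemma of the form $\|f\|\lesssim\max\{\|f(0)\|,\|g\|\}$ for $f+\kappa f_t=g$; and recover full interior regularity of $\partial_t^a\eta$ by combining $\curl$, $\Div$, and normal trace through a Hodge-type elliptic estimate, rather than by boundary elliptic regularity alone.
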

\begin{thm}
	[The zero surface tension limit]\label{thm.z.main}
        Suppose that the initial data $( \rho_0, u_0, \Omega)$ verify
	\begin{enumerate}
		\item $\MZ= P(\mathscr{E}(0)) < \infty $, 
		\item the lower-bound condition
\eqref{l.boundedness.r0}, 
                \item the Taylor sign condition \eqref{Taylor}, and 
		\item the compatibility conditions \eqref{z.compatibility conditions}. 
	\end{enumerate}
	Then for some $T>0$, there exists a solution to
        $\eqref{Euler}$ with $\sigma=0$
        on  the
        time-interval $[0,T]$ such that  $\rho(t) \ge  \lambda$ in $\overline{\Omega}(t)$,
        $-\frac{\p \rho^2(t)}{\p n(t)} \ge \nu$ on $\Gamma(t)$, and  
	\begin{align*}  
		\Sup \EZ\le 2\MZ. 
	\end{align*}
	
	Furthermore, the solution is unique if the initial data is such that 
	\begin{align*}
 \sum_{a=0}^9\norm{\p^a_t\eta(0)}_{5.5-\frac{1}{2}a}^2 
+
\sum_{a=0}^7\norm{\p^a_tJ(0)}_{5.5-\frac{1}{2}a}^2  
&+
\norm{\p^{8}_tJ(0)}_{1}^2  
  < \infty.
	\end{align*} 
\end{thm}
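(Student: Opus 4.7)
The plan is to realize the $\sigma = 0$ solution as a limit of solutions of the surface-tension problem produced by Theorem \ref{thm.s.main}. The first step is to construct an approximating family of initial data $(\rho_0^\sigma, u_0^\sigma, \Omega^\sigma)$, indexed by $\sigma > 0$, that satisfies the higher-order surface-tension compatibility conditions \eqref{s.compatibility conditions} and converges to $(\rho_0, u_0, \Omega)$ as $\sigma \to 0$ in a topology controlled by $\mathscr{E}(0)$. Since \eqref{s.compatibility conditions} at order $a=0$ prescribes $\rho_0^\sigma$ on $\Gamma$ via $(\rho_0^\sigma)^2 = \beta + \sigma H_0$, together with analogous inhomogeneous traces for $\p^a_t (J^{-2})|_{t=0}$, $a=1,2,3$, this is achieved by a localized modification of $\rho_0$ near $\Gamma$, adjusting $u_0$ to preserve the lower bound \eqref{l.boundedness.r0} and the Taylor sign condition \eqref{Taylor}. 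Applying Theorem \ref{thm.s.main} then produces, for each such $\sigma > 0$, a solution $(\eta^\sigma, v^\sigma)$ on a time-interval $[0, T^\sigma]$; however, both $T^\sigma$ and the bound on $E^\sigma(t)$ may degenerate as $\sigma \to 0$, so Theorem \ref{thm.s.main} alone does not suffice.

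The heart of the proof is therefore to derive $\sigma$-independent a priori estimates for these solutions in the weaker energy $\mathscr{E}^\sigma(t)$ modeled on \eqref{z.energy function}. As indicated in Remark \ref{scaling}, in the zero-surface-tension regime $\p_t$ scales like $(\p_x)^{1/2}$, which forces the half-integer indices $4.5 - \tfrac{1}{2}a$ in \eqref{z.energy function}. The scheme is to test \eqref{Euler.m} against $\p^{2a}_t \eta$ and, for the tangential-regularity estimates, against half-integer tangential derivatives of $\p^a_t v$ in a fractional Sobolev sense, and to integrate by parts using the equivalent identity \eqref{coupled equation} and the Lagrangian relation $J_t = a^j_i v^i\cp{j}$. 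Integration by parts in the tangential direction produces a boundary integral of the form
\begin{align*}
-\int_\Gamma \tfrac{1}{\sqrt{g}} \, N^j a_i^j a_i^k (\rho_0^2 J^{-2})\cp{k} \, \Abs{\hd^{\,4.5-\frac{1}{2}a}\, \p^a_t \eta}^2,
\end{align*}
which by the Taylor sign condition \eqref{Taylor} is nonnegative and coercive on a short time-interval; this is the mechanism that replaces the control of curvature that surface tension provided in Theorem \ref{thm.s.main}. The surface-tension contribution from $(\rho_0^\sigma)^2 J^{-2} = \beta + \sigma H(\eta^\sigma)$ appears with a nonnegative sign on the energy side and can simply be discarded, so no lower bound on $\sigma$ is required. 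Interior regularity is recovered by viewing \eqref{coupled equation} as a div-curl system for $v$, coupled with the nonlinear wave equation satisfied by $J$ that arises from differentiating $J_t = a^j_i v^i\cp{j}$ and substituting \eqref{Euler.m}. A Gronwall closure then yields $T > 0$ and a constant $2\mathscr{M}_0$, both independent of $\sigma$, such that $\sup_{t \in [0,T]} \mathscr{E}^\sigma(t) \le 2\mathscr{M}_0$.

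The main obstacle is the calibration of these half-integer tangential differentiations against the time differentiations so that every commutator term, every lower-order product, and every boundary integrand has a clean partner furnished either by the Taylor-sign boundary integral or by the div-curl/wave interior estimates; this is precisely what dictates the range of indices in \eqref{z.energy function}. Once the uniform bound is established, passage to the limit $\sigma \to 0$ is by now standard: weak-$\ast$ compactness in the energy spaces, strong compactness in lower-order norms via Aubin--Lions, and convergence of the nonlinearities identify a limit $(\eta, v)$ that solves \eqref{Euler} with $\sigma = 0$, retains the lower bound $\rho(t) \ge \lambda$, and satisfies the Taylor sign condition $-\p\rho^2/\p n(t) \ge \nu$. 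The energy bound descends by weak lower-semicontinuity, yielding $\sup_{t \in [0,T]} \mathscr{E}(t) \le 2\mathscr{M}_0$. Finally, uniqueness under the stronger initial-regularity hypothesis follows from a separate energy estimate for the difference of two solutions, closed by Gronwall's inequality with the extra half-derivative of regularity provided by the hypothesis, using again the Taylor sign condition to control the crucial boundary term.
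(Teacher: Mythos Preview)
Your high-level strategy---produce $\sigma>0$ solutions via Theorem~\ref{thm.s.main}, establish $\sigma$-independent a priori bounds using the Taylor sign condition, and pass to the limit---matches the paper's. Three points of departure are worth flagging.

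First, the paper does not modify the initial data to meet the surface-tension compatibility conditions \eqref{s.compatibility conditions}. Instead it defines a $\sigma$-problem in which the boundary condition is $\rho_0^2 J^{-2}=\beta_\sigma(t)-\sigma g^{\alpha\beta}\eta_{,\alpha\beta}\cdot n$ with $\beta_\sigma(t)=\beta+\sigma\sum_{a=0}^6\frac{t^a}{a!}\p_t^a[g^{\alpha\beta}\eta_{,\alpha\beta}\cdot n]\big|_{t=0}$; this builds compatibility into the problem itself and converges to the correct $\sigma=0$ boundary condition since $\beta_\sigma(t)\to\beta$. Your proposed near-boundary modification of $(\rho_0,u_0)$ is a legitimate alternative, but it is more delicate to carry out while preserving \eqref{Taylor} uniformly in $\sigma$.

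Second, and more substantively, the half-integer regularity in $\EZ$ is \emph{not} obtained by testing against ``half-integer tangential derivatives of $\p_t^a v$ in a fractional Sobolev sense.'' Fractional tangential differentiation would create commutators you cannot close at this level. The paper instead exploits the $\p_t\sim\p_x^{1/2}$ scaling by testing with \emph{integer} operators $\hd^a\p_t^{8-2a}$ for $a=1,2,3,4$ (and, at the top, $\p_t^6$ acting on the derived wave equation for $f^2=\rho_0^2 J^{-2}$). The Taylor-sign boundary term then appears as $\tfrac{1}{2}\tfrac{d}{dt}\int_\Gamma \sqrt{g}\,J^{-1}[-a_n^k(\rho_0^2 J^{-2})_{,k}]\,|\hd^a\p_t^{7-2a}v\cdot n|^2$, controlling only the \emph{normal component}; tangential trace control comes from the curl estimates. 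The half-integer Sobolev indices are recovered afterwards via the Hodge estimate (Proposition~\ref{prop:Hodge}) and interpolation, not at the energy-identity stage.

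Third, the surface-tension remainders cannot simply be discarded. After integration by parts on $\Gamma$ they produce error integrals (the $\j_1,\dots,\j_5$ in the paper's analysis) that are absorbed only because the $\sigma$-energy $E^\sigma(t)$ carries additional $\sqrt{\sigma}$- and $\sigma$-weighted norms (e.g.\ $\|\sqrt{\sigma}\,\p_t^5 v\|_2^2$, $|\sigma\,\p_t^5 v\cdot n|_{2.5}^2$). Without those weighted terms in the bootstrap, the closure fails.
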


\begin{rem}
The proofs of Theorems~\ref{thm.s.main} and~\ref{thm.z.main}
        do not rely on our choice of $\gamma=2$ and as such are valid
        for general $\gamma>1$, since  by introducing new enthalpy-type
        variables for $\rho_0^{\gamma-1}$ and $J^{\gamma-1}$ in \eqref{prob: pre-Lagrangian}, 
we can reduce the case of general $\gamma > 1$ to the case that
$\gamma = 2$.  
\end{rem}

\subsection{Structure of the proofs of Theorems~\ref{thm.s.main} and
  \ref{thm.z.main}}
\label{sec:struct}

\subsubsection{Existence for the surface tension problem
  \eqref{Euler}}  
 An existence theory is obtained via the vanishing viscosity method, but with a very special choice of artificial viscosity
 which does not alter the transport-type structure of vorticity.   This is introduced in Section \ref{sec:parab-kappa-appr}.
The Euler equations \eqref{Euler.m}  yield
\begin{align*}
  \curl_{\eta}v_t =0,
\end{align*}
where the Lagrangian curl operator $\curl_\eta$ is defined below in
Section~\ref{sec:curl-diverg-oper}. By defining a
parabolic approximation of the Euler equations \eqref{Euler.m} which
preserves the homogeneous vorticity equation, we  ensure that the
vorticity estimates for the approximate problem are independent of the parabolic
approximation parameter $ \kappa $.
The parabolic approximate $\kappa $-problem defined in
Section~\ref{sec:parab-kappa-appr} maintains a
homogeneous vorticity equation.

The  structure of the Euler equation \eqref{Euler.m} shows that
an a priori estimate for $v_t$ provides an estimate for the gradient of $J$, which then
yields an estimate for  the gradient of  $ \operatorname{div} \eta$.   
As such,  yet another constraint on the choice of parabolic regularization operator is that
its presence must still permit estimates for $D \operatorname{div} \eta$ whenever estimates for $ v_t$ are known.
We choose our $\kappa$-parabolic operator so
that the parabolically-regularized momentum equations  have the form
\begin{align*}
  f + \kappa f_t =g,
\end{align*}
for $f$ equalling the gradient of $J$.    Such a structure allows us to obtain $ \kappa $-independent estimates for $f$, given estimates for $g$.
We must further add parabolic regularization to the surface tension boundary conditions  \eqref{Euler.bc} which also has the same structure,
in order to infer the maximal regularity of the free-boundary.

Our strategy is to obtain energy estimates for the highest number of time-derivatives in the problem, and then use the
structure of the momentum equations to infer estimates for the divergence.  In conjunction with curl-estimates and boundary
regularity, we boot-strap the full regularity of the velocity, its time-derivatives, and the flow map $\eta$.

In Section~\ref{sec:kappa independent estimates}, we  perform energy estimates in the fourth (highest)
time-differentiated approximate $\kappa $-problem.  From these energy estimates, we are able
to close estimates for $v_{tttt}(t)$ in $L^2(\Omega)$ and $v _{ttt}(t)$  in
$H^1(\Omega)$.    We then infer the optimal regularity of $v_{tt}$ and $v_{t}$.   This, in turn, enables us to get the
optimal regularity for $v$ and $\eta$.  All estimates are found to be independent of the artificial viscosity parameter $\kappa $.

In Section~\ref{sec:appr-kappa-probl}, 
we construct smooth solutions the approximate $\kappa$-problem.
We
utilize (the Lagrangian variables version of) the basic vector identity
$
  -\Delta=\curl\curl- D\Div
$
in order to replace 
gradient of the divergence  of $v$ with 
  the Laplacian of $v$, modulo lower-order terms. Additionally, we use that a sufficiently regular vector $\xi\in
\mathbb{R}^3$ satisfies 
\begin{align*}
N ^j A_r^j A_r^k \xi \cp{k}=
\sqrt{g} J^{-1} {(\curl_{\eta}
 \xi)\times n}
+\sqrt{g} J^{-1}  (\Div_{\eta} \xi )n+   b(\xi,\eta),
\end{align*}
where $b(\xi,\eta)$ is a first-order differential boundary operator
with respect to $\xi$ and a second-order differential boundary operator
with respect to $\eta$.   Since the regularity of the flow map $\eta$ is
dictated by the regularity of $v$ in our construction scheme, we employ a horizontal convolution-by-layers operator
(first introduced in \cite{CS07}) in order to view $b(v,\eta)$ as a
lower-order term; with the horizontal convolution-by-layers approximation in place, solutions can be found via the existence
theory for uniformly parabolic second-order equations.   We must then find estimates for such solutions which are
indeed independent of the convolution parameter and pass to the limit.

 In Section~\ref{sec:uniqueness-solutions}, we use the $\kappa$-independent a priori estimates established in
Section~\ref{sec:kappa independent estimates} and 
the construction of  solutions to our approximate $ \kappa $-problem  in 
Section~\ref{sec:appr-kappa-probl} to establish
the existence and uniqueness of solutions  to the surface tension problem
  \eqref{Euler}.

\subsubsection{The zero surface tension limit}  
In Section~\ref{sec:uniqueness-solutions.z}, we establish our existence theory for the
zero surface tension limit of \eqref{Euler} 
via
$\sigma$-independent a priori estimates.  For initial data
satisfying  the Taylor sign condition
\eqref{Taylor}, we have that solutions to the surface tension problem
\eqref{Euler} satisfy 
\begin{align*}
0< \nu\int_{\Gamma}\abs{ \eta\cdot n}^2\le-   \int_{\Gamma}\frac{1}{\sqrt{g}}N^ja_\ell^ja_{\ell}^k (\rho_0^2 J
    ^{-2})\cp{k} \abs{ \eta\cdot n}^2,
\end{align*}
on a sufficiently small time-interval $[0,T]$.

\section{Preliminaries}
\label{sec:preliminaries}

\subsection{Notation} \label{sec:notation} 
\subsubsection{The    three-dimensional gradient vector} \label{sec:grad-horiz-deriv}

Throughout this paper the symbol $D \label{n:D}$ will be used to denote the three-dimensional gradient vector 
\begin{align*}
	D=\Pset{\frac{
	\partial}{
	\partial x_1},\frac{
	\partial}{
	\partial x_2},\frac{
	\partial}{
	\partial x_3} }.
\end{align*}

\subsubsection{Notation for partial differentiation and Einstein's summation convention} \label{sec:notat-part-diff}

The $k$th partial derivative of $F$ will be denoted by $F\cp{k}=\frac{
\partial F}{
\partial x_k}$. Repeated Latin indices $i,j,k$, etc., are summed from $1$ to $3$, and repeated Greek indices $\alpha, \beta, \gamma$, etc., are summed from $1$ to $2$. For example, $F\cp{ii}=\sum_{i=1}^3\frac{\p^2F}{\p x_i\p x_i}$, and $F^i\cp{\alpha} I^{\alpha\beta} G^i\cp{\beta}=\sum_{i=1}^3\sum_{\alpha=1}^2\sum_{\beta=1}^2\frac{\p F^i}{\p x_\alpha} I^{\alpha\beta} \frac{\p G^i}{\p x_\beta}$.

\subsubsection{The divergence and curl
  operators} \label{sec:curl-diverg-oper} We use the notation $\Div u$
to denote the divergence of a vector field $u$ on
$\Omega$: \label{n.div.curl} 
\begin{align*}
	\Div u=u^1\cp{1}+u^2\cp{2}+u^3\cp{3}, 
\end{align*}
and we use the notation $\curl u$ to denote the curl of a vector $u$ on $\Omega$: 
\begin{align*}
	\curl u=\Pset{u^3\cp{2}-u^2\cp{3},u^1\cp{3}-u^3\cp{1}, u^2\cp{1} -u^1\cp{2}}. 
\end{align*}
We recall that the permutation symbol $\varepsilon_{ijk}$ is defined by 
\begin{align*}
	\varepsilon_{ijk}= 
	\begin{cases} 
		\hfill 1,& \text{even permutation of $\set{1,2,3}$,}\\
		\hfill -1,& \text{odd permutation of $\set{1,2,3}$,}\\
		\hfill 0,&\text{otherwise}. 
	\end{cases}
\end{align*}
This allows for   the curl of a vector field $u$
to be expressed as  $
\curl u=\varepsilon_{\cdot jk} u^k\cp{j}$. 
Letting $v=u(\eta)$ for a given flow map $\eta$, we use the notation
$\Div_\eta v$ to denote the Lagrangian divergence of $v$ on $\Omega$: 
\begin{align*}
	\Div_\eta v=A_r^s v^r \cp{s}, 
\end{align*}
and we use the notation $\curl_\eta v$ to denote the Lagrangian curl
of $v$ on $\Omega$: 
\begin{align*}
	\curl_\eta v= \varepsilon_{\cdot jk }A_j ^s v^k \cp{s}.
\end{align*}
\subsubsection{The scalar- and cross-product of vectors in $\mathbb{R}^3$}
\label{sec:scalar-cross-product}
Let $u$ and $v$ be vectors in $\mathbb{R}^3$.
The scalar-product of $u$ and $v$, denoted $u\cdot v$, is defined as
\begin{align}
  \label{notation.scalar}
  u\cdot v = u^1v^1+ u^2v^2+ u^3v^3.
\end{align}
The cross-product of $u$ and $v$, denoted $u\times v$, is defined as
\begin{align}
  \label{notation.cross}
  u\times v = \varepsilon_{\cdot jk}u^jv^k.
\end{align}

\subsubsection{Local coordinates near $\Gamma$ } \label{sec:local-coord-near}

We let $\Omega\subset\mathbb{R}^3$ denote an open, bounded subset of
class $H^s$ for $s\ge4$, and we let $\set{\UL}_{\local=1}^K$ denote an open covering of $\Gamma=\p\Omega$, such that for each $\label{n:l}\local\in \set{1,2,\dots,K}$, with 
\begin{align*}
	\VL&=B(0,r_\local),\text{ denoting the open ball of radius $r_\local$ centered at the origin}, \\
	\VLP&=\VL\cap\set{x_3>0}, \\
	\DL&=\VL\cap\set{x_3=0}, 
\end{align*}
there exists an $H^s$-class chart $\thetal$  satisfying
\begin{align*} 
	\thetal\colon \VL\to\UL\ &\text{ is an $H^s$ diffeomorphism}, \\
	\thetal&(\VLP)=\UL\cap\Omega, \\
	\thetal&(\DL)=\UL\cap\Gamma. 
\end{align*}
\begin{figure}
	[here] \centering 
	\includegraphics{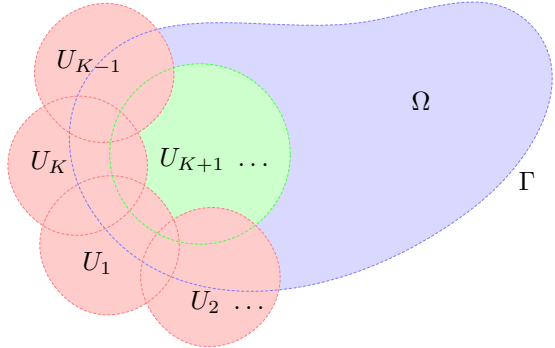} \caption{Indexing convention for the open cover $\set{U_\local}_{\local=1}^L$ of $\Omega$.} 
\end{figure}
For $L>K$, we let $\set{\UL}_{\local=K+1}^L$ denote a family of
open balls of radius $r_\local$ properly contained in $\Omega$ such that $\set{\UL}_{\local=1}^L$ is an open cover of $\Omega$. We let 
\begin{align*}
	\set{\xl}_{\local=1}^L \text{ denote a $C^\infty$
          partition-of-unity subordinate to the open covering of $\Omega$.} 
\end{align*}

\subsubsection{Tangential derivatives}\label{sec: tangential-derivative} On each $\UL\cap\Omega$, for $1\le\local\le L$, we let $\label{n:hd}\hd_\local$ denote the \textit{local tangential-derivative}. That is, for a differentiable function $f$ on $\Omega$, the $\alpha$th component of the local tangential-derivative of $f$ is defined in $\UL\cap\Omega$ by 
\begin{align*}
	\hd_{\local,\alpha} f=\Pset{\frac{\p}{\p x_\alpha}\bset{f\circ\thetal}}\circ\thetal^{-1}=\Pset{\pset{D f\circ\thetal}\frac{\p\thetal}{\p x_\alpha}}\circ\thetal^{-1}, 
\end{align*}
where for $K+1\le \local\le L$, we have set $\thetal$ to be the identity map $e$.

We let $\hd$ denote the \textit{tangential-derivative} whose $\alpha$th component is given by 
\begin{align*}
	\hd_\alpha=\sum_{l=1}^L\xl\hd_{\local,\alpha}. 
\end{align*}
We use $\hd_\alpha f$ or $f\cp{\alpha}$ to denote the components of the tangential-derivative of $f$.
\subsubsection{Geometry of the moving surface
  $\Gamma(t)$} \label{sec:geometry-moving}   The vectors $\eta\cp{\alpha}$ for $\alpha=1,2$, span the tangent space to the moving surface $\Gamma(t)=\eta(\Gamma)$ in $\mathbb{R}^3$.
The \textit{surface metric} $\label{n:g}g$ on $\Gamma(t)$ has components 
\begin{align*}
	g_{\alpha\beta}=\eta\cp{\alpha}\cdot\eta\cp{\beta}. 
\end{align*}
We let $\label{n:g0}g_0$ denote the surface metric of the initial surface $\Gamma$.
The components of the inverse metric $\label{n:sqrt g}\bset{g}^{-1}$
are denoted by $ g^{\alpha\beta}$. We use $\sqrt{g}$ to denote $\sqrt{\det g}$; we note that $\sqrt{g}=\abs{\eta\cp{1}\times\eta\cp{2}}$, so that \label{n:n}$n=\bset{\eta\cp{1}\times\eta\cp{2}}/\sqrt{g}$. Equivalently,
\begin{align}
  \label{e.cross.e}
  \sqrt{g} n =\eta \cp{1}\times \eta \cp{2}.
\end{align}
The
Laplace-Beltrami\label{n:BL} operator $\Delta_g$ is defined on $\Gamma$ as 
\begin{align*}
	\Delta_g=\sqrt{g}^{-1}
	\hd_\alpha[\sqrt{g}g^{\alpha\beta}
	\hd_\beta]. 
\end{align*} 

\subsubsection{Sobolev spaces on $\Omega$} \label{sec:diff-norms-open}

For integers $k\ge0$ and a smooth, open domain $\Omega$ of $\mathbb{R}^3$, we define the Sobolev space $H^k(\Omega)$ $\pset{H^k(\Omega;\mathbb{R}^3)}$ to be the closure of $C^\infty(\overline{\Omega})$ $\pset{C^\infty(\overline{\Omega};\mathbb{R}^3)}$ in the norm 
\begin{align*}
	\norm{u}_k^2=\sum_{\abs{a}\le k}\int_\Omega \Abs{D^a u(x) }^2 , 
\end{align*}
for a multi-index $a\in\mathbb{Z}^3_+$, with the convention that
$\abs{a}=a_1+a_2+a_3$. For real numbers $s\ge0$, the Sobolev spaces $H^s(\Omega)$ and the norms $\label{n:interior norm}\norm{\cdot}_s$ are defined by interpolation. 
We will write $H^s(\Omega)$ instead of $H^s(\Omega;\mathbb{R}^3)$ for
vector-valued functions. We use $H^s(\Omega)'$ to denote the dual
space of $H^s(\Omega)$. 

\subsubsection{Sobolev spaces on $\Gamma$} \label{sec:sobolev-spaces-gamma} For functions $u\in H^k(\Gamma)$, $k\ge0$, we set 
\begin{align*}
	\abs{u}_k^2=\sum_{\abs{a}\le k } \int_\Gamma \Abs{ \hd^a u(x)}^2, 
\end{align*}
for a multi-index $a\in \mathbb{Z}^2_+$. For real $s\ge0$, the Hilbert space $H^s(\Gamma)$ and the boundary norm $\label{n:boundary-norm}\abs{\cdot}_s$ is defined by interpolation. The negative-order Sobolev spaces $H^{-s}(\Gamma)$ are defined via duality. That is, for real $s\ge0$, 
\begin{align*}
	H^{-s}(\Gamma)=H^s(\Gamma)'. 
\end{align*}
\begin{rem}
  Throughout this paper, we suppress the Euclidean measure $dx$ by letting $\int_{\Omega} $ represent $\int_{\Omega} dx$. Similarly, the notation $\int_{\Gamma} $ represents $\int_{\Gamma} dS_0$ where  $dS_0=\sqrt{ g_0} dx_1 dx_2$ is the surface measure of the initial surface $\Gamma$. Equally, the time integral $\int_0^{t} $ should be read as $\int_0^{t} ds$. 
\end{rem}
\begin{rem}
We let $\abs{\cdot}_{s,\DL}$ denote the $H^s(\DL)$-norm.
\end{rem}
 
 \subsection{Differentiation and geometric identities and properties} 
\label{sec:prop-determ-j}

\subsubsection{An identity for the Jacobian determinant $J$} \label{sec:an-identity-jacobian} With $\dim\Omega=3$, we have that the Jacobian determinant $J$ is written as 
\begin{align}
	\label{identity: J} J=\frac{1}{\dim\Omega} \Bset{ a_r^s\eta^r\cp{s}}, 
\end{align}
which follows from the definition of the cofactor matrix $a$ in Section~\ref{sec:fixing-doma-lagr}.

\subsubsection{Time-differentiating the Jacobian determinant $J$ and
  the cofactor matrix $a$} \label{sec:diff-cofact-matr} We record the following basic differentiation formulas:
\begin{align}
	\label{formula: J_t} \p_t J&=a_r^sv^r\cp{s},\\
	\label{formula: a_t} \p_t a_i^k&=J^{-1}\bset{a_r^sa_i^k-a_i^sa_r^k}v^r\cp{s}. 
\end{align}
Using (\ref{formula: J_t}) and (\ref{formula: a_t}) and the fact that $a=JA$, we have that 
\begin{align}
	\label{formula: derivative: A } \p_t A_i^k=-A_i^sA_r^k v^r\cp{s}. 
\end{align}
We note that the time-differentiation formulas (\ref{formula:
  J_t})--(\ref{formula: derivative: A }) at once become formulas for
tangential-differentiation by replacing $v^r\cp{s}$ with
$\hd\eta^r\cp{s}$ in the right-hand sides of (\ref{formula: J_t})--(\ref{formula: derivative: A }).

The formulas (\ref{formula: J_t}) and (\ref{formula: a_t}) imply the following scaling relations:
\begin{alignat*}
	{3} \p_tJ&\sim aDv,&\qquad\p_ta&\sim a^2Dv,\\
	\p^2_tJ&\sim a^2\pset{Dv}^2+aDv_t,&\p^2_ta&\sim a^3\pset{Dv}^2+a^2Dv_t. 
\end{alignat*}
These scaling relations are particularly useful when estimating error-terms.

\subsubsection{The Piola identity} \label{sec:piola-identity} Columns of every cofactor matrix are divergence-free. Thus, 
\begin{align}
	\label{Piola identity} a_i^k\cp{k}=0. 
\end{align}

\subsubsection{Relating the normal vectors of $\Gamma$ and $\Gamma(t)$} \label{sec:relat-norm-vect} With $\label{n:N}N$ as the outward unit normal to the reference surface $\Gamma$, the outward-normal direction of the moving surface $\Gamma(t)$ is 
\begin{align*}
	a_i^k N^k=\abs{\eta\cp{1}\times\eta\cp{2} } n^i. 
\end{align*}
The identity $\sqrt{g}=\sqrt{\det g}=\abs{\eta\cp{1}\times
  \eta\cp{2}}$ implies that 
\begin{align}
	\label{formula: outward normal} a_i^kN^k=\sqrt{g}n^i. 
\end{align}

\subsubsection{Derivatives of the inverse metric $g^{\alpha\beta}$, Jacobian $\sqrt{g}$ and unit normal $n$} \label{sec:diff-metr-g-1}

A tangential-derivative of the inverse metric $g^{\alpha\beta}$,
Jacobian determinant $\sqrt{g}$ and moving outward normal unit vector
$n$ are given by the formulas 
\begin{subequations}
\label{hor.gJn}
  \begin{align}
    \label{formula: derivative: metric inverse} \hd g^{\alpha\beta}&=-g^{\alpha\mu}\hd g_{\mu\nu}g^{\nu\beta},\\
    \label{formula: derivative: Jacobian determinant} \hd \sqrt{
      g}&=\textstyle\frac{1}{2}\sqrt{g}g^{\mu\nu}\hd g_{\mu\nu},
    \\
    \label{formula: derivative: outward normal vector} \hd
    n&=-g^{\gamma\delta}\bset{\hd\eta\cp{\delta}\cdot n
    }\eta\cp{\gamma}.
  \end{align}
\end{subequations}
Also, \begin{subequations}
\label{t.gJn}
  \begin{align}
    \label{t derivative: metric inverse} \p_t g^{\alpha\beta}&=-g^{\alpha\mu}\p_t g_{\mu\nu}g^{\nu\beta},\\
    \label{t derivative: Jacobian determinant} \p_t \sqrt{
      g}&=\textstyle\frac{1}{2}\sqrt{g}g^{\mu\nu}\p_t g_{\mu\nu},
    \\ 
    \label{t derivative: outward normal vector} \p_t
    n&=-g^{\gamma\delta}\bset{v\cp{\delta}\cdot n
    }\eta\cp{\gamma}.
  \end{align}
\end{subequations}

\begin{rem}
  The right-hand side of (\ref{formula: derivative: outward normal
    vector}) and (\ref{t derivative: outward normal vector}) is a
vector that is  tangent to the embedded surface.
\end{rem}

\subsubsection{Relating the Laplace-Beltrami operator $\Delta_g$ to the unit normal $n$} \label{sec:relat-surf-tens-1}
With the formulas (\ref{formula: derivative: metric inverse}) and
(\ref{formula: derivative: Jacobian determinant}), we have that the
Laplace-Beltrami operator $\Delta_g=\sqrt{g} ^{-1}\hd_\alpha[\sqrt{g}
g ^{\alpha\beta} \hd_\beta]$ applied to the particle flow
$\eta$ decomposes into normal and tangential components  as  
\begin{align*}
	\sqrt{g}\Delta_g(\eta)=\sqrt{g} \underbrace{(g ^{\alpha\beta} \eta
        \cp{\beta\alpha}-  \eta \cp{\mu\alpha} \cdot \eta \cp{\nu}g
        ^{\alpha\mu} g ^{\nu\beta}\eta\cp{\beta})}_{g ^{\alpha\beta}
     [   \eta \cp{\alpha\beta} \cdot n] n}+\sqrt{g}\eta\cp{\mu\alpha}\cdot\eta\cp{\nu} (g^{\alpha\beta}g^{\mu\nu}-g^{\alpha\mu}g^{\nu\beta})\eta\cp{\beta}. 
\end{align*}
We therefore have the identity 
\begin{align}
	\label{formula:Laplace-Beltrami opearator and projection operator} \sqrt{g}\Delta_g(\eta)=\sqrt{g}g^{\alpha\beta}\bset{\eta\cp{\alpha\beta}\cdot n}n. 
\end{align}
For reference, we recall the identity $\Delta_g(\eta)=-H(\eta)n.$

\subsection{Two  identities for the Euler equations in Lagrangian variables} \label{sec:tangential-identity} 
 
\subsubsection{The Lagrangian vorticity
  equation} \label{sec:lagrangian-vorticity} With the
operator $\curl_\eta$  defined in Section~\ref{sec:curl-diverg-oper},
\begin{align}
	\label{E.Lagrangian vorticity} \curl_\eta v_t=0\ \ \ \text{in
        } \Omega.
\end{align}
 The identity \eqref{E.Lagrangian vorticity} is obtained by taking the
 Lagrangian curl of the Euler equations
(\ref{coupled equation}).

\subsubsection{A tangential identity for $v_t$ on the
  boundary $\Gamma$}\label{sec:tangential-identity-1} Setting $\sigma=1$,
\begin{align} 
	\label{identity: tangential} v_t\cdot \eta\cp{\alpha} = f^{-1}\hd_\alpha\pset{g^{\mu\nu} \bset{\eta\cp{\mu\nu}\cdot n}}\ \ \ \text{on } \Gamma.
\end{align}
The identity \eqref{identity: tangential} is established using the
Euler equations~(\ref{coupled equation}), the Laplace-Young boundary condition \eqref{Euler.bc}, and the formula (\ref{formula:Laplace-Beltrami opearator and projection operator}).

\subsection{General inequalities} \label{sec:trace-estim-techn} 
   
\subsubsection{Trace estimates} \label{sec:trace-estimate} For
$s>\frac{1}{2}$ 
and some constant $C$ independent of $w\in H^s(\Omega)$, the trace
theorem \cite{A75} states that the trace of $w$ is
defined in $H^{s- \frac{1}{2}}(\Gamma)$ with the estimate 
\begin{align*}
	\abs{w}_{s-\frac{1}{2}}\le C \norm{w}_s.
\end{align*}

\begin{lem}\label{lem.j}
Let $w\in L^2(0,T;H^1(\Omega ))\cap L^\infty(0,T;L^2(\Omega ))$. Then
\begin{align}
\label{in.lem}
 \int_{0}^{T} \abs{w}_{0.25 }^2 \le\delta \int_{0}^{T} \norm{ w}_1^2+ C_\delta \, T \Sup \norm{ w(t)}_0^2 ,
\end{align}
where the constant $C_\delta $  depends on $1/ \delta>0$.
\end{lem}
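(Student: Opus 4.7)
The inequality is a standard interpolation/Young-type estimate and the proof plan is as follows. First, I would invoke the trace theorem of Section~\ref{sec:trace-estimate} with $s=\tfrac{3}{4}>\tfrac{1}{2}$, which gives
\begin{equation*}
\abs{w}_{0.25}\le C\norm{w}_{3/4}.
\end{equation*}
Next, I would apply the standard Sobolev interpolation inequality between $L^2(\Omega)$ and $H^1(\Omega)$,
\begin{equation*}
\norm{w}_{3/4}\le C\,\norm{w}_0^{1/4}\norm{w}_1^{3/4},
\end{equation*}
so that squaring yields
\begin{equation*}
\abs{w}_{0.25}^2\le C\,\norm{w}_0^{1/2}\norm{w}_1^{3/2}.
\end{equation*}

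The heart of the argument is then Young's inequality with conjugate exponents $p=\tfrac{4}{3}$ and $q=4$. Applied to $a=\norm{w}_1^{3/2}$ and $b=\norm{w}_0^{1/2}$ with a free positive weight, it gives, for every $\delta>0$,
\begin{equation*}
\norm{w}_0^{1/2}\norm{w}_1^{3/2}\le \delta\,\norm{w}_1^2 + C_\delta\,\norm{w}_0^2,
\end{equation*}
where the constant $C_\delta$ blows up polynomially as $\delta\downarrow 0$ (specifically like $\delta^{-3}$). Combining with the previous bound yields the pointwise-in-time estimate
\begin{equation*}
\abs{w}_{0.25}^2\le \delta\,\norm{w}_1^2 + C_\delta\,\norm{w(t)}_0^2.
\end{equation*}

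Finally, I would integrate this inequality over $t\in[0,T]$. The $H^1$-term integrates directly to $\delta\int_0^T\norm{w}_1^2$, which is finite by the assumption $w\in L^2(0,T;H^1(\Omega))$. For the $L^2$-term, I use $w\in L^\infty(0,T;L^2(\Omega))$ to estimate
\begin{equation*}
\int_0^T \norm{w(t)}_0^2\,dt\le T\,\Sup\norm{w(t)}_0^2,
\end{equation*}
which produces the factor of $T$ in \eqref{in.lem}. There is no real obstacle here; the only delicate choices are fixing the interpolation exponent $s=3/4$ so that the trace endpoint $s-\tfrac{1}{2}=\tfrac{1}{4}$ matches the left-hand side, and choosing the Young exponents so that only the $H^1$-norm carries the small parameter $\delta$ while the $L^2$-norm absorbs the price $C_\delta$.
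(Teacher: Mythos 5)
Your argument is the same as the paper's, up to reordering: the paper first interpolates $\norm{w}_{0.75}^2 \le C\norm{w}_0^{1/2}\norm{w}_1^{3/2}$, applies Young with weight $\delta$, integrates in time, and then invokes the trace theorem, whereas you invoke the trace theorem first and then carry out the identical interpolation and Young steps. The exponents, the role of $\delta$, and the final $T\sup\norm{w}_0^2$ bound all match.
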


\begin{proof} 
  Using interpolation and Young's inequality with $\delta >0$, we have that
  \begin{align*}
  \int_{0}^{T}
    \norm{w}_{0.75}^2\le C \int_{0}^{T}
    \norm{w}_{0}^{\frac{1}{2}}\norm{w}_{1}^{\frac{3}{2}}\le\delta
    \int_{0}^{T} \norm{ w}_1^2+ C_\delta \, T \Sup \norm{ w(t)}_0^2 .
  \end{align*}
The proof is complete thanks to the trace theorem.
\end{proof}

\begin{lem}
	[Normal trace theorem]\label{lem: normal trace} Let $w$ be a vector field defined on $\Omega$ such that $\hd w\in L^2(\Omega)$ and $\Div w\in L^2(\Omega)$, and let $N$ denote the outward unit normal vector to $\Gamma$. Then the normal trace $\hd w\cdot N$ exists in $H^{-0.5}(\Gamma)$ with the estimate 
	\begin{align}
		\label{estimate: normal trace} \abs{\hd w\cdot N}_{-0.5}^2 \le C \Bset{\norm{\hd w}_{L^2(\Omega)}^2 +\norm{\Div w}_{L^2(\Omega)}^2}, 
	\end{align}
	for some constant $C$ independent of $w$. 
\end{lem}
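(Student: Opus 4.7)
The plan is a duality argument based on the divergence theorem, with the crucial input that the tangential differentiation $\hd_\alpha$ is generated by a vector field tangent to $\Gamma$. Fix $\phi \in H^{0.5}(\Gamma)$ and let $\Phi \in H^1(\Omega)$ be an extension with $\norm{\Phi}_1 \le C\abs{\phi}_{0.5}$. For each fixed $\alpha$, I would apply the divergence theorem to the vector field with components $(\hd_\alpha w^i)\Phi$ to obtain
$$\int_\Gamma (\hd_\alpha w^i) N^i \phi = \int_\Omega \partial_i(\hd_\alpha w^i)\Phi + \int_\Omega (\hd_\alpha w^i)\partial_i\Phi.$$
The second integral is immediately bounded by $\norm{\hd w}_0\norm{\Phi}_1 \le C\norm{\hd w}_0\abs{\phi}_{0.5}$, which is of the desired form.

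The substantive work is on the first integral. Using the local expression $\hd_{\local,\alpha} = \tau_{\local,\alpha}^j\partial_j$ in each chart $\theta_\local$, where $\tau_{\local,\alpha}$ is the smooth vector field whose components are $\partial_\alpha\theta_\local^j\circ\theta_\local^{-1}$ and is tangent to $\Gamma$ on $U_\local\cap\Gamma$, I would write
$$\partial_i(\hd_{\local,\alpha}w^i) = \hd_{\local,\alpha}(\Div w) + (\partial_i\tau_{\local,\alpha}^j)(\partial_j w^i).$$
For the first piece, integrating by parts in $x_j$ yields
$$\int_\Omega \tau_{\local,\alpha}^j\partial_j(\Div w)\Phi = -\int_\Omega \Div w\,\partial_j(\tau_{\local,\alpha}^j\Phi) + \int_\Gamma (\tau_{\local,\alpha}\cdot N)(\Div w)\Phi,$$
and the boundary term vanishes because $\tau_{\local,\alpha}\cdot N = 0$ on $\Gamma$; the remaining bulk contribution is bounded by $C\norm{\Div w}_0\norm{\Phi}_1$. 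The commutator $(\partial_i\tau_{\local,\alpha}^j)(\partial_j w^i)$, which a priori involves normal derivatives of $w$ not controlled by the hypotheses, is best handled by introducing the auxiliary field $\tilde u := \hd_{\local,\alpha}w - \tau_{\local,\alpha}\Div w$: since $\tau_{\local,\alpha}\cdot N = 0$ on $\Gamma$, one has $\tilde u\cdot N = \hd_{\local,\alpha}w\cdot N$ on $\Gamma$; and a direct computation shows $\Div\tilde u = (\partial_i\tau_{\local,\alpha}^j)(\partial_j w^i) - (\Div w)\Div\tau_{\local,\alpha}$, which one can reorganize through one further integration by parts in $x_j$ (now against the already-present $\Phi$) into expressions controlled by $\norm{\hd w}_0 + \norm{\Div w}_0$ times $\norm{\Phi}_1$, with the boundary contribution again vanishing thanks to $\tau_{\local,\alpha}\cdot N = 0$.

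Summing the chart-localized estimates against the partition of unity $\{\xi_\local\}$ and taking the supremum over $\phi$ with $\abs{\phi}_{0.5}\le 1$ then delivers \eqref{estimate: normal trace}. The principal obstacle is the careful bookkeeping of commutators that arise from passing between the local tangential derivatives $\hd_{\local,\alpha}$ and the full Euclidean gradient; the structural key that makes everything close is the tangency $\tau_{\local,\alpha}\cdot N = 0$, which eliminates every dangerous boundary contribution so that only the bulk terms, all of which are linear in $\hd w$ or $\Div w$ against smooth chart-dependent coefficients, remain.
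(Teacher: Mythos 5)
Your overall framework — pairing $\hd w\cdot N$ against $\phi\in H^{1/2}(\Gamma)$ via the divergence theorem and using the tangency $\tau_{\local,\alpha}\cdot N=0$ to eliminate boundary contributions from tangential integration by parts — is the same duality argument the paper invokes via Temam, and you are right to make the commutator $[\Div,\hd_{\local,\alpha}]w=(\partial_i\tau_{\local,\alpha}^j)(\partial_j w^i)$ explicit, since it is the substantive obstacle.

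The gap is the final claim that, after integrating $\int_\Omega(\partial_i\tau_{\local,\alpha}^j)(\partial_j w^i)\Phi$ by parts in $x_j$, the boundary contribution vanishes ``thanks to $\tau_{\local,\alpha}\cdot N=0$.'' That boundary integral is $\int_\Gamma(\partial_i\tau_{\local,\alpha}^j)N^j\,w^i\,\Phi$, and the coefficient $(\partial_i\tau^j)N^j$ is not $\tau^jN^j$: differentiating the tangency constraint gives $(\partial_i\tau^j)N^j=\partial_i(\tau\cdot N)-\tau^j\partial_iN^j$, and even if $\tau\cdot N\equiv 0$ is arranged on a whole collar of $\Gamma$, the residual $-\tau^j\partial_iN^j$ encodes the second fundamental form and is generically nonzero on a curved boundary (for the unit disk with the radial chart $\theta(y_1,y_3)=((1-y_3)\cos y_1,(1-y_3)\sin y_1)$ one computes $\tau_1^i(\partial_i\tau_1^j)N^j=-1$ on the circle). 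So the boundary term survives; worse, it pairs the trace of $w$ on $\Gamma$ against a smooth function, a trace that the hypotheses $\hd w,\Div w\in L^2(\Omega)$ do not supply, and the companion bulk term $-\int_\Omega w^i\,\partial_j[(\partial_i\tau^j)\Phi]$ costs $\norm{w}_0$, which is absent from the right-hand side of \eqref{estimate: normal trace}. Your auxiliary field $\tilde u=\hd_{\local,\alpha}w-\tau_{\local,\alpha}\Div w$ is a sensible device — it does cancel the $\hd(\Div w)$ portion of $\Div(\hd w)$ — but $\Div\tilde u$ retains exactly the first-order commutator $(\partial_i\tau^j)(\partial_j w^i)$, so the difficulty is relabeled rather than discharged. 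The route the paper's one-line identity points to is to keep the pairing in the form $\int_\Omega\hd w\cdot D\Phi-\int_\Omega(\Div w)\hd\Phi$, i.e.\ to put the tangential derivative on the test function rather than convert it into a Euclidean gradient acting on $w$, so that the full gradient $Dw$ is never produced and only the controlled quantities $\hd w$ and $\Div w$ appear.
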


See \cite{Temam} for the proof of Lemma~\ref{lem: normal
trace} in the case that $\hd w$ is replaced by $w$.  For $\phi \in H^1(\Omega)$, $\int_\Gamma \hd w\cdot n \phi dS = \int_\Omega  
\hd w \cdot D \phi dx - \int_\Omega \operatorname{div} w \hd \phi dx$ because we can integrate-by-parts with $\hd$ without any boundary contributions.  Thus, the
identical proof given in \cite{Temam} proves Lemma \ref{lem: normal trace}.   Similarly, we have 

\begin{lem}
	[Tangential trace theorem]\label{lem: tangential trace} Let $w$ be a vector field defined on $\Omega$ such that $\hd w\in L^2(\Omega)$ and $\curl w\in L^2(\Omega)$, and let $\tau_1, \tau_2$ denote the unit tangent vectors to $\Gamma$, so that any vector field $u$ on $\Gamma$ can be uniquely written as $u^\alpha \tau_\alpha$. Then the tangential trace $\hd w\cdot \tau_\alpha$ exists in $H^{-0.5}(\Gamma)$ with the estimate 
	\begin{align}
		\label{est: tangential trace} \abs{\hd w\cdot \tau_\alpha}_{-0.5}^2 \le C \Bset{\norm{\hd w}_{L^2(\Omega)}^2 +\norm{\curl w}_{L^2(\Omega)}^2}, 
	\end{align}
	for some constant $C$ independent of $w$. 
\end{lem}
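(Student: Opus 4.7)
My plan is to mirror the strategy that the authors sketch for Lemma~\ref{lem: normal trace}, but to replace the divergence-based Green's identity with the curl-based one. For a vector test field $\Phi \in H^1(\Omega;\mathbb{R}^3)$, I start from the pointwise identity
\[
\Div(\Phi \times \hd w) = \hd w \cdot \curl \Phi - \Phi \cdot \curl(\hd w),
\]
integrate over $\Omega$, and apply the divergence theorem. The boundary term is $\int_\Gamma N \cdot (\Phi \times \hd w) \, dS$, which by the cyclic scalar triple-product identity rewrites as $-\int_\Gamma \Phi \cdot (\hd w \times N) \, dS$. Rearranging gives
\[
\int_\Gamma \Phi \cdot (\hd w \times N) \, dS = \int_\Omega \Phi \cdot \curl(\hd w) \, dx - \int_\Omega \hd w \cdot \curl \Phi \, dx.
\]

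The next step is to transfer the tangential derivative off $\curl w$. Exactly as in the authors' comment on Lemma~\ref{lem: normal trace}, integration by parts with $\hd$ against a smooth ambient test vector produces no boundary contribution, since the partition-of-unity functions $\xi_\local$ vanish at the lateral walls of each chart and tangential derivatives along $\Gamma$ contribute no boundary integral on $\Gamma$. After commuting $\curl$ and $\hd$ and integrating by parts, I obtain
\[
\int_\Omega \Phi \cdot \curl(\hd w) \, dx = -\int_\Omega \hd \Phi \cdot \curl w \, dx + \text{(lower-order commutator terms)}.
\]
Combining this with the previous identity and applying Cauchy--Schwarz yields
\[
\Abs{ \int_\Gamma \Phi \cdot (\hd w \times N) \, dS } \le C \norm{\Phi}_1 \bigl( \norm{\hd w}_0 + \norm{\curl w}_0 \bigr).
\]
Given $\Psi \in H^{0.5}(\Gamma;\mathbb{R}^3)$, choosing an $H^1(\Omega;\mathbb{R}^3)$-extension $\Phi$ with $\norm{\Phi}_1 \le C \abs{\Psi}_{0.5}$ and invoking duality gives $\abs{\hd w \times N}_{-0.5} \le C\bigl(\norm{\hd w}_0 + \norm{\curl w}_0\bigr)$.

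Finally, I would extract the tangential components using the pointwise identity $\hd w \times N = (\hd w \cdot \tau_2)\,\tau_1 - (\hd w \cdot \tau_1)\,\tau_2$, valid for the oriented orthonormal frame $(\tau_1, \tau_2)$ with $\tau_1 \times \tau_2 = N$; this gives $\abs{\hd w \cdot \tau_\alpha}_{-0.5} \le \abs{\hd w \times N}_{-0.5}$ for $\alpha = 1, 2$, which is the desired estimate. The main obstacle I anticipate is the same technicality already present in the normal-trace argument: because $\hd$ is defined via a partition of unity over charts, it does not commute strictly with $\curl$ or with the Cartesian partials, so one must verify that the resulting commutator terms remain controlled by $\norm{\hd w}_0 + \norm{\curl w}_0$. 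As in \cite{Temam}, this is handled chart by chart using that the lateral cutoffs vanish and that the coefficients arising from derivatives of $\thetal$ and $\xi_\local$ are smooth and uniformly bounded.
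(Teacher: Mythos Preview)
The paper does not actually prove this lemma; it simply cites \cite{CCS07}. Your argument is the natural curl-analogue of the normal-trace proof the authors sketch just above, and it is essentially correct: the key duality identity for $\hd w\times N$ is obtained from $\Div(\Phi\times\hd w)$ plus the observation that $\hd$-integration by parts against an ambient test vector produces no boundary term, then the tangential components are read off via the frame identity. One small slip: the scalar triple product gives $N\cdot(\Phi\times\hd w)=\Phi\cdot(\hd w\times N)$ with a \emph{plus} sign, so the signs on the right-hand side of your rearranged identity are flipped; this is harmless for the estimate. The commutator concern you flag is real but benign, since $[\curl,\hd]$ and the terms from differentiating $\xi_\local$ are zeroth-order in $w$ with smooth coefficients, hence controlled by $\norm{\hd w}_0$.
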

 
See \cite{CCS07} for the proof of Lemma~\ref{lem: tangential trace}.  

\subsubsection{An elliptic estimate which is independent of
  $\kappa$} \label{sec:kappa-indep-estim} The following lemma is used to establish our
$\kappa$-independent a priori estimates. The proof is given in Section~6 of \cite{CS06}. 
\begin{lem}
	\label{kappa-indep: f+kappa f_t} Let $\kappa>0$, $s\ge0$ and
        $g\in L^\infty\pset{0,T; H^s(\Omega)}$ be given. Suppose that
        $f\in H^1\pset{0,T;H^s(\Omega)}$ satisfies
	\begin{align}
		\label{form: f + kappa f_t} f+\kappa f_t=g\ \ \ \text{in}\ \Omega\times (0,T). 
	\end{align}
	Then, 
	\begin{align*}
		\norm{f}_{L^\infty(0,T;H^s(\Omega))}\le C\max\set{\norm{f(0)}_s,\norm{g}_{L^\infty(0,T;H^s(\Omega))}}. 
	\end{align*}
\end{lem}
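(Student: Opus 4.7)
The plan is to view $f+\kappa f_t = g$ as a first-order linear ODE in time with values in the Banach space $H^s(\Omega)$, and to solve it explicitly by an integrating factor. Since $f\in H^1(0,T;H^s(\Omega))$ is absolutely continuous as an $H^s(\Omega)$-valued map, the product rule $\frac{d}{dt}(e^{t/\kappa} f(t)) = e^{t/\kappa}\bigl(f_t(t)+\kappa^{-1}f(t)\bigr)$ holds for a.e.\ $t$, so multiplying the equation through by $e^{t/\kappa}/\kappa$ rewrites it as
\begin{equation*}
\frac{d}{dt}\bigl(e^{t/\kappa} f(t)\bigr) = \kappa^{-1} e^{t/\kappa} g(t).
\end{equation*}

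Integrating from $0$ to $t$ and multiplying by $e^{-t/\kappa}$ yields the Duhamel-type representation
\begin{equation*}
f(t) = e^{-t/\kappa} f(0) + \kappa^{-1}\int_0^t e^{-(t-s)/\kappa} g(s)\,ds.
\end{equation*}
Taking $H^s(\Omega)$-norms and applying Minkowski's inequality in the integral, together with the uniform bound $\|g(s)\|_s \le \|g\|_{L^\infty(0,T;H^s(\Omega))}$, gives
\begin{equation*}
\|f(t)\|_s \le e^{-t/\kappa}\|f(0)\|_s + \|g\|_{L^\infty(0,T;H^s(\Omega))} \cdot \kappa^{-1}\int_0^t e^{-(t-s)/\kappa}\,ds.
\end{equation*}
The last integral evaluates to $1-e^{-t/\kappa}$, so the right-hand side is a convex combination of $\|f(0)\|_s$ and $\|g\|_{L^\infty(0,T;H^s(\Omega))}$, hence bounded by $\max\{\|f(0)\|_s,\|g\|_{L^\infty(0,T;H^s(\Omega))}\}$. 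Taking the supremum over $t\in(0,T)$ then yields the desired estimate, in fact with constant $C=1$; the $\kappa$-independence is automatic because the exponential kernel has unit mass on $(0,t)$ for every $\kappa>0$.

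There is no real obstacle here beyond justifying the pointwise manipulations in the Bochner setting, which is standard for $H^1$-in-time regularity. The key structural point — and the reason this lemma is so useful for $\kappa$-independent estimates elsewhere in the paper — is that the convolution kernel $\kappa^{-1}e^{-t/\kappa}$ is a probability density for every $\kappa>0$, so it acts as a uniform-in-$\kappa$ bounded operator on $L^\infty(0,T;H^s(\Omega))$.
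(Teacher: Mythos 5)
Your proof is correct, and in fact gives the sharp constant $C=1$. The paper itself does not supply a proof of this lemma but merely cites Section~6 of \cite{CS06}; the argument there is the same standard one you give, namely solving $f_t+\kappa^{-1}f=\kappa^{-1}g$ by the integrating factor $e^{t/\kappa}$ to obtain the Duhamel formula
\begin{align*}
f(t)=e^{-t/\kappa}f(0)+\kappa^{-1}\int_0^t e^{-(t-s)/\kappa}g(s)\,ds,
\end{align*}
and then observing that the exponential kernel $\kappa^{-1}e^{-(t-s)/\kappa}$ integrates to $1-e^{-t/\kappa}$, so that $\norm{f(t)}_s$ is a convex combination of $\norm{f(0)}_s$ and the $L^\infty$ bound on $\norm{g}_s$, uniformly in $\kappa>0$. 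Your justification of the Bochner-valued product rule via the absolute continuity of $H^1(0,T;H^s(\Omega))$ functions is the right level of rigor for this step, and your closing remark correctly identifies the structural reason the estimate is $\kappa$-uniform.
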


\subsubsection{A technical lemma}
\label{sec:technical-lemma}

The following technical lemma is established in \cite{CS07}.
\begin{lem}
  \label{lem.tech}
There exists a constant $C$ such that
\begin{align*}
  \norm{\hd w}_{H^{0.5}(\Omega)'}\le C    \norm{
    w}_{H^{0.5}(\Omega)}\ \ \ \forall \ w\in   H^{0.5}(\Omega).
\end{align*}
\end{lem}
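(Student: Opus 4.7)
The plan is to prove the lemma by duality combined with operator interpolation. By the paper's definition of the dual norm,
\begin{equation*}
\norm{\hd w}_{H^{0.5}(\Omega)'} = \sup_{\phi\in H^{0.5}(\Omega)\setminus\{0\}} \frac{\abs{\langle \hd w,\phi\rangle}}{\norm{\phi}_{0.5}},
\end{equation*}
where the pairing is the natural one extending the $L^2$ inner product. It therefore suffices to bound the bilinear form $(w,\phi)\mapsto\langle \hd w,\phi\rangle$ by $C\norm{w}_{0.5}\norm{\phi}_{0.5}$.

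I would first establish two endpoint bounds for the operator $\hd$. The upper endpoint $\hd\colon H^1(\Omega)\to L^2(\Omega)$ is manifestly bounded. The main work lies in the lower endpoint, namely $\hd\colon L^2(\Omega)\to H^1(\Omega)'$. This is where the \emph{tangentiality} of $\hd$ is essential: using the partition of unity $\{\xl\}$ and the charts $\{\thetal\}$ from Section~\ref{sec:local-coord-near}, the component $\hd_\alpha=\sum_\local\xl\hd_{\local,\alpha}$ corresponds in each local half-ball $\mathcal{B}_\local^+$ to $\partial_{x_\alpha}$ with $\alpha\in\{1,2\}$, which is tangential to the flat boundary $\{x_3=0\}$. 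Pulling back and integrating by parts in $x_\alpha$ therefore produces no boundary terms, so for smooth $w$ and $\phi\in H^1(\Omega)$,
\begin{equation*}
\int_\Omega (\hd w)\,\phi \;=\; -\int_\Omega w\,(\hd \phi) \;+\; R(w,\phi),
\end{equation*}
where $R(w,\phi)$ collects zero-order commutator contributions arising from derivatives falling on $\xl$ and on the Jacobians of $\thetal$. Since $\Omega$ is an $H^s$-class domain with $s\ge 4$, every such factor is bounded in $L^\infty$, so $\abs{R(w,\phi)}\le C\norm{w}_0\norm{\phi}_0$, and the entire right-hand side is bounded by $C\norm{w}_0\norm{\phi}_1$. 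A density argument from $C^\infty(\overline{\Omega})$ then extends this to all $w\in L^2(\Omega)$.

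To conclude, I would invoke complex interpolation: since $\hd$ is bounded $H^1(\Omega)\to L^2(\Omega)$ and $L^2(\Omega)\to H^1(\Omega)'$, it is bounded $[H^1,L^2]_{1/2}\to [L^2,H^1(\Omega)']_{1/2}$. By the paper's definition of $H^{0.5}(\Omega)$ via interpolation and the standard duality of the interpolation scale (together with $L^2$ being self-dual), the target space is precisely $H^{0.5}(\Omega)'$, yielding the claimed estimate. The step I expect to require the most care is the bookkeeping in the $L^2\to H^1(\Omega)'$ bound: one must verify that the non-flatness of the charts $\thetal$ does not introduce hidden boundary contributions when transferring the $x_\alpha$-derivative onto $\phi$, and that all remainder terms genuinely have zero differential order on both $w$ and $\phi$. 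The $H^s$-regularity of the charts with $s\ge 4$ leaves ample room for this.
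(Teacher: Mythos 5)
Your proof is correct. The paper does not prove Lemma~\ref{lem.tech} in the text (it cites \cite{CS07}), but the route you take---the easy upper endpoint $\hd\colon H^1(\Omega)\to L^2(\Omega)$, the dual lower endpoint $\hd\colon L^2(\Omega)\to H^1(\Omega)'$ via tangential integration by parts (no boundary contribution because $\hd$ is tangential on $\DL$ and each $\xl\circ\thetal$ is compactly supported in $\VL$, with the zero-order remainders controlled by the $W^{1,\infty}$ regularity of the $H^s$-class charts for $s\ge 4$), and then complex interpolation at $\theta=1/2$ using $L^2$ self-duality together with the duality theorem for the interpolation scale---is the natural argument and, as far as I am aware, is also the one given in the cited source.
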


\subsubsection{The Hodge decomposition elliptic estimates} \label{sec:hodge-decomp-ellipt}

The following Hodge-type elliptic estimate is well-known and follows from the identity $-\Delta w=\curl\curl w- D \Div w$, together with estimates divergence-form
elliptic operators with Sobolev-class coefficients:
\begin{prop}
	\label{prop:Hodge} For an $H^r$-class domain $\Omega$, $r\ge 3$, if $w\in L^2(\Omega;\mathbb{R}^3)$ with $\curl w\in H^{s-1}(\Omega;\mathbb{R}^3)$, $\Div w\in H^{s-1}(\Omega)$, and $w\cdot N|_\Gamma \in H^{s-\frac{1}{2}}(\Gamma)$ for $1\le s\le r$, then there exists a constant $C>0$ depending only on $\Omega$ such that 
	\begin{align*}
		\norm{w}_s\le C\Bset{\norm{w}_0 +\norm{\curl w}_{s-1}+\norm{\Div w}_{s-1}+\abs{\hd w\cdot N}_{s-\frac{3}{2}}}, 
	\end{align*}
	where $N$ denotes the outward unit-normal to $\Gamma$. 
\end{prop}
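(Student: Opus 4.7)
The plan is to establish the estimate by first proving the base case $s=1$ through a Green's-identity computation, and then inducting on $s$ using tangential differentiation together with the recovery of normal derivatives via the divergence-curl system.

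For the base case, I would multiply the identity $-\Delta w = \curl\curl w - D\Div w$ by $w$ and integrate by parts twice on each side, which produces the representation
\begin{align*}
\int_\Omega |Dw|^2 \, dx = \int_\Omega |\curl w|^2 \, dx + \int_\Omega |\Div w|^2 \, dx + \int_\Gamma B(w,w) \, dS,
\end{align*}
where $B(w,w)$ is a quadratic boundary form depending on the trace of $w$ and on first derivatives of $N$. By decomposing $w|_\Gamma = (w\cdot N)N + w_T$ in the orthonormal frame $\{N,\tau_1,\tau_2\}$, the boundary term reduces to terms linear in $w\cdot N$ and tangential derivatives of $w$, plus a curvature contribution quadratic in $w|_\Gamma$. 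Using the trace theorem, Lemma~\ref{lem.tech} (to trade an $\hd$ for half a derivative in duality), and Young's inequality with small parameter, the boundary integral is bounded by $\epsilon\|w\|_1^2 + C_\epsilon(\|w\|_0^2 + |\hd w\cdot N|_{-1/2}^2)$. Absorbing the $\epsilon$ term on the left yields the $s=1$ estimate.

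For the inductive step from $s-1$ to $s$, I would apply the local tangential derivative operators $\hd_{l,\alpha}$ (built from the partition of unity $\{\xi_l\}$ and $H^r$-class charts $\theta_l$ of Section~\ref{sec:local-coord-near}) to the underlying system. Since $\hd$ commutes with the trace on $\Gamma$, the tangentially differentiated quantity $\hd^{s-1}w$ satisfies the same kind of system with forcing $\hd^{s-1}\curl w$, $\hd^{s-1}\Div w$, and boundary data $\hd^{s-1}(w\cdot N)$, modulo commutator terms whose coefficients live in $H^{r-1}\hookrightarrow L^\infty$ (this is where $r\ge 3$ enters). Applying the $s=1$ estimate to $\hd^{s-1}w$ and using the inductive hypothesis to control the commutators gives control of all $s$ tangential derivatives. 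The remaining purely normal derivatives of each component are then read off algebraically: in local coordinates where $x_3$ is normal to $\Gamma$, $\partial_3 w_3$ is determined by $\Div w$ and tangential derivatives of $w_1,w_2$, while $\partial_3 w_\alpha$ for $\alpha=1,2$ is determined by the components of $\curl w$ and tangential derivatives of $w_3$. This closes the induction for integer $s$, and interpolation completes the fractional range up to $s=r$.

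The main obstacle is the careful bookkeeping at the boundary, in particular extracting the correct regularity class $H^{s-3/2}$ for $\hd w\cdot N$: the naive approach would demand $w\cdot N\in H^{s-1/2}$ in the boundary estimate, but using duality (Lemma~\ref{lem.tech}) one saves exactly half a derivative. A secondary technical point is handling the commutators of the globally defined $\hd$ (via partition of unity) with $\curl$ and $\Div$; the assumption that $\Omega$ is of $H^r$-class with $r\ge 3$ ensures that the Jacobians of $\theta_l$ and their derivatives appearing in these commutators are controlled in spaces with enough Sobolev embedding room to avoid loss.
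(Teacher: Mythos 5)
The paper does not actually prove Proposition~\ref{prop:Hodge}; it treats the estimate as a well-known Hodge-type bound and only remarks that it follows from the identity $-\Delta w = \curl\curl w - D\Div w$ together with elliptic regularity for divergence-form operators with Sobolev-class coefficients, and that the milder datum $|\hd w\cdot N|_{s-3/2}$ suffices in place of $|w\cdot N|_{s-1/2}$ because one only needs to control the divergence and curl. Your outline is a correct, self-contained blueprint along exactly these lines: Gaffney's identity for the $s=1$ base case (the vector identity integrated against $w$, with the first-order boundary pieces cancelling once $\partial w/\partial N$ is decomposed in terms of $\curl w$, $\Div w$, and tangential derivatives), an inductive step via tangential differentiation, algebraic recovery of the normal derivatives from the div--curl system, and interpolation to cover fractional $s$. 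It also correctly identifies why the weaker boundary datum appears: after decomposing the boundary integrand, what survives is a pairing of a tangential derivative of $w\cdot N$ against the tangential trace of $w$, which is precisely an $H^{-1/2}(\Gamma)$--$H^{1/2}(\Gamma)$ duality.

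Two points deserve attention if this were written out in full. First, your appeal to Lemma~\ref{lem.tech} is slightly off target: that lemma concerns the interior pairing $H^{0.5}(\Omega)'$--$H^{0.5}(\Omega)$, whereas the base-case boundary term only needs the ordinary boundary duality together with the observation that $\hd(w\cdot N)$ and $(\hd w)\cdot N$ differ by the lower-order term $w\cdot\hd N$, which is absorbable by interpolation once $\Omega$ is of class $H^r$ with $r\ge 3$. Second, the commutators of the globally defined $\hd$ with $\curl$ and $\Div$ carry up to \emph{second} derivatives of the charts $\theta_\local$, i.e., coefficients in $H^{r-2}(\Omega)$ rather than $H^{r-1}(\Omega)$ as you wrote; for $r=3$ these lie only in $H^1(\Omega)$ and do not embed in $L^\infty(\Omega)$, so the commutator estimates in the upper range of $s$ must rely on Sobolev product estimates rather than on $L^\infty$ bounds for the coefficients. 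Neither issue is fatal -- both are resolved by the same standard tools -- but the precision matters for the endpoint case $r=3$, $s=3$.
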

 In fact, the well-known version of this elliptic estimate replaces $\abs{\hd w\cdot N}_{s-\frac{3}{2}}$ with $\abs{ w\cdot N}_{s-\frac{1}{2}}$, but this requires too much 
 regularity for the unit normal $N$.   It is easy to verify that having estimates for the divergence and curl of a vector field $w$ only requires the slightly weaker norm
 $\abs{\hd w\cdot N}_{s-\frac{3}{2}}$ estimated in order to infer full regularity of $w$.   Also, this elliptic estimate is usually stated for smooth domains; the Sobolev-class
 regularity follows from the Sobolev embedding theorem and somewhat standard elliptic estimates for second-order elliptic operators with Sobolev-class coefficients.
 
\subsubsection{A polynomial-type inequality} \label{sec:polyn-type-ineq} For a constant $M\ge 0$, suppose $f\colon t\mapsto f(t)\ge0$ continuously and satisfies 
\begin{align}
	\label{poly-type ineq} f(t)\le M +t\, P(f(t)), \qquad t\ge0, 
\end{align}
where $P$ denotes a generic polynomial function. Then for $t$ taken sufficiently small, 
\begin{align*} 
	f(t)\le 2 M. 
\end{align*}

\section{A parabolic $\kappa$-approximation of the  surface tension
  problem \eqref{Euler}} 
\label{sec:parab-kappa-appr} 
 
In this section, we define a parabolic approximation of the
surface tension problem \eqref{Euler}, which we term the $\kappa$-problem.
The $\kappa$-problem is defined by adding artificial viscosity terms
to the Euler equations and the Laplace-Young boundary condition.   The
salient feature of the $\kappa$-problem is its compatibility with
our   energy-estimates methodology based on
Proposition~\ref{prop:Hodge} in that (1)
the
transport structure of the Euler equations is maintained and (2) the momentum
equations of the $\kappa$-problem are equivalently expressed in the
form $f+ \kappa f_t =g$ with $f$ equalling the gradient of $J$.  These
structural properties of the $\kappa$-problem respectively yield the
$\kappa$-independent curl- and
divergence-estimates.

\subsection{Assuming $C^\infty$-class initial data}
\label{sec:assum-cinfty-class-1}
In our construction of solutions to the surface tension problem
\eqref{Euler}, we assume that the
initial data $(\rho_0,u_0, \Omega)$ is of $C^\infty$-class and satisfy
the conditions
\eqref{l.boundedness.r0} and \eqref{s.compatibility conditions}, as in
Appendix~\ref{sec:appendix:init}.   Later, in    
Section~\ref{sec:optim-regul-init}, we will recover the optimal regularity
of the initial data stated in Theorem~\ref{thm.s.main}.

\subsection{The parabolic  approximation of the surface tension problem \eqref{Euler}} 
\label{sec:parab-appr-compr} 
 
We recall that an equivalent expression of the   surface tension problem \eqref{Euler} is 
\begin{alignat*}{2}
	v^i_t +2 A_i^k\pset{\rho_0 J^{-1}}\cp{k}&=0&\qquad&\text{in}\ \Omega\times(0,T], \\
	\rho_0^2J^{-2} &=\beta - \sigma g^{\alpha\beta}\eta
        \cp{\alpha\beta}\cdot n&&\text{on}\ \Gamma\times(0,T].
\end{alignat*}
We have used the identity $H(\eta)=-g^{\alpha\beta}\eta
\cp{\alpha\beta}\cdot n$ in writing the  boundary condition.

The variables  in the following problem a
priori 
depend on the parabolic parameter $\kappa$.
To indicate this dependence, we place the
symbol $\sim$ above each of the variables.
\begin{defn}
	[The $\kappa$-problem] \label{defn.akp} For
        $\kappa>0$, we define $\vt$ as the solution of
	\begin{subequations}
		\label{akp} 
		\begin{alignat}
			{4} \label{akp.momentum} \vt^i_t+2\At_i^k\pset{\rho_0\Jt^{-1}}\cp{k}-\kappa \At_i^k(\rho_0\Jt_t)\cp{k}&=0&&in\ \Omega\times(0,T_\kappa],\\
			\label{akp.bc}
                        \rho_0^2\Jt^{-2}-\kappa
                        \rho_0^2\Jt ^{-1}\Jt_t&=\Hk &\ \ &on\ \Gamma\times(0,T_\kappa],\\
			\label{akp: init cond} \pset{\et,\vt}|_{t=0}&=\pset{e,u_0}&&on\ \Omega.
		\end{alignat}
	\end{subequations}
The function $\Hk$ appearing in the right-hand side of \eqref{akp.bc}
is defined as
\begin{align}
\label{akp.H}
  \Hk = \beta(t) - \sigma \gt ^{\alpha\beta}\et \cp{\alpha\beta}\cdot \nt
  - \kappa   \gt ^{\alpha\beta} \vt \cp{\alpha\beta}\cdot \nt,
\end{align}
where
the  function $\beta(t)$ appearing in
        the right-hand side of \eqref{akp.H} is defined as
\label{n:beta zero}  
	\begin{align}
		\label{akp:beta zero}
                \begin{aligned}[b]
                  \beta(t)=\beta&+\sum_{a=0}^{3}
                  \frac{t^a}{a!}\p^a_t\Bbset{\rho _0^2\Jt^{-2} -
                    \beta+\sigma \gt ^{\alpha\beta}\et
                    \cp{\alpha\beta}\cdot \nt }|_{t=0}
                  \\
                  &+\kappa\sum_{a=0}^{3} \frac{t^a}{a!}\p^a_t\Bbset{-\rho
                    _0^2\Jt^{-1}\Jt_t+  \gt ^{\alpha\beta}
\vt\cp{\alpha\beta}\cdot \nt }|_{t=0}.
                \end{aligned}
	\end{align}
\end{defn}
\begin{rem}
	The particular artificial viscosity $-\kappa
        \At_i^k (\rho_0\Jt_t)\cp{k}$ appearing in
        (\ref{akp.momentum})  preserves the
        transport structure of the Euler equations.  In comparison, an
        artificial viscosity of the form $- \kappa \At_r^j[\At_r^k
        \vt^i \cp{k}]\cp{j}$ would not
        preserve the transport structure of the Euler equations.
\end{rem}

\begin{rem}\label{rem.beta}
The initial data satisfy the compatibility conditions \eqref{s.compatibility conditions} so that   definition \eqref{akp:beta zero} of $\beta (t)$ implies that $\beta(t)\to\beta$ in the limit as  $\kappa$ tends to
  zero. Formally, the $\kappa $-problem~\eqref{akp} is
  asymptotically consistent with the surface tension problem \eqref{Euler}.  
\end{rem}
\begin{rem}
For   $\phi\in L^2 (0,T;H^1(\Omega))$ such that $\phi\cdot \nt\in L^2(0,T;H^1(\Gamma))$, the variational equation  for the $\kappa$-problem \eqref{akp} is 
\begin{align}
\label{akp.var}
\begin{aligned}[b]
  \int_{0}^{T}\int_{\Omega} \rho_0 \vt_t \cdot\phi -
  \int_{0}^{T}\int_{\Omega}  \rho_0^2 \Jt ^{-2} \at_i^k \phi^i \cp{k} +
  \kappa\int_{0}^{T} \int_{\Omega} \rho_0\Jt_t \at_i^k (\rho_0 \Jt
  ^{-1} \phi^i ) \cp{k} &\\
+ \int_{0}^{T}\int_{\Gamma}\beta(t) \sqrt{\gt} \phi\cdot \nt+
\int_{0}^{T}\int_{\Gamma}[\sigma\et\cp{\beta} + \kappa \vt  \cp{\beta}]^i (\nt^i\sqrt{\gt}\gt
^{\alpha\beta} \phi\cdot \nt)\cp{\alpha} &=0. 
\end{aligned}
\end{align}
Since $\lim_{\kappa\to
  0}\beta(t)=\beta$, we have that   \eqref{akp.var} with $\kappa=0$ is  the variational equation for  the surface tension problem
\eqref{Euler}. Furthermore,   for $\kappa>0$ our choice of
artificial
        viscosity $-\kappa \At_i^k  (\rho_0 \Jt _t)\cp{k}$ has the
        nice property of not
        introducing any nontrivial boundary integrals in the variational equation
        \eqref{akp.var}, whereas the use of the artificial
        viscosity 
 $- \kappa \At_r^j[\At_r^k
        \vt^i \cp{k}]\cp{j}$, for example,  in \eqref{akp.var}
would       require additional boundary conditions to account for the tangential
        components of
        $- \kappa N^j \At_r^j\At_r^k
        \vt^i \cp{k}$.
\end{rem}

\subsection{The constant-in-time vectors $\mathrm{v}_a$ for $a=1,2,3,4$} \label{sec:comp-cond-kappa}  The vector field $\vt_t|_{t=0}$ is computed using the
momentum equations (\ref{akp.momentum}), as follows:
\begin{align*}
	\vt_t|_{t=0}=\Pset{\kappa \At_\cdot^k(\rho_0 \Jt_t) \cp{k} -2\At_\cdot^k\pset{\rho_0 \Jt^{-1}}\cp{k}}|_{t=0}=D\pset{\kappa\rho_0\Div u_0-2\rho_0}. 
\end{align*}
Similarly, for all $a\in \mathbb{N}$, 
\begin{align*}
	\p^a_t\vt|_{t=0}=\frac{\p^{a-1}}{\p t^{a-1}}\Pset{\kappa
          \At_\cdot^k(\rho_0 \Jt_t)\cp{k}
          -2\At_\cdot^k\pset{\rho_0\Jt^{-1}}\cp{k}}\big|_{t=0}\ \ \
        \text{on } \Omega. 
\end{align*}
This formula makes it clear that each $\p_t^a\vt|_{t=0}$ is a function
of space-derivatives of  the initial data $u_0$ and $\rho_0$.
We define the constant-in-time vectors $\mathrm{v}_a$   as\label{n.rmv}
\begin{align}
	\label{defn.va} {\rm v}_a=\frac{\p^{a-1}}{\p t^{a-1}}\Pset{\kappa \At_\cdot^k(\rho_0 \Jt_t)\cp{k} -2\At_\cdot^k\pset{\rho_0\Jt^{-1}}\cp{k}}\big|_{t=0}, \ \ \ \text{for}\ a=1,2,3,4. 
\end{align}
 Since $2\At_i^k\pset{\rho_0 \Jt^{-1}}\cp{k}=\rho_0^{-1}\at_i^k\pset{\rho_0^2 \Jt^{-2}}\cp{k}$, we have that ${\rm v}_a\to v_a$, $a=1,2$, as $\kappa\to0$ where $v_a$ are defined in Section~\ref{sec:s.comp-cond}. We use (\ref{akp.bc}) to compute the following identities: for $a=0,1,2,3,$
\begin{align}
	\label{akp: compat cond} \partial^a_t\bset{ \rho_0^2\Jt^{-2}
          -\kappa\rho_0^2\Jt^{-3}\Jt_t}\big|_{t=0} =
  \p^a_t\bset{\beta(t)-\sigma \gt^{\alpha\beta}\et
        \cp{\alpha\beta}\cdot \nt -   \kappa     \gt ^{\alpha\beta}
        \vt\cp{\alpha\beta}  \cdot \nt}\big|_{t=0}.
\end{align}
  \section{A priori estimates for the $\kappa$-problem \eqref{akp} } \label{sec:kappa independent estimates}

We establish our  $\kappa$-independent  a priori
estimates in this section; the precise estimate is stated below  in Lemma~\ref{lem.EK}.  Our existence of solutions to the
$\kappa$-problem \eqref{akp}   is established  in
Section~\ref{sec:appr-kappa-probl}.

For $\kappa>0$, we define the following higher-order energy function:
 \label{n:EK} 
\begin{align}
	\label{defn.kappa E(t)} 
	\begin{aligned} [b]
		\EK =1&+ \sum_{a=0}^5\norm{\p^a_t\et (t)}_{5-a}^2 +
                \abs{ \vt_{ttt}\cdot \nt(t)}_1^2 + \sum_{a=0}^2
                \abs{\hd^2\!\p^a_t\vt \cdot \nt(t)}_{2.5-a}^2\\
			+ \int_0^T \abs{
                          \sqrt{\kappa}\hd\vt_{tttt}\cdot \nt}_{0}^2		&
			+ \int_0^T \norm{ \sqrt{\kappa}\vt_{tttt}}_{1}^2		+
                \sum_{a=0}^3\norm{\kappa\p^{a}_t\vt(t)}_{5-a}^2
+ \sum_{a=0}^2
                \abs{\kappa\hd^2\!\p^a_t\vt_t \cdot \nt(t)}_{2.5-a}^2.
	\end{aligned}
\end{align}

\begin{rem}
  The inequality stated in Theorem~\ref{thm.akp}
  ensures  that $\EK$ is continuous in time.
\end{rem}

We make the following definition to allow for constants to depend on
$1/\delta$:
\begin{defn}
 	[Notational convention for constants depending on $1/ \delta>0$] We let $\PK\label{n:kP}$ denote a generic polynomial with constant and coefficients depending on $1/\delta>0$.
	
	We define the constant $\NK>0$ by 
	\begin{align}
		\NK=\PK\pset{\norm{u_0}_{100}, \norm{\rho_0}_{100}}. 
	\end{align}
	
	We let $\RK\label{n:kR}$ denote generic lower-order terms satisfying 
	\begin{align*}
		\int_0^T\RK\le \NK+\delta \Sup \EK+T\,\PK(\Sup \EK). 
	\end{align*}
\end{defn} 
The artificially high
$H^{100}(\Omega)$-norm in defining $\NK$ is acceptable as the initial
data $(\rho_0,u_0, \Omega)$ is of $C^\infty$-class.
We shall assume that       
\begin{align}
\label{assum.Jt}  \frac{1}{2}\le \Jt \le \frac{3}{2}\ \ \text{for all } t\in[0,T] \text{
    and } x\in {\Omega}.
\end{align}
The bounds \eqref{assum.Jt} are possible  by taking $T>0$ sufficiently small, since thanks to Theorem~\ref{thm.akp}
\begin{align}\label{universal constant}
	\norm{\Jt-1}_{L^\infty(\Omega)}\le C\norm{\int_0^t\p_t \Jt }_2\le C\sqrt{T}. 
\end{align}

\begin{lem}
	[A priori estimates for the $\kappa$-problem] We let
        $ \vt$ solve the $\kappa$-problem $\eqref{akp}$\label{lem.EK}
        on a time-interval $[0,T]$, for some $T=T _{\kappa} >0$. Then independent of $\kappa>0$,
	\begin{align}
		\label{est: EK} \Sup\EK\le \int_0^{T}\RK. 
	\end{align}
\end{lem}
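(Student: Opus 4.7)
\emph{Overall strategy.} The plan is to perform a top-level energy estimate by applying $\p_t^4$ to the momentum equation~\eqref{akp.momentum} and testing against $\vt_{tttt}$, and then to boot-strap full spatial regularity of $\p_t^a\et$ via the Hodge decomposition (Proposition~\ref{prop:Hodge}). The $\kappa$-independence is secured by Lemma~\ref{kappa-indep: f+kappa f_t} applied twice: in the bulk, to control the gradient of $\Jt$ (and hence $\Div\et$) via the $f+\kappa f_t=g$ form of the momentum equation; and on the boundary, to control the tangential boundary norms of $\vt$ via the analogous structure of~\eqref{akp.bc}.

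\emph{Top-level energy identity.} Applying $\p_t^4$ to~\eqref{akp.momentum} and testing against $\vt_{tttt}$, I would use the Piola identity~\eqref{Piola identity} to integrate by parts the pressure gradient. The resulting interior contribution is, schematically,
\[
\frac12\frac{d}{dt}\Bbset{\int_{\Omega}|\vt_{tttt}|^2 + \int_{\Omega}\rho_0^2\Jt^{-3}|\p_t^4\Jt|^2} + \kappa\!\int_{\Omega}\rho_0^2\Jt^{-2}|\p_t^5\Jt|^2 + \mathrm{l.o.t.}\,,
\]
with all commutator errors from $\p_t^4$ landing on $\At,\Jt,\rho_0$ falling into $\RK$. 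The boundary contribution, after using the $\kappa$-Laplace--Young condition~\eqref{akp.bc}--\eqref{akp.H} to replace the pressure trace, produces the surface-tension time-derivative
\[
\frac12\frac{d}{dt}\sigma\!\int_{\Gamma}\sqrt{\gt}\,\gt^{\alpha\beta}\hd_\alpha(\vt_{ttt}\cdot\nt)\,\hd_\beta(\vt_{ttt}\cdot\nt)
\]
together with the boundary parabolic gain
\[
\kappa\!\int_{\Gamma}\sqrt{\gt}\,\gt^{\alpha\beta}\hd_\alpha(\vt_{tttt}\cdot\nt)\,\hd_\beta(\vt_{tttt}\cdot\nt),
\]
modulo commutator terms from $\p_t$ landing on $\gt,\nt,\at,\Jt$. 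These commutators are bounded in $\RK$ via Sobolev embedding, the trace theorem, Lemma~\ref{lem: normal trace}, and the interpolation Lemma~\ref{lem.j}.

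\emph{Spatial boot-strap via curl, divergence, and boundary trace.} To recover the spatial regularity of $\p_t^a\et$ for $a=0,\dots,4$, I would invert $\At$ in~\eqref{akp.momentum} to rewrite it, at each time-differentiated level, in the form $f + \kappa f_t = g$ with $f$ proportional to the gradient of $\rho_0\Jt^{-1}$; Lemma~\ref{kappa-indep: f+kappa f_t} then yields $\kappa$-independent estimates for $\Div\et$ and $\Div\p_t^a\et$. The curl estimates follow by differentiating the Lagrangian vorticity identity $\curl_{\et}\vt_t=0$ in time, plus a commutator specific to the $\kappa$-problem that belongs to $\RK$. For the higher-order boundary norms $|\hd^2\p_t^a\vt\cdot\nt|_{2.5-a}$ with $a=0,1,2$, I would apply $\hd^2\p_t^a$ to~\eqref{akp.bc}, which is again of $f+\kappa f_t=g$ form on $\Gamma$, and invoke Lemma~\ref{kappa-indep: f+kappa f_t}. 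Proposition~\ref{prop:Hodge} then assembles the divergence, curl, and tangential boundary estimates into the claimed $H^{5-a}(\Omega)$-regularity of $\p_t^a\et$.

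\emph{Main obstacle and closure.} The delicate point is ensuring the cancellation, at the top time-derivative level, between the interior artificial viscosity $-\kappa\At_i^k(\rho_0\Jt_t)\cp{k}$ (after integration by parts against $\vt_{tttt}$) and the $\kappa$-pressure boundary contribution $\kappa\,\gt^{\alpha\beta}\vt\cp{\alpha\beta}\cdot\nt$ appearing in~\eqref{akp.H}, so that what survives on $\Gamma$ is precisely the clean parabolic gain $\kappa|\hd\vt_{tttt}\cdot\nt|_0^2$ together with commutators that are absorbable uniformly in $\kappa$ via Young's inequality and Lemma~\ref{lem.j}. Once this cancellation is verified and the bulk and boundary estimates are combined, the polynomial inequality of Section~\ref{sec:polyn-type-ineq} applied to the time-integrated form yields
\[
\Sup\EK\le\NK+\delta\Sup\EK+T\,\PK(\Sup\EK),
\]
which, after absorbing $\delta\Sup\EK$ and taking $T$ small, is precisely the claimed bound $\Sup\EK\le\int_0^T\RK$.
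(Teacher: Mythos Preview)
Your strategy matches the paper's proof almost exactly: the six-step structure there is precisely (i) curl estimates from $\curl_{\et}\vt_t=0$, (ii) the $\p_t^4$-energy estimate producing $\|\vt_{tttt}\|_0$, $\|\p_t^4\Jt\|_0$, $|\vt_{ttt}\cdot\nt|_1$ and the parabolic gains, (iii)--(iv) divergence and trace via the $f+\kappa f_t=g$ form of the momentum equation together with the Hodge decomposition, and (v) improved boundary regularity from the $f+\kappa f_t=g$ form of the boundary condition. Two small corrections: there is \emph{no} ``commutator specific to the $\kappa$-problem'' in the vorticity equation---the artificial viscosity $\kappa\At_i^k(\rho_0\Jt_t)\cp{k}$ is a Lagrangian gradient and hence exactly curl-free, which is the design principle; and the boundary commutators in the top-order energy identity are more delicate than your description suggests---the paper relies crucially on the tangential identity $\rho_0\Jt^{-1}\vt_t\cdot\et\cp{\gamma}=\hd_\gamma[\sigma\gt^{\mu\nu}\et\cp{\mu\nu}\cdot\nt+\kappa\gt^{\mu\nu}\vt\cp{\mu\nu}\cdot\nt-\beta(t)]$ (and its time-differentiated versions) together with repeated integration by parts in time to close the error terms $\i_1$--$\i_5$, not merely on trace and interpolation.
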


We will establish Lemma~\ref{lem.EK} in the following six steps:

\subsection*{Step 1: The $\kappa$-independent curl-estimates} \label{sec:curl-estimates.k} 

We follow \cite{CS10} in establishing
\begin{lem}[The  $\kappa$-independent curl-estimates]
	\label{lem: curl estimates} 
	\begin{align*}
\Sup			\sum_{a=0}^4\norm{\curl \p^a_t\et (t)}_{4-a}^2
+ \int_{0}^{T}\norm{\sqrt{\kappa} \curl \vt_{tttt}}_0^2 
+
\Sup			\sum_{a=0}^3\norm{\kappa\curl \p^a_t\vt (t)}_{4-a}^2
			&\le\int_{0}^{T}\RK. 
	\end{align*}
\end{lem}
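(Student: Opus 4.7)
The key observation is that both terms on the right-hand side of \eqref{akp.momentum} are Lagrangian gradients, so the Lagrangian curl operator $\curl_{\et}$ annihilates each of them. This yields
$$\curl_{\et}\vt_t = 0 \quad\text{in } \Omega\times(0,T_\kappa],$$
which holds \emph{uniformly} in $\kappa \ge 0$. The $\kappa$-independence of this vorticity identity is precisely why the artificial viscosity term $-\kappa\At_i^k(\rho_0\Jt_t)\cp{k}$ was chosen in gradient form in Section~\ref{sec:parab-appr-compr}; any other choice (such as $-\kappa\At_r^j[\At_r^k\vt^i\cp{k}]\cp{j}$) would destroy the Lagrangian vorticity equation and prevent $\kappa$-independent curl bounds.

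Writing $\curl\vt_t^i - \curl_{\et}\vt_t^i = \varepsilon_{ijk}(\delta_j^s-\At_j^s)\vt_t^k\cp{s}$ and integrating in time with integration by parts (using $\At(0)=\mathrm{Id}$), I obtain the representation
$$\curl\vt(t) = \curl u_0 + \varepsilon_{ijk}(\delta_j^s - \At_j^s(t))\vt^k\cp{s}(t) + \int_0^t\varepsilon_{ijk}\p_\tau\At_j^s\vt^k\cp{s}\,d\tau.$$
Each of the two correction terms carries either the small factor $\delta-\At(t)=-\int_0^t\p_\tau\At\,d\tau$ or an explicit time integral. Estimating in $H^3(\Omega)$ via Moser-type product inequalities then yields $\Sup\norm{\curl\vt}_3^2 \le \int_0^T\RK$. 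For the higher time-differentiated cases $a=2,3$, apply $\p_t^{a-1}$ to $\curl_{\et}\vt_t=0$ to get
$$\curl_{\et}\p_t^a\vt = -\sum_{b=1}^{a-1}\binom{a-1}{b}\varepsilon_{ijk}\p_t^b\At_j^s\p_t^{a-b}\vt^k\cp{s},$$
and proceed analogously; the resulting commutators are products of $\p_t^b\At$ and $\p_t^{a-b}\vt$ of strictly lower total regularity than top order, so each contribution closes inside $\int_0^T\RK$. For $a=4$, the term $\int_0^T\norm{\sqrt{\kappa}\curl\vt_{tttt}}_0^2$ is controlled directly by $\int_0^T\norm{\sqrt{\kappa}\vt_{tttt}}_1^2$, which is part of $\EK$, together with the analogous commutator contributions.

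The $a=0$ estimate $\norm{\curl\et}_4^2$ is obtained from $\curl\et(t)=\int_0^t\curl\vt\,d\tau$ by substituting the representation above, and then integrating by parts in $\tau$ a second time using $\vt=\p_\tau\et$ to convert $\int_0^t(\delta-\At)D\vt\,d\tau$ into $(\delta-\At(t))(D\et-I)(t) + \int_0^t\p_\tau\At(D\et-I)\,d\tau$. This trades top-order spatial derivatives for smallness in $t$ of $\delta-\At(t)$ and of $D\et - I = \int_0^t D\vt\, d\tau$. The $\kappa$-weighted estimates $\Sup\norm{\kappa\curl\p_t^a\vt}_{4-a}^2$ for $a=0,\dots,3$ are carried out in parallel, making explicit use of the additional regularity $\norm{\kappa\p_t^a\vt(t)}_{5-a}^2 \le \EK$ already encoded in \eqref{defn.kappa E(t)} by the parabolic structure.

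The principal technical obstacle will be the top-order product estimates, where a naive algebra-type inequality $\norm{fg}_4\le C\norm{f}_4\norm{g}_4$ would demand norms of $\vt$ beyond what is available in $\EK$ (for instance, $\norm{\vt}_5$ is not present in \eqref{defn.kappa E(t)}). The resolution combines three ingredients: (i) the $O(t)$ smallness of $\delta-\At$ and of $D\et - I$ in both $L^\infty$ and $H^4$ norms, uniform in $\kappa$; (ii) integration by parts in $t$ to relocate time-derivatives onto factors whose norms are in $\EK$; and (iii) Moser-type inequalities of the form $\norm{fg}_s\le C(\norm{f}_{L^\infty}\norm{g}_s+\norm{f}_s\norm{g}_{L^\infty})$ so that at least one factor in every product sits in $L^\infty$ via the Sobolev embedding $H^2(\Omega)\hookrightarrow L^\infty(\Omega)$, while the other is controlled by $\EK$. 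Each remaining polynomial factor of $\EK$ is then tamed by an explicit power of $T$, placing the whole estimate into the $\int_0^T\RK$ framework.
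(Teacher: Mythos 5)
Your proposal is correct and takes essentially the same route as the paper: both rely on the $\kappa$-independent identity $\curl_{\et}\vt_t=0$, the fundamental-theorem-of-calculus representation of $\curl\vt$ and $\curl\et$ as $\curl u_0$ plus lower-order corrections carrying $O(t)$ smallness, Moser-type product inequalities placing one factor in $L^\infty$, and integration by parts in time to trade the out-of-range factor $D^5\vt$ for $D^5\et$, which is controlled by $\EK$. The paper's identity \eqref{curl D eta} for $D\curl\et$ packages the nested time integrals slightly differently from your representation of $\curl\et$, but the two expressions agree after reorganization and the estimation strategy is the same.
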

\begin{proof}
The Lagrangian curl of  \eqref{akp.momentum} yields $\curl_{\et}\vt_t=0$. Setting
\begin{align*}
  B(\At, D\vt)= \varepsilon_{\cdot ji}\At_t{}_j^s\vt^i\cp{s},
\end{align*}
we find that $\pset{\curl_{\et}
          \vt}_t = B(\At,D\vt)$. By the fundamental theorem of calculus, 
	\begin{align}
		\label{curl v} \curl_{\et} \vt(t)=\curl u_0 +\int_0^t B(\At,D\vt). 
	\end{align}
	Applying the gradient operator $D$ to (\ref{curl v}) and a second application of the fundamental theorem of calculus, we find that 
	\begin{align}
		\label{curl D eta} 
		\begin{aligned}[b]
			D \curl \et (t)=&\;tD\curl u_0-\varepsilon_{\cdot ji}D\et ^i\cp{s}\int_0^t \At_t{}_j^s\\
			&\qquad+\varepsilon_{\cdot ji}\int_0^t\bset{\At_t{}_j^sD\et ^i\cp{s}-D\At_j^s\vt^i\cp{s}} +\int_0^t\int_0^{t'} DB(\At,D\vt), 
		\end{aligned}
	\end{align}
	where we have used the identity $\curl_{\et} D\et =\curl D\et +\varepsilon_{\cdot ji}D\et ^i\cp{s}\int_0^t \At_t{}_j^s$. 
	
	The differentiation formula (\ref{formula: derivative: A }) equally holds for when the gradient $D$ replaces $\p_t$; hence, the first three terms on the right-hand side of (\ref{curl D eta}) are, with respect to the action of $D^3$, each bounded by $\int_{0}^{T}\RK$.
	
	We next analyze the highest-order term created in $\int_0^t \int_0^{t'}D^4B(\At,D\vt)$. With (\ref{formula: derivative: A }), 
	\begin{align*}
		DB(\At,D\vt)=-\varepsilon_{\cdot ji}\bset{\At_j^q D\vt^r\cp{q} \At_r^s \vt^i\cp{s} +\At_j^q \vt^r\cp{q} \At_r^s D\vt^i\cp{s} + \vt^r\cp{q} \vt^i\cp{s} D\pset{\At_j^q \At_r^s} } , 
	\end{align*}
	from which it follows that the highest-order term of $D^4B(\At,D\vt)$ is 
	\begin{align*}
		-\varepsilon_{\cdot ji}\int_0^t\int_0^{t'}\bset{ \At_j^q D^4\vt^r\cp{q}\At_r^s \vt^i\cp{s}+\At_j^q \vt^r\cp{q}\At_r^s D^4\vt^i\cp{s} }. 
	\end{align*}
	With a relaxation of the precise structure of the summands in the integrands of the highest-order terms of $D^4B(\At,D\vt)$, we highlight the derivative count that results from integration by parts in time by writing 
	\begin{align*}
		\int_0^t\int_0^{t'}D^4 B(\At,D\vt)=-\int_0^t\int_0^{t'}D^5\et (D\vt\,\At\,\At)_t+\int_0^t D^5\et D\vt \,\At\,\At. 
	\end{align*}
	
	With such a temporal-integration-by-parts computation, the action of $D^3$ in (\ref{curl D eta}) yields
	\begin{align}
		\label{estimate: curl eta.k} \Sup \norm{\curl\et (t)}_{4}^2\le \int_{0}^{T}\RK, 
	\end{align}
	and by the same arguments, the action of $D^3$ and $\kappa D^4$ in (\ref{curl v}) yield
	\begin{align}
		\label{estimate: curl v} \Sup \norm{\curl
                  \vt(t)}_{3}^2+
\Sup \norm{\kappa\curl \vt(t)}_{4}^2\le \int_{0}^{T}\RK. 
	\end{align}
	
	By returning to the Lagrangian vorticity equation $\curl_{\et}\vt_t=0$, we find that 
	\begin{align}
		\label{curl: vort eq} \curl \vt_t=\varepsilon_{j\cdot i}\vt_t^i\cp{s}\int_0^t \At_t{}_j^s, 
	\end{align}
	and by considering the action of $D^2$ and $\kappa D^3$  in
        the identity \eqref{curl: vort eq} we infer that 
	\begin{align}
		\label{estimate: curl v_t}
 \Sup\norm{\curl
                  \vt_{t}(t)}_{2}^2  + \Sup\norm{\kappa\curl
                  \vt_{t}(t)}_{3}^2  \le\int_{0}^{T}\RK. 
	\end{align}
	
	By considering the action of $D\p_t$, $\p^2_t$,  $\kappa D^2\p_t$,
        $\kappa D\p^2_t$,  and
        $\sqrt{\kappa} \p^3_t $ in
        (\ref{curl: vort eq}), and using the
        fundamental-theorem-of-calculus identity
        $\p^a_t\vt_t=\p^a_t\vt_t |_{t=0}+\int_0^t\p^{a+1}_t\vt_t$ in the lower-order terms, we establish that 
	\begin{align}
		\label{estimate: curl v_tt and curl vttt} \Sup
\sum_{a=0}^1\norm{\curl\p^a_t\vt_{tt}(t)}_{1-a}^2+\sum_{a=0}^1\norm{\kappa\curl\p^a_t\vt_{tt}(t)}_{2-a}^2+
\int_{0}^{T}\norm{\kappa\curl\vt_{tttt}(t)}_{0}^2 \le\int_{0}^{T}\RK. 
	\end{align}
	 The sum of the inequalities (\ref{estimate: curl eta.k})--(\ref{estimate:  curl v_tt and curl vttt}) completes the proof. 
\end{proof}

\subsection*{Step 2: The $\kappa$-independent estimates for $\vt_{tttt}$ and $\vt_{ttt}$} \label{sec: Step 2.k}

We equivalently write the momentum equations  (\ref{akp.momentum}) and
boundary condition (\ref{akp.bc}) of the $\kappa$-problem as
\begin{subequations}
	\label{twice rewritten parabolic equations} 
	\begin{alignat}
		{4}\label{test: twice rewritten interior equation}
                \rho_0 \vt_t^i + \at_i^k\pset{\rho_0^2 \Jt^{-2}}\cp{k}
                - \kappa \rho_0\Jt^{-1}\at_i^k(\rho_0 \Jt_t)\cp{k} &=0 &\quad&{\rm in}\ \Omega\times(0,T], \\
		\rho_0^2 \Jt^{-2}-\kappa \rho_0^2\Jt^{-1}\Jt_t &=
\Htk &&{\rm on}\ \Gamma\times(0,T],\label{test: twice rewritten boundary condition} 
	\end{alignat}
\end{subequations}
where $\Htk$ is given by \eqref{akp.H}. From (\ref{akp.momentum}), we have the identity 
\begin{align*}
	\vt_t\cdot\et_\gamma=\hd_\gamma[-2\rho_0\Jt^{-1}+\kappa \rho_0
        \Jt_t]. 
\end{align*}
Multiplying this identity by $\rho_0\Jt^{-1} $ yields
\begin{align*}
	\rho_0\Jt^{-1}
        \vt_t\cdot\et_\gamma=\hd_\gamma[-\rho_0^2\Jt^{-2}+\kappa\rho_0^2\Jt^{-1}
        \Jt_t]-\kappa(\rho_0\Jt^{-1})\cp{\gamma} \rho_0\Jt_t.  
\end{align*}
We use the boundary condition (\ref{test: twice rewritten boundary
  condition}) to obtain the following tangential identity for $v_t$:
\begin{align} 
	\label{indentity: smoothed tangential} 
		\rho_0 \Jt ^{-1} \vt_t\cdot\et\cp{\gamma}
                =\hd_\gamma\Big[
                {\sigma\gt^{\mu\nu}\et\cp{\mu\nu}\cdot\nt} +
                \kappa \gt ^{\mu\nu}\vt \cp{\mu\nu}\cdot \nt-\beta(t) \Big] - \kappa(\rho_0\Jt^{-1})\cp{\gamma} \rho_0\Jt_t. 
\end{align}
\begin{prop}
	[Energy estimates for the fourth time-differentiated
        problem]\label{prop: divergence: energy estimates}  
	\begin{align*}
			\Sup
                        \norm{\vt_{tttt}(t)}_0^2
+
\Sup\norm{\p^4_t\Jt(t)}_0^2
+
\Sup\abs{\vt_{ttt}\cdot \nt(t)}_1^2
\\
+
\int_{0}^{T}\abs{\sqrt{\kappa} \p^4_t\vt\cdot\nt(t)}_1^2			+ \int_0^T \norm{ \sqrt{\kappa}\p^5_t\Jt}_{0}^2\le \int_{0}^{T}\RK .
	\end{align*}
\end{prop}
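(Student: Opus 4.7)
I would apply $\partial_t^4$ to the momentum equations in the form \eqref{test: twice rewritten interior equation}, test with $\vt_{tttt}$, and integrate over $\Omega\times(0,T)$. The natural energy identity will yield three simultaneous controls: a $\frac{1}{2}\frac{d}{dt}\int \rho_0|\vt_{tttt}|^2$ from the time-derivative in \eqref{akp.momentum}, a $\frac{1}{2}\frac{d}{dt}\int \rho_0^2\Jt^{-3}|\p^4_t\Jt|^2$ arising from the leading piece of the pressure after a Piola integration-by-parts, and a boundary contribution that, by invoking the boundary condition \eqref{test: twice rewritten boundary condition} together with the formula \eqref{akp.H} for $\Htk$, will produce both the $\tfrac{1}{2}\frac{d}{dt}\sigma\int_\Gamma \sqrt{\gt}\gt^{\alpha\beta}\hd_\alpha(\vt_{ttt}\cdot\nt)\hd_\beta(\vt_{ttt}\cdot\nt)$ term (controlling $|\vt_{ttt}\cdot\nt|_1^2$) and the $\kappa$-dissipation $\kappa\int_0^T|\hd\vt_{tttt}\cdot\nt|_0^2$.

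\textbf{Interior analysis.} In the pressure term $\partial_t^4[\at_i^k(\rho_0^2\Jt^{-2})\cp{k}]$, expand by Leibniz. The leading piece $\at_i^k\partial_t^4(\rho_0^2\Jt^{-2})\cp{k}$, after Piola integration-by-parts, produces an interior integral $\int\rho_0^2\Jt^{-3}\p^4_t\Jt\cdot(\at_i^k\vt_{tttt}^i\cp{k})$ which, using $\at_i^k\vt_{tttt}^i\cp{k}=\p^5_t\Jt+\RK$-type lower-order commutators, gives the desired $\frac{d}{dt}\|\p^4_t\Jt\|_0^2$ together with the boundary flux $\int_\Gamma \sqrt{\gt}\,\p^4_t(\rho_0^2\Jt^{-2})\,\vt_{tttt}\cdot\nt$. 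The artificial-viscosity term $-\kappa\rho_0\Jt^{-1}\at_i^k(\rho_0\Jt_t)\cp{k}$, after $\p^4_t$ and the same Piola integration-by-parts, yields exactly $\kappa\int_\Omega\rho_0^2\Jt^{-1}|\p^5_t\Jt|^2$ (plus lower order), delivering the $\int_0^T\|\sqrt{\kappa}\p^5_t\Jt\|_0^2$ contribution. The remaining Leibniz terms, in which $\partial_t^4$ distributes nontrivially over $\at$, have their highest-order part involving $D\vt_{tttt}$; a time integration-by-parts converts each such piece into an $\frac{d}{dt}$ of an energy controlled by $\EK$, plus $t=0$ endpoint contributions absorbed into $\NK$ and commutators bounded by $\RK$.

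\textbf{Boundary analysis.} On the boundary flux, substitute $\rho_0^2\Jt^{-2}=\Htk+\kappa\rho_0^2\Jt^{-1}\Jt_t$ via \eqref{test: twice rewritten boundary condition} and use definition \eqref{akp.H} of $\Htk$. Applying $\p^4_t$ produces two crucial leading terms on $\Gamma$: $-\sigma\gt^{\alpha\beta}\vt_{ttt}\cp{\alpha\beta}\cdot\nt$ and $-\kappa\gt^{\alpha\beta}\vt_{tttt}\cp{\alpha\beta}\cdot\nt$. Tested against $\vt_{tttt}\cdot\nt$ and integrated tangentially by parts, the first yields the perfect time derivative of $\sigma\int_\Gamma\sqrt{\gt}\gt^{\alpha\beta}\hd_\alpha(\vt_{ttt}\cdot\nt)\hd_\beta(\vt_{ttt}\cdot\nt)$ up to terms in which $\p_t$ falls on $\gt^{\alpha\beta}$ or $\nt$ (these are controlled in $\RK$ by \eqref{t derivative: metric inverse}--\eqref{t derivative: outward normal vector}); the second yields $\kappa\int_\Gamma\sqrt{\gt}\gt^{\alpha\beta}\hd_\alpha(\vt_{tttt}\cdot\nt)\hd_\beta(\vt_{tttt}\cdot\nt)$, i.e. the $\int_0^T|\sqrt{\kappa}\hd\vt_{tttt}\cdot\nt|_0^2$ dissipation. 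The $\kappa\rho_0^2\Jt^{-1}\Jt_t$ part of the substitution and all other terms produced by Leibniz contribute boundary expressions whose worst derivatives are one order lower and are absorbed into $\delta\sup_{[0,T]}\EK+\int_0^T\RK$ via the trace theorem and Lemma~\ref{lem.j}.

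\textbf{Main obstacle and conclusion.} The hard part is the bookkeeping for the cross-terms in which $\p^4_t$ splits between the cofactor $\at$ (or the metric $\gt$, normal $\nt$) and the scalar $\Jt$ (or tangential derivatives of $\et,\vt$), since the top-order summands involve $\vt_{tttt}$ or $\p^5_t\Jt$ — exactly the quantities we are estimating. For each such commutator I would integrate by parts in time to shift one $\p_t$ off the highest factor, producing a $\frac{d}{dt}$-of-energy piece (absorbed into the left-hand side of the final estimate), an initial-data remainder collected into $\NK$, and a genuinely lower-order bulk term bounded by $\RK$ using the scaling relations of Section~\ref{sec:diff-cofact-matr}, the trace theorem, and the curl estimates of Lemma~\ref{lem: curl estimates} wherever spatial regularity is needed. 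Integrating the resulting differential inequality in time, applying the polynomial-type inequality of Section~\ref{sec:polyn-type-ineq}, and using \eqref{assum.Jt} to invert the $\rho_0^2\Jt^{-3}$ weight then yields the stated bound.
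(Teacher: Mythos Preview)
Your overall strategy matches the paper's: test $\partial_t^4$ of \eqref{test: twice rewritten interior equation} against $\vt_{tttt}$, extract the three energy contributions and the $\kappa$-dissipation, and handle the boundary flux via \eqref{test: twice rewritten boundary condition} and \eqref{akp.H}. The interior analysis is essentially right (minor correction: the Leibniz terms where $\partial_t^4$ hits $\at$ have top order $D\vt_{ttt}$, not $D\vt_{tttt}$, but these are indeed absorbed in $\RK$).

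The genuine gap is in the boundary analysis. Your claim that ``all other terms produced by Leibniz contribute boundary expressions whose worst derivatives are one order lower'' is not correct. After extracting the two good pieces, the commutators leave terms such as
\[
\int_\Gamma \sqrt{\gt}\,\gt^{\alpha\beta}\,(\vt_{ttt}\cp{\alpha}\cdot\nt_t)\,(\vt_{ttt}\cp{\beta}\cdot\nt)
\qquad\text{and}\qquad
\int_\Gamma \sqrt{\gt}\,\gt^{\alpha\beta}\,(\vt_{ttt}\cp{\beta}\cdot\nt\cp{\alpha})\,(\vt_{tttt}\cdot\nt),
\]
which are the paper's $\i_1$ and $\i_{2b}$. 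Since $\nt_t$ and $\nt\cp{\alpha}$ are \emph{tangent} vectors (formulas \eqref{formula: derivative: outward normal vector} and \eqref{t derivative: outward normal vector}), these integrands involve the tangential components $\hd\vt_{ttt}\cdot\et\cp{\gamma}$ and $\vt_{tttt}\cdot\et\cp{\gamma}$ on $\Gamma$. A naive trace bound costs $\|\vt_{ttt}\|_{1.5}$, half a derivative more than $\EK$ carries; using Lemma~\ref{lem: tangential trace} to put one factor in $H^{-0.5}(\Gamma)$ then forces the other into $H^{0.5}(\Gamma)$, which requires $|\vt_{ttt}\cdot\nt|_{1.5}$, again unavailable. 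Time-integration by parts transfers a $\partial_t$ onto $\vt_{tttt}$ and introduces a bad $\kappa^{-1/2}$. So none of the tools you list close these terms.

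The paper's missing ingredient is the tangential identity \eqref{indentity: smoothed tangential}: restricted to $\Gamma$, the momentum equation dotted with $\et\cp{\gamma}$ gives $\rho_0\Jt^{-1}\vt_t\cdot\et\cp{\gamma}=\hd_\gamma[\sigma\gt^{\mu\nu}\et\cp{\mu\nu}\cdot\nt+\kappa\gt^{\mu\nu}\vt\cp{\mu\nu}\cdot\nt-\beta(t)]$ plus a lower-order correction. Time-differentiating this (see \eqref{tagential: hd v_ttt}, \eqref{tangential identity: v_ttt}, \eqref{tangential identity: hd v_ttt}) converts the uncontrolled tangential components of $\vt_{tttt}$ and $\hd\vt_{ttt}$ into tangential derivatives of $\p_t^a(\gt^{\mu\nu}\et\cp{\mu\nu}\cdot\nt)$, i.e.\ into the improved boundary-regularity quantities $\abs{\hd^2\p_t^a\vt\cdot\nt}_{2.5-a}$ that are built into $\EK$. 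The paper applies this repeatedly, together with an exact cancellation between two of the commutator pieces ($\i_{3a}$ and $\i_{3b,0}$), to reduce every boundary remainder to $\RK$. Your proposal needs this mechanism; without it the boundary bookkeeping cannot close.
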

\begin{proof}
	Testing four time-derivatives of (\ref{test: twice rewritten interior equation}) against $\p^4_t \vt$ in the $L^2(\Omega)$-inner product, and integrating by parts with respect to $\p_k$ in the interior integrals $\int_\Omega \at_i^k \p^4_t(\rho_0 ^2\Jt^{-2})\cp{k} \p^4_t\vt^i$ and $-\kappa\int_\Omega\at_i^k \p^4_t[ \rho_0\Jt^{-1}(\rho_0\Jt_t)\cp{k}] \p^4_t\vt^i $, we find that 
	\begin{align*}
			{\int_\Omega \p^4_t\bset{\rho_0\vt^i_t } \p^4_t\vt^i} &- \underbrace{\int_\Omega \p^4_t\bset{\rho_0^2\Jt^{-2} }\at_i^k\p^4_t\vt^i\cp{k}}_{\I_1} + \kappa \underbrace{\int_\Omega \p^4_t\bset{\rho_0^2\Jt^{-1}\Jt_t }\at_i^k\p^4_t\vt^i\cp{k}}_{\I_2} \\
&			+ \underbrace{\int_\Gamma
                          \p^4_t\bset{\rho_0^2\Jt^{-2}
-                          \kappa \rho_0^2\Jt^{-1} \Jt_t}\at_i^k\p^4_t\vt^iN^k}_{\II}=\RK. 
	\end{align*}
	It is convenient to rewrite this equation as 
	\begin{align}
		\label{test:divergence} 
			\frac{1}{2} \frac{d}{dt} \int_\Omega
                        \rho_0\abs{\p^4_t \vt}^2 + \frac{d}{dt}
                        \int_\Omega {\rho_0^2\Jt^{-3}}\abs{\p^4_t\Jt}^2
                        + \kappa\int_\Omega \rho_0^2
                        \Jt^{-1}\abs{\p^5_t\Jt}^2 + \II  = \RK, 
	\end{align}
	where the identity $\Jt_t=\at_r^s\vt^r\cp{s}$ implies that the error created in order to write $\at_i^k\p^4_t\vt^i\cp{k}$ as $\p^5_t\Jt$ in $\I_1$ and $\I_2$ is of lower-order and so is absorbed in $\RK$.

	\subsection*{Rewriting the boundary integral in
          (\ref{test:divergence})} Using the outward normal identity
        (\ref{formula: outward normal}) and the boundary condition
        (\ref{test: twice rewritten boundary condition}) with 
        $\sigma=1$, we find
        that 
	\begin{align}
		\label{tv.4.i} 
		\begin{aligned}[b]
			{\II}
			&=\frac{1}{2}\frac{d}{dt}\int_\Gamma
                        \sqrt{\gt} \gt^{\alpha\beta} \,
                        \vt_{ttt}\cp{\alpha}\cdot \nt\,
                        \vt_{ttt}\cp{\beta}\cdot \nt  
+
\int_{\Gamma}
\sqrt{\gt}\gt ^{\alpha\beta} \sqrt{\kappa} \vt_{tttt} \cp{\alpha}\cdot \nt \sqrt{\kappa} \vt_{tttt} \cp{\beta}\cdot \nt
\\
			&\quad \underbrace{ -\int_\Gamma \sqrt{\gt}
                          \gt^{\alpha\beta} \vt_{ttt}\cp{\alpha}\cdot
                          \p_t\nt \,\vt_{ttt}\cp{\beta}\cdot \nt  }_{ \i_{1}} -
                          \frac{1}{2}
\underbrace{\int_\Gamma \p_t\bset{\sqrt{\gt}\gt^{\alpha\beta}}\, \vt_{ttt}\cp{\alpha}\cdot \nt \,\vt_{ttt}\cp{\beta}\cdot \nt }_{ \RK} \\
			&\quad+ \underbrace{ \int_\Gamma
                          \vt_{ttt}\cp{\beta}\cdot\nt                            \sqrt{\gt}\gt^{\alpha\beta}
                          \vt_{tttt} \cdot \nt \cp{\alpha}
+
\int_\Gamma         \vt_{ttt}\cp{\beta}\cdot\nt \cp{\alpha}                            \sqrt{\gt}\gt^{\alpha\beta}
                          \vt_{tttt} \cdot \nt 
                       }_{ \i_{2} }\\
	&\quad+ \underbrace{ \int_\Gamma  \vt_{ttt}\cp{\beta}\cdot \nt(
          \sqrt{\gt}\gt^{\alpha\beta})\cp{\alpha} \vt_{tttt} \cdot \nt
-
 \sum_{\counter=0}^{3}c_\counter\int_\Gamma \sqrt{\gt}\gt^{\alpha\beta} \p^\counter_t \et ^j\cp{\alpha\beta} \p^{4-\counter}_t\nt^j\,\vt_{tttt}\cdot \nt }_{\i_{3}}\\
			&\quad- \underbrace{ \sum_{\counter=1}^4c_\counter\int_\Gamma\p^\counter_t{\gt^{\alpha\beta}}\p^{4-\counter}_t\bset{\et \cp{\alpha\beta}\cdot \nt }\sqrt{\gt}\vt_{tttt}\cdot\nt }_{\i_{4}}\\
			&\quad
+
\underbrace{
                          \kappa\int_\Gamma\vt_{tttt} ^i\cp{\beta} \bpset{\nt^i
    \gt ^{\alpha\beta} \nt^j}
                          \cp{\alpha}                         \sqrt{\gt}\vt_{tttt}^j 
-\kappa
\sum_{\counter=1}^4c_\counter\int_\Gamma\p^\counter_t(
\gt^{\alpha\beta}\nt^i)\p^{4-\counter}_t\et \cp{\alpha\beta}
\sqrt{\gt}\vt_{tttt}\cdot \nt}_{\i_{5}}. 
		\end{aligned}
	\end{align}
	
	In our analysis of $\int_0^T\i_i$, $i=1,2,3$, we adopt the convention of letting 
	\begin{align*}
		\text{ $\ellt$ denote a function of $L^\infty(\Gamma)$-class and $\elt$ a function of $H^{0.5}(\Gamma)$-class.} 
	\end{align*}
	We recycle the symbols $\j$, $\j_A$, $\j_B$, etc.\ in terming boundary integrals that require explanation. 
	
	\subsection*{Analysis of $\int_0^T \i_1$ in the time-integral
          of \eqref{tv.4.i}} The action of $\hd_ \alpha\p^2_t$ in the tangential
        identity (\ref{indentity: smoothed tangential}) provides that 
	\begin{align}
		\label{tagential: hd v_ttt} \vt_{ttt}\cp{\alpha}\cdot
                \et \cp{\gamma}=\rho_0^{-1}\Jt
                \bset{\hd_{\alpha\gamma}\big(\gt^{\mu\nu}\et\cp{\mu\nu}\cdot\nt+
                  \kappa \gt ^{\mu\nu} \vt \cp{\mu\nu}\cdot \nt-\beta(t) \big)_{tt} - \elt_{\gamma\alpha}}, 
	\end{align}
	where the lower-order $\elt_{\gamma\alpha}\in H^{0.5}(\Gamma)$ is given by 
	\begin{align*}
		\elt_{\gamma\alpha}= \kappa\hd_\alpha\big[(\rho_0\Jt^{-1})\cp{\gamma} \rho_0\Jt_t\big]_{tt} + \vt_{ttt}\cdot\pset{\et\cp{\gamma}\rho_0 \Jt^{-1}}\cp{\alpha} +\sum_{a=1}^2c_a\bset{\p^a_t\vt\cdot\p^{3-a}_t\pset{\et\cp{\gamma}\rho_0 \Jt^{-1}} }\cp{\alpha}. 
	\end{align*}
	Setting
        $\ellt_{\gamma}^{\alpha\beta}=\rho_0^{-1}\Jt\sqrt{\gt}\gt^{\alpha\beta}\gt^{\gamma\delta}\vt\cp{\delta}\cdot\nt$
        we use the tangential identity (\ref{tagential: hd v_ttt}),
        together with the outward normal differentiation formula  \eqref{t derivative: outward normal vector}, to find that 
	\begin{align}
\label{r1.j.t}
          \begin{aligned}[b]
            \i_1& = \underbrace{ \int_\Gamma \ellt_{\gamma}^{\alpha\beta}
              \hd_{\gamma\alpha}\bset{\gt^{\mu\nu}\et\cp{\mu\nu}\cdot\nt}_{tt}\,
              \vt_{ttt}\cp{\beta}\cdot\nt}_{\i_1{}'} +
            \underbrace{\kappa \int_\Gamma
              \ellt_{\gamma}^{\alpha\beta}
              \hd_{\gamma\alpha}\bset{\gt^{\mu\nu}\vt\cp{\mu\nu}\cdot\nt}_{tt}\,
              \vt_{ttt}\cp{\beta}\cdot\nt}_{\j} +\RK.
          \end{aligned}
	\end{align}
	
	Concerning $\i_1{}'$, we integrate by parts with respect to a time-derivative to write 
	\begin{align*}
			\int_0^T&\i_1{}'=-\underbrace{\int_\Gamma \hd_{\gamma}\bset{\gt^{\mu\nu}\et\cp{\mu\nu}\cdot\nt}_{tt}\, \hd_\alpha\bset{\ellt_{\gamma}^{\alpha\beta} \vt_{tt}\cp{\beta}\cdot\nt }}_{\j_A}\Big|_0^T + \int_0^T\underbrace{\int_\Gamma \hd_{\gamma}\bset{\gt^{\mu\nu}\et\cp{\mu\nu}\cdot\nt}_{tt}\, \hd_\alpha\bset{\ellt_{\gamma}^{\alpha\beta} \vt_{tt}\cp{\beta}\cdot\nt_t}}_{\j_B}\\
			& + \int_0^T \underbrace{\int_\Gamma \hd_{\gamma}\bset{\gt^{\mu\nu}\et\cp{\mu\nu}\cdot\nt}_{tt}\, \hd_\alpha\bset{\p_t\ellt_{\gamma}^{\alpha\beta} \vt_{tt}\cp{\beta}\cdot\nt}}_{\j_C} + \int_0^T \underbrace{ \int_\Gamma \hd_{\gamma}\bset{\gt^{\mu\nu}\et\cp{\mu\nu}\cdot\nt}_{ttt}\, \hd_\alpha\bset{\ellt_{\gamma}^{\alpha\beta} \vt_{tt}\cp{\beta}\cdot\nt}}_{\j_D}, 
	\end{align*}
	where we have further integrated by parts with respect to $\hd_\alpha$ on the right-hand side. 
	
Interpolation and two applications of Young's inequality provide that
	\begin{align*}
		\abs{\vt_{tt}\cp{\beta}\cdot\nt(T)}_1^2 \le C\abs{\vt_{tt}\cdot\nt(T)}_{1.5} \abs{\hd\vt_{tt}\cdot\nt(T)}_{1.5} \le C \abs{\vt_{tt}\cdot\nt(T)}_{0.5}^{2}+2\delta \abs{\hd\vt_{tt}\cdot\nt(T)}_{1.5}^2.
	\end{align*}
Hence,
	\begin{align*}
		\j_A(t)\big|_0^T\le \int_{0}^{T}\RK. 
	\end{align*}
We record that via  Lemma~\ref{lem: tangential trace},
	\begin{align*}
		\abs{\vt_{tt}\cp{\alpha\beta}\cdot\et\cp{\sigma} }_{H^{-0.5}(\Gamma)}^2\le C \Bset{\norm{\hd^2 \vt_{tt}}_0^2+\norm{\curl \hd \vt_{tt}}_0^2 }\le C\norm{\vt_{tt}}_2^2.
	\end{align*}
Since $\nt_t=-\et\cp{\sigma}\gt^{\sigma\rho}\vt\cp{\rho}\cdot\nt$ and
$\hd^2\vt_t\cdot\nt$ is in $H^{1.5}(\Gamma)$, we
use an $H^{-0.5}(\Gamma)$-duality pairing in the highest-order term to write 
	\begin{align*}
		\int_0^T\j_B \le \int_0^T\RK +
                C\int_{0}^{T}\abs{\hd^2\vt_t \cdot \nt(t)}_{1.5}
                \abs{\vt_{tt}\cp{\alpha\beta}\cdot\et\cp{\sigma}
                }_{H^{-0.5}(\Gamma)} \le \int_{0}^{T}\RK.
	\end{align*}
	We use the Cauchy-Schwarz inequality for the estimate 
	\begin{align*}
		\int_0^T\j_C \le \int_{0}^{T}\RK. 
	\end{align*}
	
	Regarding $\int_0^T\j_D$, for the integral
        $\int_0^T\int_\Gamma
        \hd_{\gamma}\bset{\gt^{\mu\nu}\et\cp{\mu\nu}\cdot\nt}_{ttt}\,
        \hd_\alpha\ellt_{\gamma}^{\alpha\beta}
        \vt_{tt}\cp{\beta}\cdot\nt$, we integrate by parts with
        respect to $\hd_\gamma$ and estimate using the Cauchy-Schwarz
        inequality, and for the integral $\int_0^T\int_\Gamma
        \hd_{\gamma}\bset{\gt^{\mu\nu}\et\cp{\mu\nu}\cdot\nt}_{ttt}\,
        \ellt_{\gamma}^{\alpha\beta}
        \vt_{tt}\cp{\beta}\cdot\nt\cp{\alpha}$, we also integrate by
        parts with respect to $\hd_\gamma$ and then estimate using an
        $H^{-0.5}(\Gamma)$-duality pairing. These methods work equally
        well in all terms of the spacetime-integral
        $\int_0^T\int_\Gamma
        \hd_{\gamma}\bset{\gt^{\mu\nu}\et\cp{\mu\nu}\cdot\nt}_{ttt}\,
        \ellt_{\gamma}^{\alpha\beta}
        \vt_{tt}\cp{\alpha\beta}\cdot\nt$, as well as in
        $\int_{0}^{T}\j$, where $\j$ is defined in \eqref{r1.j.t}.
	 Thus, we have established that 
	\begin{align}
		\label{boundary integral: error i_1} \int_0^T \i_{1}\, \le \int_{0}^{T}\RK. 
	\end{align}
	
	\subsection*{Analysis of $\int_0^T\i_2$ in the time-integral
          of \eqref{tv.4.i}} We have that
	\begin{align}
		\label{boundary: rewrite i_2} 
		\begin{aligned}[b]
			\i_2&= \underbrace{ \int_\Gamma
                          \vt_{ttt}\cp{\beta}\cdot\nt                            \sqrt{\gt}\gt^{\alpha\beta}
                          \vt_{tttt} \cdot \nt \cp{\alpha}
}_{\i_{2a}} 
+
\underbrace{
\int_\Gamma         \vt_{ttt}\cp{\beta}\cdot\nt \cp{\alpha}                            \sqrt{\gt}\gt^{\alpha\beta}
                          \vt_{tttt} \cdot \nt 
                       }_{ \i_{2b} }.
		\end{aligned}
	\end{align}
	
We write the action of $\p^3_t$ in  the tangential identity
\eqref{indentity: smoothed tangential} as
\begin{subequations}
\label{tid.v31}
  \begin{align}
    \label{tangential identity: v_ttt} \vt_{tttt}\cdot\et \cp{\gamma}
    = \rho_0^{-1}\Jt\Bset{\hd_{\gamma}\big(
      \gt^{\mu\nu}\et\cp{\mu\nu}\cdot\nt
      +\kappa\gt^{\mu\nu}\vt\cp{\mu\nu}\cdot\nt-\beta(t) \big)_{ttt}
      -\elt_\gamma},
  \end{align}
  where the lower-order $\elt_\gamma\in H^{0.5}(\Gamma)$ is given by
  \begin{align}
    \label{notation: l_gamma} \elt_\gamma =
    \kappa\big[(\rho_0\Jt^{-1})\cp{\gamma} \rho_0\Jt_t\big]_{ttt} +
    \sum_{\counter=1}^3c_a\p^a_t\vt\cdot\p^{4-\counter}_t\pset{\et\cp{\gamma}\rho_0
      \Jt^{-1}} .
  \end{align}
\end{subequations}
	Letting
        $\ellt_\gamma^\beta=-\rho_0^{-1}\Jt\sqrt{\gt}\gt^{\alpha\beta}\gt^{\gamma\delta}\et
        \cp{\delta\alpha}\cdot\nt$,	we use \eqref{tid.v31} to write
	\begin{align*}
		\i_{2a} &
		= - \underbrace{ \int_\Gamma \ellt_\gamma^\beta \, \bset{\gt^{\mu\nu}\et\cp{\mu\nu}\cdot\nt}_{ttt} \vt_{ttt}\cp{\beta\gamma}\cdot\nt }_{ \i_{2a}{}'} + \kappa \underbrace{ \int_\Gamma \ellt_\gamma^\beta\,\hd_{\gamma}[ \gt^{\mu\nu}\vt_{ttt}\cp{\mu\nu}\cdot\nt] \,\vt_{ttt}\cp{\beta}\cdot\nt }_{ \i_{2a}{}''}+\RK, 
	\end{align*}
	where we have used integration by parts with respect to $\hd_\gamma$ to determine $\i_{2a}{}'$. We integrate by parts with respect to time in order to write 
	\begin{align*}
		- \int_0^T \i_{2a}{}' =&-\int_\Gamma \ellt_\gamma^\beta\,\vt_{tt}\cp{\beta\gamma}\cdot\nt\, \bset{\gt^{\mu\nu}\et\cp{\mu\nu}\cdot\nt}_{ttt} \Big|_0^T + \int_0^T\int_\Gamma \ellt_\gamma^\beta\, \vt_{tt}\cp{\beta\gamma}\cdot\nt_t\, \bset{\gt^{\mu\nu}\et\cp{\mu\nu}\cdot\nt}_{ttt} \\
		&+ \int_0^T\int_\Gamma \p_t\ellt_\gamma^\beta\,\vt_{tt}\cp{\beta\gamma}\cdot\nt\, \bset{\gt^{\mu\nu}\et\cp{\mu\nu}\cdot\nt}_{ttt} + \int_0^T\int_\Gamma \ellt_\gamma^\beta\,\vt_{tt}\cp{\beta\gamma}\cdot\nt \,\bset{\gt^{\mu\nu}\et\cp{\mu\nu}\cdot\nt}_{tttt} \\
		=& \int_0^T\underbrace{ \int_\Gamma \ellt_\gamma^\beta\,\vt_{tt}\cp{\beta\gamma}\cdot\nt \,\gt^{\mu\nu}\vt_{ttt}\cp{\mu\nu}\cdot\nt }_{\j} + \int_0^T\RK. 
	\end{align*}
	Letting $\ellt=\rho_0^{-1}\Jt\sqrt{\gt}\,\gt^{\gamma\delta}\et\cp{\gamma\delta}\cdot\nt$ we utilize the symmetry of $\ellt_\gamma^\beta$ to exchange $\hd_\alpha$ and $\hd_\gamma$ via integration by parts for 
	\begin{align}
		\label{boundary: symmetry} 
		\begin{aligned}[b]
			\int_0^T\j = &\frac{1}{2}\int_\Gamma \ellt\,\abs{\gt^{\mu\nu}\vt_{tt}\cp{\mu\nu}\cdot\nt}^2\Big|_0^T \\
			& - \int_0^T \int_\Gamma \ellt\,\vt_{tt}\cp{\alpha\beta}\cdot\nt\, \vt_{tt}\cp{\mu\nu}\cdot\pset{ n\gt^{\mu\nu}}_t - \frac{1}{2}\int_0^T\int_\Gamma \ellt_t\,\abs{\gt^{\mu\nu}\vt_{tt}\cp{\mu\nu}\cdot\nt}^2=\int_0^T\RK. 
		\end{aligned}
	\end{align}	Next, integrating by parts in time, we find that 
	\begin{align*}
		\kappa\int_0^T \i_{2a}{}'' &= \int_\Gamma \ellt_\gamma^\beta\,\hd_{\gamma}[ \gt^{\mu\nu}\vt_{tt}\cp{\mu\nu}\cdot\nt] \,\kappa(\vt_{ttt}\cp{\beta}\cdot\nt) \Big|_{0}^T \\
		+ \int_0^T &\int_\Gamma [\gt^{\mu\nu}\vt_{tt}\cp{\mu\nu}\cdot\p_t\nt] \hd_\gamma[ \ellt_\gamma^\beta\, \,\kappa(\vt_{ttt}\cp{\beta}\cdot\nt)] - \int_0^T \int_\Gamma \ellt_\gamma^\beta\,\hd_{\gamma}[ \gt^{\mu\nu}\kappa(\vt_{tt}\cp{\mu\nu}\cdot\nt)] \,\vt_{ttt}\cp{\beta}\cdot\p_t\nt \\
		& - \int_0^T \underbrace{ \int_\Gamma \ellt_\gamma^\beta\,\hd_{\gamma}[ \gt^{\mu\nu}\sqrt{\kappa}\vt_{tt}\cp{\mu\nu}\cdot\nt] \,\sqrt\kappa\vt_{tttt}\cp{\beta}\cdot\nt }_{\j} =\int_0^T\RK+\int_0^T\j. 
	\end{align*}
Interpolation and	Young's inequality provide for the estimate 
	\begin{align*}
		\int_0^T \j &\le \delta\int_0^T\abs{\sqrt{\kappa}\hd \vt_{tttt}\cdot\nt}_0^2 + C_\delta \int_0^T\abs{\sqrt{\kappa}\hd\vt_{tt}\cdot\nt}_2^2 \\
		&\le \delta\int_0^T\abs{\sqrt{\kappa}\hd \vt_{tttt}\cdot\nt}_0^2 + C_\delta \,T\,\abs{\hd\vt_{tt}\cdot\nt}_{1.5}\abs{\kappa\hd\vt_{tt}\cdot\nt}_{2.5}\le \int_0^T\RK. 
	\end{align*}	
	We have thus established that 
	\begin{align*}
		\int_0^T \i_{2a}=\int_0^T\RK. 
	\end{align*}
	
	The analysis of $\int_0^T\i_{2b}$ is similar. We set $\ellt^{\gamma\beta}=\sqrt{\gt}\gt^{\alpha\beta}\gt^{\gamma\delta}\et \cp{\delta\alpha}\cdot\nt$ and write 
	\begin{align*}
		\int_0^T\i_{2b} = \int_\Gamma \vt_{ttt}\cp{\beta}\cdot\nt\cp{\alpha}\,\sqrt{\gt}\gt^{\alpha\beta\,}\vt_{ttt}\cdot\nt\Big|_0^T &- \int_0^T \int_\Gamma\vt_{ttt}^j\cp{\beta}\bset{\nt^j\cp{\alpha}\sqrt{\gt}\gt^{\alpha\beta} \nt^i }_t\, \vt^i_{ttt} \\
		+ \int_0^T \underbrace{ \int_\Gamma \ellt^{\gamma\beta}\,\vt_{tttt}\cp{\beta}\cdot\et \cp{\gamma}\,\vt_{ttt}\cdot\nt}_{\i_{2b} {}'} &= \int_0^T\RK+\int_0^T \i_{2b} {}'. 
	\end{align*}
	Regarding $\int_0^T \i_{2b} {}'$, we use the identity
	\begin{align}
		\label{tangential identity: hd v_ttt} 
			\vt_{tttt}\cp{\beta}\cdot\et \cp{\gamma} = \rho_0^{-1}\Jt\bset{\hd_\beta \hd_\gamma \big( \gt^{\mu\nu}\et\cp{\mu\nu}\cdot\nt+\kappa\gt^{\mu\nu}\vt\cp{\mu\nu}\cdot\nt -\beta(t) \big)_{ttt}-\elt'_{\beta\gamma}}, 
	\end{align}
	where
        $\elt'_{\beta\gamma}=\vt_{tttt}\cdot\et\cp{\gamma\beta}\rho_0
        \Jt^{-1}+ \vt_{tttt}\cdot\et\cp{\gamma}\pset{\rho_0
          \Jt^{-1}}\cp{\beta}+\hd_\beta \elt_\gamma$, with
        $\elt_\gamma$ given by (\ref{notation: l_gamma}). We integrate by parts with respect to both $\hd_\beta$ and
        $\hd_\gamma$ in $\int_0^T \int_\Gamma\rho_0^{-1}\Jt
        \hd_{\beta\gamma} [
        \gt^{\mu\nu}\et\cp{\mu\nu}\cdot\nt]_{ttt}\,
        \ellt^{\gamma\beta}\,\vt_{ttt}\cdot\nt$, where the
        highest-order term produced by $\hd_{\beta\gamma}$-integration
        by parts is (\ref{boundary: symmetry}). To estimate the
        integral where $\hd_\beta \elt_\gamma$ appears, we have the
        choice of integration by parts with respect to $\hd_\beta$ or
        an $H^{-0.5}(\Gamma)$-duality pairing. In the integral where
        $\vt_{tttt}\cdot\et\cp{\gamma}\pset{\rho_0
          \Jt^{-1}}\cp{\beta}$ appears, we use the identity
        (\ref{tangential identity: v_ttt}). Hence, we conclude that
	\begin{align*}
		\int_0^T\i_{2b} {}'&= \int_0^T\RK - \int_0^T\underbrace{
                \int_\Gamma \ellt^{\gamma\beta}\, \hd_\beta
                \elt_\gamma\,\vt_{ttt}\cdot\nt  }_{\RK}.
	\end{align*}
Thus, we have established that 
	\begin{align}
		\label{boundary integral: error i_2} \int_0^T \i_{2}\, \le \int_{0}^{T}\RK. 
	\end{align}

        \subsection*{Analysis of $\int_{0}^{T}\i_{3}$  in the time-integral
          of \eqref{tv.4.i}}
We have that
        \begin{align*}
          \i_3= \underbrace{ \int_\Gamma  \vt_{ttt}\cp{\beta}\cdot \nt(
          \sqrt{\gt}\gt^{\alpha\beta})\cp{\alpha} \vt_{tttt} \cdot \nt }_{\i_{3a}}- \sum_{\counter=0}^{3}c_\counter\underbrace{\int_\Gamma \sqrt{\gt}\gt^{\alpha\beta} \p^\counter_t \et ^j\cp{\alpha\beta} \p^{4-\counter}_t\nt^j\,\vt_{tttt}\cdot \nt }_{\i_{3b,\counter}}.
        \end{align*}
	We will first establish that $\int_0^T\i_{3a}$ cancels with a
        term arising in $\int_0^T\i_{3b,0}$. Recalling that $\nt{}_t=-\gt^{\gamma\delta}\bset{\vt\cp{\delta}\cdot\nt}\et \cp{\gamma}$, the highest-order term of $\nt{}_{tttt}$ is $-\gt^{\gamma\delta}\bset{\vt{}_{ttt}\cp{\delta}\cdot\nt}\et \cp{\gamma}$. It follows that 
	\begin{align}
		\label{h.o. counter=0 term} 
			-\int_0^T\i_{3b,0} &= \int_0^T \underbrace{ \int_\Gamma \sqrt{\gt}\,\gt^{\alpha\beta} \gt^{\gamma\delta}\et \cp{\alpha\beta}\cdot \et \cp{\gamma} \vt{}_{ttt}\cp{\delta}\cdot\nt\,\vt_{tttt}\cdot\nt }_{\j} + \int_0^T\RK,
	\end{align}
	where the estimation of the lower-order terms of
        $\int_0^T\i_{3b,0}$ requires integration by parts with respect
        to a time-derivative of $\vt_{tttt}$. Using the differentiation formulas (\ref{formula: derivative: metric inverse}) and (\ref{formula: derivative: Jacobian determinant}), we write 
	\begin{align*}
		\j= -\underbrace{ \int_\Gamma \vt_{ttt}\cp{\beta}\cdot\nt \pset{\sqrt{\gt}\gt^{\alpha\beta}}\cp{\alpha}\vt_{tttt}\cdot\nt }_{\i_{3a}}. 
	\end{align*}
Thus,
	\begin{align*}
		\int_0^T\i_{3a} - \int_0^T\i_{3b,0}=\int_0^T\RK. 
	\end{align*}
	
	The terms $\i_{3b,1}$ and $\i_{3b,2}$ are analyzed by
        integrating by parts with respect to a time-derivative of
        $\vt_{tttt}$, followed by elementary estimates. Thus, 
	\begin{align*}
		\int_0^T\i_{3b,1}+ \int_0^T\i_{3b,2}=\int_0^T\RK. 
	\end{align*}
	
	We next examine $\i_{3b,3}=\int_\Gamma \sqrt{\gt}\gt^{\alpha\beta} \vt_{tt}\cp{\alpha\beta}\cdot\nt_t\,\vt_{tttt}\cdot\nt$. After integration by parts with respect to a time-derivative of $\vt_{tttt}$, we find that 
	\begin{align*}
\i_{3b,3} &=- \int_\Gamma \sqrt{\gt}\gt^{\alpha\beta} \vt_{ttt}\cp{\alpha\beta}\cdot\nt_t\,\vt_{ttt}\cdot\nt + \RK = \underbrace{\int_\Gamma \sqrt{\gt}\gt^{\alpha\beta} \vt_{ttt}\cp{\beta}\cdot\nt_t\,\vt_{ttt}\cp{\alpha}\cdot\nt }_{\i_{3b,3}{}'} + \RK.
	\end{align*}
 Letting
 $\ellt^\gamma_{\alpha\beta}=\sqrt{\gt}\gt^{\alpha\beta}\gt^{\gamma\delta}\vt\cp{\delta}\cdot\nt$,
 we have that
	\begin{align*}
\int_{0}^{T}			\i_{3b,3}{}' = \int_\Gamma \ellt^\gamma_{\alpha\beta} \, \vt_{ttt}\cp{\beta}\cdot \et \cp{\gamma}\vt_{tt}\cp{\alpha}\cdot\nt\Big|_0^T &- \int_0^T \int_\Gamma \vt_{ttt}\cp{\beta}^j\vt^i_{tt}\cp{\alpha}\cdot \pset{\nt^i\, \et ^j\cp{\beta}\, \ellt^\gamma_{\alpha\beta} }_t \\
			&- \int_0^T \underbrace{ \int_\Gamma \ellt^\gamma_{\alpha\beta} \,\vt_{tttt}\cp{\beta}\cdot \et \cp{\gamma}\vt_{tt}\cp{\alpha}\cdot\nt }_{\j} = - \int_0^T\j + \int_0^T\RK. 
	\end{align*}
	By using the identity (\ref{tangential identity: hd v_ttt}) in
        $\int_0^T\j$, we note that the   term corresponding with $\tilde l_\gamma$ has an elementary
        estimate after integration by parts with respect to
        $\hd_\beta$. The remaining terms are similarly analyzed,
        except for $ \int_0^T \int_\Gamma
        \ellt^\gamma_{\alpha\beta}
        \rho_0^{-1}\Jt\hd_{\beta\gamma}\bset{\gt^{\mu\nu}\vt_{tt}\cp{\mu\nu}\cdot\nt}\,
        \vt_{tt}\cp{\alpha}\cdot\nt$ in which we integrate by parts with respect to both
        $\hd_\beta$ and $\hd_\gamma$. For a certain highest-order term created by $\hd_{\beta\gamma}$-integration by parts, we form an exact derivative: setting 
        $\ellt_\gamma=\rho_0^{-1}\Jt\sqrt{\gt}\gt^{\gamma\delta}\vt\cp{\delta}\cdot\nt$,
	\begin{align*}
\int_0^T\int_\Gamma \ellt_\gamma\,\bset{\gt^{\alpha\beta} \vt_{tt}\cp{\alpha\beta}\cdot\nt}\cp{\gamma} \gt^{\mu\nu}\vt_{tt}\cp{\mu\nu}\cdot\nt = -\frac{1}{2}\int_0^T \int_\Gamma \hd_\gamma{\ellt_\gamma}\abs{ \gt^{\mu\nu}\vt_{tt}\cp{\mu\nu}\cdot\nt }^2.
	\end{align*}
This
        establishes that $\int_0^T\j=\int_0^T\RK$. We thus conclude that
	\begin{align*}
		\int_0^T\i_{3b,3}=\int_0^T\RK. 
	\end{align*}
		Hence, 
	\begin{align}
		\label{boundary integral: error i_3} \int_0^T \i_{3}\, \le \int_{0}^{T}\RK. 
	\end{align}
	
	\subsection*{Analysis of $\int_0^T\i_4$ in the time-integral
          of \eqref{tv.4.i}} We integrate by parts with respect to a
        time derivative of $\vt_{ttt}$ in   
        $\int_0^T\i_4$ and, if need be, spatially integrate by parts. For example, letting $\i_{4}=\sum_{\counter=1}^4 \i_{4,\counter}$, we find that after integration by parts with respect to time, 
	\begin{align*}
		\int_0^T\i_{4,1}&= \int_0^T\int_\Gamma \pset{\sqrt{\gt} \gt^{\alpha\beta}}_t \pset{\et \cp{\alpha\beta}\cdot\nt }_{tttt} \,\vt_{ttt}\cdot\nt+\int_0^T\RK \\
		&= - \int_0^T\int_\Gamma \vt_{ttt} \cp{\alpha}\cdot\hd_\beta[\nt \,\pset{\sqrt{\gt} \gt^{\alpha\beta}}_t \vt_{ttt}\cdot\nt]+\int_0^T\RK =\int_0^T\RK. 
	\end{align*}
	
	Similarly, integration by parts with respect to time provides for the expression 
	\begin{align*}
		\int_0^T\i_{4,2}+\int_0^T\i_{4,3}=\int_0^T\RK. 
	\end{align*}
	
	Finally, using the differentiation formulas (\ref{formula: derivative: metric inverse}) and (\ref{formula: derivative: Jacobian determinant}), 
	\begin{align*}
		\int_0^T \i_{4,4}= \int_{0}^{T}\int_\Gamma \sqrt{\gt} (2 \gt^{\alpha\mu} \gt^{\nu\beta}-\gt^{\alpha\beta} \gt^{\mu\nu})\,\vt{}_{ttt}\cp{\mu}\cdot \et \cp{\nu}\,\et \cp{\alpha\beta}\cdot\nt\, \vt_{tttt}\cdot\nt + \int_0^T\RK = \int_0^T\RK, 
	\end{align*}
	where the second equality follows from our above analysis of $\int_0^T\i_{2b}$.
	
	Hence, 
	\begin{align}
		\label{boundary integral: error i_4} \int_0^T \i_4\, \le \int_{0}^{T}\RK. 
	\end{align}
	
	\subsection*{Analysis of $\int_0^T\i_5$  in the time-integral
          of \eqref{tv.4.i}} In the
        first term defining $\i_5$, we integrate by parts
        with respect to $\hd_ \alpha$ when $\hd_\beta$ acts on
        $\nt^j$. This term is bounded by $\int_{0}^{T}\RK$, since
        thanks to Lemma~\ref{lem.j}, 
	\begin{align*}
		\int_0^T\abs{\sqrt{\kappa}\vt_{tttt}}_0^2\le C 		\int_0^T\abs{\sqrt{\kappa}\vt_{tttt}}_{0.25}^2\le \delta
                \int_0^T \norm{\sqrt{\kappa}\vt_{tttt}}_1^2 +C_ \delta\,T\Sup\norm{\vt_{tttt}(t)}_0^2.
	\end{align*}
The remaining terms are similarly estimated. 
	Hence, 
	\begin{align}
		\label{boundary integral: error i_5} \int_0^T \i_5\, \le \int_{0}^{T}\RK. 
	\end{align}

	\subsection*{Rewriting the equation (\ref{test:divergence})} Summing the inequalities (\ref{boundary integral:
          error i_1}), (\ref{boundary integral: error i_2}),
        (\ref{boundary integral: error i_3}), (\ref{boundary integral:
          error i_4}),  (\ref{boundary integral: error i_5}) yields
	\begin{align}
		\label{est: boundary-error} \sum_{i=1}^5\int_0^T\i_i\le \int_{0}^{T}\RK.
	\end{align}
Thanks to the inequality \eqref{est: boundary-error} and the
identity (\ref{tv.4.i}), we equivalently write (\ref{test:divergence}) as
	\begin{align}
		\label{energy equation} 
		\begin{aligned}[b]
			\frac{1}{2}\frac{d}{dt} \int_\Omega\rho_0
                        \abs{\p^4_t \vt}^2 + \frac{d}{dt} \int_\Omega
                        {\rho_0^2\Jt^{-3}}\abs{\p^4_t\Jt}^2 +
                        \frac{1}{2}\frac{d}{dt}\int_\Gamma \sqrt{\gt}
                        \gt^{\alpha\beta} \,
                        {\vt_{ttt}}\cp{\alpha}\cdot \nt&
                        \,{\vt_{ttt}}\cp{\beta}\cdot \nt \\
+ 
\int_{\Gamma} \sqrt{\kappa}\vt_{tttt}\cp{\beta}\cdot \nt\sqrt{\gt}\gt
^{\alpha\beta} \sqrt{\kappa}\vt_{tttt}\cp{\alpha}\cdot \nt
			+ \int_\Omega \rho_0^2\Jt^{-1} \abs{ \sqrt{\kappa}\p^5_t\Jt}^2 & = \RK.
		\end{aligned}
	\end{align}
The time-integral of (\ref{energy equation}) completes the proof. 
\end{proof}

Via Proposition~\ref{prop:Hodge}, the estimates of Lemma~\ref{lem:
  curl estimates} and  Proposition~\ref{prop: divergence: energy estimates} imply
the following 
\begin{prop}
	[The $\kappa$-independent estimates for $\vt_{ttt}$ and $\sqrt{\kappa} \vt_{tttt}$] \label{prop: estimate for v_ttt
          and sqrt kp v_tttt}  
	\begin{align*}
		\Sup\norm{\vt_{ttt}(t)}_1^2 + \int_{0}^{T}\norm{\sqrt{\kappa}\vt_{tttt}}_1^2\le\int_{0}^{T}\RK.
	\end{align*}
\end{prop}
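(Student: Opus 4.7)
The plan is to apply the Hodge elliptic estimate from Proposition~\ref{prop:Hodge} with $s=1$ separately to $w=\vt_{ttt}$ (time-pointwise) and to $w=\sqrt{\kappa}\vt_{tttt}$ (time-integrated), bounding each of the four terms that appear on the right-hand side using what has already been established.

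For the pointwise estimate on $\vt_{ttt}$, I would first handle $\norm{\vt_{ttt}(t)}_0$ by writing $\vt_{ttt}(t)=\vt_{ttt}(0)+\int_0^t\vt_{tttt}$, bounding the initial value by $\NK$ and the integral by $T^{1/2}\bpset{\int_0^T\norm{\vt_{tttt}}_0^2}^{1/2}$, which Proposition~\ref{prop: divergence: energy estimates} controls by $\int_0^T\RK$. The curl term $\norm{\curl\vt_{ttt}(t)}_0^2$ comes directly from Lemma~\ref{lem: curl estimates} (the $a=4$ summand, since $\p^4_t\et=\vt_{ttt}$). For the boundary term $\abs{\hd\vt_{ttt}\cdot N}_{-0.5}$, since $\hd$ is tangential I would write $\hd\vt_{ttt}\cdot N=\hd(\vt_{ttt}\cdot\nt)+\vt_{ttt}\cdot(N-\nt)_{,\alpha}+\hd\vt_{ttt}\cdot(N-\nt)$ and then use $\abs{\vt_{ttt}\cdot\nt(t)}_1^2$ which is bounded by Proposition~\ref{prop: divergence: energy estimates} and (via the formula $\sqrt{\gt}\nt^i=\at_i^k N^k$) is compatible with $H^{-0.5}(\Gamma)$-duality pairings against lower-order contributions.

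The main obstacle is the divergence term $\norm{\Div\vt_{ttt}}_0^2$, which requires reconciling the Euclidean divergence appearing in Hodge with the controlled quantity $\p^4_t\Jt$. Using $\Jt_t=\at_r^s\vt^r\cp{s}$, I would write
\begin{align*}
\p^4_t\Jt=\at_r^s\vt_{ttt}^r\cp{s}+\sum_{k=1}^{3}\tbinom{3}{k}\p^k_t\at_r^s\p^{3-k}_t\vt^r\cp{s},
\end{align*}
so that $\at_r^s\vt_{ttt}^r\cp{s}=\p^4_t\Jt-\mathrm{l.o.t.}$, and then split
\begin{align*}
\vt_{ttt}^r\cp{r}=\at_r^s\vt_{ttt}^r\cp{s}+(\delta_r^s-\at_r^s)\vt_{ttt}^r\cp{s}.
\end{align*}
The second summand is controlled by $\norm{\at-I}_{L^\infty}\norm{\vt_{ttt}}_1\le C\sqrt{T}\norm{\vt_{ttt}}_1$ via \eqref{universal constant}, which when inserted in the Hodge estimate produces a term that can be absorbed into the left-hand side for $T$ sufficiently small. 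The lower-order terms involve $\p^k_t\at$ paired with $D\p^{3-k}_t\vt$ for $1\le k\le 3$, all of which sit safely below the top regularity level and contribute only to $\int_0^T\RK$.

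For the integrated estimate on $\sqrt{\kappa}\vt_{tttt}$, the argument is structurally identical: the $L^2$ term absorbs via $\sqrt{\kappa}$ and the trace lemma (Lemma~\ref{lem.j}); $\int_0^T\norm{\sqrt{\kappa}\curl\vt_{tttt}}_0^2$ is directly controlled by Lemma~\ref{lem: curl estimates}; the divergence term uses the analogous identity to relate $\Div\sqrt{\kappa}\vt_{tttt}$ to $\sqrt{\kappa}\p^5_t\Jt$, which Proposition~\ref{prop: divergence: energy estimates} bounds in $L^2(0,T;L^2(\Omega))$; and the boundary term uses $\int_0^T\abs{\sqrt{\kappa}\hd\vt_{tttt}\cdot\nt}_0^2\le\int_0^T\RK$, again from Proposition~\ref{prop: divergence: energy estimates}. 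Summing the two Hodge estimates and absorbing the $C\sqrt{T}\norm{\vt_{ttt}}_1$-type terms on the left yields the stated bound.
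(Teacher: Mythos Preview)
Your approach is essentially the same as the paper's: apply Proposition~\ref{prop:Hodge} with $s=1$, feed in the curl bounds from Lemma~\ref{lem: curl estimates}, extract the divergence from the $\p^4_t\Jt$ (resp.\ $\sqrt{\kappa}\p^5_t\Jt$) estimate of Proposition~\ref{prop: divergence: energy estimates} by correcting $\at_r^s$ back to $\delta_r^s$, and handle the normal-trace term via the $\abs{\vt_{ttt}\cdot\nt}_1$ (resp.\ $\abs{\sqrt{\kappa}\hd\vt_{tttt}\cdot\nt}_0$) bound already obtained. The paper packages the two ``correction'' steps uniformly via the fundamental theorem of calculus, writing $\delta_r^s-\at_r^s=-\int_0^t\p_t\at_r^s$ and $N-\nt=-\int_0^t\p_t\nt$, which places the error terms directly into the $T\cdot\PK(\Sup\EK)$ part of $\RK$ without an explicit absorption step; your $C\sqrt{T}\norm{\vt_{ttt}}_1$ argument is the same mechanism. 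One small point: your boundary decomposition $\hd\vt_{ttt}\cdot N=\hd(\vt_{ttt}\cdot\nt)+\vt_{ttt}\cdot(N-\nt)\cp{\alpha}+\hd\vt_{ttt}\cdot(N-\nt)$ is off by the term $-\vt_{ttt}\cdot\hd N$ (just expand both sides), but that extra term is lower order and bounded exactly as you indicate, so the argument goes through unchanged.
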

\begin{proof}
	
	The fundamental theorem of calculus provides for 
	\begin{align}
		\label{identity: div and Lagrangian} \Div \p^a_t\et& =\at_r^s\p^a_t\et ^r\cp{s}-\p^a_t\et ^r\cp{s}\int_0^t\at_t{}_r^s,\ \ a=0,1,2,3,4. 
	\end{align}
The identity $\Jt_t=\at_r^s\vt^r\cp{s}$ implies that for
        $a=1,2,3$, $\p^a_t\Jt_t$ is equal to
        $\at_r^s\p^a_t\vt^r\cp{s}+\tilde\jmath$, where $\tilde \jmath$
        scales like $D\p^{a-1}_t\vt$. The  estimate for
        $\p^4_t\Jt$ stated in  Proposition~\ref{prop: divergence:
          energy estimates} therefore implies that
	\begin{align}
\label{ftc.div}
		\Sup \norm{\Div\vt_{ttt}(t)}_0^2\le \int_{0}^{T}\RK. 
	\end{align}
	 
 	Using the identity $N= \nt-\int_0^t\p_t\nt$, and the estimate 
	\begin{align*}
	\abs{\hd\p^a_t\vt\cdot \int_0^t\p_t \nt}_{2.5-a}^2\le T\,C  \norm{ \p^a_t\vt}_{4-a}^2\ \ \ \text{for $a=0,1,2,3$,}
	\end{align*}
	we infer from the trace-estimate stated in Proposition~\ref{prop: divergence: energy estimates} that 
	\begin{align*}
			\Sup\abs{\hd\vt_{ttt} (t)\cdot N}^2_{-0.5}\le \int_{0}^{T}\RK. 
	\end{align*}
	Thanks to the curl-estimates of Lemma~\ref{lem: curl
          estimates}, Proposition~\ref{prop:Hodge} therefore
        establishes that
        \begin{align*}
          \Sup \norm{\vt_{ttt}(t)}_1^2\le \int_{0}^{T}\RK.
        \end{align*}
A similar analysis establishes the $L^2(0,T;H^1(\Omega))$-estimate for $\sqrt{\kappa}\vt_{tttt}$.
\end{proof}

We now improve the normal trace-estimates of Proposition~\ref{prop: divergence: energy estimates}.
\begin{prop}[An improved $\kappa$-independent normal trace-estimate for $\vt_{ttt}$]
	\label{prop: lowest-order normal trace} 
	\begin{align*}
		\Sup\abs{\vt_{ttt}\cdot\nt (t)}_{1}^2 \le\int_{0}^{T}\RK. 
	\end{align*}
\end{prop}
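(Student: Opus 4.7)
The plan is to upgrade the bound produced by the boundary term in the energy identity \eqref{energy equation}---which is really a bound on $\abs{\hd\vt_{ttt}\cdot \nt}_0$ rather than on $\abs{\vt_{ttt}\cdot \nt}_1$---into the full $H^1(\Gamma)$ norm by combining it with the $H^1(\Omega)$-estimate on $\vt_{ttt}$ established in Proposition~\ref{prop: estimate for v_ttt and sqrt kp v_tttt}, using the Leibniz rule and the trace theorem.

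First I would time-integrate \eqref{energy equation} and use that the matrix $\sqrt{\gt}\gt^{\alpha\beta}$ remains uniformly positive definite on $[0,T]$---a consequence of $\Sup\norm{\et(t)}_5^2 \le \EK$ combined with the smallness of $T$ (cf.~\eqref{universal constant})---to extract the intermediate bound
\begin{align*}
  \Sup \abs{\hd\vt_{ttt}\cdot \nt(t)}_0^2 \le \int_0^T \RK.
\end{align*}
Next I would write $\hd(\vt_{ttt}\cdot \nt)= \hd\vt_{ttt}\cdot \nt+ \vt_{ttt}\cdot \hd\nt$ via the Leibniz rule. By \eqref{formula: derivative: outward normal vector} and Sobolev embedding on the two-dimensional surface $\Gamma$, $\abs{\hd\nt(t)}_{L^\infty(\Gamma)}$ is controlled by $\norm{\et(t)}_5$, hence by $\EK^{1/2}$. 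Combining the trace theorem with Proposition~\ref{prop: estimate for v_ttt and sqrt kp v_tttt} gives $\abs{\vt_{ttt}(t)}_0 \le C \norm{\vt_{ttt}(t)}_1$, whose supremum is bounded by $\int_0^T \RK$. Putting these together yields
\begin{align*}
  \Sup \abs{\hd(\vt_{ttt}\cdot \nt)(t)}_0^2 \le \int_0^T \RK,
\end{align*}
while the zeroth-order contribution $\Sup\abs{\vt_{ttt}\cdot\nt(t)}_0^2$ is handled by another application of the trace theorem to the same $H^1(\Omega)$ estimate on $\vt_{ttt}$.

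The main obstacle is the commutator $\vt_{ttt}\cdot \hd\nt$ produced when moving from $\hd\vt_{ttt}\cdot \nt$ to $\hd(\vt_{ttt}\cdot \nt)$: one derivative must land on the free-surface geometry $\nt$, and so this term is genuinely of lower order, but controlling it requires the $H^5(\Omega)$-regularity of $\et$ to enter the $L^\infty(\Gamma)$ control of $\hd\nt$. Because this regularity is already absorbed in $\EK$ and in the polynomial $P(\EK)$ structure of $\RK$, no derivative loss occurs and the error is manifestly of type $\int_0^T \RK$.
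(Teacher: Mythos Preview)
Your proposal is correct and follows essentially the same approach as the paper: both combine the $H^1(\Omega)$-estimate on $\vt_{ttt}$ from Proposition~\ref{prop: estimate for v_ttt and sqrt kp v_tttt} (via the trace theorem) with the boundary control $\abs{\hd\vt_{ttt}\cdot\nt}_0$ coming from the energy identity. The paper's proof is terser---it simply invokes the trace bound $\abs{\vt_{ttt}}_{0.5}$ and says ``combining'' with the normal trace-estimate of Proposition~\ref{prop: divergence: energy estimates} completes the proof---whereas you have written out the Leibniz-rule commutator $\vt_{ttt}\cdot\hd\nt$ explicitly, which is exactly the content hidden behind the paper's word ``combining.''
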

\begin{proof}
	The estimate of Proposition~\ref{prop: estimate for v_ttt and sqrt kp v_tttt} together with the trace theorem implies that 
	\begin{align*}
		\Sup\abs{\vt_{ttt}}_{0.5}^2 \le\int_{0}^{T}\RK. 
	\end{align*}
	Combining this estimate with the normal trace-estimate of Proposition~\ref{prop: divergence: energy estimates} completes the proof. 
\end{proof}
\subsection*{Step 3: The $\kappa$-independent divergence- and normal trace-estimates} \label{sec:step-3:-divergence-1.k}
	We equivalently write the approximate surface tension problem (\ref{test: twice rewritten interior equation}) as 
	\begin{align*}
		2\rho_0\Jt^{-2}\At_i^k\Jt\cp{k}+\kappa\rho_0 \At_i^k\Jt_t\cp{k}=\vt^i_t+\bset{2\Jt^{-1}-\kappa \Jt_t}\At_i^k\rho_0\cp{k}, 
	\end{align*}
	or equivalently, setting $\kappa' = \kappa/2$,
	\begin{align}
		\label{Euler for d}\Jt
                ^{-2}\At_\cdot^k\Jt\cp{k}+\kappa'\At_\cdot^k\Jt_t\cp{k}=\underbrace{\tfrac{1}{2\rho_0}\bset{\vt_t+\bset{2\Jt^{-1}-\kappa
                      \Jt_t}\At_\cdot^k\rho_0\cp{k}}}_{\tilde{\mathcal{V}}
                _t}. 
	\end{align}
The fundamental theorem of calculus provides that \eqref{Euler for d}
is equivalently written as
	\begin{align}
		\label{Euler for divergence}
D\Jt+\kappa'D\Jt_t=\tilde{\mathcal{V}}
                _t - \underbrace{\bset{\Jt \cp{k } \int_{0}^{t}\p_t (\Jt
^{-2} \At_\cdot ^k)
+
\kappa \Jt_t \cp{k } \int_{0}^{t}\p_t \At_\cdot ^k}}_{\tilde \jmath}.
	\end{align}

\begin{lem}
	[Estimates for $\Jt_{ttt}$ and $\kappa\,\Jt_{tttt}$ via Lemma~\ref{kappa-indep: f+kappa f_t}]\label{lem: JT_ttt and kp JT_tttt}
	\begin{align*}
		\Sup\norm{\Jt_{ttt}(t)}_{1}^2 + \Sup\norm{\kappa\,\Jt_{tttt}(t)}_1^2 \le\int_{0}^{T}\RK.
	\end{align*}
\end{lem}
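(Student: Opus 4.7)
The strategy is to apply Lemma~\ref{kappa-indep: f+kappa f_t} to a three-times time-differentiated form of \eqref{Euler for divergence}.  Taking $\p_t^3$ of \eqref{Euler for divergence} yields
\begin{align*}
D\Jt_{ttt} + \kappa' D\p_t^4\Jt = \p_t^3\tilde{\mathcal{V}}_t - \p_t^3\tilde{\jmath},
\end{align*}
which is of the form $f + \kappa' f_t = g$ with $f = D\Jt_{ttt}$ and $\kappa' = \kappa/2$.

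The main task is to bound the right-hand side in $L^\infty(0,T;L^2(\Omega))$ by $\int_0^T\RK$.  The dominant terms in $\p_t^3\tilde{\mathcal{V}}_t$ are $\vt_{tttt}/(2\rho_0)$ together with products involving $\p_t^4\Jt$, $\p_t^3\At\sim D\vt_{tt}$, and lower-order time-derivatives of $\vt,\,\Jt,\,\At$, each multiplied by bounded coefficients from $\rho_0$, $D\rho_0$, and $\Jt^{-1}$ (the latter controlled by \eqref{assum.Jt}).  The three time-derivatives of the time-integral $\tilde{\jmath}$ produce terms of order $D\vt_{ttt}$ at worst.  Each constituent is already under control: $\Sup\norm{\vt_{tttt}}_0^2$ and $\Sup\norm{\p_t^4\Jt}_0^2$ are bounded by $\int_0^T\RK$ via Proposition~\ref{prop: divergence: energy estimates}; $\Sup\norm{D\vt_{ttt}}_0^2$ by Proposition~\ref{prop: estimate for v_ttt and sqrt kp v_tttt}; and $D\vt_{tt}$ in $L^2$ follows from the fundamental theorem of calculus $D\vt_{tt}(t) = D\vt_{tt}(0) + \int_0^t D\vt_{ttt}$ together with the $D\vt_{ttt}$-bound and the $C^\infty$-smoothness of the initial data from Section~\ref{sec:assum-cinfty-class-1}.

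Since the same smoothness yields $\norm{D\Jt_{ttt}(0)}_0^2\le\NK$, Lemma~\ref{kappa-indep: f+kappa f_t} gives $\Sup\norm{D\Jt_{ttt}}_0^2\le\int_0^T\RK$.  The complementary bound $\Sup\norm{\Jt_{ttt}}_0^2\le\int_0^T\RK$ follows from $\Jt_{ttt}(t) = \Jt_{ttt}(0)+\int_0^t\p_t^4\Jt$ and Proposition~\ref{prop: divergence: energy estimates}.  Together these yield the $H^1$-estimate for $\Jt_{ttt}$.  For $\kappa\,\Jt_{tttt}$ in $H^1$, the spatial-derivative part is obtained by rearranging the differentiated equation as $\kappa' D\p_t^4\Jt = g - D\Jt_{ttt}$, whose right-hand side is already controlled; the $L^2$ part is trivial via $\norm{\kappa\,\Jt_{tttt}}_0\le\kappa\,\norm{\Jt_{tttt}}_0$ using Proposition~\ref{prop: divergence: energy estimates}.

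\textbf{Main obstacle.}  No individual step is difficult; the only subtlety is the bookkeeping that confirms $\p_t^3\tilde{\mathcal{V}}_t$ and $\p_t^3\tilde{\jmath}$ involve no time-derivatives of $\vt,\Jt$ beyond $\vt_{tttt}$ and $\p_t^4\Jt$, and no spatial derivatives of $\vt$ beyond $D\vt_{ttt}$, so that all factors fall within the estimates already established in Steps~1 and~2.
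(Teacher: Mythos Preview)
Your approach is essentially the same as the paper's: differentiate \eqref{Euler for divergence} three times, apply Lemma~\ref{kappa-indep: f+kappa f_t} to $f=D\Jt_{ttt}$, then read off $\kappa D\Jt_{tttt}$ from the equation. One point of bookkeeping you understate: $\p_t^3\tilde\jmath$ is not ``$D\vt_{ttt}$ at worst''---the Leibniz rule on the product $\Jt\cp{k}\int_0^t(\cdots)$ (and its $\kappa\Jt_{t}\cp{k}$ counterpart) produces $D\Jt_{ttt}\cdot\int_0^t(\cdots)$ and $\kappa D\Jt_{tttt}\cdot\int_0^t(\cdots)$, i.e.\ precisely the quantities being estimated. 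The paper notes these carry a factor $T$ from the time integral, and since $D\Jt_{ttt}\sim D^2\vt_{tt}$ and $\kappa D\Jt_{tttt}\sim \kappa D^2\vt_{ttt}$ are controlled by $\norm{\vt_{tt}}_2$ and $\norm{\kappa\vt_{ttt}}_2$ (both in $\EK$), they fall into $T\,\PK(\Sup\EK)\subset\int_0^T\RK$. With that correction your argument goes through and matches the paper.
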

\begin{proof}
	Taking three time-derivatives of (\ref{Euler for divergence}) produces an equation of the form $f+\kappa f_t=g$: 
	\begin{align}
		\label{interior: two time derivatives} 
 D\Jt_{ttt}+ \kappa' \p_t ( D\Jt_{ttt})	= \p^3_t\tilde{\mathcal{V}}_{t}+\tilde{\jmath}_{ ttt}.
\end{align}
By \eqref{Euler for d} we have that $\p^3_t\tilde{\mathcal{V}}_{t}$ scales like
$\vt_{tttt}+ T\,D\vt_{ttt} + \kappa  \p^5_t\Jt$. Proposition~\ref{prop: estimate for v_ttt
          and sqrt kp v_tttt} therefore implies that $\norm{\p^3_t\tilde{\mathcal{V}}_{t} (t)}_0^2 \le \int_{0}^{T}\RK$. According to
        \eqref{Euler for divergence}, we have that $\tilde{\jmath}_{ ttt}$
        scales like $T \, D\Jt_{ttt}+ T\,\kappa D\Jt_{tttt} +
        T\,D\vt_{ttt} $. So, $\norm{\tilde{\jmath}_{ttt}(t)}_0^2 \le \int_{0}^{T}\RK$.
The fundamental theorem of calculus provides  a good
        estimate for $\Jt_{ttt}$ in $L^\infty(0,T;L^2(\Omega))$.
	Thus, Lemma~\ref{kappa-indep: f+kappa f_t} implies that 
	\begin{align*}
		\Sup \norm{\Jt_{ttt}(t)}_1^2\le\int_{0}^{T}\RK. 
	\end{align*}
We similarly infer from  equation (\ref{interior: two time
  derivatives}) and the fundamental theorem of calculus that 
	\begin{align*}
		\Sup \norm{\kappa \Jt_{tttt}(t)}_1^2\le\int_{0}^{T}\RK. 
	\end{align*}
	This completes the proof. 
\end{proof}

\begin{lem}
	[Normal trace-estimates for $\vt_{tt}$ and $\kappa \vt_{ttt}$] \label{lem:trace est
          v_t.n and kp v_tt.n} 
	\begin{align*}
		\Sup \abs{\hd\vt_{tt}\cdot\nt(t)}_{0.5}^2
+
		\Sup \abs{\kappa\hd\vt_{ttt}\cdot\nt(t)}_{0.5}^2&\le\int_{0}^{T}\RK.
	\end{align*}
\end{lem}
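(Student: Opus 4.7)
The plan is to derive an elliptic equation on the moving surface $\Gamma$ satisfied by the scalar $f = \vt_{tt}\cdot\nt$ of the form $\sigma\Delta_g f + \kappa\Delta_g f_t = G$, invert $\Delta_g$ modulo its constant kernel, and then apply the $\Gamma$-analogue of Lemma~\ref{kappa-indep: f+kappa f_t} to convert a bound on $G$ into a $\kappa$-uniform bound on $f$ (and separately on $\kappa f_t$) in a surface Sobolev norm strong enough to absorb one tangential derivative.

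First I apply $\p_t^3$ to the boundary identity \eqref{test: twice rewritten boundary condition}. The principal part on the left produces $-2\rho_0^2\Jt^{-3}\Jt_{ttt}-\kappa\rho_0^2\Jt^{-1}\Jt_{tttt}$, and on the right, via \eqref{akp.H}, produces $-\sigma\gt^{\alpha\beta}\vt_{tt}\cp{\alpha\beta}\cdot\nt - \kappa\gt^{\alpha\beta}\vt_{ttt}\cp{\alpha\beta}\cdot\nt + \p_t^3\beta(t)$, modulo lower-order factors. Next, using the identity \eqref{formula:Laplace-Beltrami opearator and projection operator} and the scalar decomposition $(\vt_{tt}\cdot\nt)\cp{\alpha\beta} = \vt_{tt}\cp{\alpha\beta}\cdot\nt + 2\vt_{tt}\cp{\alpha}\cdot\nt\cp{\beta} + \vt_{tt}\cdot\nt\cp{\alpha\beta}$, I rewrite $\gt^{\alpha\beta}\vt_{tt}\cp{\alpha\beta}\cdot\nt = \Delta_g(\vt_{tt}\cdot\nt) + R_1$ and analogously for $\vt_{ttt}$, where $R_1$ is a bilinear remainder involving only first tangential derivatives of $\vt_{tt}$ (resp.\ $\vt_{ttt}$) paired with derivatives of $\nt$ and $\gt$. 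This yields
\[
\sigma\Delta_g f + \kappa\Delta_g f_t = G \quad\text{on } \Gamma,
\]
where $G$ is an explicit combination of $\Jt_{ttt}$, $\kappa\Jt_{tttt}$, the smooth function $\p_t^3\beta(t)$, and lower-order boundary terms. Combining Lemma~\ref{lem: JT_ttt and kp JT_tttt} with the trace theorem and the curl-estimates of Lemma~\ref{lem: curl estimates} gives $\abs{G(t)}_{0.5}^2 \le \int_0^T \RK$. Elliptic regularity for $\Delta_g$ on the closed surface $\Gamma$ (modulo constants, which drop out under $\hd$) then produces a function $h$ with $\abs{h(t)}_{2.5}\lesssim \abs{G(t)}_{0.5}$ and $\sigma f + \kappa f_t = h$. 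Applying Lemma~\ref{kappa-indep: f+kappa f_t} in its boundary version gives $\Sup\abs{f(t)}_{2.5} \le C\max\{\abs{f(0)}_{2.5}, \sup_{t}\abs{h(t)}_{2.5}\} \le \NK + \int_0^T\RK$, and tangentially differentiating yields $\Sup\abs{\hd\vt_{tt}\cdot\nt(t)}_{1.5}^2 \le \int_0^T\RK$, which is stronger than the claimed $H^{0.5}(\Gamma)$-bound. The second estimate follows at once from $\kappa f_t = h - \sigma f$, noting that $f_t = \vt_{ttt}\cdot\nt + \vt_{tt}\cdot\nt_t$, with $\nt_t = -\gt^{\gamma\delta}(\vt\cp{\delta}\cdot\nt)\et\cp{\gamma}$ by \eqref{t derivative: outward normal vector}, so the correction $\vt_{tt}\cdot\nt_t$ is bounded by $\int_0^T\RK$ already.

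The main obstacle is the careful accounting of lower-order terms. In the $\p_t^3$-differentiation of $\Htk$ one produces numerous products involving $\p_t^j \vt$ paired with geometric quantities of the form $\p_t^{3-j}\gt^{\alpha\beta}$, $\p_t^{3-j}\et\cp{\alpha\beta}$, $\p_t^{3-j}\nt$; similarly the Laplace-Beltrami rewriting introduces terms where $\vt_{tt}$ enters undifferentiated but multiplies second tangential derivatives of $\nt$. Each such contribution must be verified to live in $H^{0.5}(\Gamma)$ uniformly in $\kappa$, which requires invoking $\norm{\et(t)}_5$, $\norm{\vt_{ttt}(t)}_1$ (Proposition~\ref{prop: estimate for v_ttt and sqrt kp v_tttt}), the curl-estimates of Lemma~\ref{lem: curl estimates}, and the Jacobian estimates of Lemma~\ref{lem: JT_ttt and kp JT_tttt}, together with trace theorems and Young's inequality to package the resulting bounds into the $\int_0^T\RK$-budget. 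A subsidiary point is the treatment of the kernel of $\Delta_g$: the mean value $\int_\Gamma G$ need not vanish, but after subtracting its mean we can invert and absorb the remaining constant mode into a time-dependent correction that is itself controlled by the same data and drops out when we apply $\hd$.
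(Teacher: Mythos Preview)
Your approach is far more elaborate than what is required, and as written it contains a circularity. The paper's proof is a two-line interpolation argument: write
\[
\abs{\hd\vt_{tt}\cdot\nt(t)}_{0.5}^2 \le C\,\abs{\vt_{tt}\cdot\nt(t)}_{0.5}\,\abs{\hd\vt_{tt}\cdot\nt(t)}_{1.5} \le C_\delta\,\abs{\vt_{tt}\cdot\nt(t)}_{0.5}^2 + \delta\,\abs{\hd\vt_{tt}\cdot\nt(t)}_{1.5}^2,
\]
bound the first term by $\NK + C\int_0^T\norm{\vt_{ttt}}_1^2$ via the fundamental theorem of calculus (using Proposition~\ref{prop: estimate for v_ttt and sqrt kp v_tttt}), and note the second term is $\delta$ times a quantity already appearing in $\EK$. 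The $\kappa\vt_{ttt}$ estimate is identical. No elliptic equation, no inversion of $\Delta_g$, no appeal to Lemma~\ref{kappa-indep: f+kappa f_t}.

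Your route---deriving $\sigma\Delta_g f + \kappa\Delta_g f_t = G$ on $\Gamma$, inverting, and applying Lemma~\ref{kappa-indep: f+kappa f_t}---is essentially what the paper does \emph{later}, in Proposition~\ref{prop: Improved boundary.k}, to obtain the sharper bound $\abs{\hd^2\vt_{tt}\cdot\nt}_{0.5}^2\le\int_0^T\RK$. But that proposition sits after Proposition~\ref{prop: first higher-order.k}, which supplies the bounds $\norm{\kappa\vt_{ttt}}_2^2\le\int_0^T\RK$ and $\norm{\vt_{tt}}_2^2\le\int_0^T\RK$ that you need to place the Laplace--Beltrami remainders $R_1$ (terms like $\kappa\gt^{\alpha\beta}\vt_{ttt}\cp{\alpha}\cdot\nt\cp{\beta}$) into $H^{0.5}(\Gamma)$. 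At the present stage only $\norm{\vt_{ttt}}_1^2\le\int_0^T\RK$ is available, and Proposition~\ref{prop: first higher-order.k} itself depends on the lemma you are trying to prove. So your claim $\abs{G(t)}_{0.5}^2\le\int_0^T\RK$ cannot be justified here without circularity: you would instead obtain $\abs{G(t)}_{0.5}^2\le C\,\EK$ with a constant that is neither $\delta$-small nor accompanied by a factor of $T$, which does not fit the $\RK$-budget. You could salvage the argument by targeting only $G\in H^{-0.5}(\Gamma)$ (yielding $f\in H^{1.5}$, just enough), but then the machinery is still vastly heavier than the paper's interpolation.
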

\begin{proof}
  The fundamental theorem of calculus implies the desired
  estimates. For example,
  \begin{align*}
    \abs{\hd\vt_{tt}\cdot \nt(t)}_{0.5}^2 &
\le    C
    \abs{\vt_{tt}\cdot \nt(t)}_{0.5}
    \abs{\hd\vt_{tt}\cdot \nt(t)}_{1.5}
\\
&\le
\NK + C_\delta\int_{0}^{T}\norm{ \vt_{ttt}}_1^2+
\delta\Sup \abs{\hd \vt_{tt}\cdot \nt(t)}_{1.5}^2\le \int_{0}^{T}\RK.
  \end{align*}
\end{proof}

\subsection*{Step 4: The $\kappa$-independent higher-order estimates
  via Proposition~\ref{prop:Hodge}}

The divergence- and normal trace-estimates obtained in Step~3,
together with the curl-estimates of Lemma~\ref{lem: curl estimates},
imply a good  estimate  for $\vt_{tt}$. Hence, successively repeating Step~3 yields
  \begin{align}
\label{pre.kp.ho}
\Sup\sum_{a=0}^2 \norm{\p^a_t \vt(t)}_{4-a}^2
+
\Sup
\sum_{a=0}^2 \norm{\kappa\p^a_t \vt_t(t)}_{4-a}^2
		\le \int_{0}^{T}\RK. 
        \end{align}

We recall the boundary condition \eqref{akp.bc} is
\begin{align}
	\label{boundary condition.k}
\gt^{\alpha\beta}\et\cp{\alpha\beta}\cdot \nt + \kappa
\gt^{\alpha\beta} \vt \cp{\alpha\beta}\cdot \nt
= \underbrace{ \beta(t)-\rho_0^2\Jt^{-2} +\kappa\rho_0^2\Jt^{-1} \Jt_t }_{\tilde\jmath}. 
\end{align}
Writing the boundary condition \eqref{boundary condition.k} as an equation of the form
$f+\kappa f_t =g$ yields 
\begin{align}
	\label{boundary condition.1}
\gt^{\alpha\beta}\et\cp{\alpha\beta}\cdot \nt + \kappa \p_t(
\gt^{\alpha\beta} \et \cp{\alpha\beta}\cdot \nt)
= {\tilde\jmath} + \kappa \et \cp{\alpha\beta}\cdot (\nt \gt ^{\alpha\beta})_t. 
\end{align}
As $\kappa\et$ is of the same regularity as $\et$, the flow map for the
$\kappa$-problem does
not witness an improved boundary-regularity.  The fundamental theorem
of calculus, however, does imply a good estimate for the right-hand
side of \eqref{boundary condition.1}. Hence, we infer from  Lemma~\ref{kappa-indep:
  f+kappa f_t} that the normal trace of $\et$ and $\kappa\vt$ each has
a good estimate in $L^\infty(0,T;H^{4.5}(\Gamma))$:
\begin{align}
  \label{pre.kp.ho.1}
  \Sup \norm{\et(t)}_5^2+  \Sup \norm{\kappa\vt(t)}_5^2 \le \int_{0}^{T}\RK.
\end{align}
We collect the estimates \eqref{pre.kp.ho} and \eqref{pre.kp.ho.1} in
the following
\begin{prop}
[Higher-order $\kappa$-independent estimates]\label{prop: first higher-order.k} 
  \begin{align*}
\Sup\sum_{a=0}^3 \norm{\p^a_t \et(t)}_{5-a}^2
+
\Sup
\sum_{a=0}^3 \norm{\kappa\p^a_t \vt_t(t)}_{5-a}^2
		\le \int_{0}^{T}\RK. 
        \end{align*}
\end{prop}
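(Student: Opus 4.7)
The plan is to bootstrap upward from the estimates already established in Steps~1--3 using Proposition~\ref{prop:Hodge} and the boundary condition written in the ``$f+\kappa f_t=g$'' form. First I would use Step~3 directly: Lemma~\ref{lem: JT_ttt and kp JT_tttt} controls $\Div \vt_{tt}$ (modulo lower-order commutator terms arising from replacing $\Jt_{ttt}$ by $\at_r^s \vt_{tt}^r\cp{s}$, as in \eqref{ftc.div}), Lemma~\ref{lem:trace est v_t.n and kp v_tt.n} controls the normal trace $\hd\vt_{tt}\cdot N$ (after converting $\nt$ to $N$ via the fundamental theorem of calculus), and Lemma~\ref{lem: curl estimates} controls $\curl\vt_{tt}$. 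Proposition~\ref{prop:Hodge} then upgrades these into $\Sup\norm{\vt_{tt}(t)}_2^2\le\int_0^T\RK$, with the analogous $\kappa$-weighted estimate for $\vt_{ttt}$.

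Next I would iterate: with $\vt_{tt}\in L^\infty_tH^2$ in hand, the $f+\kappa f_t=g$ equation \eqref{Euler for divergence} applied with fewer time-derivatives provides estimates for $D\Jt_{tt}$ and $\kappa D\Jt_{ttt}$ via Lemma~\ref{kappa-indep: f+kappa f_t}, exactly as in Lemma~\ref{lem: JT_ttt and kp JT_tttt}, giving control of $\Div \vt_t$ in $H^1$. Combined with the curl-estimate for $\vt_t$ from Lemma~\ref{lem: curl estimates} and the normal-trace estimate (upgraded by another fundamental-theorem-of-calculus step exactly as in Lemma~\ref{lem:trace est v_t.n and kp v_tt.n}, now at one half-derivative higher), a second application of Proposition~\ref{prop:Hodge} yields $\Sup\norm{\vt_t(t)}_3^2\le\int_0^T\RK$. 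Repeating once more gives $\Sup\norm{\vt(t)}_4^2\le\int_0^T\RK$, and the time-integral identity $\et(t)=e+\int_0^t\vt$ then yields $\Sup\norm{\et(t)}_4^2\le\int_0^T\RK$. This establishes the inequality \eqref{pre.kp.ho} for the non-boundary portion.

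The main obstacle is upgrading $\et$ from $H^4$ to $H^5$ and $\kappa\vt$ to $H^5$, because the standard Hodge estimate already requires $H^{3.5}(\Gamma)$-control of the normal trace $\hd\et\cdot N$, and naively differentiating the Laplace--Young boundary condition would cost a full derivative of $\vt$ on the right-hand side which we do not yet have. The trick is the one already recorded in \eqref{boundary condition.1}: rewriting the parabolic boundary condition \eqref{akp.bc} as
\begin{align*}
\gt^{\alpha\beta}\et\cp{\alpha\beta}\cdot\nt+\kappa\,\p_t\bpset{\gt^{\alpha\beta}\et\cp{\alpha\beta}\cdot\nt}=\tilde\jmath+\kappa\,\et\cp{\alpha\beta}\cdot(\nt\gt^{\alpha\beta})_t,
\end{align*}
where $\tilde\jmath=\beta(t)-\rho_0^2\Jt^{-2}+\kappa\rho_0^2\Jt^{-1}\Jt_t$. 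The right-hand side is controlled in $H^{2.5}(\Gamma)$ by the estimates already proved ($\tilde\jmath$ through Lemma~\ref{lem: JT_ttt and kp JT_tttt} and the $H^4$ estimate for $\vt$, and the commutator term via the now-known $H^4$ estimate for $\et$). Lemma~\ref{kappa-indep: f+kappa f_t} applied on $\Gamma$ to the scalar quantity $f=\gt^{\alpha\beta}\et\cp{\alpha\beta}\cdot\nt$ then yields $\Sup\abs{\gt^{\alpha\beta}\et\cp{\alpha\beta}\cdot\nt}_{2.5}^2\le\int_0^T\RK$, and unfolding the second fundamental form gives the desired $H^{3.5}(\Gamma)$-estimate for $\hd\et\cdot N$ (modulo already-controlled lower-order terms).

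Finally, feeding this boundary bound together with the curl-estimate (Lemma~\ref{lem: curl estimates}) and the divergence-estimate (obtained from \eqref{Euler for divergence} applied at one more level of regularity) into Proposition~\ref{prop:Hodge} produces $\Sup\norm{\et(t)}_5^2\le\int_0^T\RK$. The identical argument, with $\kappa\vt$ in place of $\et$ and keeping the $\kappa$-weights, produces $\Sup\norm{\kappa\vt(t)}_5^2\le\int_0^T\RK$; the $\kappa$-weighted Lemma~\ref{kappa-indep: f+kappa f_t} application at intermediate levels handles $\norm{\kappa\p^a_t\vt_t(t)}_{5-a}^2$ for $a=1,2,3$, completing the proof.
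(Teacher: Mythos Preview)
Your proposal is correct and follows essentially the same approach as the paper: iterate Step~3 (divergence via Lemma~\ref{kappa-indep: f+kappa f_t}, normal trace via Lemma~\ref{lem:trace est v_t.n and kp v_tt.n}, curl via Lemma~\ref{lem: curl estimates}, then Proposition~\ref{prop:Hodge}) to reach \eqref{pre.kp.ho}, and then use the boundary condition in the $f+\kappa f_t=g$ form \eqref{boundary condition.1} together with Lemma~\ref{kappa-indep: f+kappa f_t} on $\Gamma$ to gain the extra boundary regularity needed for $\et,\kappa\vt\in H^5$. One small sharpening: when you say the commutator term $\kappa\,\et\cp{\alpha\beta}\cdot(\nt\gt^{\alpha\beta})_t$ is handled ``via the now-known $H^4$ estimate for $\et$'', the $H^4$ bound alone does not place it in $H^{2.5}(\Gamma)$; the paper's remark that ``the fundamental theorem of calculus \dots\ does imply a good estimate'' is doing real work here, producing a $T$-factor so that this term is absorbed into $T\,\PK(\Sup\EK)\subset\int_0^T\RK$.
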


\subsection*{Step 5: The $\kappa$-independent improved
  boundary-regularity estimates} \label{sec:step-3:-improved.k}
With the estimates provided by  Proposition~\ref{prop: first
  higher-order.k} for $\norm{\p^a_t\et}_{5-a}^2$ and $\norm{\kappa\p^a_t\vt}_{5-a}^2$, $a=0,1,2,3$, we are in
position to establish the improved boundary-regularity estimates.
\begin{prop}
	[The $\kappa$-independent improved boundary-regularity via Lemma~\ref{kappa-indep: f+kappa f_t}] \label{prop: Improved boundary.k} 
	\begin{align*}
		 \Sup \sum_{a=0}^2
                 \abs{\hd^2\p^a_t\vt\cdot\nt(t)}_{2.5-a}^2&
+
\Sup \sum_{a=0}^2 \abs{\hd^2\p^a_t\vt_t\cdot\nt(t)}_{2.5-a}^2
\le\int_{0}^{T}\RK.
	\end{align*}
\end{prop}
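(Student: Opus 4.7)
The plan is, for each $a \in \{0,1,2\}$, to derive from the boundary condition \eqref{akp.bc} a scalar equation on $\Gamma$ of the structural form $f + \kappa f_t = g$ in which $f$ essentially equals the mean-curvature-type quantity $\sigma\gt^{\alpha\beta}\p^a_t\vt\cp{\alpha\beta}\cdot\nt$, and then to invoke Lemma~\ref{kappa-indep: f+kappa f_t} at the surface regularity $s = 2.5 - a$. Concretely, after rewriting \eqref{akp.bc} as
\begin{equation*}
  \sigma\gt^{\alpha\beta}\et\cp{\alpha\beta}\cdot\nt + \kappa\gt^{\alpha\beta}\vt\cp{\alpha\beta}\cdot\nt = \beta(t) - \rho_0^2\Jt^{-2} + \kappa\rho_0^2\Jt^{-1}\Jt_t,
\end{equation*}
and applying $\p^{a+1}_t$ to both sides, using $\p_t\et = \vt$ and moving the commutator terms generated when $\p_t$ falls on $\gt^{\alpha\beta}$ or $\nt$ to the right-hand side, one obtains
\begin{equation*}
  \sigma\gt^{\alpha\beta}\p^a_t\vt\cp{\alpha\beta}\cdot\nt + \kappa\p_t\bset{\gt^{\alpha\beta}\p^a_t\vt\cp{\alpha\beta}\cdot\nt} = g_a,
\end{equation*}
where $g_a$ collects the $\p^{a+1}_t$ of the right-hand side of the previous display along with the aforementioned commutators.

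The second stage is to verify $\norm{g_a}_{L^\infty(0,T;H^{2.5-a}(\Gamma))}^2 \le \int_0^T \RK$. Using the scalings $\p^b_t\Jt^{-2}\sim D\p^{b-1}_t\vt$ and $\kappa\p^b_t(\Jt^{-1}\Jt_t)\sim\kappa D\p^{b-1}_t\vt_t$, the trace theorem converts the interior bounds provided by Proposition~\ref{prop: first higher-order.k}, Lemma~\ref{lem: JT_ttt and kp JT_tttt}, and Proposition~\ref{prop: divergence: energy estimates} into the required $H^{2.5-a}(\Gamma)$-bounds; crucially, the $\kappa$-weighted forcing pieces are handled by the additional estimate $\norm{\kappa\p^b_t\vt_t}_{5-b}^2\le\int_0^T\RK$ of Proposition~\ref{prop: first higher-order.k}. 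The commutators — each a product of a time-derivative of $\hd\et$ with a strictly lower-order $\hd^2\p^k_t\vt$ for $k<a$ — are controlled by Sobolev multiplication using the same prior estimates. Lemma~\ref{kappa-indep: f+kappa f_t} then yields
\begin{equation*}
  \Sup\abs{\gt^{\alpha\beta}\p^a_t\vt\cp{\alpha\beta}\cdot\nt}_{2.5-a}^2 \le \int_0^T\RK.
\end{equation*}

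The third stage is to upgrade this single contracted-Hessian bound to a bound on the full quantity $\hd^2\p^a_t\vt\cdot\nt$ in $H^{2.5-a}(\Gamma)$. Setting $u_a := \p^a_t\vt\cdot\nt$, the product-rule identity $\gt^{\alpha\beta}(u_a)\cp{\alpha\beta} = \gt^{\alpha\beta}\p^a_t\vt\cp{\alpha\beta}\cdot\nt + \text{l.o.t.}$ (where l.o.t.\ denotes products of $\hd\nt$ with $\hd\p^a_t\vt$, and of $\hd^2\nt$ with $\p^a_t\vt$, both rendered strictly lower-order by the already-established estimates) shows $\Delta_{\gt} u_a \in L^\infty(0,T;H^{2.5-a}(\Gamma))$; elliptic regularity for $\Delta_{\gt}$ on the compact surface $\Gamma$ — whose coefficients are controlled in $L^\infty(0,T;H^{3.5}(\Gamma))$ via $\et\in L^\infty(0,T;H^5(\Omega))$ — gives $u_a\in L^\infty(0,T;H^{4.5-a}(\Gamma))$, hence the claimed bound on $\hd^2 u_a$, and thereby on $\hd^2\p^a_t\vt\cdot\nt$ once the $\hd\nt$- and $\hd^2\nt$-products are reabsorbed. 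The companion estimate on $\hd^2\p^a_t\vt_t\cdot\nt$ is obtained by repeating the argument with $\p^{a+2}_t$ replacing $\p^{a+1}_t$.

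The main obstacle will be the tight bookkeeping in the second stage: every time-derivative of $\gt^{\alpha\beta}$ or $\nt$ consumes a spatial derivative of $\et$, and the $\kappa$-weighted Jacobian forcing requires one extra time-derivative of $\Jt$ beyond the naive trace-based count. The argument closes only because Proposition~\ref{prop: first higher-order.k} supplies the $\kappa$-weighted interior regularity $\norm{\kappa\p^b_t\vt_t}_{5-b}^2$ over the full range $b=0,1,2,3$, which precisely absorbs the $\kappa$-tail of $g_a$ at each level $a$.
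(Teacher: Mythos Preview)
Your treatment of the first sum is essentially the paper's argument: apply $\p_t^{a+1}$ to the boundary condition, rearrange to the form $f+\kappa f_t=g_a$ with $f=\gt^{\alpha\beta}\p_t^a\vt\cp{\alpha\beta}\cdot\nt$, check that $g_a\in L^\infty(0,T;H^{2.5-a}(\Gamma))$ via Proposition~\ref{prop: first higher-order.k}, and invoke Lemma~\ref{kappa-indep: f+kappa f_t}. Your explicit Laplace--Beltrami elliptic step to pass from the contracted quantity $\gt^{\alpha\beta}\p_t^a\vt\cp{\alpha\beta}\cdot\nt$ to the full $\hd^2(\p_t^a\vt\cdot\nt)$ is a correct and welcome expansion of a point the paper leaves implicit.

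The gap is in your handling of the second sum. First, the statement as printed omits a factor of $\kappa$ (compare with the definition of $\EK$; the paper's own proof produces only $\kappa\hd^2\vt_{ttt}\cdot\nt\in H^{0.5}(\Gamma)$, not the unweighted version). Second, your proposed route --- applying $\p_t^{a+2}$ and re-running Lemma~\ref{kappa-indep: f+kappa f_t} --- does not close at the top levels. At $a=2$ the forcing contains $\p_t^4(\rho_0^2\Jt^{-2})$, whose leading part scales like $D\vt_{ttt}$; since only $\vt_{ttt}\in H^1(\Omega)$ is available, this term has no trace in $H^{0.5}(\Gamma)$. At $a=1$ you would likewise need $\Jt_{ttt}\in H^2(\Omega)$, which is not provided. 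The paper avoids this entirely: having bounded $f=\gt^{\alpha\beta}\p_t^a\vt\cp{\alpha\beta}\cdot\nt$ in $H^{2.5-a}(\Gamma)$, it simply reads off $\kappa f_t=g_a-f$ from the \emph{same} equation and uses
\[
\kappa\p_t\bigl(\gt^{\alpha\beta}\p_t^a\vt\cp{\alpha\beta}\cdot\nt\bigr)=\kappa\gt^{\alpha\beta}\p_t^{a+1}\vt\cp{\alpha\beta}\cdot\nt+\kappa\,\p_t^a\vt\cp{\alpha\beta}\cdot(\nt\gt^{\alpha\beta})_t
\]
to obtain $\kappa\gt^{\alpha\beta}\p_t^{a+1}\vt\cp{\alpha\beta}\cdot\nt\in L^\infty(0,T;H^{2.5-a}(\Gamma))$ with no additional time-differentiation of the boundary condition.
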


\begin{proof}
Taking three time-derivatives of (\ref{boundary condition.k}), we obtain
	\begin{align}
		\label{three time-der: bc.k} 		\begin{aligned}[b]
\gt^{\alpha\beta}\vt_{tt}\cp{\alpha\beta}\cdot \nt
+
\kappa\p_t (\gt^{\alpha\beta}\vt_{tt}\cp{\alpha\beta}\cdot \nt)
			=\p^3_t\tilde\jmath
&- \underbrace{
 \bset{ \p^3_t(\gt^{\alpha\beta}\et\cp{\alpha\beta}\cdot
  \nt)
-
\gt^{\alpha\beta}\vt_{tt}\cp{\alpha\beta}\cdot
  \nt}
}_{\tilde\jmath_A} \\
&-\underbrace{
\bset{\kappa \p^3_t(\gt^{\alpha\beta}\vt\cp{\alpha\beta}\cdot
  \nt)
-
\kappa\p_t (\gt^{\alpha\beta}\vt_{tt}\cp{\alpha\beta}\cdot \nt)
}}_{\tilde\jmath_B} .  
\end{aligned}
\end{align}
  By   Proposition~\ref{prop: first higher-order.k}, the right-hand side
  of \eqref{three time-der: bc.k} has a good estimate in $L^\infty(0,T;H^{0.5}(\Gamma))$.
Lemma~\ref{kappa-indep: f+kappa f_t} therefore provides that 
\begin{align*}
  \Sup \abs{\hd^2 \vt_{tt}\cdot \nt(t)}_{0.5}^2\le \int_{0}^{T}\RK.
\end{align*}
Since $\kappa\p_t (\gt^{\alpha\beta}\vt_{tt}\cp{\alpha\beta}\cdot \nt)
=\kappa\gt^{\alpha\beta}\vt_{ttt}\cp{\alpha\beta}\cdot \nt
+\kappa \vt_{tt}\cp{\alpha\beta} \cdot(\nt\gt^{\alpha\beta})_t
$, it follows from \eqref{three time-der: bc.k} that 
\begin{align*}
  \Sup \abs{\kappa\hd^2 \vt_{ttt}\cdot \nt(t)}_{0.5}^2\le \int_{0}^{T}\RK.
\end{align*}
The higher-order estimates are similarly established.
\end{proof}  

\subsection*{Step 6: Concluding the proof of
  Lemma~\ref{lem.EK}} \label{sec:asymptotic-analysis.k} The sum of the
estimates given in Propositions~\ref{prop: divergence: energy
  estimates}--\ref{prop: Improved boundary.k} competes the proof of
Lemma~\ref{lem.EK}. Taking $\delta$ sufficiently small in the inequality \eqref{est: EK} 
yields a polynomial-type inequality of the form \eqref{poly-type
  ineq}. Hence, for
sufficiently small $T>0$ and independent of $\kappa>0$,
\begin{align}
	\label{the kappa-independent estimate} \Sup\EK\le2\NK,
\end{align} 
where the higher-order energy function $\EK$ is defined in \eqref{defn.kappa E(t)}.
  
\section{Construction of solutions to the $\kappa$-problem
  \eqref{akp}}
\label{sec:appr-kappa-probl}

In this section, we  prove the following
\begin{thm}
	[Solutions to the $\kappa$-problem] \label{thm.akp} For $C^\infty$-class initial data
        $(\rho_0,u_0, \Omega )$ satisfying the conditions \eqref{l.boundedness.r0} and
\eqref{s.compatibility conditions}, and for some
        $T=T_\kappa>0$, there exists a unique
        solution $\vt$ to the $\kappa$-problem \eqref{akp} verifying
$(\vt,\vt_t,\dots,\vt_{tttt} )|_{t=0} =(u_0, \mathrm{v}_1,\dots,\mathrm{v}_4)$   and
      \begin{align}
        \label{cont.est.v}
 \Sup \norm{\vt_{tttt}(t)}_{0}^2+\int_{0}^{T}\abs{\vt_{tttt}\cdot \nt}_1^2
 +\sum_{a=0}^4\int_{0}^{T}\norm{\p^a_t\vt}_{9-2a}^2<\infty. 
      \end{align}
\end{thm}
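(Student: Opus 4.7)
My plan is to construct $\vt$ through a horizontal convolution-by-layers regularization, in the spirit of \cite{CS07}, combined with classical existence theory for uniformly parabolic second-order systems. The first task is to exhibit the parabolic character of \eqref{akp}. Using $\Jt_t=\at_r^s\vt^r\cp{s}$, the artificial viscosity term in \eqref{akp.momentum} equals $-\kappa\At_i^k(\rho_0\at_r^s\vt^r\cp{s})\cp{k}$; the Piola identity together with $-\Delta=\curl\curl-D\Div$ allows this operator to be rewritten, modulo lower-order terms, as a uniformly elliptic operator on $\vt$. The additional viscosity $-\kappa\,\gt^{\alpha\beta}\vt\cp{\alpha\beta}\cdot\nt$ in \eqref{akp.H} is a tangential Laplacian of $\vt$ projected onto $\nt$ and provides the matching boundary complement for this interior operator. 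Coupling interior and boundary via the decomposition
\begin{align*}
N^j\At_r^j\At_r^k \vt\cp{k}=\sqrt{\gt}\Jt^{-1}\bbset{(\curl_{\et}\vt)\times\nt+(\Div_{\et}\vt)\nt}+b(\vt,\et),
\end{align*}
with $b$ first-order in $\vt$ but second-order in $\et$, produces a quasilinear second-order uniformly parabolic boundary-value problem for $\vt$.

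Since $\et=e+\int_0^t\vt$ is only as regular as the time-integral of $\vt$, the term $b(\vt,\et)$ would carry the same number of $\et$-derivatives on $\Gamma$ as the highest-order $\vt$-terms, and cannot be absorbed as a perturbation in standard parabolic theory. To circumvent this derivative imbalance I introduce the horizontal convolution-by-layers operator $\Lambda_\epsilon$ of \cite{CS07}, which mollifies only in the coordinate directions tangent to $\Gamma$ so that the normal structure is preserved, and replace $\et$ inside $b$ by $\Lambda_\epsilon^2\et$. For each fixed $\epsilon>0$, the resulting smoothed problem is a uniformly parabolic second-order system for $\vt$ with smooth-in-space coefficients. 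Existence of a unique solution $\vt^\epsilon$ on a short time-interval $[0,T_{\kappa,\epsilon}]$ is obtained by a Banach fixed-point iteration $w\mapsto\vt^\epsilon$: given $w$ in a closed ball of a velocity space carrying the regularity encoded in \eqref{cont.est.v}, form $\bar\et=e+\int_0^t w$ with associated $\bar\At,\bar\Jt,\bar\gt,\bar\nt$, and solve the linear smoothed problem by Galerkin approximation and the standard $L^2$-in-time, $H^k$-in-space parabolic theory; a contraction argument at a small time then provides the nonlinear $\vt^\epsilon$. The polynomial $\beta(t)$ in \eqref{akp:beta zero} is engineered precisely so that the compatibility conditions \eqref{akp: compat cond} hold up to order $3$ at $t=0$, whence the momentum equation identifies $\p_t^a\vt^\epsilon|_{t=0}=\mathrm{v}_a$ for $a=1,2,3,4$.

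With $\vt^\epsilon$ in hand, I derive $\epsilon$-independent a priori estimates (with constants depending on $\kappa$) in the norm of \eqref{cont.est.v} by a time-differentiated energy argument paralleling Section~\ref{sec:kappa independent estimates}, but now using the $\kappa$-parabolicity to control the highest-order terms and the horizontal smoothing to commute $\Lambda_\epsilon$ past $\p_t$ and $\hd$. Weak-$*$ compactness yields a limit $\vt$ on a uniform interval $[0,T_\kappa]$, and lower-semicontinuity combined with the continuity of $\Lambda_\epsilon^2\et^\epsilon\to\et$ in a sufficiently strong norm shows that $\vt$ satisfies the unsmoothed $\kappa$-problem \eqref{akp}. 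Uniqueness follows from an energy estimate on the difference of two solutions in a lower-regularity norm, using \eqref{cont.est.v} to bound coefficient differences in $L^\infty$. The principal obstacle is precisely the boundary term $b(\vt,\et)$: the interior parabolicity of \eqref{akp.momentum} does not by itself deliver the normal-trace regularity of $\et$ required to treat the curvature contribution in \eqref{akp.bc} as a source, and the convolution-by-layers is tailored to absorb this one-derivative deficit at the $\epsilon$-level without disturbing the limit $\epsilon\to 0$.
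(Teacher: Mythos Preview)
Your overall architecture is correct and matches the paper's: rewrite \eqref{akp} as a quasilinear heat-type Neumann system via the identity $-\Delta_\eta=\curl_\eta\curl_\eta-D_\eta\Div_\eta$ and the decomposition of $N^j\At_r^j\At_r^k\vt\cp{k}$, then tame the second-order-in-$\et$ boundary term $b(\vt,\et)$ by horizontal convolution-by-layers, solve by a fixed point, and pass to the limit. The paper follows exactly this route.

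There is, however, a concrete gap in your construction step. After your $\epsilon$-smoothing of $\et$ inside $b$, the Neumann data still contains the term $\kappa\,\gt^{\alpha\beta}\vt\cp{\alpha\beta}\cdot\nt$, which is second-order in $\vt$ itself, not in $\et$. You are right that in the \emph{nonlinear} energy estimate this term is a matching complement: after integration by parts it produces the coercive boundary contribution $\int_\Gamma\sqrt{\gt}\gt^{\alpha\beta}\vt\cp{\alpha}\!\cdot\nt\,\vt\cp{\beta}\!\cdot\nt$. But at the level of the \emph{linearized} fixed-point map $w\mapsto\vt$, this term becomes $\kappa\,\bar g^{\alpha\beta}w\cp{\alpha\beta}\cdot\bar n$ sitting in the Neumann forcing, and that is one tangential derivative too many for the contraction to close in the norm you propose; the problem is no longer covered by ``standard $L^2$-in-time, $H^k$-in-space parabolic theory'' for Neumann data. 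The paper handles this by introducing a \emph{second} smoothing parameter $\mu$, applied specifically to that boundary term (replacing $\gt^{\alpha\beta}\vt\cp{\alpha\beta}\cdot\nt$ by a $\Lambda_\mu$-mollified version), so that at fixed $\mu$ the linear problem is genuinely standard parabolic and the contraction closes. One then proves $\mu$-independent a priori estimates---using precisely the coercive boundary energy you identified---and passes $\mu\to0$ to recover the $\kappa\epsilon$-problem, before finally taking $\epsilon\to0$. A secondary point: the paper does not merely replace $\et$ by $\Lambda_\epsilon^2\et$ inside $b$, but rather builds a globally smoothed flow $\ec$ (via an elliptic Dirichlet problem with horizontally convolved boundary data) and uses the associated $\Ac_\epsilon,\Jc_\epsilon,\gc_\epsilon,\nc_\epsilon$ throughout; this is needed so that the heat-type reformulation and the original $\kappa\epsilon$-formulation remain equivalent (the paper checks this equivalence separately).
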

\begin{rem}
  We recall that the initial data $\mathrm{v}_a$, $a=1,2,3,4$, is
  defined in Section~\ref{sec:comp-cond-kappa}.
\end{rem}
We establish Theorem~\ref{thm.akp} via a
succession of two asymptotic estimates that correspond with two intermediate
approximate problems defined below.
Each of the two intermediate problems 
involves the use of a convolution operator that smooths in the 
direction tangent to the moving boundary. We  use 
$\epsilon>0$ and $\mu>0$ as the smoothing parameters.

Our first intermediate problem, which we call the
$\kappa\epsilon$-problem, is defined by smoothing the moving domain
of the $\kappa$-problem.  The overall structure of the
$\kappa\epsilon$-problem matches that of the $\kappa$-problem. It
naturally follows that
the $\epsilon$-independent a priori estimates closely resemble the
$\kappa$-independent a priori estimates of Section \ref{sec:kappa
  independent estimates}. The $\epsilon=0$ formal limit of the
$\kappa\epsilon$-problem is the $\kappa$-problem.

 Our second intermediate problem, which we call the  $\mu$-problem, is
defined by smoothing the $\kappa$-artificial viscosity term  $\kappa g _{\epsilon}
^{\alpha\beta} v\cp{\alpha\beta}\cdot n _{\epsilon}$ appearing in the boundary
condition of the $\kappa\epsilon$-problem. The  
$\mu$-problem is a nonlinear heat-type problem with Neumann-type boundary
conditions. The $\mu=0$ formal limit of
the heat-type $\mu$-problem is
equivalent to the $\kappa\epsilon$-problem. The key 
to obtaining the $\mu$-independent a
priori estimates is   that the diffusive term of
the heat-type $\mu$-problem yields a trace-estimate on the boundary $\Gamma$.

  \subsection{Horizontal convolution-by-layers}
\label{sec:horiz-conv-layers}

   For $\epsilon>0$, we let $0\le \bar{\varphi}_{\epsilon}\in
  C^\infty_0(\mathbb{R}^2)$ with
  $\supp(\bar{\varphi}_{\epsilon})\subset \overline{B(0,\epsilon)}$
  denote the family of standard mollifiers on $\mathbb{R}^2$. With
  $x_h=(x_1,x_2)$, we define the operation of \textit{horizontal
    convolution-by-layers} as follows:
  \begin{align*}
    \Lambda_{\epsilon}f(x_h,x_3)= \int_{\mathbb{R}^2}
    \bar{\varphi}_{\epsilon}(x_h-y_h) f(y_h,x_3)dy_h\ \ \text{for }
    f\in L^1_{loc}(\mathbb{R}^2).
  \end{align*}
  By standard properties of convolution, there exists a constant $C$
  which is independent of $\epsilon$, such that for $s\ge 0$,
  \begin{align*}
    \abs{\Lambda _{\epsilon}f}_s\le C \abs{f}_s\ \ \ \forall f\in
    H^s(\Gamma).
  \end{align*}
  Furthermore,
  \begin{align}
    \label{hc.est} \epsilon\abs{\Lambda _{\epsilon}f}_1\le C
    \abs{f}_0\ \ \ \forall f\in L^2(\Gamma).
  \end{align}
We recall the local coordinates near $\Gamma $ are defined in
Section~\ref{sec:local-coord-near}. We set
\begin{align}
\label{epsilon.0}
  \epsilon_0=\min_{\local=1}^K\dist(\supp\xl\circ\thetal,\p\VLP\setminus\DL).
\end{align}
We define the   horizontally-convolved vector field  ${v} _{\epsilon}$
on $\Gamma$
of a given vector   ${v}$  by\label{n.ve}
\begin{align}
  \label{horizon.convolved.v}
{v}_\epsilon= \sum_{l=1}^K\LE  [(\xl v  )\circ \thetal ] \circ
 \thetal^{-1}  \quad{\rm on}\ \Gamma.
\end{align} 
Given a sufficiently smooth  vector field $\vc$, we set $\check{\eta}=
e+\int_{0}^{t}\vc$ in $\Omega$ and
$\check{\eta}_{\epsilon}  =e+\int_0^t{\vc}_{\epsilon} $ on $\Gamma$. We define
$\ec$   to be the solution of the following time-dependent
elliptic Dirichlet problem:
\begin{subequations}
\label{elliptic.etm}
  \begin{alignat}{2}
    \label{etm.m}
  \Delta\ec&=\Delta\check{\eta}&& \text{in } \Omega,
\\
\ec&=\check{\eta}_{\epsilon}&\ \ \ \ &\text{on }\Gamma.
  \end{alignat}
\end{subequations}
We define the following  $\epsilon$-approximate Lagrangian variables:
\begin{align*} 
\Ac_{\epsilon}  =\bset{D\ec  }^{-1},\ \ \Jc_{\epsilon}  =\det D\ec  ,\ \ \ac_{\epsilon} =\Jc_{\epsilon}  {\Ac}_{\epsilon}  ,\ \bset{{\gc}_{\epsilon} }_{\alpha\beta}=\ec \cp{\alpha}\cdot\ec \cp{\beta}, \ {\rm and}\  \sqrt{{\gc}_{\epsilon} } {\nc}_{\epsilon}  =\bset{{\ac}_{\epsilon} }^TN.
\end{align*}

\subsection{The $\kappa\epsilon$-problem and its a
  priori estimates}
\label{sec:an-interm-parab}

We define an intermediate approximate problem, which we will refer to
as the $\kappa\epsilon$-problem, that is asymptotically
consistent with the $\kappa$-problem \eqref{akp}.  To indicate  the
dependence on the smoothing parameter $\epsilon$  of all the variables in the $\kappa\epsilon$-problem,
we place the symbol $\check{\ }$ over each of the
variables.    
In the $\kappa\epsilon$-problem, we smooth the
moving boundary  and use the corresponding
twice-mean-curvature function
\begin{align*}
  \sigma H(\ec)=- \sigma \gm ^{\alpha\beta}\ec \cp{\alpha\beta}\cdot \nm.
\end{align*}

\begin{defn}
	[The $\kappa\epsilon$-problem] \label{defn.mkp} For 
        $\kappa>0$ and $\epsilon>0$, we define $\vc$ as  the solution of 	\begin{subequations}
		\label{mkp} 
		\begin{alignat}
			{4} \label{mkp.momentum} \vc^i_t+2\Amu_i^k 
 \pset{\rho_0 \Jm^{-1}}\cp{k}-\kappa \Amu_i^k(\rho_0\JT)\cp{k}&=0&&in\ \Omega\times(0,T_\kappa(\epsilon)],\\
			\label{mkp.bc}
                        \rho_0^2 \Jm^{-2}-\kappa\rho_0^2\Jm^{-1}
                        \JT&=\Hm&\ \ &on\ \Gamma\times(0,T_\kappa(\epsilon)],\\
			\label{mkp: init cond} \pset{\ec,\vc}|_{t=0}&=\pset{e,u_0}&&on\ \Omega.
		\end{alignat}
	\end{subequations}
The function $\JT$ appearing in the left-hand side of equations
\eqref{mkp.momentum} and \eqref{mkp.bc} is defined as
\begin{align}
\label{mkp.J}
    \JT= \Jm\Div_{\ec}\vc.
\end{align}
 The function $\Hm$ appearing in the right-hand side of \eqref{mkp.bc}
is defined as
\begin{align}
\label{mkp.H}
  \Hm =\beta_{\epsilon}(t) -\sigma{\gm^{\alpha\beta}  {\ec
                          \cp{\alpha\beta}\cdot \nm} }
-
\kappa \gm^{\alpha\beta} \vc \cp{\alpha\beta}\cdot \nm.
\end{align}
The  function $\beta_{\epsilon}(t)$ appearing in
        the right-hand side of \eqref{mkp.H} is defined as
\label{n:beta.e zero} 
	\begin{align}
		\label{mkp:beta zero}
                \begin{aligned}[b]
                  \beta_{\epsilon}(t)=\beta&+\sum_{a=0}^{3}
                  \frac{t^a}{a!}\p^a_t\bset{ \rho _0^2 \Jm^{-2} - \beta+\sigma{\gm^{\alpha\beta}  {\ec
                      \cp{\alpha\beta}\cdot \nm} }
                    }
                  \\
&                  + \kappa \sum_{a=0}^{3} \frac{t^a}{a!}\p^a_t\bset{- \rho_0^2\Jm^{-1}\JT+
                    \gm^{\alpha\beta} \vc \cp{\alpha\beta}\cdot
                    \nm}|_{t=0}.
                \end{aligned}
	\end{align}
\end{defn}
\begin{rem}
Elliptic regularity provides that 
  The  $\epsilon$-approximate Lagrangian flow map $\ec$ defined by \eqref{elliptic.etm} satisfies $\norm{\ec}_s\le C \norm{\check{\eta}}_s$
  for a positive constant $C$ independent of $\epsilon$. Standard
  properties of convolution provide that setting
  $\epsilon=0$  in \eqref{elliptic.etm} yields $\ec=\check{\eta}$. It follows
  that the $\kappa\epsilon$-problem \eqref{mkp} is asymptotically consistent with
  the $\kappa$-problem \eqref{akp}.
\end{rem}

\subsubsection{The constant-in-time vectors $\mathbf{v}_a$ for $a=1,2,3,4$} \label{sec:comp-cond-mu}   Letting $\vc$ solve
\eqref{mkp}, the vector field $\p^a_t\vc|_{t=0}$  for all $a\in \mathbb{N}$
is computed as follows:
\begin{align*} 
	\p^a_t\vc|_{t=0}=\frac{\p^{a-1}}{\p t^{a-1}}\Pset{\kappa
          \Amu_\cdot^k(\rho_0 \JT)\cp{k}
          -2\Amu_\cdot^k\pset{\rho_0\Jm^{-1}}\cp{k}}\big|_{t=0}\ \ \
        \text{on } \Omega. 
\end{align*} 
This formula makes it clear that each $\p_t^a\vc|_{t=0}$ is a function
of space-derivatives of  the initial data $u_0$, $\LE u_0$  and $\rho_0$.
We define the constant-in-time vectors $\mathbf{v}_a$   as\label{n.bfv}
\begin{align}
	\label{defn.va.bf} \mathbf{v}_a=\frac{\p^{a-1}}{\p t^{a-1}}\Pset{\kappa \Amu_\cdot^k(\rho_0\JT)\cp{k} -2\Amu_\cdot^k\pset{\rho_0\Jm^{-1}}\cp{k}}\big|_{t=0}, \ \ \ \text{for}\ a=1,2,3,4. 
\end{align}
We have that $\mathbf{v}_a\to \mathrm{v}_a$ as $\epsilon\to0$ where
$\mathrm{v}_a$ are defined in Section~\ref{sec:comp-cond-kappa}. We
use (\ref{mkp.bc}) and the definition \eqref{mkp:beta zero}  of
$\beta_{\epsilon}(t)$ to compute the following identities: for $a=0,1,2,3,$
\begin{align}
	\label{mkp: compat cond} \partial^a_t\bset{ \rho_0^2\Jm^{-2}
          -\kappa\rho_0^2\Jm^{-1}\JT}\big|_{t=0} =
        \p^a_t\bset{\beta_{\epsilon}(t)}\big|_{t=0} -         \p^a_t\bset{\sigma{\gm^{\alpha\beta}  {\ec
                          \cp{\alpha\beta}\cdot \nm} }
+
\kappa \gm^{\alpha\beta} \vc \cp{\alpha\beta}\cdot \nm}\big|_{t=0}.
\end{align}

\subsubsection{A priori estimates for the $\kappa\epsilon$-problem \eqref{mkp}}
\label{sec:priori-estim-kapp}

For $\epsilon>0$, we define the following higher-order energy function:
\begin{align}
\label{EM.l}
  \EM
=1+   \norm{\vc_{tt}(t)}_{0}^2 
 +
\int_{0}^{t}\abs{  \vc_{tt}\cdot \nc_{\epsilon}}_{1}^2
+
\sum_{a=0}^2\int_{0}^{t}\norm{\p^a_t\vc}_{5-2a}^2 . 
\end{align}
 
\begin{defn}
 	[Notational convention for constants depending on $1/
        \delta\kappa>0$] We let $\PM$  denote a generic polynomial with constant and coefficients depending on $1/\delta\kappa>0$.
	
	We define the constant $\MM>0$ by 
	\begin{align*}
		\MM=\PM\pset{\norm{u_0}_{100}, \norm{\rho_0}_{100}}. 
	\end{align*}
	
	We let $\RM\label{n:eRM}$ denote generic lower-order terms satisfying 
	\begin{align*}
		\int_0^T\RM\le \MM+\delta \Sup \EM+T\,\PM(\Sup \EM). 
	\end{align*}
\end{defn}

We assume       $T>0$ is taken sufficiently small to ensure that
\begin{align*}
   \frac{1}{2}\le \Jc  \le \frac{3}{2}\ \ \text{and}\ \
\frac{1}{2}\le \Jc_{\epsilon}  \le \frac{3}{2}\ \ \text{for all } t\in[0,T] \text{
    and } x\in {\Omega}.
\end{align*}

\begin{lem}[A priori estimates for the $\kappa\epsilon$-problem]
\label{lem.m.i}  We let $\vc$  solve the $\kappa\epsilon$-problem
\eqref{mkp} on a time-interval $[0,T]$ for some
$T=T_\kappa(\epsilon)>0$. Then
independent of $\epsilon$,
\begin{align}
  \label{mu.ind}
\Sup\EM \le\int_{0}^{T}\RM.
\end{align}
\end{lem}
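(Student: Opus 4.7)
The plan is to adapt the six-step strategy of Section~\ref{sec:kappa independent estimates} to obtain $\epsilon$-independent estimates that parallel the $\kappa$-independent ones, but at the reduced regularity level matching the definition \eqref{EM.l} of $\EM$. A crucial structural ingredient is that the elliptic problem \eqref{elliptic.etm} defining $\ec$ yields $\norm{\ec}_s\le C\norm{\check\eta}_s$ with $C$ independent of $\epsilon$, so the smoothed geometric quantities $\Am,\Jm,\sqrt{\gm},\nm$ never suffer an $\epsilon^{-1}$ loss from the crude commutator estimate \eqref{hc.est}. Because the $\kappa\epsilon$-problem \eqref{mkp} retains both the transport structure (the Lagrangian curl of \eqref{mkp.momentum} vanishes) and the $f+\kappa f_t=g$ structure of the momentum equation (with $f=D\Jm$), the same energy/curl/Hodge architecture applies.

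First, I would establish the $\epsilon$-independent curl-estimates by taking the Lagrangian curl of \eqref{mkp.momentum}, arguing as in Lemma~\ref{lem: curl estimates} via the fundamental theorem of calculus, to obtain $\Sup \sum_{a=0}^{2}\norm{\curl \p^a_t\vc}_{4-2a}^2\le \int_0^T\RM$. Next, I would perform the energy estimate on the twice time-differentiated \eqref{mkp.momentum}, tested against $\p_t^2\vc$ in $L^2(\Omega)$. After integration by parts with respect to $\p_k$ in the interior terms, the computation produces
\[
\frac{d}{dt}\Bigl[\tfrac12\int_\Omega\rho_0|\vc_{tt}|^2+\int_\Omega\rho_0^2\Jm^{-3}|\p_t^2\Jm|^2\Bigr]+\kappa\!\int_\Omega\rho_0^2\Jm^{-1}|\p_t^3\Jm|^2+\II_\epsilon=\RM,
\]
where $\II_\epsilon$, handled by the identity \eqref{formula: outward normal} applied to $\ec$ and the boundary condition \eqref{mkp.bc}, yields the surface-tension energy $\tfrac12\tfrac{d}{dt}\!\int_\Gamma\sqrt{\gm}\gm^{\alpha\beta}\vc_t\cp{\alpha}\!\cdot\nm\,\vc_t\cp{\beta}\!\cdot\nm$, the viscous trace $\int_\Gamma\sqrt{\gm}\gm^{\alpha\beta}\sqrt{\kappa}\vc_{tt}\cp{\alpha}\!\cdot\nm\,\sqrt{\kappa}\vc_{tt}\cp{\beta}\!\cdot\nm$, and error terms $\i_1,\ldots,\i_5$ completely analogous to those in the proof of Proposition~\ref{prop: divergence: energy estimates}. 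Each $\i_i$ is controlled by the same manipulations used there, starting from the tangential identity
\[
\rho_0\Jm^{-1}\vc_t\cdot\ec\cp{\gamma}=\hd_\gamma\!\Bigl[\sigma\gm^{\mu\nu}\ec\cp{\mu\nu}\!\cdot\nm+\kappa\gm^{\mu\nu}\vc\cp{\mu\nu}\!\cdot\nm-\beta_\epsilon(t)\Bigr]-\kappa(\rho_0\Jm^{-1})\cp{\gamma}\rho_0\JT,
\]
obtained by dotting \eqref{mkp.momentum} with $\ec\cp{\gamma}$ and invoking \eqref{mkp.bc}, combined with integration by parts in time and space and $H^{-0.5}(\Gamma)$-duality pairings.

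Third, with second time-derivative control in hand, I would execute Steps 3--5 of Section~\ref{sec:kappa independent estimates}: rewrite \eqref{mkp.momentum} in the form $D\Jm+\kappa'D\JT=\tilde{\mathcal V}_t-\tilde\jmath$ and apply Lemma~\ref{kappa-indep: f+kappa f_t} to recover divergence-estimates for $\p_t^a\vc$ with $a=0,1$; use the fundamental theorem of calculus to upgrade boundary traces; then invoke Proposition~\ref{prop:Hodge} to synthesize $\int_0^T\norm{\p_t^a\vc}_{5-2a}^2\le\int_0^T\RM$ for $a=0,1,2$. The improved boundary-regularity estimate for $\ec$ is obtained by writing \eqref{mkp.bc} in the form $\gm^{\alpha\beta}\ec\cp{\alpha\beta}\!\cdot\nm+\kappa\p_t(\gm^{\alpha\beta}\ec\cp{\alpha\beta}\!\cdot\nm)=\tilde\jmath$ and applying Lemma~\ref{kappa-indep: f+kappa f_t} once more. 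A final polynomial-type inequality as in \eqref{poly-type ineq}, for $T>0$ sufficiently small, produces \eqref{mu.ind}.

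The main obstacle is verifying that every boundary error integral stemming from tangential differentiation of the smoothed geometric quantities $\sqrt{\gm},\gm^{\alpha\beta},\nm$ remains controlled by constants independent of $\epsilon$. This is ultimately why $\ec$ is defined through the elliptic problem \eqref{elliptic.etm} rather than by horizontal convolution of $\check\eta$ directly in the bulk: elliptic regularity absorbs all commutators between $\Lambda_\epsilon$ and differentiation into $\epsilon$-independent constants, so the entire boundary-analysis machinery of Proposition~\ref{prop: divergence: energy estimates} transfers verbatim, with the $\kappa$-viscous boundary term $\int_0^T\i_5$-type contributions handled via Lemma~\ref{lem.j} and Young's inequality exactly as there.
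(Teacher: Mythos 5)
Your Step~1 (curl estimates) and your overall architecture are sound, but Steps~2 and 3 hinge on structural identities that simply do not hold for the $\kappa\epsilon$-problem, because the smoothing $\vc\mapsto\ec$ breaks the compatibility $\p_t\Jm=\JT$ that was tacitly present in the $\kappa$-problem. Specifically, $\p_t\Jm=(\am)_r^s(\p_t\ec)^r\cp{s}$ where $\p_t\ec$ solves the Dirichlet problem $\Delta\p_t\ec=\Delta\vc$ in $\Omega$, $\p_t\ec=\vc_\epsilon$ on $\Gamma$, while $\JT=\Jm\Div_{\ec}\vc=(\am)_r^s\vc^r\cp{s}$; these are different objects. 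Consequently the interior term $\int_\Omega\p_t^2(\rho_0^2\Jm^{-2})(\am)_i^k\vc_{tt}^i\cp{k}$ cannot be rewritten as $\tfrac{d}{dt}\int_\Omega\rho_0^2\Jm^{-3}|\p_t^2\Jm|^2$ (because $(\am)_i^k\vc_{tt}^i\cp{k}$ is not $\p_t^3\Jm$ modulo lower-order terms), and the $\sigma$-part of the boundary integral cannot be rewritten as $\tfrac{d}{dt}\int_\Gamma\sqrt{\gm}\gm^{\alpha\beta}\vc_t\cp{\alpha}\cdot\nm\,\vc_t\cp{\beta}\cdot\nm$ (because $\p_t^2\ec=\p_t\vc_\epsilon\ne\vc_t$ on $\Gamma$). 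The paper's actual Step~2, in \eqref{t.vtm.4}--\eqref{test:div.m}, simply dumps both of those contributions into $\RM$ and retains only the $\kappa$-dissipative pieces $\kappa\int_\Omega\rho_0^2\Jm^{-1}|\JTc_{tt}|^2$ and $\kappa\int_\Gamma\sqrt{\gm}\gm^{\alpha\beta}\vc_{tt}\cp{\beta}\cdot\nm\,\vc_{tt}\cp{\alpha}\cdot\nm$, which suffice because the target energy $\EM$ in \eqref{EM.l} is much weaker than $\EK$ (no $L^\infty$-in-time compression energy and no surface-tension boundary energy).

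The same mismatch invalidates your Step~3: since $\JT\ne\p_t\Jm$, the momentum equations \eqref{mkp.momentum} cannot be put into the form $D\Jm+\kappa'D\p_t\Jm=g$, so Lemma~\ref{kappa-indep: f+kappa f_t} is not applicable, and indeed the paper never invokes it in this proof. Instead the paper writes \eqref{mkp.m.e} as $(\Am)_i^k\JTc\cp{k}=\htt$, uses the fundamental theorem of calculus to get \eqref{htt.A.B}, and directly estimates $\JTc_t$ in $L^2(0,T;H^2(\Omega))$, combining this with the trace estimate from the $\kappa$-parabolic boundary condition \eqref{mkp.bc.t} and the curl estimates through Proposition~\ref{prop:Hodge}. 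Your proposal also aims to reproduce the full $\i_1,\dots,\i_5$ boundary cascade of Proposition~\ref{prop: divergence: energy estimates}, which is unnecessary here (only two time-derivatives, and $\j$ and $\j'$ in \eqref{t.vtm.4.i.1} are dispatched in two lines via Lemma~\ref{lem.j} and an $H^{-0.5}$-pairing) and would require precisely the exact-time-derivative cancellations that are unavailable. In short, the flaw is treating the $\kappa\epsilon$-problem as if it had the same variational structure as the $\kappa$-problem; the smoothing decouples the evolution of $\ec$ from $\vc$, and the a priori estimates must rely on the $\kappa$-parabolicity rather than on conserved energies.
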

  
We will establish Lemma~\ref{lem.m.i} in the following four steps:

\subsection*{Step~1: The  $\epsilon$-independent curl-estimates }
Taking the $\epsilon$-approximate Lagrangian curl of the equations (\ref{mkp.momentum}) yields 
\begin{align}
	\label{mkp.curl.vt} 
		\curl_{\ec} \vc_t=0. 
\end{align}
 Integrating the identity (\ref{mkp.curl.vt}) in time from $0$ to $t\in (0,T]$ provides that
\begin{align}
	\label{mkp.curl.v} \curl_{\ec}\vc=\curl u_0+\varepsilon_{\cdot ji}\int_0^t\p_t \bset{\Ac_{\epsilon}}{}_j^s \vc^i\cp{s}. 
\end{align} 
We may therefore infer the curl estimates from Lemma~\ref{lem: curl
  estimates}. We record this fact as

\begin{lem}[The $\epsilon$-independent curl-estimates for $\p^a \vc$, for $a=0,1,2$]
	\label{lem.curl.em} 
	\begin{align*}
			\sum_{a=0}^2\int_{0}^{T}\norm{\curl \p^a_t\vc }_{4-2a}^2 
			&\le \int_{0}^{T}\RM. 
	\end{align*}
\end{lem}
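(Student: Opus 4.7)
The plan is to follow the strategy of Lemma~\ref{lem: curl estimates}, but with two structural simplifications: (i) in the $\kappa\epsilon$-problem the Lagrangian curl operator $\curl_{\ec}$ is built from the elliptically-smoothed map $\ec$ governed by \eqref{elliptic.etm}, so all coefficients in the curl identities will be controlled in $L^\infty L^\infty$ by $\norm{\check{\eta}}_s \le C + T\,P(\Sup\EM)$ via elliptic regularity for the Dirichlet problem \eqref{elliptic.etm}, and (ii) we only need spacetime $L^2$-in-time control of $\curl\p_t^a\vc$, rather than the $L^\infty$-in-time control sought in Lemma~\ref{lem: curl estimates}, so the time-integration-by-parts trick used there to handle $D^5 \eta$-type terms will not be required.

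First I would extract a workable identity for each $\curl\p_t^a\vc$ from the Lagrangian vorticity equations. For $a=0$, I would use \eqref{mkp.curl.v}, write $\curl_{\ec}\vc = \curl\vc + \varepsilon_{\cdot ji}\bpset{\Ac_\epsilon{}_j^s - \delta_{js}}\vc^i\cp{s}$ with $\Ac_\epsilon - I = -\int_0^t \p_t\Ac_\epsilon$, and solve algebraically for $\curl\vc$ as $\curl u_0$ plus a time integral whose integrand is of the form (product of $\Ac_\epsilon$-factors)$\times\p_t\Ac_\epsilon\times D\vc$. For $a\ge 1$, the identity $\curl_{\ec}\p_t\vc = 0$ from \eqref{mkp.curl.vt}, differentiated $(a-1)$ further times in $t$, yields an expression of the form
\begin{equation*}
\curl\p_t^a\vc = -\varepsilon_{\cdot ji}\,\p_t^a\vc^i\cp{s}\int_0^t\p_t\Ac_\epsilon{}_j^s + \sum_{b<a} (\text{time derivatives of }\Ac_\epsilon)\times D\p_t^b\vc,
\end{equation*}
exactly as in (\ref{curl: vort eq}).

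Next I would apply $D^{4-2a}$ to these identities and estimate in $L^2(\Omega\times(0,T))$. The worst term in each case is of the form $(L^\infty L^\infty\text{-coefficient depending on }\ec,\p_t\ec,\dots)\times D^{5-2a}\p_t^a\vc$. The $L^\infty L^\infty$-bounds on the coefficients reduce, through the differentiation formulas \eqref{formula: J_t}--\eqref{formula: derivative: A } and elliptic regularity $\norm{\ec}_s\le C\norm{\check{\eta}}_s$ for \eqref{elliptic.etm}, to bounds on $\check{\eta}=e+\int_0^t\vc$ and its time derivatives, which Sobolev embedding makes controllable by $P(\Sup\EM)$ after a short time. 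The $L^2 L^2$-norm of $D^{5-2a}\p_t^a\vc$ is then bounded by $\int_0^T\norm{\p_t^a\vc}_{5-2a}^2$, which sits inside $\EM$ by definition \eqref{EM.l}. Cauchy--Schwarz applied to the convolution-in-time structure converts these into $T\cdot P(\Sup\EM)\cdot\int_0^T\norm{\p_t^a\vc}_{5-2a}^2$, which is absorbed into $\int_0^T\RM$. All lower-order terms are handled identically.

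The main obstacle will be bookkeeping rather than analytical: one must verify that the elliptic regularization $\ec\leftrightarrow\check{\eta}$ does not produce any tangential-derivative mismatch when $D^{4-2a}$ is distributed onto a product involving $\Ac_\epsilon$, since the horizontal convolution $\LE$ enters only through the Dirichlet boundary data and not through interior coefficients. Because the norms we need are interior Sobolev norms of $\curl\p_t^a\vc$, the smoothing on $\Gamma$ only helps (by \eqref{hc.est}) and never costs a derivative, so the bookkeeping closes. Summing the three estimates for $a=0,1,2$ yields the claim.
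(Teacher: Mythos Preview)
Your proposal is correct and follows the same route as the paper, which simply records that the identities \eqref{mkp.curl.vt}--\eqref{mkp.curl.v} have the same structure as in the $\kappa$-problem and therefore ``infers the curl estimates from Lemma~\ref{lem: curl estimates}.'' Your two observations---that elliptic regularity for \eqref{elliptic.etm} transfers $\ec$-bounds to $\check\eta$-bounds uniformly in $\epsilon$, and that the $L^2$-in-time target renders the temporal integration-by-parts device of Lemma~\ref{lem: curl estimates} unnecessary---are exactly the adaptations implicit in the paper's one-line deferral.

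One minor bookkeeping point worth flagging: in the $a=2$ case your identity produces a term $\p_t[\Ac_\epsilon]_j^s\,\vc_t^i\cp{s}$ with no protective time integral in front. To place $\int_0^T\|\p_t\Ac_\epsilon\|_{L^\infty}^2\|D\vc_t\|_0^2$ into the $\RM$ framework you should invoke the fundamental theorem of calculus on the lower-order factors (e.g.\ $\|\vc(t)\|_3\le\|u_0\|_3+\sqrt{t}\bigl(\int_0^t\|\vc_t\|_3^2\bigr)^{1/2}$ and similarly for $\|\vc_t(t)\|_1$) so that each factor splits as $\MM^{1/2}+\sqrt{T}(\Sup\EM)^{1/2}$, after which the product lands in $\MM+T\,\PM(\Sup\EM)$. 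This is routine and is the same ``FTC in the lower-order terms'' maneuver the paper uses in establishing \eqref{estimate: curl v_tt and curl vttt}; your sketch is consistent with it but does not make it explicit.
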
 

\subsection*{Step~2: The  $\epsilon$-independent estimate for  $\vc_{tt}$}
We equivalently write the momentum equations \eqref{mkp.momentum} as
\begin{align}
\label{mkp.m.e} 
\rho_0  \vc^i_t+\bset{\ac_{\epsilon}}_i^k 
 \pset{\rho_0^2 \Jc_{\epsilon}^{-2}}\cp{k}-\kappa
\rho_0 \Jc_{\epsilon}^{-1}\bset{\ac_{\epsilon}}_i^k(\rho_0\JTc)\cp{k}&=0&&\text{in } \Omega\times(0,T_\kappa(\epsilon)].
\end{align}

\begin{lem}
  [Energy estimates for the action of $\p^{2}_t$ in
  \eqref{mkp.m.e}]\label{lem.mk.2}
  \begin{align*}
    \Sup \norm{\vc_{tt}(t)}_0^2 +
    \int_{0}^{T} \norm{\JTc{}_{tt}}_0^2
+
\int_{0}^{T}\abs{\hd \vc_{tt}\cdot \nc_{\epsilon}}_0^2\le \int_{0}^{T}\RM.
  \end{align*}
\end{lem}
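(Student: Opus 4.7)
The plan is to derive an energy identity by applying $\p^2_t$ to the equivalent momentum equation \eqref{mkp.m.e} and testing against $\vc_{tt}^i$ in $L^2(\Omega)$. The resulting identity parallels the one used in Proposition~\ref{prop: divergence: energy estimates}, but at the level of two time-derivatives rather than four. The $L^2$-pairing with $\p^2_t(\rho_0\vc_t)$ produces $\tfrac{1}{2}\tfrac{d}{dt}\int_\Omega\rho_0|\vc_{tt}|^2$, which after integration in time delivers $\Sup\norm{\vc_{tt}}_0^2$. For both the pressure term and the $\kappa$-viscosity term in \eqref{mkp.m.e}, I would use the Piola identity $\bset{\ac_{\epsilon}}_i^k\cp{k}=0$ to absorb the cofactor inside $\p_k$, and then integrate by parts to split each into interior and boundary contributions.

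The interior contribution of the viscosity term, after using that $\bset{\ac_{\epsilon}}_i^k\vc_{tt}^i\cp{k}=\p^2_t\JTc$ modulo lower-order remainders scaling like $\p_t\bset{\ac_{\epsilon}}D\vc_t$ and $\p^2_t\bset{\ac_{\epsilon}}D\vc$, becomes the positive dissipation $\kappa\int_\Omega\rho_0^2\Jc_{\epsilon}^{-1}|\p^2_t\JTc|^2$ up to error terms controlled by $\RM$; dividing by $\kappa$ and absorbing $1/\kappa$ into the constants of $\PM$ then yields the $\int_0^T\norm{\JTc_{tt}}_0^2$ bound. The pressure interior term produces a cross-coupling of the form $\int_\Omega\rho_0^2\Jc_{\epsilon}^{-3}\,\p^2_t\Jc_{\epsilon}\,\p^2_t\JTc$ that I would dispose of by Young's inequality, splitting off half of the viscous dissipation and bounding $\int_0^T\norm{\p^2_t\Jc_{\epsilon}}_0^2$ by $\RM$ through the $\epsilon$-independent elliptic estimate $\norm{\p^a_t\ec}_s\le C\norm{\p^a_t\check{\eta}}_s$, a consequence of $\Delta\p^a_t\ec=\Delta\p^a_t\check{\eta}$ in $\Omega$, the boundary values $\p^a_t\ec|_\Gamma=\p^{a-1}_t\vc_{\epsilon}$ for $a\ge1$, and the $\epsilon$-uniform boundedness of $\LE$ on Sobolev norms.

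For the boundary integrals produced by the two integrations by parts, I would invoke the outward-normal identity \eqref{formula: outward normal} and substitute the boundary condition \eqref{mkp.bc} to rewrite $\p^2_t[\rho_0^2\Jc_{\epsilon}^{-2}-\kappa\rho_0^2\Jc_{\epsilon}^{-1}\JTc]$ as $\p^2_t\Hm$. Following the $\i_1$--$\i_5$ bookkeeping of Proposition~\ref{prop: divergence: energy estimates}, the surface-tension piece $-\sigma\p^2_t(\gm^{\alpha\beta}\ec\cp{\alpha\beta}\cdot\nm)$ in $\Hm$ yields, after a tangential integration by parts together with the analog of the tangential identity \eqref{indentity: smoothed tangential} adapted to the $\kappa\epsilon$-problem, the exact time-derivative $\tfrac{1}{2}\tfrac{d}{dt}\int_\Gamma\sqrt{\gm}\gm^{\alpha\beta}(\vc_t\cp{\alpha}\cdot\nm)(\vc_t\cp{\beta}\cdot\nm)$ modulo $\RM$, while the boundary $\kappa$-viscosity piece $-\kappa\p^2_t(\gm^{\alpha\beta}\vc\cp{\alpha\beta}\cdot\nm)$ produces $\kappa\int_\Gamma\sqrt{\gm}\gm^{\alpha\beta}(\hd_\alpha\vc_{tt}\cdot\nm)(\hd_\beta\vc_{tt}\cdot\nm)$, which by uniform ellipticity of $\gm^{\alpha\beta}$ dominates $\kappa\int_\Gamma|\hd\vc_{tt}\cdot\nm|^2$; dividing again by $\kappa$ then delivers the third summand of the lemma, and the $\p^2_t\beta_{\epsilon}(t)$ contribution is a polynomial in $t$ and is absorbed directly into $\RM$. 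The main technical obstacle will be the boundary error-term bookkeeping: terms generated when a time-derivative falls on $\nm$, $\gm^{\alpha\beta}$, or $\sqrt{\gm}$ require a combination of tangential integration by parts, the fundamental theorem of calculus in time, and $H^{-0.5}(\Gamma)$-duality pairings to avoid placing too many derivatives on the top-order unknown $\vc_{tt}$, but because only two time-derivatives are involved this analysis is a strict simplification of the four-derivative $\i_1$--$\i_5$ analysis already carried out. Integrating the resulting energy inequality over $[0,T]$ and invoking \eqref{poly-type ineq} with $\delta$ sufficiently small then yields \eqref{mu.ind} at the level of the three quantities claimed.
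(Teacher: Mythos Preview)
Your approach is correct and will prove the lemma, but it does more work than the paper's proof. The paper exploits the fact that at the $\p^2_t$ level, unlike the $\p^4_t$ level of Proposition~\ref{prop: divergence: energy estimates}, both the interior pressure term and the $\sigma$ surface-tension boundary contribution are genuinely lower-order and may be absorbed directly into $\RM$: the paper simply labels $\int_\Omega\p^2_t[\rho_0^2\Jc_{\epsilon}^{-2}]\,\bset{\ac_{\epsilon}}_i^k\vc_{tt}^i\cp{k}$ as $\RM$ without your Young-inequality splitting, and in the boundary integral it puts the $\beta_\epsilon$ and $\sigma$ pieces of $\p^2_t\Htc$ into $\RM$ as well. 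The only boundary term requiring real analysis is the $\kappa$-viscosity piece, which after one $\hd_\alpha$ integration by parts yields the coercive term $\kappa\int_\Gamma\sqrt{\gc_{\epsilon}}\gc_{\epsilon}^{\alpha\beta}\vc_{tt}\cp{\beta}\cdot\nc_{\epsilon}\,\vc_{tt}\cp{\alpha}\cdot\nc_{\epsilon}$ plus remainders handled by an $H^{-0.5}(\Gamma)$ pairing and Lemma~\ref{lem.j} (to control $\int_0^T|\vc_{tt}|_0^2$). Your $\i_1$--$\i_5$ route would additionally manufacture the sign-definite term $\sigma\Sup|\hd\vc_t\cdot\nc_{\epsilon}(t)|_0^2$, which is harmless but not part of the claim; the paper's argument is shorter precisely because at two time-derivatives the surface-tension term does not compete with the top order.
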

\begin{proof}
  Testing the action of $\p^2_t$ in \eqref{mkp.m.e} against $\vc_{tt}$ in the
  $L^2(\Omega)$-inner product and integrating by parts with respect to
  $\p_k$ in the interior integrals  $\int_{\Omega} \bset{\ac_{\epsilon}}_i^k \p^2_t (\rho_0^2\Jc_{\epsilon}^{-2})\cp{k} \vc^i_{tt}$ and $- \kappa \int_{\Omega} \bset{\ac_{\epsilon}}_i^k \p^2_t\bbset{\rho_0 \Jc_{\epsilon}^{-1}  (\rho_0\JTc
)\cp{k}} \vc^i_{tt}$
yields
	\begin{align}
\label{t.vtm.4}
\begin{aligned}[b]
			\frac{1}{2}\frac{d}{dt}\int_\Omega \rho_0\abs{ \vc_{tt}}^2 &- \underbrace{\int_\Omega \p^2_t\bset{\rho_0^2\Jc_{\epsilon}^{-2} }\bset{\ac_{\epsilon}}_i^k\vc_{tt}^i\cp{k}}_{\RM} + \kappa \underbrace{\int_\Omega \p^2_t\bset{\rho_0^2\Jc_{\epsilon}^{-1}\JTc }\bset{\ac_{\epsilon}}_i^k\vc_{tt}^i\cp{k}}_{\I} \\
&			+ \underbrace{\int_\Gamma
                          \p^2_t\bset{\rho_0^2\Jc_{\epsilon}^{-2}
-                          \kappa \rho_0^2\Jc_{\epsilon}^{-1} \JTc}\bset{\ac_{\epsilon}}_i^k\vc_{tt}^iN^k}_{\II}=\RM.
\end{aligned}
	\end{align}
	It is  convenient to write the equation \eqref{t.vtm.4} as 
	\begin{align}
		\label{test:div.m} 
			\frac{1}{2} \frac{d}{dt} \int_\Omega
                        \rho_0\abs{ \vc_{tt}}^2 
                        + \kappa\int_\Omega \rho_0^2
                        \Jc_{\epsilon}^{-1}\abs{ \JTc{}_{tt}}^2 + \II  = \RM, 
	\end{align}
	where the error created in order to write  $\bset{\ac_{\epsilon}}_i^k \p^2_t \vc^i
        \cp{k}$  as $\JTc{}_{tt}$ in  $\I$ is of lower-order and so is absorbed in $\RM$. 

	 We write the boundary integral $\II$ appearing in the
         left-hand side of \eqref{test:div.m} as
         \begin{align}
           \label{t.vtm.4.i}
           \II = \int_{\Gamma} \Htc{}_{tt} \sqrt{\gc_{\epsilon}}\vc_{tt}\cdot
           \nc_{\epsilon},
         \end{align}
where we have used the boundary condition \eqref{mkp.bc} and the
formula \eqref{formula: outward normal}. The definition
\eqref{mkp.H} provides that $\Htc=\beta_{\epsilon}(t)-\sigma{\gc_{\epsilon}^{\alpha\beta}  {\ec
                          \cp{\alpha\beta}\cdot \nc_{\epsilon}} }
-
\kappa \gc_{\epsilon}^{\alpha\beta} \vc \cp{\alpha\beta}\cdot \nc_{\epsilon}$. Thus,
         \begin{align}
           \label{t.vtm.4.i.1}
                      \II =\kappa \int_{\Gamma} \sqrt{\gc_{\epsilon}}\gc_{\epsilon} ^{\alpha\beta}           \vc_{tt}\cp{\beta}\cdot \nc_{\epsilon}
           \vc_{tt}\cp{\alpha}\cdot \nc_{\epsilon}
+ 
\kappa \underbrace{
\int_{\Gamma}           \vc_{tt}^i\cp{\beta} \bpset{ \nc_{\epsilon}^i
  \sqrt{\gc_{\epsilon}}\gc_{\epsilon} ^{\alpha\beta} \nc_{\epsilon}^j}\cp{\alpha}
           \vc_{tt}^j }_{\j}
+ \RM.
         \end{align}
Thanks to Lemma~\ref{lem.j},
\begin{align}
\label{52.tr}
  \int_{0}^{T} \abs{\vc_{tt}}_0^2\le  C \int_{0}^{T} \abs{\vc_{tt}}_{0.25}^2\le \int_{0}^{T}\RM.
\end{align}
Employing the Cauchy-Schwarz inequality and Young's
inequality with $\delta>0$ thus yields
\begin{align*}
  \int_{0}^{T}\j -
\underbrace{ \int_{0}^{T} \vc_{tt}\cp{\beta}\cdot \nc_{\epsilon}\cp{\alpha}
  \sqrt{\gc_{\epsilon}} \gc_{\epsilon} ^{\alpha\beta} \vc_{tt}\cdot \nc_{\epsilon} }_{\j'}\le \int_{0}^{T}\RM.
\end{align*}
Integrating by parts with respect to $\hd_ \beta$, we similarly have that
$\int_{0}^{T}\j'\le \int_{0}^{T}\RM$.  We have therefore established that the
identity \eqref{t.vtm.4.i.1}  is equivalently written as
\begin{align*}
  \II = \kappa \int_{\Gamma} \sqrt{\gc_{\epsilon}}\gc_{\epsilon} ^{\alpha\beta}           \vc_{tt}\cp{\beta}\cdot \nc_{\epsilon}
           \vc_{tt}\cp{\alpha}\cdot \nc_{\epsilon}
+ \RM.
\end{align*}
Using this identity for $\II$ in the time-integral of
\eqref{test:div.m} completes the proof.
\end{proof}
\begin{prop}
  [The $\epsilon$-independent estimates for $\vc_{tt}$]\label{prop.m.vt.2}
  \begin{align*}
    \Sup \norm{\vc_{tt}(t)}_0^2 
+
    \int_{0}^{T}\abs{\vc_{tt}\cdot \nc_{\epsilon}}_1^2
 + \int_{0}^{T}\norm{\vc_{tt}}_1^2 \le \RM.
  \end{align*}
  \begin{proof}
    The desired  $L^\infty(0,T;L^2(\Omega))$-estimate is provided by
    Lemma~\ref{lem.mk.2}.
The inequality \eqref{52.tr} and the trace-estimate stated in
Lemma~\ref{lem.mk.2} establish the desired
$L^2(0,T;H^1(\Gamma))$-estimate. 
 We infer from
    the arguments proving Proposition \ref{prop: estimate for v_ttt
          and sqrt kp v_tttt} that Lemma~\ref{lem.mk.2} implies a
        divergence- and normal trace-estimate for $\vc_{tt}$.   Thanks
        to the curl-estimate for $\vc_{tt}$ stated in
        Lemma~\ref{lem.curl.em}, we have by
        Proposition~\ref{prop:Hodge} that the proof is complete.
  \end{proof}
\end{prop}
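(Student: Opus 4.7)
The plan is to assemble three pieces already available in the text: the energy estimate of Lemma~\ref{lem.mk.2}, the curl bound of Lemma~\ref{lem.curl.em}, and the Hodge-type elliptic estimate of Proposition~\ref{prop:Hodge}. The $L^\infty(0,T;L^2(\Omega))$-bound on $\vc_{tt}$ is exactly the first conclusion of Lemma~\ref{lem.mk.2} and requires no further work.

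For the boundary term $\int_0^T \abs{\vc_{tt}\cdot \nc_{\epsilon}}_1^2$, I would write the $H^1(\Gamma)$-norm as the sum of a zero-order piece and a tangential-derivative piece. The tangential piece equals $\int_0^T\abs{\hd\vc_{tt}\cdot\nc_{\epsilon}}_0^2$ up to commutator terms where $\hd$ falls on $\nc_{\epsilon}$; the principal part is bounded by the third conclusion of Lemma~\ref{lem.mk.2}, and the commutator terms are lower-order and hence absorbed into $\RM$. The zero-order piece $\int_0^T\abs{\vc_{tt}}_0^2$ is controlled by \eqref{52.tr}, which in turn rests on Lemma~\ref{lem.j} applied to $\vc_{tt}$ together with the $L^\infty L^2$-bound already established.

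For the interior bound $\int_0^T \norm{\vc_{tt}}_1^2$, I would invoke Proposition~\ref{prop:Hodge} with $s=1$ applied to $\vc_{tt}$. The curl term is controlled by Lemma~\ref{lem.curl.em} with $a=2$. The divergence term follows from $\int_0^T\norm{\JTc{}_{tt}}_0^2\le\int_0^T\RM$ of Lemma~\ref{lem.mk.2} together with the identity $\JTc = \Jc_{\epsilon}\Div_{\ec}\vc$: two time-differentiations produce $\Jc_{\epsilon}\Div_{\ec}\vc_{tt}$ as the leading term, with the remaining commutators scaling at lower order and absorbed in $\RM$. For the weak normal trace $\abs{\hd \vc_{tt}\cdot N}_{-0.5}$, I would decompose $N = \nc_{\epsilon} - \int_0^t \p_t \nc_{\epsilon}$; the principal piece is bounded by the boundary estimate just established, and the correction contributes an absorbable $T$-factor via Sobolev embedding applied to $\int_0^t \p_t \nc_{\epsilon}$.

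The main obstacle is closing the circular dependence: the quantity $\int_0^T\norm{\vc_{tt}}_1^2$ reappears implicitly on the right through Lemma~\ref{lem.j} and through low-order commutators in the Hodge and boundary estimates. I would keep every such contribution at the level of $\delta\int_0^T\norm{\vc_{tt}}_1^2$ with $\delta$ sufficiently small, and then close via the polynomial-type inequality of Section~\ref{sec:polyn-type-ineq} for $T$ small. Since $\RM$ is $\epsilon$-independent at fixed $\kappa$, the resulting bound is $\epsilon$-independent, as claimed.
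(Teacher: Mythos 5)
Your proof is correct and follows the same route as the paper: the $L^\infty L^2$, boundary $H^1$, and weak normal-trace pieces are assembled from Lemma~\ref{lem.mk.2} and \eqref{52.tr}, the curl from Lemma~\ref{lem.curl.em}, and the interior estimate closes via Proposition~\ref{prop:Hodge}. You merely spell out the divergence and normal-trace deductions which the paper compresses into a reference to the arguments of Proposition~\ref{prop: estimate for v_ttt and sqrt kp v_tttt}.
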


\subsection*{Step~3: The $\epsilon$-independent estimate for $\vc_t$}

\begin{prop}
  [The $\epsilon$-independent estimate for $\vc_t$]\label{prop.vtm.1}
  \begin{align*}
    \int_{0}^{T}\norm{\vc_t}_3^2\le \int_{0}^{T}\RM.
  \end{align*}
\end{prop}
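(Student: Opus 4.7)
The plan is to apply the Hodge decomposition elliptic estimate (Proposition~\ref{prop:Hodge}) at level $s=3$ to $\vc_t$ at each time $t$ and then integrate in time, which reduces the target bound to controlling $\int_0^T$ of each of the four quantities $\norm{\vc_t}_0^2$, $\norm{\curl\vc_t}_2^2$, $\norm{\Div\vc_t}_2^2$, and $\abs{\hd\vc_t \cdot N}_{1.5}^2$. The $L^2$-norm is immediate from the identity $\vc_t(t) = \mathbf{v}_1 + \int_0^t \vc_{tt}$ together with the estimate from Proposition~\ref{prop.m.vt.2}; the curl-norm is precisely the $a=1$ case of Lemma~\ref{lem.curl.em}.

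For the divergence I would mimic Step~3 of Section~\ref{sec:kappa independent estimates}. Rearranging the momentum equation \eqref{mkp.m.e}, in analogy with \eqref{Euler for d}, yields a relation of the form $\Amu_\cdot^k \Jc_\epsilon\cp{k} + \kappa'\Amu_\cdot^k \JTc\cp{k} = (\text{right-hand side involving } \vc_t)$; differentiating once in time then produces an equation $D\JTc + \kappa' D\JTc{}_t = g$ in which $g$ is controlled in $L^\infty(0,T;H^2(\Omega))$ by Proposition~\ref{prop.m.vt.2} together with fundamental-theorem-of-calculus identities for the $\Amu$-factors. Applying Lemma~\ref{kappa-indep: f+kappa f_t} at $s=2$ delivers $\JTc \in L^\infty(0,T;H^2(\Omega))$, and because $\JTc = \Jc_\epsilon \Div_{\ec}\vc$ with $\Amu$ a small perturbation of the identity for small $T$, this converts after one further time-differentiation into the desired $L^2(0,T;H^2(\Omega))$ bound on $\Div\vc_t$, with commutator errors absorbed into $\RM$.

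The normal trace is handled analogously through the boundary condition \eqref{mkp.bc}, which is already of the type $f + \kappa f_t = g$ with $\hd^2\vc \cdot \nc_\epsilon$ essentially in the role of $f_t$ after one time-differentiation; Lemma~\ref{kappa-indep: f+kappa f_t} then yields a bound on $\hd^2\vc_t \cdot \nc_\epsilon$ in the appropriate fractional Sobolev space on $\Gamma$, and the geometric identity $N = \nc_\epsilon - \int_0^t \p_t \nc_\epsilon$ provides the $H^{1.5}(\Gamma)$-control on $\hd\vc_t \cdot N$ required to feed Proposition~\ref{prop:Hodge}.

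I expect the main obstacle to be the bookkeeping associated with the $\epsilon$-smoothing: because $\JTc = \Jc_\epsilon \Div_{\ec}\vc$ involves the harmonically-extended smoothed flow map $\ec$ solving the Dirichlet problem \eqref{elliptic.etm}, rather than the unsmoothed $\check{\eta}$, one must check that the commutators between $\Div$, $\Div_{\ec}$, and $\p_t$ generated in passing from estimates on $\JTc{}_t$ to estimates on $\Div\vc_t$ remain lower-order uniformly in $\epsilon$. This should follow from the $\epsilon$-uniform elliptic regularity $\norm{\ec}_s \le C\norm{\check{\eta}}_s$ combined with the smallness of $T$ that makes each time-integrated error a $\RM$-contribution.
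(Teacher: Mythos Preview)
Your overall strategy—Hodge decomposition plus separate control of curl, divergence, and normal trace—is exactly what the paper does, and your handling of the $L^2$ and curl pieces is correct. The divergence and normal-trace steps are where your plan departs from the paper's and runs into a genuine obstruction.

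You propose to use Lemma~\ref{kappa-indep: f+kappa f_t} (the $f+\kappa f_t=g$ ODE lemma) for both. The difficulty is that the $\epsilon$-smoothing destroys the needed structure. Because $\ec$ is defined through the Dirichlet problem~\eqref{elliptic.etm}, one has $\p_t\Jm = \amu_r^s(\p_t\ec)^r\cp{s}$, whereas $\JTc = \amu_r^s\vc^r\cp{s}$; and $\p_t\ec\ne\vc$, since $\p_t\ec-\vc$ is harmonic with boundary data $\vc_\epsilon-\vc$, which is bounded but \emph{not} small uniformly in $\epsilon$. Hence time-differentiating the relation $\Amu_\cdot^k\Jm\cp{k}+\kappa'\Amu_\cdot^k\JTc\cp{k}=\cdots$ does not produce $D\JTc+\kappa'D\JTc{}_t=g$; the leading term is $D\p_t\Jm$, not $D\JTc$, and the discrepancy is of the same order as the quantity you are trying to control. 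The identical issue appears on the boundary: $\p_t(\ec\cp{\alpha\beta})|_\Gamma=\vc_\epsilon\cp{\alpha\beta}$, not $\vc\cp{\alpha\beta}$. This is not the bookkeeping you flag at the end (passing from $\JTc{}_t$ to $\Div\vc_t$, which is indeed routine); it is a structural obstruction one level earlier that blocks Lemma~\ref{kappa-indep: f+kappa f_t} from applying.

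The paper sidesteps this entirely by exploiting that the constants in $\RM$ are allowed to depend on $1/\kappa$ (these are $\epsilon$-independent, not $\kappa$-independent, estimates). From the momentum equation one isolates $\Amu_\cdot^k\JTc\cp{k}=\htt$ with a factor $(\kappa\rho_0)^{-1}$ absorbed into $\htt$; a single time-derivative, the $L^2(0,T;H^1)$ bound on $\vc_{tt}$ from Proposition~\ref{prop.m.vt.2}, and fundamental-theorem-of-calculus identities then give $D\JTc{}_t$ in $L^2(0,T;H^1)$ directly. The boundary condition is handled identically: divide by $\kappa$, take $\p_t$, and note that the right-hand side is, after the fundamental theorem of calculus, of size $T$ times the trace quantity being estimated, so it is absorbed. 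No ODE lemma is needed or usable here.
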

\begin{proof}
We write the boundary condition \eqref{mkp.bc} as
\begin{align*}
\kappa \gc_{\epsilon}^{\alpha\beta} \vc \cp{\alpha\beta}\cdot \nc_{\epsilon}
=
\underbrace{\kappa\rho_0^2 \Jc_{\epsilon}^{-1}\JTc
-
\rho_0^2 \Jc_{\epsilon}^{-2}+
  \beta_{\epsilon}(t)-\sigma{\gc_{\epsilon}^{\alpha\beta}  {\ec
                          \cp{\alpha\beta}\cdot \nc_{\epsilon}} }
}_{\jtt}.
\end{align*}
Taking a time-derivative of this equation yields
  \begin{align}
\label{mkp.bc.t}
\kappa \gc_{\epsilon}^{\alpha\beta} \vc_t \cp{\alpha\beta}\cdot \nc_{\epsilon}
=\jtt_t+ \kappa\vc \cp{\alpha\beta}\cdot \p_t(\nc_{\epsilon} \gc_{\epsilon} ^{\alpha\beta}).
\end{align}
  The right-hand side of \eqref{mkp.bc.t} scales like $T \,\hd^2 \vc_t 
  $. We infer that
  \begin{align}
\label{tr.vtm.t}
    \int_{0}^{T}\abs{\hd^2\vc_t\cdot\nc_{\epsilon}}_{0.5}^2\le \int_{0}^{T}\RM.
  \end{align}

We equivalently write the momentum  equations \eqref{mkp.m.e}  as 
  \begin{align*} 
    \bset{\Ac_{\epsilon}}_i^k\JTc\cp{k}=
\underbrace{
( \kappa    \rho_0)^{-1}\bbset{\vc^i_t+2\bset{\Ac_{\epsilon}}_i^k(\rho_0\Jc_{\epsilon}^{-1})\cp{k} 
-\kappa\JTc\bset{\Ac_{\epsilon}}_i^k \rho_0\cp{k}}
}_{\htt}.
  \end{align*}
Using the fundamental theorem of calculus, we have that
\begin{align}
  \label{htt.A.B}
  D\JTc= \htt + \JTc \cp{k} \int_{0}^{t}\p_t \bset{\Ac_{\epsilon}}_i^k.
\end{align}
Since the right-hand side of \eqref{htt.A.B} scales
  like $\vc_{t}+ D\vc$, a time-derivative of \eqref{htt.A.B}  yields
\begin{align*}
  \int_{0}^{T}\norm{D\JTc{}_t}_1^2\le \int_{0}^{T}\RM.
\end{align*}
Lemma~\ref{lem.mk.2} provides a good estimate for 
$\JTc{}_t$ in $L^2(0,T;L^2(\Omega))$. Thus,
\begin{align}
\label{div.vtm.t}
  \int_{0}^{T}\norm{\JTc{}_t}_2^2\le \int_{0}^{T}\RM.
\end{align} 
 Via Proposition~\ref{prop:Hodge}, the curl-, normal trace- and divergence-estimates for $\vc_t$,
respectively given in Lemma~\ref{lem.curl.em} and inequalities 
\eqref{tr.vtm.t} and \eqref{div.vtm.t}, 
complete the proof.
\end{proof}
\subsection*{Step~4: Concluding the proof of Lemma~\ref{lem.m.i}}
Repeating Step~3 establishes
\begin{prop}
  [The $\epsilon$-independent estimate for $\vc$]\label{prop.vtm.0}
  \begin{align*}
    \int_{0}^{T}\norm{\vc}_5^2\le \int_{0}^{T}\RM.
  \end{align*}
\end{prop}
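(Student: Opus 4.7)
The plan is to mimic the proof of Proposition~\ref{prop.vtm.1} verbatim, now applied to $\vc$ in place of $\vc_t$. Via the Hodge-type elliptic estimate of Proposition~\ref{prop:Hodge}, it suffices to bound, in $L^2(0,T)$, each of $\norm{\curl\vc}_4$, $\norm{\Div\vc}_4$, and $\abs{\hd\vc\cdot N}_{3.5}$ by $\int_{0}^{T}\RM$. The curl-estimate is the $a=0$ case of Lemma~\ref{lem.curl.em}, and the remaining two estimates are obtained by repeating Step~3, one time-derivative lower.

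For the normal trace-estimate, we use the boundary condition \eqref{mkp.bc} in the undifferentiated form
\begin{align*}
\kappa\gc_\epsilon^{\alpha\beta}\vc\cp{\alpha\beta}\cdot\nc_\epsilon = \jtt,
\end{align*}
where $\jtt = \kappa\rho_0^2\Jc_\epsilon^{-1}\JTc - \rho_0^2\Jc_\epsilon^{-2} + \beta_\epsilon(t) - \sigma\gc_\epsilon^{\alpha\beta}\ec\cp{\alpha\beta}\cdot\nc_\epsilon$. The goal is to bound $\jtt$ in $L^2(0,T;H^{2.5}(\Gamma))$. The terms depending on $\ec$ are handled via the identity $\ec|_\Gamma = e + \int_{0}^{t}\vc_\epsilon$: using the $\epsilon$-uniform convolution bound $\abs{\LE f}_s\le C\abs{f}_s$, their $H^{2.5}(\Gamma)$-norms reduce to time integrals of norms of $\vc$ and get absorbed into $\int_{0}^{T}\RM$ through $T\,\PM(\Sup\EM)$. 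The term $\kappa\rho_0^2\Jc_\epsilon^{-1}\JTc$ is controlled by the trace of $\JTc$ in $H^{2.5}(\Gamma)$, i.e.\ by $\norm{\JTc}_3$, which is handled in the divergence step.

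For the divergence-estimate, we rewrite the momentum equation \eqref{mkp.m.e} exactly as in the proof of Proposition~\ref{prop.vtm.1}:
\begin{align*}
\bset{\Ac_\epsilon}_i^k\JTc\cp{k} = (\kappa\rho_0)^{-1}\bbset{\vc^i_t + 2\bset{\Ac_\epsilon}_i^k(\rho_0\Jc_\epsilon^{-1})\cp{k} - \kappa\JTc\bset{\Ac_\epsilon}_i^k\rho_0\cp{k}}.
\end{align*}
The fundamental theorem of calculus converts this into
\begin{align*}
D\JTc = (\text{RHS above}) + \JTc\cp{k}\int_{0}^{t}\p_t\bset{\Ac_\epsilon}_i^k.
\end{align*}
Since Proposition~\ref{prop.vtm.1} supplies $\int_{0}^{T}\norm{\vc_t}_3^2\le\int_{0}^{T}\RM$, the RHS above lies in $L^2(0,T;H^3(\Omega))$, whence $\int_{0}^{T}\norm{D\JTc}_3^2\le\int_{0}^{T}\RM$. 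Combined with the $L^2(0,T;L^2(\Omega))$-control of $\JTc_t$ from Lemma~\ref{lem.mk.2}, the fundamental theorem of calculus, and the smoothness of the initial datum $\JTc(0)=\Div u_0$, this yields $\int_{0}^{T}\norm{\JTc}_4^2\le\int_{0}^{T}\RM$. Translating back from $\JTc=\Jc_\epsilon\Div_{\ec}\vc$ delivers the divergence-estimate.

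The main obstacle is controlling the surface-tension contribution $\sigma\gc_\epsilon^{\alpha\beta}\ec\cp{\alpha\beta}\cdot\nc_\epsilon$ to $\jtt$ in $L^2(0,T;H^{2.5}(\Gamma))$, since $\hd^2\ec|_\Gamma = \int_{0}^{t}\hd^2\vc_\epsilon$ involves the very quantity $\vc$ that we are trying to estimate. It is precisely the time-integration built into this expression, together with the $\epsilon$-uniform convolution bound, that produces a genuinely $T$-small contribution absorbed into $\int_{0}^{T}\RM$, allowing us to close the polynomial-type inequality and conclude the estimate via Proposition~\ref{prop:Hodge}.
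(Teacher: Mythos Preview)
Your proposal is correct and is precisely what the paper does: its proof consists of the single sentence ``Repeating Step~3 establishes'' the estimate, and you have simply written out that repetition---bounding the curl via Lemma~\ref{lem.curl.em}, the normal trace via the undifferentiated boundary condition \eqref{mkp.bc}, and the divergence via the undifferentiated momentum identity \eqref{htt.A.B}, then invoking Proposition~\ref{prop:Hodge}. Your observation that the surface-tension term $\sigma\gc_\epsilon^{\alpha\beta}\ec\cp{\alpha\beta}\cdot\nc_\epsilon$ closes only because $\ec|_\Gamma - e = \int_0^t\vc_\epsilon$ carries a time integral (hence is $T$-small) is exactly the mechanism the paper relies on implicitly.
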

 The sum of the $\epsilon$-independent estimates stated in Propositions~\ref{prop.m.vt.2},
 \ref{prop.vtm.1} and \ref{prop.vtm.0}  completes the
proof of Lemma~\ref{lem.m.i}.

\

As an intermediate step in proving Theorem~\ref{thm.akp}, we will
establish the following
\begin{thm}
	[Solutions to the $\kappa\epsilon$-problem] \label{thm.mkp} For
        $C^\infty$-class initial data $(\rho_0,u_0, \Omega )$  satisfying  the conditions \eqref{l.boundedness.r0} and
\eqref{s.compatibility conditions}, and for some
        $T=T_\kappa(\epsilon)>0$, there
        exists a  unique
        solution $\vc$ to the $\kappa\epsilon$-problem \eqref{mkp} verifying
$(\vc,\vc_t,\dots,\vc_{tttt} )|_{t=0} =(u_0, \mathbf{v}_1,\dots,\mathbf{v}_4)$  and
      \begin{align}
        \label{mu.cont.est.v}
 \Sup \norm{\vc_{tttt}(t)}_{0}^2
 +\sum_{a=0}^4\int_{0}^{T}\norm{\p^a_t\vc}_{9-2a}^2
 +\sum_{a=0}^4\int_{0}^{T}\abs{  \p^a_t \vc\cdot \nc_{\epsilon}}_{9-2a}^2<\infty. 
      \end{align}
\end{thm}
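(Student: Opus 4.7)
The plan is to construct solutions to the $\kappa\epsilon$-problem \eqref{mkp} by a further parabolic regularization in a smoothing parameter $\mu > 0$, as foreshadowed at the start of Section~\ref{sec:appr-kappa-probl}. Since $\epsilon > 0$ is now fixed, the horizontal convolution-by-layers $\LE$ and the elliptic regularization \eqref{elliptic.etm} make the coefficients $\Ac_{\epsilon}$, $\Jc_{\epsilon}$, $\gc_{\epsilon}$, $\nc_{\epsilon}$ as smooth as one likes with respect to $\int_0^t \vc$; this is what allows us to treat the system as a nonlinear parabolic problem for $\vc$ rather than as a free-boundary problem.

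First, I would define the $\mu$-problem by smoothing the second-order tangential boundary operator $\kappa \gc_{\epsilon}^{\alpha\beta} \vc \cp{\alpha\beta}\cdot \nc_{\epsilon}$ in \eqref{mkp.bc} via a tangential mollifier (e.g.\ a further horizontal convolution-by-layers $\Lambda_\mu$ applied to $\vc$ in that term), while leaving the momentum equations \eqref{mkp.momentum} intact and replacing $\beta_{\epsilon}(t)$ by a time-polynomial $\beta_{\epsilon,\mu}(t)$ chosen so that the analog of \eqref{mkp: compat cond} is exactly satisfied at $t=0$. For $\mu>0$ the resulting system is a genuine second-order nonlinear parabolic problem with Neumann-type boundary condition, so solutions on a (possibly $\mu$-dependent) time-interval can be obtained by a fixed-point argument: freeze the coefficients at a previous iterate, solve the resulting linear parabolic system by standard theory for uniformly parabolic equations with Sobolev-class coefficients, and establish contraction on a small time-interval. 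The compatibility conditions ensure that the iterates and their time-derivatives at $t=0$ match $(u_0, \mathbf{v}_1, \dots, \mathbf{v}_4)$.

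Second, I would derive $\mu$-independent estimates for the $\mu$-solutions that mirror the $\epsilon$-independent estimates already established in Lemma~\ref{lem.m.i}. The essential new ingredient, as noted in the introduction, is that the $\mu$-diffusive boundary term produces an additional trace-estimate on $\Gamma$ which is needed to close the boundary-regularity estimates; the curl-, divergence-, and interior-estimates follow exactly as in the proofs of Lemma~\ref{lem.curl.em} and Propositions~\ref{prop.m.vt.2}--\ref{prop.vtm.0}, because they rely only on the transport structure of the momentum equations and on the form $f + \kappa f_t = g$ with $f = D\JTc$, both of which are preserved by the $\mu$-regularization. Passing to the limit $\mu \to 0$ by weak and weak-$*$ compactness in the function spaces appearing in \eqref{mu.cont.est.v} produces the desired solution $\vc$ of the $\kappa\epsilon$-problem on an interval $[0, T_\kappa(\epsilon)]$ determined by Lemma~\ref{lem.m.i}.

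Finally, uniqueness would follow from an energy estimate on the difference $w = \vc^{(1)} - \vc^{(2)}$ of two candidate solutions: testing the difference of \eqref{mkp.momentum} against $w$, integrating by parts in the interior, and handling the boundary integral via the boundary condition \eqref{mkp.bc} exactly as in the proof of Lemma~\ref{lem.mk.2}, then closing by Gronwall's inequality. The main obstacle is the design of the $\mu$-smoothing so that simultaneously (i) the $\mu > 0$ system falls within standard parabolic theory, (ii) $\mu$-independent estimates with boundary-regularity at the level of \eqref{mu.cont.est.v} can be extracted uniformly up through the fourth time-derivative, and (iii) $\beta_{\epsilon,\mu}(t) \to \beta_{\epsilon}(t)$ in a strong enough sense that the compatibility conditions \eqref{mkp: compat cond} are inherited in the limit.
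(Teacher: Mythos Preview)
Your overall plan---$\mu$-regularize, solve by fixed point, derive $\mu$-independent estimates, pass to the limit---is the same as the paper's, but there is a genuine gap in the first step that would stop the argument. You propose to $\mu$-smooth only the second-order boundary term $\kappa \gc_{\epsilon}^{\alpha\beta}\vc\cp{\alpha\beta}\cdot\nc_{\epsilon}$ while ``leaving the momentum equations \eqref{mkp.momentum} intact,'' and then invoke ``standard theory for uniformly parabolic equations.'' But the interior equations \eqref{mkp.momentum} are \emph{not} uniformly parabolic in $\vc$: the artificial-viscosity term $-\kappa\Amu_i^k(\rho_0\JT)\cp{k}$ with $\JT=\Jm\Div_{\ec}\vc$ has principal part proportional to $\Amu_i^k(\Amu_r^s\vc^r\cp{s})\cp{k}$, which is only $D\,\Div_{\ec}\vc$ and hence has a rank-one principal symbol. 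Smoothing the coefficients (via $\epsilon$) or the boundary data (via $\mu$) does nothing to fix this degeneracy, so neither a linear parabolic solver nor a contraction-mapping scheme built on it is available at this stage.

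The paper closes this gap by first \emph{reformulating} the $\kappa\epsilon$-problem as the heat-type system \eqref{hp}: using the Lagrangian identity $-\Amu_r^j(\Amu_r^k\vc\cp{k})\cp{j}=\curl_{\ec}\curl_{\ec}\vc-\Amu_\cdot^s(\Div_{\ec}\vc)\cp{s}$ together with the vorticity equation \eqref{mkp.curl.v}, one trades $D\,\Div_{\ec}\vc$ for the genuine Lagrangian Laplacian $\Amu_r^j(\Amu_r^k\vc\cp{k})\cp{j}$ plus a lower-order forcing $\Kc$. Simultaneously, the boundary condition is rewritten as a Neumann condition for the full normal derivative $\re N^j\Amu_r^j\Amu_r^s\vc\cp{s}$ via the decomposition \eqref{DN.2}. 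Only after this reformulation is the $\mu$-smoothing applied (in the boundary data \eqref{shp.h}), the fixed-point argument carried out (Appendix~\ref{sec:appendix:fp}), and the $\mu$-independent estimates of Lemma~\ref{lem.e.i} established. A separate, nontrivial step (Lemma~\ref{lem.equiv}) is then required to check that the heat-type solution actually solves the original $\kappa\epsilon$-problem---in particular, that the compatibility vector $c(t)$ vanishes and that the vorticity identity used in the reformulation is self-consistently satisfied. Your proposal omits both the reformulation and the equivalence verification; without them, the fixed-point step has no uniformly parabolic linear problem to rest on.
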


\begin{rem}
  We recall that the initial data $\mathbf{v}_a$, $a=1,2,3,4$, is
  defined in Section~\ref{sec:comp-cond-mu}.
\end{rem}

\subsection{Deriving a heat-type problem with Neumann-type boundary conditions}
\label{sec:deriving-heat-type}
We will  derive  a nonlinear heat-type problem with
Neumann-type boundary conditions which is  equivalent to the
$\kappa\epsilon$-problem \eqref{mkp}.
\subsubsection{Rewriting the
  momentum equations
  \eqref{mkp.momentum} via  \eqref{mkp.curl.v}} 
\label{sec:rew1}
Setting
 \begin{align}
   \label{varrho}
   \check{\varrho}=\kappa \rho_0\Jm,
 \end{align}
and using the definition \eqref{mkp.J}, the momentum equations \eqref{mkp.momentum} are equivalently written as
 \begin{align}
     \label{mkp.m.1}
   \vc_t - \check{\varrho}\Amu_\cdot^k (\Div_{\ec} \vc) \cp{k}=
\Div_{\ec} \vc \Amu_\cdot^k\check{\varrho}\cp{k}-2 \Amu_\cdot^k(\rho_0\Jm
   ^{-1})\cp{k}.
 \end{align}
Given a sufficiently smooth vector $\vc$, the
identity $-\Delta \vc = \curl\curl \vc - D \Div \vc$ in the $\epsilon$-approximate Lagrangian
variables is the identity 
\begin{align} 
  -\Amu_r^j[\Amu_r^k \vc \cp{k}]\cp{j}= \curl_{\ec}\curl_{\ec} \vc - \Amu_\cdot ^s
(  \Div_{\ec} \vc)\cp{s}.
\label{HH.Lagrangian}
\end{align} 
Using \eqref{HH.Lagrangian}, the equations \eqref{mkp.m.1} are
equivalently  written as
\begin{align}
  \label{mkp.m.2}
  \begin{aligned}[b]    \vc_t&-\check{\varrho} \Amu_r^j\bpset{\Amu_r^k \vc\cp{k}}\cp{j}= \check{\varrho}    \curl_{\ec} (\curl_{\ec} \vc )+ 
\Div_{\ec} \vc \Amu_\cdot^k\check{\varrho}\cp{k}-2 \Amu_\cdot^k(\rho_0\Jm   ^{-1})\cp{k}.
  \end{aligned}
\end{align}
Thanks to the vorticity equation \eqref{mkp.curl.v}, we further have
that \eqref{mkp.momentum} is equivalent to
\begin{align}
  \label{mkp.m.212}
  \begin{aligned}[b]
    \vc_t& - \check{\varrho} \Amu_r^j\bpset{\Amu_r^k \vc \cp{k}}\cp{j}= \Kc,
  \end{aligned}
\end{align} 
where the vector field $\Kc$ appearing in the right-hand side
of \eqref{mkp.m.212} is defined as
\begin{align}
  \label{mkp.m.21}
  \Kc=
\check{\varrho} 
    \curl_{\ec}{\pset{\curl u_0+\varepsilon_{\cdot ji}\int_0^t\p_t \Amu_j^s \vc^i\cp{s}} }
+ \Div_{\ec} \vc \Amu_\cdot^k\check{\varrho}\cp{k}-2 \Amu_\cdot^k(\rho_0\Jm   ^{-1})\cp{k}.
\end{align}
\subsubsection{Deriving a Neumann-type boundary condition for $\vc$}
\label{sec:rew2} 

We decompose any vector field $\xi\in \mathbb{R}^3$ evaluated on
$\Gamma$ into tangential components   and a normal
component  as
\begin{align}
\label{v.decomp} 
  \xi=\xi^\alpha \gm^{\alpha\beta}\ec \cp{\beta}+ \xi^3 \nm,
\end{align}
where $\xi^\alpha=\xi\cdot \ec \cp{\alpha}$ and $\xi^3=\xi\cdot \nm$.
In the special case of a flat  boundary, we
may use the standard orthogonal unit vectors $e_k$ to write
\eqref{v.decomp} as
\begin{align}
\label{v.decomp.simple}
  \xi= \xi^ \alpha e_\alpha+ \xi^3e_3.
\end{align}
For the special case where \eqref{v.decomp} takes the form
\eqref{v.decomp.simple}, we have that
\begin{align}
\label{DN.simple}
\frac{\p \vc^i}{\p N}= [(\curl \vc)\times N]^i
+\vc^3 \cp{\alpha} e^i_\alpha + \bset{\Div \vc  -\vc^ \alpha\cp{\alpha} }N^i.
\end{align}

 We define the vector $\err$ 
 as the sum
\begin{align}
\label{err}
  \err=\err_{\curl}+\err_{\Div}, 
\end{align}
where $\err_{\curl}$ is a tangent vector and  $\err_{\Div}$ is a
normal vector, respectively defined as
\begin{subequations}
  \begin{align}
    \label{err.curl}
    \err_{\curl} &= \frac{\sqrt{\gm}}{\Jm} \bbset{ \gm^{\alpha\beta}\vc^3
      \cp{\beta} + \vc^\gamma \gm^{\gamma\delta}\gm^{\alpha\beta}\ec
      \cp{\delta\beta} \cdot \nm}\ec\cp{\alpha},
\\
    \label{err.div}
    \err_{\Div}&= - \frac{\sqrt{\gm}}{\Jm} \bbset{ \gm^{\alpha\beta} \vc^
      \alpha\cp{\beta} + \vc ^{\alpha}
      (\sqrt{\gm}\gm^{\alpha\beta})\cp{\beta} + \vc^3H(\ec)}\nm.
  \end{align}
\end{subequations}
For the moving boundary $\Gamma_{\epsilon}(t)=\ec(\Gamma)$, the identity \eqref{DN.simple} is
written as
\begin{align}
  \label{DN.1}
\frac{\Jm}{\sqrt{\gm}} N ^j \Amu_r^j \Amu_r^k \vc ^i\cp{k}&= \bpset{(\curl_{\ec} \vc
 ) \times \nm}^i + \nm^i \Div_{\ec} \vc +
\frac{\Jm}{\sqrt{\gm} }   \err^i,
\end{align}
or equivalently,
\begin{align}
  \label{DN.2}
N ^j \Amu_r^j \Amu_r^k \vc \cp{k}&=
\sqrt{\gm} \Jm^{-1} {(\curl_{\ec}
 \vc)\times \nm}
+\sqrt{\gm} \Jm^{-1}  (\Div_{\ec} \vc )\nm+   \err.
\end{align}
Thanks to \eqref{mkp.curl.v}, we have that \eqref{DN.2} multiplied by
$\check{\varrho} =\kappa\rho_0\Jm$ is equivalent
to
\begin{align}
\label{DN.33}
\check{\varrho} N ^j \Amu_r^j \Amu_r^k \vc \cp{k}
=\hc_{\curl}
+\frac{\sqrt{\gm}}{\rho_0}
\kappa\rho_0^2( \Div_{\ec} \vc )\nm + \check{\varrho}\err,
\end{align}
where the tangential vector field $\hc_{\curl}$ appearing in the right-hand
side of \eqref{DN.33} is defined  via the $\epsilon$-approximate Lagrangian vorticity
equation \eqref{mkp.curl.v}  as
\begin{align}
  \label{DN.3}
  \hc_{\curl}= 
\check{\varrho} \sqrt{\gm}\Jm^{-1} {\bpset{\curl
  u_0+\varepsilon_{\cdot ji}\int_0^t\p_t \Amu_j^s \vc^i\cp{s}}\times \nm}.
\end{align}
We write the boundary condition
\eqref{mkp.bc} as 
\begin{align}
  \label{mkp.div.bc}
\kappa \rho_0^2  \Div _{\ec}\vc =   \rho_0^2 \Jm ^{-2} -
\beta_{\epsilon}(t) +\sigma { \gm
                  ^{\alpha\beta}  {\ec\cp{\alpha\beta}\cdot
                    \nm}} 
+
\kappa \gm ^{\alpha\beta} \vc\cp{\alpha\beta}\cdot \nm. 
\end{align}
Setting
\begin{align}
  \label{DN.gfk}
\gfk ^{\alpha\beta}&= \kappa \frac{\sqrt{\gm}
}{\rho_0}\gm ^{\alpha\beta},
\\
\label{DN.hn}
  \hc_{\Div}&=
 \frac{\sqrt{\gm}
}{\rho_0}\Bbset{ \rho_0^2 \Jm ^{-2} -
\beta_{\epsilon}(t) +\sigma { \gm
                  ^{\alpha\beta}  {\ec\cp{\alpha\beta}\cdot
                    \nm}} 
}\nm,
\end{align} 
we find by using the identity \eqref{mkp.div.bc} for
$\kappa\rho_0^2              \Div _{\ec} \vc $ in the equation \eqref{DN.33} that
\begin{align}
\label{DN.4}
\check{\varrho} N ^j \Amu_r^j \Amu_r^k \vc \cp{k}=\hc_{\curl}+\hc_{\Div}+
\gfk^{\alpha\beta}\bset{\vc\cp{\alpha\beta}\cdot \nm}\nm+ \rerr.
\end{align}
We  define  the vector field $\hc$ as the sum
\begin{align}
\label{DN.h}
  \hc=\hc_{\curl}
+ \hc_{\Div} +\gfk ^{\alpha\beta} \bset{\vc\cp{\alpha\beta}\cdot \nm}\nm+\check{\varrho}[\err_{\curl}+\err_{\Div}],
\end{align}
with the vectors
 $\hc_{\curl}$, $\hc_{\Div}$, $\err_{\curl}$ and $\err_{\Div}$  
 respectively  given in \eqref{DN.3}, \eqref{DN.hn}, \eqref{err.curl}
 and 
\eqref{err.div}, and the function
$\check{\varrho}$ defined in \eqref{varrho}. We equivalently express the identity \eqref{DN.4} as
\begin{align}
  \label{DN.5}
\check{\varrho}  N ^j \Amu_r^j \Amu_r^k \vc \cp{k}=
\hc.
\end{align}

\begin{rem} We record that the identities \eqref{mkp.div.bc} and
  \eqref{DN.hn} establish the following identity:
  \begin{align}
        \label{hp.bc.n.1}
    \check{\varrho} \sqrt{\gm} \Jm^{-1} \Div _{\ec} \vc&=\hc_{\Div} \cdot \nm+{\gfk  ^{\alpha\beta}{\vc\cp{\alpha\beta}\cdot \nm}}.
  \end{align}
\end{rem}
\subsubsection{The heat-type $\kappa\epsilon$-problem  with Neumann-type
  boundary conditions}
\label{sec:an-equiv-stat}

   \begin{defn}[The heat-type $\kappa\epsilon$-problem] For
    $\kappa>0$  and $\epsilon>0$ given, we define  $\vc$ as the solution of  the nonlinear heat-type system
	\begin{subequations}
		\label{hp} 
		\begin{alignat}
			{4} \label{hp.momentum}     
      \vc_t -  \re \Amu_r^j\bpset{\Amu_r^k \vc
      \cp{k}}\cp{j}&=\Kc
&\ &\text{in }
      \Omega\times(0,T_\kappa(\epsilon)],\\
\label{hp.bc}
\begin{aligned}[b]
\re N^j\Amu_r^j
\Amu_r^s \vc \cp{s} 
\end{aligned}
&=
\hc+c(t)
&\ \ \ &\text{on } \Gamma\times(0,T_\kappa(\epsilon )],\\
			\label{hp.ic} \pset{\ec,\vc}|_{t=0}&=\pset{e,u_0}&&on\ \Omega.
		\end{alignat}
	\end{subequations}
The bounded, nonnegative function $\check{\varrho}$ is defined in
\eqref{varrho}. The vector fields
$\Kc$ and $\hc$ are respectively defined in \eqref{mkp.m.21} and
\eqref{DN.h}.
 The vector field $c(t)$ appearing in the right-hand side of
\eqref{hp.bc} is defined as
\begin{align}
  \label{hp.c}
  \begin{aligned}[b]
  c(t)=
  \sum_{a=0}^2\frac{t^a}{a!} &\p^a_t\bbset{  \re N^j\Amu_r^j
\Amu_r^s \vc \cp{s} -\hc}|_{t=0}.
\end{aligned}
\end{align}
  \end{defn}

\begin{rem}
\label{rem.e.va} The vector fields $\mathbf{v}_a$, for $a=1,2,3,4,$  
defined in Section~\ref{sec:comp-cond-mu},  are
  equivalently defined using   the $(a-1)$th time-derivative of the  momentum equations \eqref{hp.momentum} evaluated at time $t=0$.
\end{rem}

\begin{rem}
  According to Appendix~\ref{sec:appendix:equiv}, the heat-type
  $\kappa\epsilon$-problem \eqref{hp} is equivalent to the $\kappa\epsilon$-problem \eqref{mkp}.
\end{rem}
\begin{prop}
	[Solutions to the
heat-type $\kappa\epsilon$-problem \eqref{hp}] \label{prop.hp}
For        $C^\infty$-class initial data $(\rho_0, u_0,  \Omega )$
satisfying  the conditions  \eqref{l.boundedness.r0} and
\eqref{s.compatibility conditions}, and for some
        $T=T_\kappa(\epsilon)>0$, there exists a unique $\vc\in
        L^2(0,T;H^9(\Omega))$ with $\p^a_t \vc\in
L^2(0,T;H^{9-2a}(\Omega))$, for $a=1,2,3,4$, and $\vc_{tttt}\in
L^\infty(0,T;L^2(\Omega))$ that solves the nonlinear heat-type $\kappa\epsilon$-problem  \eqref{hp} on a time-interval
        $[0,T]$ and verifies $(\vc,\vc_t,\dots,\vc_{tttt} )|_{t=0} =(u_0, \mathbf{v}_1,\dots,\mathbf{v}_4)$. 
\end{prop}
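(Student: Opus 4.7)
The plan is to construct $\vc$ by a Banach fixed-point iteration for the quasilinear heat-type system \eqref{hp}, exploiting that $\kappa>0$ and $\epsilon>0$ are held fixed so that all constants may depend on $1/\kappa$ and $1/\epsilon$. The heart of the argument is linear parabolic theory for a uniformly parabolic second-order system with smooth coefficients and a conormal (Neumann-type) boundary operator; the horizontal convolution in \eqref{horizon.convolved.v} together with the elliptic smoothing \eqref{elliptic.etm} ensures that every geometric coefficient constructed from an iterate is $C^\infty$ in the spatial variable, so we are free to lose any fixed number of spatial derivatives to the iteration.

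First I would introduce a linearization map $\Phi : w\mapsto\vc$ by the following procedure. Given an iterate $w$ with $w(0)=u_0$ and $\p_t^a w(0)=\mathbf{v}_a$ for $a=1,\ldots,4$, set $\check{\eta}[w]=e+\int_0^t w$, solve \eqref{elliptic.etm} to obtain $\ec[w]$, and from $\ec[w]$ compute $\Amu[w]$, $\Jm[w]$, $\gm[w]$, $\nm[w]$, $\re[w]=\kappa\rho_0\Jm[w]$, together with $\Kc[w]$, $\hc[w]$, and $c[w](t)$ via the formulas \eqref{mkp.m.21}, \eqref{DN.h} and \eqref{hp.c}. Then $\vc=\Phi(w)$ is defined as the solution of the linear system obtained from \eqref{hp} by substituting $w$ for $\vc$ in every coefficient and every right-hand side. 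Because $\rho_0\ge 2\lambda>0$ and $\Jm[w]$ stays close to $1$ for short time by \eqref{assum.Jt}, the operator on the left of \eqref{hp.momentum} is uniformly elliptic with smooth coefficients, and the boundary operator in \eqref{hp.bc} is the conormal derivative associated to it. Classical $L^2$-theory for linear parabolic systems with Neumann conditions and smooth coefficients (the Solonnikov--Ladyzhenskaya--Uraltseva framework) then yields a unique solution with the claimed regularity $\p_t^a\vc\in L^2(0,T;H^{9-2a}(\Omega))$ for $a=0,\ldots,4$ and $\vc_{tttt}\in L^\infty(0,T;L^2(\Omega))$.

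The compatibility conditions demanded by the parabolic theory at this regularity level are supplied by the data preparation. The vectors $\mathbf{v}_a$ in Section~\ref{sec:comp-cond-mu} are defined precisely as the $(a{-}1)$-fold time-differentiation of \eqref{hp.momentum} at $t=0$, so interior compatibility holds through the highest time-derivative considered (Remark~\ref{rem.e.va}). The polynomial-in-$t$ correction $c[w](t)$ in \eqref{hp.c} is engineered so that the $a$-th time-derivative of \eqref{hp.bc} is automatically satisfied at $t=0$ for $a=0,1,2$; together with the identities \eqref{mkp: compat cond} that come from the original Euler compatibility \eqref{s.compatibility conditions}, this supplies the remaining orders. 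I would then close the fixed point in a ball of the Banach space $\mathcal{X}_T$ of vector fields with the regularity stated above and prescribed initial traces $w(0)=u_0$, $\p_t^a w(0)=\mathbf{v}_a$: the linear parabolic estimates bound $\Phi(w)$ in terms of the norm of $w$ and of the data, while Lipschitz dependence of the coefficient maps on $w$ (with constants allowed to depend on $1/\kappa$ and $1/\epsilon$) makes $\Phi$ a contraction after shrinking $T=T_\kappa(\epsilon)>0$. Uniqueness then follows by applying the same difference estimate to two solutions of \eqref{hp}.

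The principal analytical obstacle is that the boundary data $\hc$ defined in \eqref{DN.h} contains the term $\gfk^{\alpha\beta}(\vc\cp{\alpha\beta}\cdot\nm)\nm$, which is of the same principal order as the conormal derivative on the left of \eqref{hp.bc} but acts tangentially on the normal trace $\vc\cdot\nm$. This forces the boundary condition to be treated as a coupled interior/boundary parabolic problem in which the normal trace itself satisfies a surface heat-type equation; verifying that this coupled system is well-posed in the Sobolev scale of Proposition~\ref{prop.hp} --- equivalently, that the boundary operator is of Wentzell type with the correct stabilizing sign and satisfies the complementing condition for parabolic systems --- is the delicate point. Once this structural fact is secured, the fixed-point construction and the propagation of the compatibility conditions to the claimed order are routine.
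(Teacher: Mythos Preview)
Your proposal correctly identifies the central difficulty but does not resolve it, and the paper takes a different route to circumvent it.

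The claim that the $\epsilon$-smoothing makes ``every geometric coefficient constructed from an iterate $C^\infty$ in the spatial variable, so we are free to lose any fixed number of spatial derivatives to the iteration'' is true for the coefficients $\Amu[w]$, $\gm[w]$, $\nm[w]$, but it is \emph{not} true for the boundary forcing $\hc[w]$: the term $\gfk^{\alpha\beta}(w\cp{\alpha\beta}\cdot\nm)\nm$ in \eqref{DN.h} involves the un-smoothed iterate $w$, not $\ec[w]$. If, as you first describe, you put this term on the right-hand side of the linear Neumann problem, the fixed point cannot close: with $w\in L^2(0,T;H^{9}(\Omega))$ one has $w\cp{\alpha\beta}|_\Gamma\in L^2(0,T;H^{6.5}(\Gamma))$, whereas Neumann data in $L^2(0,T;H^{7.5}(\Gamma))$ is required to return $\vc$ to $L^2(0,T;H^{9}(\Omega))$; the same one-derivative deficit persists at every regularity level. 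You recognize this in your final paragraph and propose instead to keep the term on the left and invoke parabolic theory with a Wentzell-type boundary operator, but you neither verify the complementing condition nor carry out any estimate; the sentence ``once this structural fact is secured'' is precisely where the proof should be.

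The paper avoids the Wentzell-type boundary condition entirely by introducing a \emph{second} smoothing parameter $\mu>0$: in the $\mu$-problem \eqref{shp} the offending term is replaced by its $\LM$-mollified version (see \eqref{shp.h}), and the convolution $\LM$ gains the missing tangential derivative at the cost of constants depending on $1/\mu$. The resulting linear problem then has a standard Neumann boundary condition with lower-order forcing and is solved by a contraction in Appendix~\ref{sec:appendix:fp}. The $\mu$-independent a priori estimate of Lemma~\ref{lem.e.i} is obtained by testing with $\p_t^2\vr$ and integrating the $\LM$-smoothed tangential Laplacian by parts on $\Gamma$, which produces the coercive boundary energy $\int_{\DL}\gfrk^{\alpha\beta}\LM[(\sqrt{\xl}\vr_{tt}\cdot\nr)\cp\alpha]\LM[(\sqrt{\xl}\vr_{tt}\cdot\nr)\cp\beta]$; passing to the limit $\mu\to0$ then yields the solution of \eqref{hp}. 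In short, the paper trades your unverified Wentzell well-posedness for an additional approximation layer plus an explicit energy identity on the boundary.
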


\subsection{The $\mu$-problem and its a
  priori estimates}
\label{sec:defining-heat-type}
We now define our second intermediate problem, which we term
the $\mu$-problem. The $\mu$-problem is a system of nonlinear
heat-type equations that is asymptotically consistent with the
heat-type $\kappa\epsilon$-problem \eqref{hp}.
To indicate  the dependence on the smoothing paramater
$\mu$  of all the variables in the following problem,
we place the symbol $\mathring{\ }$ over each of the variables.   

 Given a sufficiently smooth vector field $\vr$, we set
$\mathring{\eta}= e+\int_{0}^{t}\vr$ in $\Omega$ and
$\mathring{\eta}_{\epsilon}  =e+\int_0^t{\vr}_{\epsilon} $ on
$\Gamma$. We define
$\er$   to be the solution of the following time-dependent
elliptic Dirichlet problem:
\begin{subequations}
\label{elliptic.er}
  \begin{alignat}{2}
    \label{er.m}
  \Delta\er&=\Delta\mathring{\eta}&& \text{in } \Omega,
\\
\er&=\mathring{\eta}_{\epsilon}&\ \ \ \ &\text{on }\Gamma.
  \end{alignat}
\end{subequations}
We define the following  $\epsilon$-approximate Lagrangian variables:
\begin{align*} 
	\Ar  =\bset{D\er  }^{-1},\ \ \Jr  =\det
        D\er  ,\ \ \ar =\Jr \Ar  ,\ \bset{\gr}_{\alpha\beta}=\er \cp{\alpha}\cdot \er \cp{\beta}, \ \text{and}\  \sqrt{\gr } \nr  =\bset{\ar }^TN.
\end{align*}

We recall that the convolution operator $\LM$ for $\mu>0$ is defined
via Section~\ref{sec:horiz-conv-layers}.

  \begin{defn}[The $\mu$-problem] Given
    $\kappa>0$ and $\epsilon>0$, for   $\mu>0$ we define  $\vr$ as the solution of  the nonlinear heat-type system
	\begin{subequations}
		\label{shp} 
		\begin{alignat}
			{4} \label{shp.m}     
      \vr_t -  \rro \Aru_r^j\bpset{\Aru_r^k \vr
      \cp{k}}\cp{j}&=\Kr
&\ &\text{in }
      \Omega\times(0,T_\kappa(\epsilon\mu)],\\
\label{shp.bc}
\begin{aligned}[b]
\rro N^j\Aru_r^j
\Aru_r^s \vr \cp{s} 
\end{aligned}
&=
\hrm+\crr
&\ \ \ &\text{on } \Gamma\times(0,T_\kappa(\epsilon\mu)],\\
			\label{shp.ic} \pset{\er,\vr}|_{t=0}&=\pset{e,u_0}&&on\ \Omega.
		\end{alignat}
	\end{subequations}
The bounded, nonnegative function $\rro$ is defined as
\begin{align}
  \label{shp.rro}
  \rro=\kappa \rho_0\Jr.
\end{align}
 The vector field
$\Kr$ appearing in the right-hand side of \eqref{shp.m} is defined as
\begin{align}
  \label{shp.K}
  \Kr=
\rro 
    \curl_{\er}{\pset{\curl u_0+\varepsilon_{\cdot ji}\int_0^t\p_t \Aru_j^s \vr^i\cp{s}} }
+ \Div_{\er} \vr \Aru_\cdot^k\rro\cp{k}-2 \Aru_\cdot^k(\rho_0\Jr   ^{-1})\cp{k}.
\end{align}
  The vector field $\hrm$  appearing in the right-hand side of
  \eqref{shp.bc} is given by   
  \begin{align}
    \label{shp.h}
    \begin{aligned}[b]
     \hrm=  \hr_{\curl}
+ \hr_{\Div}
 +\sum_{\local=1}^K \sqrt{\xl} \Bpset{\LM\Bbset{\gfrk
    ^{\alpha\beta}\LM\bbset{\pset{\sqrt{\xl}\vr\cp{\alpha\beta}\cdot \nr}\circ
  \thetal}}}\circ\thetal^{-1}\nr
+\rro\Bbset{\br^{\mu}_{\curl}+\br^{\mu}_{\Div}},
   \end{aligned} 
 \end{align}
 where $\gfrk ^{\alpha\beta}$ appearing in the right-hand side of
 \eqref{shp.h}  is defined as 
 \begin{align}
   \label{shp.g}
   \gfrk ^{\alpha\beta}= \kappa \frac{\sqrt{\gr}
}{\rho_0}\gr^{\alpha\beta} \circ\thetal,
 \end{align}
   and  $\hr_{\curl}$, $\hr_{\Div}$, $\br^{\mu}_{\curl}$, $\br^{\mu}_{\Div}$appearing in the right-hand side of \eqref{shp.h} are defined as 
 \begin{subequations}
\label{he.summands}
   \begin{align}
     \hr_{\curl}&= \rro \sqrt{\gr}\Jr^{-1} {\bpset{\curl
         u_0+\varepsilon_{\cdot ji}\int_0^t\p_t \Aru_j^s
         \vr^i\cp{s}}\times \nr},
     \\
     \hr_{\Div}&= \frac{\sqrt{\gr}
}{\rho_0}
\Bbset{ \rho_0^2
\Jr ^{-2} - \beta_{\epsilon}(t) +\sigma { \gr
         ^{\alpha\beta}  {\er\cp{\alpha\beta}\cdot \nr}}
     }\nr,
     \\
\label{he.summands.bc}
     \br^{\mu}_{\curl} &= \frac{\sqrt{\gr}}{\Jr} \Bbset{
\sum_{\local=1}^K      \gr^{\alpha\beta} \sqrt{\xl}\bpset{\LM\bbset{(
  \sqrt{\xl}\vr\cdot\nr) \cp{\beta}\circ\thetal}}\circ\thetal ^{-1} + \vr^\gamma
       \gr^{\gamma\delta}\gr^{\alpha\beta}\er \cp{\delta\beta} \cdot
       \nr}\er\cp{\alpha},\\
\label{he.summands.bd}
     \br^{\mu}_{\Div}&= - \sum_{\local=1}^K\sqrt{\xl}\LM\Bpset{\sqrt{\xl}\frac{\sqrt{\gr}}{\Jr} \bbset{
       \gr^{\alpha\beta} \vr^ \alpha\cp{\beta} + \vr ^{\alpha}
       (\sqrt{\gr}\gr^{\alpha\beta})\cp{\beta} + \vr^3H(\er)} \circ
     \thetal} \circ \thetal ^{-1}\nr.
   \end{align}
 \end{subequations}
 The vector field $\crr$ appearing in the right-hand side of
\eqref{shp.bc} is defined as
\begin{align}
  \label{shp.c}
  \begin{aligned}[b]
  \crr=
  \sum_{a=0}^2\frac{t^a}{a!} &\p^a_t\bbset{  \rro N^j\Aru_r^j
\Aru_r^s \vr \cp{s} -\hrm}|_{t=0}.
\end{aligned}
\end{align}
  \end{defn}

\begin{rem}
The fixed-point solution to the $\mu$-problem \eqref{shp} is
established in Appendix~\ref{sec:appendix:fp}.
\end{rem}
\begin{rem}\label{rem.h}
For any $h$ defined on $\Gamma$, we have that $h=\sum_{\local=1}^K \xl
h=\sum_{\local=1}^K \sqrt{\xl} \bpset{[\sqrt{\xl}
h\circ\thetal ] \circ\thetal ^{-1}}$. 
\end{rem}

For $\mu>0$, we define the following higher-order energy function:
\begin{align}
\label{def.EE}
  \EE=1+\norm{ \vr_{tt}(t)}_{0}^2
  +\sum_{a=0}^2\int_{0}^{t}\norm{\p^a_t\vr}_{5-2a}^2
& +\sum_{a=0}^2\sum_{\local=1}^K\int_{0}^{t}\abs{  \LM\bset{(\sqrt{\xl}\p^a_t \vr\cdot \nr)\circ\thetal }  }_{5-2a,\DL}^2.
\end{align}

\begin{defn}
 	[Notational convention for constants depending on $1/ \delta\kappa\epsilon>0$] We let $\PE\label{n:eP}$ denote a generic polynomial with constant and coefficients depending on $1/\delta\kappa\epsilon>0$.
	
	We define the constant $\ME>0$ by 
	\begin{align}
		\ME=\PE\pset{\norm{u_0}_{100}, \norm{\rho_0}_{100}}. 
	\end{align}
	
	We let $\RE\label{n:eR}$ denote generic lower-order terms satisfying 
	\begin{align*}
		\int_0^T\RE\le \ME+\delta \Sup \EE+T\,\PE(\Sup \EE). 
	\end{align*}
\end{defn}

\begin{lem}[A priori estimates for the   $\mu$-problem]
\label{lem.e.i} We  let $\vr$
solve the   $\mu$-problem 
\eqref{shp} on a time-interval $[0,T]$ for some
$T=T_\kappa(\epsilon\mu)>0$. Then
independent of $\mu$,
\begin{align}
  \label{ep.ind}
\Sup\EE \le\int_{0}^{T}\RE.
\end{align}
\end{lem}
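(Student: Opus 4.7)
The plan is to follow the four-step architecture used for Lemma~\ref{lem.m.i}, adapting each step to accommodate the horizontal convolution $\LM$ that has been inserted into the boundary term. Throughout, the decisive feature is that $\LM$ appears \emph{symmetrically} in the second-order tangential boundary term of $\hrm$ (the factor $\sqrt{\xl}\LM[\gfrk^{\alpha\beta}\LM[(\sqrt{\xl}\vr\cp{\alpha\beta}\cdot\nr)\circ\thetal]]\circ\thetal^{-1}\nr$), so that when we integrate by parts tangentially on $\Gamma$ and use the self-adjointness of $\LM$ on $L^2(\DL)$, we recover a positive quadratic form in $\LM[(\sqrt{\xl}\vr\cdot\nr)\cp{\alpha}\circ\thetal]$ whose coefficient $\gfrk^{\alpha\beta}$ is uniformly positive-definite (since $\gr^{\alpha\beta}$ is, on a small time interval). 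This is the mechanism that supplies the boundary contribution to $\EE$ with constants independent of $\mu$.

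First I would perform Step~1: the curl estimates. Since \eqref{shp.m} was derived via the identity $-\Delta v=\curl\curl v - D\Div v$ together with the vorticity equation $\curl_{\er}\vr_t=0$ that is built into the definition of $\Kr$, applying $\curl_{\er}$ to \eqref{shp.m} reproduces that vorticity equation and the fundamental-theorem-of-calculus argument of Lemma~\ref{lem.curl.em} yields $\sum_{a=0}^{2}\int_0^T\norm{\curl\p^a_t\vr}_{4-2a}^2\le\int_0^T\RE$, independently of $\mu$. Next, in Step~2, I test $\p^2_t$ of the momentum equation \eqref{shp.m} against $\vr_{tt}$ in $L^2(\Omega)$, integrate by parts in $\p_j$ the viscous term $-\rro\Aru_r^j(\Aru_r^k\vr\cp{k})\cp{j}$, and use the Neumann-type boundary condition \eqref{shp.bc} to replace $\rro N^j\Aru_r^j\Aru_r^k\vr_{tt}\cp{k}$ by $\p^2_t\hrm$ (the polynomial correction $\crr$ drops out after two time-derivatives). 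This produces
\begin{align*}
\tfrac12\frac{d}{dt}\int_{\Omega}\abs{\vr_{tt}}^2 +\int_{\Omega}\rro\abs{\Aru_r^k\vr_{tt}\cp{k}}^2 +\int_{\Gamma}\p^2_t\hrm\cdot \vr_{tt}=\RE.
\end{align*}
The terms $\hr_{\curl}$, $\hr_{\Div}$ and $\rro[\br^\mu_{\curl}+\br^\mu_{\Div}]$ in $\hrm$ are handled exactly as the analogous terms were in Lemma~\ref{lem.mk.2}; the one genuinely new contribution is the convolution term.

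For that new contribution, I would use the self-adjointness of $\LM$ and Remark~\ref{rem.h} to move one $\LM$ across the $L^2(\DL)$ pairing and commute $\hd_\alpha$ with $\LM$, producing
\begin{align*}
\sum_{\local=1}^{K}\int_{\DL}\gfrk^{\alpha\beta}\LM\bbset{(\sqrt{\xl}\vr_{tt}\cdot\nr)\cp{\alpha}\circ\thetal}\,\LM\bbset{(\sqrt{\xl}\vr_{tt}\cdot\nr)\cp{\beta}\circ\thetal}+\RE,
\end{align*}
which by positive-definiteness of $\gfrk^{\alpha\beta}$ bounds $\sum_\local\abs{\LM[(\sqrt{\xl}\vr_{tt}\cdot\nr)\circ\thetal]}_{1,\DL}^2$ from below. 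Error terms coming from commutators $[\LM,\hd_\alpha]$, from distributing time-derivatives on $\gfrk^{\alpha\beta}$, $\nr$ and $\sqrt{\xl}$, and from the cross-pairings with $\vr_{tt}$ on $\Gamma$ are controlled by Lemma~\ref{lem.j} (interpolating the trace in $H^{0.25}(\Gamma)$) and Young's inequality with parameter $\delta$, exactly as in \eqref{52.tr}. Time-integrating yields the $L^\infty(0,T;L^2(\Omega))$-bound for $\vr_{tt}$ together with the desired $L^2$-in-time boundary bound from $\EE$.

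With Step~2 in hand, Steps~3--4 parallel Propositions~\ref{prop.vtm.1}--\ref{prop.vtm.0}: from \eqref{shp.bc} and the identity analogous to \eqref{hp.bc.n.1}, I extract the divergence $\Div_{\er}\vr$ on $\Gamma$ (again using that the new term involves only $\LM$ of $\vr\cp{\alpha\beta}\cdot\nr$, which is already controlled by the previous step), while \eqref{shp.m} rewritten as $\Aru_r^j\JTc\cp{k}\sim \vr_t+\text{lower}$ gives $\JTc{}_t$ in $L^2(0,T;H^2)$ by the fundamental theorem of calculus, and hence $\Div_{\er}\vr_t$ in $L^2(0,T;H^2)$. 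Combining these with the curl estimate and the normal-trace estimate, Proposition~\ref{prop:Hodge} delivers $\vr_t\in L^2(0,T;H^3)$; one further iteration gives $\vr\in L^2(0,T;H^5)$. Summing the resulting inequalities and applying the polynomial-type inequality of Section~\ref{sec:polyn-type-ineq} (after choosing $\delta$ small) yields \eqref{ep.ind}.

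The principal obstacle will be Step~2: verifying that all commutator and error terms created by $\LM$ — in particular, those arising from the interaction of $\LM$ with the cutoffs $\sqrt{\xl}$, with the $\mu$-independent geometric coefficients $\gfrk^{\alpha\beta}$, and with the coordinate charts $\thetal$ — are genuinely of type $\RE$. This uses both the $L^2(\Gamma)$-boundedness of $\LM$ uniform in $\mu$ and the standard fact that $[\LM,f]$ is one order smoother than $\LM$ itself when $f$ is Lipschitz tangentially. Once this bookkeeping is done, the closure of the estimate is routine.
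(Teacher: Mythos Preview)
Your Step~2 (the $\vr_{tt}$ energy estimate) is essentially the paper's Step~1 and is correct, including the mechanism by which the symmetric $\LM$--structure of the boundary term produces a positive quadratic form in $\LM[(\sqrt{\xl}\vr_{tt}\cdot\nr)\circ\thetal]$. The difficulty is in your Steps~3--4. You propose to follow Propositions~\ref{prop.vtm.1}--\ref{prop.vtm.0}, i.e.\ to recover $\vr_t\in L^2(0,T;H^3)$ and $\vr\in L^2(0,T;H^5)$ from curl, interior divergence, and a normal-trace estimate via Proposition~\ref{prop:Hodge}. But the normal-trace input for Hodge at the $\vr_t$--level is $|\hd\vr_t\cdot N|_{1.5}$, and nothing in your argument produces it: your Step~2 only controls $\LM[(\sqrt{\xl}\vr_{tt}\cdot\nr)\circ\thetal]$ in $H^1(\DL)$, while your appeal to ``the identity analogous to \eqref{hp.bc.n.1}'' does not help, since in the $\mu$--problem the normal component of \eqref{shp.bc} equates $\Div_{\er}\vr$ on $\Gamma$ to $\hr_{\Div}\cdot\nr$ plus the $\LM\LM$--mollified term in $\hd^2\vr\cdot\nr$ (not $\hd^2\vr_{tt}\cdot\nr$), together with $\rro(\br^\mu_{\Div}-\err_{\Div})$. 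You cannot invert $\LM$ $\mu$--independently to turn this into an unmollified trace. Moreover, even if Hodge closed, the trace theorem would only give $\vr_t\cdot\nr\in L^2(0,T;H^{2.5}(\Gamma))$, which is half a derivative short of the $|\LM[(\sqrt{\xl}\vr_t\cdot\nr)\circ\thetal]|_{3,\DL}$ term that appears in $\EE$; the analogous deficit occurs for the $a=0$ boundary term.

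The paper does \emph{not} use Hodge here. Instead it repeats the energy argument of Step~1 twice more, testing $\hd^2\p_t$ of \eqref{shp.m} against $\hd^2\vr_t$ and then $\hd^4$ of \eqref{shp.m} against $\hd^4\vr$. Each of these produces, via the same $\LM$--symmetry mechanism you used in Step~2, the coercive boundary terms $\sum_\local|\LM[(\sqrt{\xl}\vr_t\cdot\nr)\circ\thetal]|_{3,\DL}^2$ and $\sum_\local|\LM[(\sqrt{\xl}\vr\cdot\nr)\circ\thetal]|_{5,\DL}^2$ directly, together with $\hd^2\vr_t\in L^2(0,T;H^1)$ and $\hd^4\vr\in L^2(0,T;H^1)$. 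The latter give $\vr_t|_\Gamma\in L^2(0,T;H^{2.5})$ and $\vr|_\Gamma\in L^2(0,T;H^{4.5})$ by the trace theorem, and the paper then views \eqref{shp.m} (and its time derivative) as an elliptic \emph{Dirichlet} problem for $\vr$ (resp.\ $\vr_t$) with this boundary data, invoking the elliptic regularity already established in Step~2 of Lemma~\ref{lem.lv.1} to obtain $\vr\in L^2(0,T;H^5)$ and $\vr_t\in L^2(0,T;H^3)$. Your separate curl step is therefore unnecessary, and your Hodge route should be replaced by these two additional tangential energy estimates plus elliptic Dirichlet regularity.
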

We will establish Lemma~\ref{lem.e.i} in the following four steps:

\subsection*{Step~1: The  $\mu$-independent estimates for $\vr_{tt}$}

Testing two time-derivatives of \eqref{shp.m} against $\vr_{tt}$
in the $L^2(\Omega)$-inner product and integrating by parts in the
interior integral  $-\int_{\Omega}\p^2_t\bpset{\rro
  \bset{\Ar}_r^j\bpset{\bset{\Ar}_r^k \vr\cp{k}}\cp{j}}\vr_{tt}$  yields
\begin{align}
  \label{tv.tt}
  \begin{aligned}[b]
    \frac{1}{2}\frac{d}{dt}\int_{\Omega} \abs{\vr_{tt}}^2+
    \int_{\Omega}\rro&\abs{\Aru_\cdot^k \vr_{tt}\cp{k}}^2
-
\underbrace{
 \int_{\Gamma} 
  [\hrm+\crr]_{tt}\vr_{tt}
}_{\II}
\\
&= 
\underbrace{
\int_{\Omega} \p^2_t\bpset{\Kr
- 
( \rro  \bset{\mathring{A}_{\epsilon}}_r^j) \cp{j} \bset{\mathring{A}_{\epsilon}}_r^k\mathring{v}
      \cp{k} }\mathring{v}_{tt}
}_{\RE}+\RE.
  \end{aligned}
\end{align}
The definition \eqref{shp.h} of the vector field $\hrm$ provides that 
\begin{align*}
  \II= \underbrace{- \sum_{\local=1}^K \int_{\DL}
\LM\bbset{\pset{\sqrt{\xl}\vr_{tt}\cp{\alpha\beta}\cdot
        \nr}\circ\thetal}\gfrk ^{\alpha\beta} \LM \bbset{(\sqrt{\xl}\vr_{tt}
  \cdot \nr) \circ\thetal} }_{\II'}-
\underbrace{
  \int_{\Gamma}(\rro[\br^{\mu}_{\curl}+\br^{\mu}_{\Div}])_{tt}
  \vr_{tt}
}_{\j}+ \RE.
\end{align*}
We integrate by parts with respect to $\hd_\alpha$ in $\II'$ to find
that
\begin{align*}
  \II'=&
\sum_{\local=1}^K \int_{\DL}
\LM\bbset{\pset{\sqrt{\xl}\vr_{tt}\cp{\beta}\cdot
        \nr}\circ\thetal}\gfrk ^{\alpha\beta} \LM
    \bbset{(\sqrt{\xl}\vr_{tt} \cp{\alpha}
  \cdot \nr) \circ\thetal} 
\\
&+
\underbrace{
\sum_{\local=1}^K \int_{\DL}
\LM\bbset{(\vr_{tt}\cp{\beta}\cdot (\nr \sqrt{\xl})\cp{\alpha})\circ\thetal}\gfrk ^{\alpha\beta}\LM\bbset{(\sqrt{\xl}\vr_{tt}
  \cdot \nr)\circ\thetal}}_{\II'_A}
\\
&+
\underbrace{
\sum_{\local=1}^K \int_{\DL}\LM\bbset{(\sqrt{\xl}\vr_{tt}\cp{\beta}\cdot \nr)\circ\thetal}\hd_\alpha\gfrk ^{\alpha\beta}\LM\bbset{(\sqrt{\xl}\vr_{tt}
  \cdot \nr)\circ\thetal}
}_{\II'_B}
\\
&+
\underbrace{
\sum_{\local=1}^K \int_{\DL}\LM\bbset{(\sqrt{\xl}\vr_{tt}\cp{\beta}\cdot \nr)\circ\thetal}\gfrk ^{\alpha\beta}\LM\bbset{(\vr_{tt}
  \cdot (\nr \sqrt{\xl})\cp{\alpha})\circ\thetal}}_{\II'_C}.
\end{align*}
We employ an $H^{-0.5}(\DL)$-duality pairing in the integrals
$\II'_A$, $\II'_B$ and $\II'_C$. For example, 
\begin{align*}
  \II'_A&\le C \abs{\hd \vr_{tt} \circ\thetal}_{-0.5,\DL}\abs{\LM[\pset{\sqrt{\xl}\vr_{tt}\cdot
      \nr}\circ\thetal]}_{0.5,\DL}
\\
&\le \delta \norm{\vr_{tt}}_1^2+ C_\delta \abs{\LM[\pset{\sqrt{\xl}\vr_{tt}\cdot
      \nr}\circ\thetal]}_{0.5,\DL}^2.
\end{align*}
Thanks to Lemma~\ref{lem.j}, $\abs{\LM[{ (\sqrt{\xl}\vr_{tt}\cdot  \nr)\circ\thetal}]}_{0.5,\DL}^2\le C \abs{{\vr_{tt}}}_{0}\abs{\LM[\pset{\sqrt{\xl}\vr_{tt}\cdot
      \nr}\circ\thetal]}_{1,\DL}\le \RE + \delta \abs{\LM\bpset{ \vr_{tt}\cdot
      \nr}}_{1,\DL}^2$. Thus, we infer that
  \begin{align*}
    \II= \sum_{\local=1}^K \int_{\DL}
\LM\bbset{\pset{\sqrt{\xl}\vr_{tt}\cdot
        \nr}\cp{\beta}\circ\thetal}\gfrk ^{\alpha\beta} \LM
    \bbset{(\sqrt{\xl}\vr_{tt}
  \cdot \nr)\cp{\alpha} \circ\thetal} 
+\RE,
  \end{align*}
where we have used the definitions \eqref{he.summands.bc} and
\eqref{he.summands.bd} in analyzing $\j$. We infer from the  time
integral of \eqref{tv.tt} that
\begin{align}
\label{ep.vtt}
\Sup\norm{\vr_{tt}(t)}_{0}^2+  \int_{0}^{T}\norm{\vr_{tt}}_1^2 +
\sum_{\local=1}^K\int_{0}^{t}\abs{  \LM\bset{(  \sqrt{\xl}\vr_{tt}\cdot \nr)\circ\thetal }  }_{1,\DL}^2\le \int_{0}^{T}\RE.
\end{align}
\subsection*{Step~2: The  $\mu$-independent  estimates for $\vr_{t}$}
Similar to Step~1,  
testing the action of $\hd^2\p_t$ in the equations \eqref{shp.m} against $\hd^2\vr_{t}$
in the $L^2(\Omega)$-inner product yields
 \begin{align}
\label{rep.vt}
\Sup\norm{\hd^2 \vr_{t}(t)}_{0}^2+  \int_{0}^{T}\norm{\hd^2\vr_{t}}_1^2 + \sum_{\local=1}^K\int_{0}^{t}\abs{  \LM\bset{(\sqrt{\xl}\vr_t\cdot \nr)\circ\thetal }  }_{3,\DL}^2\le \int_{0}^{T}\RE.
\end{align}
The inequality \eqref{rep.vt} provides that
$\int_{0}^{T}\abs{\vr_t}_{2.5}^2 \le \int_{0}^{T}\RE $. We infer from Step~2 of the proof of
Lemma~\ref{lem.lv.1} that by viewing a time-derivative of the
equations \eqref{shp.m} as an elliptic Dirichlet problem for
$\vr_t$ that
\begin{align}
\label{ep.vt.3} 
\int_{0}^{T}\norm{\vr_{t}}_3^2 \le \int_{0}^{T}\RE.
\end{align}

\subsection*{Step~3: The   $\mu$-independent  estimates for $\vr$}
 Repeating
Step~1, we
test four tangential-derivatives of \eqref{shp.m} against 
$\hd^4 \vr$
in the $L^2(\Omega)$-inner product and integrate by parts in the
integral   $-\int_{\Omega}\hd^4\bpset{\rro
  \Aru_r^j\bpset{\Aru_r^k \vr\cp{k}}\cp{j}} \hd^4\vr$ to find that
\begin{align}
  \label{tv.h4}
  \begin{aligned}[b]
    \frac{1}{2}\frac{d}{dt}\int_{\Omega} \abs{\hd^4\vr}^2
+
\sum_{\local=1}^K \int_{\DL}
\LM&\bbset{\hd^4\pset{\sqrt{\xl}\vr\cdot
        \nr}\cp{\beta}\circ\thetal}\gfrk ^{\alpha\beta} \LM
    \bbset{ \hd^4(\sqrt{\xl}\vr
  \cdot \nr) \cp{\alpha} \circ\thetal} 
\\
+
    \int_{\Omega}\rro\abs{ \Aru_\cdot^k \hd^4 \vr
      \cp{k}}^2 
=&\underbrace{\int_{\Gamma}  \bset{\rro \sqrt{\gr}\Jr^{-1}\bpset{
\varepsilon_{\cdot ji}  \int_{0}^{t}\p_t \Aru_j^s \hd^4\vr^i}
\cp{s}\times \nr }\cdot\hd^4 \vr}_{\RE} 
\\
&+
\underbrace{
\int_{\Omega} \hd^4\Bbset{\Kr
-(\rro 
\Aru_r^j) \cp{j} \Aru_r^k \vr
      \cp{k} 
}\hd^4\vr
}_{\I} 
+\RE.
  \end{aligned}
 \end{align}
We use an $H^{-0.5}(\DL)$-duality pairing in analyzing the first
term appearing in the right-hand side of \eqref{tv.h4} and conclude a
good estimate thanks to the time-integral.
Tangentially integrating by parts in $\I$, we find that
\begin{align*}
  \int_{0}^{T}\I \le \int_{0}^{T}\RE.
\end{align*}
Hence, the time-integral of \eqref{tv.h4} yields
\begin{align}
  \label{tv.h4.es}
  \Sup \norm{\hd^4 \vr(t)}_0^2 + \int_{0}^{T}\norm{\hd^4 \vr}_1^2
+\sum_{\local=1}^K\int_{0}^{t}\abs{  \LM\bset{(\sqrt{\xl} \vr\cdot \nr)\circ\thetal }  }_{5,\DL}^2\le \int_{0}^{T}\RE.
\end{align}
The inequality \eqref{tv.h4.es} yields
\begin{align}
  \label{tv.g.45}
  \int_{0}^{T}\abs{\vr}_{4.5}^2 \le \int_{0}^{T}\RE.
\end{align}
Viewing the equations \eqref{shp.m} as an elliptic Dirichlet
problem 
for $\vr$, we infer from elliptic regularity and the inequalities \eqref{ep.vt.3}
and \eqref{tv.g.45} that
\begin{align}
  \label{tv.5}
  \int_{0}^{T}\norm{\vr}_5^2 \le \int_{0}^{T}\RE.
\end{align}

\subsection*{Step~4: Concluding the proof of Lemma~\ref{lem.e.i}}
The sum of the inequalities \eqref{ep.vtt}, \eqref{ep.vt.3},
\eqref{tv.h4.es} and \eqref{tv.5} completes the
proof of Lemma~\ref{lem.e.i}.
Taking $\delta$ sufficiently small in the inequality \eqref{ep.ind}
yields a polynomial-type inequality of the form \eqref{poly-type
  ineq}. Hence, for
sufficiently small $T=T _{\kappa}(\epsilon)>0$ and independently of $\mu>0$,
\begin{align}
\label{EE.ME}
\Sup \EE \le2\ME,
\end{align}
where the higher-order energy function $\EE$ is defined in \eqref{def.EE}.

\subsection{The proof of Proposition~\ref{prop.hp}}
\label{sec:concl-proof-p}
  Proposition~\ref{prop.shp} establishes the existence and uniqueness
  of a
  solution $\vr $ to the $\mu$-problem \eqref{shp}.  
Given the
  $\mu$-independent estimate
  \eqref{EE.ME}, standard compactness arguments provide for the
  existence of the strong convergence, as $\mu$ tends to zero,
  \begin{alignat*}{4}
    \er &\to\ec\ &\text{in }L^2(0,T; H^4(\Omega)),  &&
\Kr &\to    \Kc&&\text{in } L^2(0,T; H^2(\Omega)),\\
    \vr &\to \vc\ &\text{in }L^2(0,T; H^4(\Omega)),&&
  \hrm &\to \hc\ & &\text{in } L^2(0,T;  H^{1.5}(\Gamma)),\\
    \vr_t &\to \vc_t\ &\text{in }L^2(0,T; H^2(\Omega)),&\qquad&  \crr &\to c(t)\ && \text{in } L^2(0,T; H^{1.5}(\Gamma)).
  \end{alignat*}
The definitions \eqref{shp.K},
\eqref{shp.h}, \eqref{shp.c}  respectively define
$\Kr$, $\hrm$,
$\crr$. The limiting vectors $\Kc$, $\hc$ and $c(t)$ are
respectively defined by \eqref{mkp.m.21}, \eqref{DN.h} and \eqref{hp.c}. Letting
$\phi\in L^2\pset{0,T;H^1(\Omega)}$, we have that the variational form
of  the $\mu$-problem \eqref{shp} is
\begin{align*}
  \int_0^T\int_\Omega \vr_t\cdot\phi
+
 \int_0^T\int_\Omega\Aru_r^k\vr\cp{k}\bpset{\rro\Aru_r^j\phi}\cp{j}
&=
\int_0^T\int_\Omega \Kr\cdot \phi
+
\int_0^T\int_\Gamma \bset{\hrm + \crr}\cdot\phi.
\end{align*}
We infer from the strong convergence of the sequences
$\pset{\er,\vr,\vr_{t}, \Kr,\hrm,\crr }$    that the limit
$\pset{\ec,\vc,\vc_t,\Kc, \hc, c(t)}$ satisfies
\begin{align*}
  \int_0^T\int_\Omega \vc_t\cdot\phi
+
 \int_0^T\int_\Omega\Amu_r^k\vc\cp{k}\bpset{\varrho\Amu_r^j\phi}\cp{j}
&=
\int_0^T\int_\Omega \Kc\cdot \phi
+
\int_0^T\int_\Gamma \bset{\hc + c(t)}\cdot\phi.
\end{align*}
Thus, $\vc$ is a solution of the nonlinear heat-type  $\kappa\epsilon$-problem
\eqref{hp} on a  
time-interval $[0,T]$  for some $T=T _{\kappa}(\epsilon)>0$. Standard arguments provide that
 $\ec(0)=e$ and  $(\vc,\vc_t,\dots,\vc_{tttt} )|_{t=0} =(u_0,
 \mathbf{v}_1,\dots,\mathbf{v}_4)$. Furthermore, according to the
 inequality \eqref{EE.ME}, 
\begin{align*}
\Sup\norm{\vc_{tt}(t)}_{0}^2
  +\sum_{a=0}^2\int_{0}^{T}\norm{\p^a_t\vc}_{5-2a}^2
+\sum_{a=0}^2\int_{0}^{T}\abs{  \p^a_t \vc\cdot \nm}_{5-2a}^2
\le 2\ME. 
\end{align*}
By the higher-order regularity stated in Proposition~\ref{prop.shp}, we infer that 
\begin{align}
  \label{e0.m}
 \Sup \norm{\vc_{tttt}(t)}_{0}^2
 +\sum_{a=0}^4\int_{0}^{T}\norm{\p^a_t\vc}_{9-2a}^2
 +\sum_{a=0}^4\int_{0}^{T}\abs{  \p^a_t \vc\cdot \nm}_{9-2a}^2<\infty. 
\end{align}
\subsection{The proof of Theorem~\ref{thm.mkp}}
\label{sec:concl-proof-theor}
By Lemma~\ref{lem.equiv}, the heat-type $\kappa\epsilon$-problem
\eqref{hp} is equivalent to the $\kappa\epsilon$-problem
\eqref{mkp}. Hence,  Proposition~\ref{prop.hp} establishes the existence
and uniqueness of a solution to the $\kappa\epsilon$-problem on a time-interval $[0,T]$ for some $T=T _{\kappa}(\epsilon)>0$ verifying $(\vc,\vc_t,\dots,\vc_{tttt} )|_{t=0} =(u_0, \mathbf{v}_1,\dots,\mathbf{v}_4)$. 
The inequality \eqref{e0.m} establishes the inequality  
\eqref{mu.cont.est.v}. 
 
\subsection{The proof of Theorem~\ref{thm.akp}}
\label{sec:proof-theor-refthm}
 
A unique solution to the $\kappa\epsilon$-problem \eqref{mkp} exists
by Theorem~\ref{thm.mkp}.

 Taking $\delta$ sufficiently small in the inequality \eqref{mu.ind}
yields a polynomial-type inequality of the form \eqref{poly-type
  ineq}. Hence, for
sufficiently small $T=T _{\kappa}>0$ and independently of $\epsilon>0$,
\begin{align}
\label{EM.MM}
\Sup \EM \le2\MM,
\end{align} 
where the higher-order energy function $\EM$ is defined in \eqref{EM.l}.

For   $\phi\in L^2 (0,T;H^1(\Omega))$ such that $\phi\cdot \nm\in L^2(0,T;H^1(\Gamma))$, the variational equation  for the $\kappa\epsilon$-problem \eqref{mkp} is 
\begin{align}
\label{mkp.var}
\begin{aligned}[b]
  \int_{0}^{T}\int_{\Omega} \rho_0 \vc_t \cdot\phi -
  \int_{0}^{T}\int_{\Omega}  \rho_0^2 \Jm ^{-2} \amu_i^k \phi^i \cp{k} +
  \kappa\int_{0}^{T} \int_{\Omega} \rho_0\JT \amu_i^k (\rho_0 \Jm
  ^{-1} \phi^i ) \cp{k} &\\
+ \int_{0}^{T}\int_{\Gamma}\beta_{\epsilon}(t) \sqrt{\gm} \phi\cdot \nm+
\int_{0}^{T}\int_{\Gamma}\bset{\sigma\ec\cp{\beta} + \kappa \vc\cp{\beta}} ^i (\nm^i\sqrt{\gm}\gm
^{\alpha\beta} \phi\cdot \nm)\cp{\alpha}&=0.
\end{aligned}
\end{align}  We infer from Section~\ref{sec:concl-proof-p} and the
pointwise convergence $\beta_{\epsilon}(t)\to \beta(t)$  that
the variational equation \eqref{mkp.var} converges to the variational equation  \eqref{akp.var} as $\epsilon$ tends to zero.
Hence, 
the $\epsilon=0$ limit $\vt$ of the solutions $\vc$ to the $\kappa\epsilon$-problem
\eqref{mkp} solves the $\kappa$-problem \eqref{akp}. By
Section~\ref{sec:comp-cond-mu}, the solution $\vt$ of the
$\kappa$-problem verifies 
$(\vt,\vt_t,\dots,\vt_{tttt} )|_{t=0} =(u_0,
\mathrm{v}_1,\dots,\mathrm{v}_4)$. According to \eqref{EM.MM},
\begin{align*}
\Sup \norm{\vt_{tt}(t)}_{0}^2 
 +
\int_{0}^{T}\abs{  \vt_{tt}\cdot \nt}_{1}^2
+
\sum_{a=0}^2\int_{0}^{T}\norm{\p^a_t\vt}_{5-2a}^2\le 2\MM.
\end{align*} It follows from the proof of Lemma~\ref{lem.m.i}  that by including two more time-derivatives in the definition \eqref{EM.l}  of the
energy function $\EM$, the solution $\vt$ of the
$\kappa$-problem \eqref{akp} satisfies
\begin{align*}
\Sup \norm{\vt_{tttt}(t)}_{0}^2 
 +
\int_{0}^{T}\abs{  \vt_{tttt}\cdot \nt}_{1}^2
+
\sum_{a=0}^4\int_{0}^{T}\norm{\p^a_t\vt}_{9-2a}^2<\infty. 
\end{align*}
This establishes the inequality \eqref{cont.est.v}.
 
\section{Well-posedness of the surface tension problem \eqref{Euler}} \label{sec:uniqueness-solutions}
In this section, we prove 
Theorem~\ref{thm.s.main} via the $\kappa$-independent a priori estimates of 
Section~\ref{sec:kappa independent estimates}.  
  
\subsection{Existence} \label{sec:limit-as-kappato0} We obtain a
solution $v$ to surface tension problem \eqref{Euler} in the
limit of $\vt$ as the parabolic parameter $\kappa$ tends
to zero.  According to Remark~\ref{rem.beta},  $\beta(t)=\beta$ in the
$\kappa=0$ limit.  Letting $\kappa=0$ in 
Section \ref{sec:kappa independent estimates}, we therefore conclude that the right-hand side of the inequality (\ref{the kappa-independent estimate})
depends only on  $M_0=P(E(0))$. That is, 
  for sufficiently small $T>0$ the   energy function
  $E(t)$ defined in \eqref{s.energy function} satisfies
\begin{align}
\label{s.go.a}
  \sup_{t\in[0,T]}E(t)\le2M_0.
\end{align}
 The
assumption (\ref{assum.Jt}) on $J$ remains valid by taking $T>0$ even smaller if necessary.
 Hence,
\begin{align*} 
	f(t)=\rho_0J^{-1}(t)\ge\tfrac{4}{3}\lambda. 
\end{align*}
Taking $T>0$ even smaller if necessary, we ensure that $\rho(t)=f\circ\eta^{-1}(t)$ satisfies 
\begin{align*}
	\rho(t) \ge \lambda\ \ \text{in } \overline{\Omega}(t). 
\end{align*}
Since $p(t)=\rho^2(t)-\beta> -\beta$ on $\Gamma(t)$, the boundary
condition \eqref{Laplace-Young} establishes that
\begin{align*}
	\sigma H(t) > -\beta\ \ \text{on } \Gamma(t).
\end{align*}

\subsection{Optimal regularity for the initial
  data} \label{sec:optim-regul-init} In order to obtain the $H^5(\Omega)$-regularity of our
existence theory, we  assumed that the given
initial data is of  $C^\infty$-class in 
Section~\ref{sec:assum-cinfty-class-1}. 
In fact, by virtue  of the estimate \eqref{s.go.a}, it  
  suffices for the regularity of the initial data to be such that $E(0)<\infty$.

\subsection{Uniqueness} \label{sec:exitence-uniqueness} We define 
\begin{align*}
	\mathbf{E}(v,t)=1
+
\sum_{a=0}^6\norm{\p^a_t\eta(t)}_{6-a}^2
+
 \abs{\p^4_tv\cdot n(t)}_1^2
+
\sum_{a=0}^4\abs{ \hd^2\!\p^a_t \eta\cdot n(t)}_{4.5-a}^2
. 
\end{align*}
We suppose that $(\eta_1,v_1,f_1)$ and $(\eta_2,v_2,f_2)$ are two solutions of the compressible surface tension problem \eqref{Euler} with $\mathbf{E}(v,0)$, for $v=v_1,v_2$, bounded by some $\mathbf{M}_0>0$.

Then by setting 
\begin{align*}
	\zeta=\eta_1-\eta_2, \ \ w=v_1-v_2, \ \ \varrho=\bset{f_1}^2-\bset{f_2}^2 ,
\end{align*}
 we have that $(\zeta,w,\varrho)$ satisfies 
\begin{subequations}
	\label{s.uniqueness problem} 
	\begin{alignat}
		{4} \ \zeta&=\int_0^tw&\quad&\text{in}\ \Omega\times(0,T], \\
		\rho_0 w^i_t+[a_1 ]{}_i^k\varrho\cp{k}&=\bset{a_2-a_1}_i^k\pset{\bset{f_2}^2}\cp{k}&\quad&\text{in}\ \Omega\times(0,T],\label{s.uniqueness: interior} \\
&\!\!\!\! \begin{aligned}[b]		\varrho=
                  -\sigma [g_1]^{
                    \alpha\beta}\zeta\cp{\alpha\beta}\cdot
                  n_1
-\sigma [g_1]^{
                    \alpha\beta}\eta_2\cp{\alpha\beta}\cdot
                  n_1\\
+\sigma [g_2]^{
                    \alpha\beta}\eta_2\cp{\alpha\beta}\cdot
                  n_2               
                \end{aligned}
&\quad&\text{on}\ \Gamma \times(0,T],\label{s.uniqueness: boundary} \\
 		(\zeta,w,\varrho)|_{t=0}&=(0,0,0)&\quad&\text{on}\ \Omega. \label{s.uniqueness: IC} 
	\end{alignat}
\end{subequations}
 
We will establish that $w=0$. Setting 
\begin{align}
\label{E.zeta}
	E^\zeta(t)=\sum_{a=0}^5\norm{\p^a_t\zeta(t)}_{5-a}^2, 
\end{align}
we follow Section~\ref{sec:kappa independent estimates} with $\kappa$
set to zero.

The Lagrangian curl operator $\curl_{\eta _1}$ applied to
\begin{align*}
	w^i_t+2\bset{A_1 }{}_i^kf_1\cp{k}=2\bset{A_2}_i^kf_2\cp{k}
\end{align*}
provides the following vorticity equation for the difference $w=v_1-v_2$:
\begin{align}
  \label{un.vort}
  \curl_{\eta _1} w_t= 2\varepsilon_{\cdot ji} \bset{A_1}_j^s (\bset{A_2}_i^kf_2\cp{k})\cp{ s}.
\end{align}
The time integral of the surface tension problem satisfied by $v_2$ yields
\begin{align*}
v_2-u_0=
  - 2  \int_0^{t} \bset{A_2 }_\cdot^k f_2 \cp{k},
\end{align*} 
by which we infer that
\begin{align*}
\int_{0}^{T} \norm{   D^2\int_0^{t} \bset{A_2 }_\cdot^k f_2 \cp{k} }_3^2\le T\,P(\Sup \norm{ v_2 (t)}_5^2)\le C\,T \,\mathbf{M}_0.
\end{align*} 
The curl-estimates for $w$ therefore follow from the analysis proving Lemma~\ref{lem: curl estimates} with the vorticity equation \eqref{un.vort} replacing the homogeneous vorticity equation \eqref{E.Lagrangian vorticity}.

Repeating the energy estimate for  the fourth time-differentiated problem in Proposition~\ref{prop: divergence: energy estimates}, the highest-order term of the interior forcing term $\int_0^t\int_\Omega\p^4_t\bset{ \bset{a_2-a_1}_i^k\pset{\bset{f_2}^2}\cp{k}}w_{tttt}^i$ obeys \begin{align*}
	\int_0^t\int_\Omega \bset{a_2-a_1}_i^k\pset{\bset{f_2}^2}_{tttt}\cp{k}w_{tttt}^i\le C\int_0^T\norm{Dv_{2ttt}}_1\norm{w_{tttt}}_0\le T\,P\pset{\Sup E^\zeta(t)}.
\end{align*}
The boundary integral corresponding with
\begin{align*}
  -\sigma [g_1]^{
                    \alpha\beta}\eta_2\cp{\alpha\beta}\cdot
                  n_1
+\sigma  [g_2]^{
                    \alpha\beta}\eta_2\cp{\alpha\beta}\cdot
                  n_2    
=
-\sigma [g_1]^{
                    \alpha\beta}\eta_2\cp{\alpha\beta}&\cdot
[                  n_1-                  n_2]
-\sigma [g_1-g_2]^{
                    \alpha\beta}\eta_2\cp{\alpha\beta}\cdot
                  n_2
\end{align*}
of (\ref{s.uniqueness: boundary}) is similarly bounded.

We notice that with 
\begin{align*}
	\int_\Omega \rho_0^2 \pset{ J_1}^{-3} \bset{a_1 }{}_p^q\,v_{1ttt}{}^p\cp{q} \bset{a_1 }{}_r^s w^r_{tttt}\cp{s} =& \int_\Omega \rho_0^2 \pset{ J_1}^{-3} \bset{a_1 }{}_p^q\,w_{ttt}^p\cp{q}\bset{a_1 }{}_r^sw^r_{tttt}\cp{s} \\
	&+\int_\Omega \rho_0^2 \pset{ J_1}^{-3}\bset{a_1 }{}_p^q\,v_{2ttt}{}^p\cp{q}\bset{a_1 }{}_r^s w^r_{tttt}\cp{s}, 
\end{align*}
we preserve an energy estimate for $\norm{ \bset{a_1 }{}_r^s w^r_{ttt}\cp{s}}_0^2$ in Proposition~\ref{prop: divergence: energy estimates}.

Following the arguments in Step 3  of Section~\ref{sec:kappa independent estimates} provides control of the divergence and normal trace of the functions of $E^\zeta(t)$. 
Proposition~\ref{prop:Hodge} and the initial condition (\ref{s.uniqueness: IC})  imply that
\begin{align*}
	\Sup E^\zeta(t)\le T\,P\pset{\Sup E^\zeta(t)}, 
\end{align*}
for the higher-order energy function $E^{\zeta}(t)$ defined in
\eqref{E.zeta}. Using the polynomial-type inequality \eqref{poly-type ineq},
we infer  that $w=0$ as desired.

\section{The asymptotic limit as 
  surface tension tends to zero}
\label{sec:uniqueness-solutions.z}

In this section, we establish an existence theory for the
zero surface tension limit of \eqref{Euler}  via a priori estimates  that
are independent of the surface tension parameter $\sigma>0$.  This asymptotic limit
holds whenever   the initial data satisfies the Taylor
sign condition \eqref{Taylor}, and provides the following:
\begin{align*}
0< \nu\int_{\Gamma}\abs{\hd^4\eta\cdot n}^2\le-   \int_{\Gamma}\frac{1}{\sqrt{g}}N^ja_\ell^ja_{\ell}^k (\rho_0^2 J
    ^{-2})\cp{k} \abs{\hd^4\eta\cdot n}^2.
\end{align*}
We recall that according to Theorem~\ref{thm.s.main}, a solution to \eqref{Euler}
  satisfies 
 \begin{align}
   \label{stp.est}
   \Sup  \sum_{a=0}^5\norm{\p^a_t\eta(t)}_{5-a}^2 +
  \abs{v_{ttt}\cdot n(t)}_1^2 
+
  \sum_0^2\abs{\hd^2\p^{a}_tv\cdot n(t)}_{2.5-a}^2 \le C_\sigma,
 \end{align} 
for a finite bound $C _{\sigma}>0$ depending on $1/\sigma$. The
Taylor-sign-condition assumption of Theorem~\ref{thm.z.main} provides for  $\sigma$-independent a priori estimates under the
higher-order energy function $\ES$ defined below in \eqref{defn:ES}.

\subsection{Assuming $C^\infty$-class initial data}
\label{z.sec:assum-cinfty-class-1} In our construction of  solutions to the zero surface tension limit of \eqref{Euler}, we will assume that the
initial data $(\rho_0,u_0, \Omega)$  is of $C^\infty$-class and
satisfy the conditions \eqref{l.boundedness.r0}, \eqref{Taylor} and
\eqref{z.compatibility conditions}, as in
Appendix~\ref{sec:appendix:init}.
Later, in    
Section ~\ref{sec:optim-regul-init.z}, we will recover the optimal regularity
of the initial data stated in Theorem~\ref{thm.z.main}.

\subsection{The $\sigma$-problem}
\label{sec:surf-tens-probl}

For $\sigma>0$ taken sufficiently small, solutions to the following problem are provided by Theorem~\ref{thm.s.main}.  To indicate  the dependence on the surface tension paramater
$\sigma$  of all the variables in the following problem,
we place the symbol $\smallsmile \label{n:vz}$ over each of the variables.   
  
\begin{defn}
	[The  $\sigma$-problem] \label{defn.stp} For
        $\sigma>0$, we define $\vz$ as the solution of
	\begin{subequations}
		\label{stp} 
		\begin{alignat}
			{4} \label{stp.m} \rho_0\vz^i_t + \az_i^k\pset{\rho_0^2 \Jz^{-2}}\cp{k}&=0&&in\ \Omega\times(0,T_\sigma],\\
			\label{stp.bc}
                        \rho_0^2\Jz^{-2}&=\beta_\sigma(t)-\sigma \gz
                        ^{\alpha\beta}\ez\cp{\alpha\beta}\cdot \nz &\ \ &on\ \Gamma\times(0,T_\sigma],\\
			\label{stp: init cond} \pset{\ez,\vz}|_{t=0}&=\pset{e,u_0}&&on\ \Omega.
		\end{alignat} 
	\end{subequations}
The  function $\beta_\sigma(t)$ appearing in
        the right-hand side of \eqref{stp.bc} is defined as
\label{n:beta stp} 
	\begin{align*}
\beta_\sigma(t)=\beta+\sum_{a=0}^{6}
                \frac{t^a}{a!}\p^a_t\bset{\rho  _0^2\Jz^{-2}- \beta +\sigma \gz
                        ^{\alpha\beta}\ez\cp{\alpha\beta}\cdot \nz  }\big|_{t=0}. 
	\end{align*}
\end{defn}

\begin{rem}
  The initial data satisfy the compatibility conditions
  \eqref{z.compatibility conditions}.  Thus,
  \begin{align}
    \label{beta.sigma}
     \beta_\sigma(t)=\beta+\sigma\sum_{a=0}^{6}
                \frac{t^a}{a!}\p^a_t\bset{  \gz
                        ^{\alpha\beta}\ez\cp{\alpha\beta}\cdot \nz }\big|_{t=0}. 
  \end{align}
The $\sigma=0$  formal limit of $\beta_\sigma(t)$ is $\beta$. It
follows that the $\sigma $-problem~\eqref{stp} is
  asymptotically consistent with the   zero surface tension limit of \eqref{Euler}.
\end{rem}
\begin{rem}We use (\ref{stp.bc}) to compute the following identities: for $a=0,\dots,6,$
\begin{align}
	\label{stp: compat cond} \partial^a_t\bset{ \rho_0^2\Jz^{-2}}|_{t=0} = \p^a_t\beta_\sigma(t)|_{t=0}-  \sigma\p^a_t\bset{  \gz
                        ^{\alpha\beta}\ez\cp{\alpha\beta}\cdot \nz}|_{t=0}.
\end{align}
\end{rem}

\subsection{The a priori estimates for the $\sigma$-problem }
\label{sec:sigma independent estimates}

For $\sigma>0$, we define  
\begin{align}
\label{defn:ES}  
\begin{aligned}[b]
  \ES= 1
&+
 \sum_{a=0}^7\norm{\p^a_t\ez(t)}_{4.5-\frac{1}{2}a}^2 
+
\sum_{a=0}^5\norm{\p^a_t\Jz(t)}_{4.5-\frac{1}{2}a}^2  
+
 \norm{\p^{6}_t\Jz(t)}_{1}^2  
\\
+&
\sum_{a=0}^5\norm{\sqrt{\sigma}\p^{a}_t\ez(t)}_{5.5-\frac{1}{2}a}^2  
+
\norm{\sqrt{\sigma}\p^5_t\vz(t)}_{2}^2  
+
\sum_{a=0}^1 \abs{\sqrt{\sigma}\p^{5+a}_t\vz\cdot\nz(t)}_{2-a}^2
\\
+
\sum_{a=0}^3&\norm{\sigma\p^{a}_t\ez(t)}_{6.5-\frac{1}{2}a}^2  
+
\norm{\sigma\vz_{ttt}(t)}_{4}^2  
+
\sum_{a=0}^1 \abs{\sigma\p^{3+a}_t\vz \cdot\nz(t)}_{4-\frac{1}{2}a}^2
+
 \abs{\sigma\p^5_t\vz \cdot\nz(t)}_{2.5}^2.
\end{aligned}
\end{align}

We will   allow constants to depend on
$1/\delta>0$:
\begin{defn}
 	[Notational convention for constants depending on $1/ \delta>0$] We let $\PS$ denote a generic polynomial with constant and coefficients depending on $1/\delta>0$.
	
	We define the constant $\MS>0$ by 
	\begin{align}
		\MS=\PS\pset{\norm{u_0}_{100}, \norm{\rho_0}_{100}}. 
	\end{align}
	
	We let $\RS\label{n:RS}$ denote generic lower-order terms satisfying 
	\begin{align*}
		\int_0^T\RS\le \MS+\delta \Sup \ES+T\,\PS(\Sup \ES). 
	\end{align*}
\end{defn}
We  infer from the estimates \eqref{universal constant} and \eqref{stp.est} that for  $T>0$ 
taken sufficiently small,
\begin{subequations}
\label{z.assum}
  \begin{align}
    \label{z.assum.Jt} \frac{1}{2}\le \Jz \le \frac{3}{2}\ \ \text{for
      all } t\in[0,T] \text{ and } x\in {\Omega}.
  \end{align}
Since the initial data satisfy
the Taylor sign condition  \eqref{Taylor},  we  also assume  that
  \begin{align}
    \label{Taylor.t}
    0< \nu\le- \frac{1}{\sqrt{\gz}}N^j \az_\ell^j \az_\ell ^k (\rho_0^2
    \Jz ^{-2})\cp{k}\ \ \text{for
      all } t\in[0,T] \text{ and } x\in {\Gamma}.
  \end{align}
\end{subequations}

 \begin{lem}
	[A priori estimates for the $\sigma$-problem]
We       let $ \vz $ solve the $\sigma$-problem 
        $\eqref{stp}$ \label{lem.ES} on a time-interval $[0,T]$, for
        some $T = T _ \sigma>0$. Then independent of $1>>\sigma >0$,
	\begin{align}
		\label{est: ES} \Sup\ES\le \int_0^{T}\RS. 
	\end{align}
\end{lem}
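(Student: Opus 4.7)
The strategy is to mirror the six-step blueprint of Section~\ref{sec:kappa independent estimates}, adapted to the half-integer Sobolev scaling dictated by the heuristic $\p_t \sim (\p_x)^{1/2}$ of Remark~\ref{scaling}, and to replace the artificial-viscosity-driven boundary control of the $\kappa$-problem by the Taylor-sign-driven boundary coercivity \eqref{Taylor.t}. I will work on the fixed domain $\Omega$, use the bounds \eqref{z.assum}, and proceed on a short time-interval $[0,T]$, $T=T_\sigma$, on which the solution provided by Theorem~\ref{thm.s.main} exists.

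First, curl estimates. As in the proof of Lemma~\ref{lem: curl estimates}, the Lagrangian vorticity identity $\curl_{\ez}\vz_t=0$ from \eqref{E.Lagrangian vorticity} and its time-derivatives, integrated against the fundamental theorem of calculus, yield $\Sup \sum_{a=0}^{6}\norm{\curl\p^a_t\ez(t)}_{4-a/2}^2 \le \int_{0}^{T}\RS$, where the constants are independent of $\sigma$.

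Second, the top-order energy estimate. Testing $\p^6_t$ of the momentum equation \eqref{stp.m} against $\p^6_t\vz$ in the $L^2(\Omega)$-inner product, and integrating by parts in $\p_k$ in the term $\int_\Omega \az_i^k\p^6_t(\rho_0^2\Jz^{-2})\cp{k}\,\p^6_t\vz^i$, produces in the interior the exact-derivative structure $\frac{d}{dt}\int_\Omega \rho_0|\p^6_t\vz|^2 + \frac{d}{dt}\int_\Omega \rho_0^2\Jz^{-3}|\p^6_t\Jz|^2$ modulo commutators controlled by $\RS$. Using $\az_i^kN^k=\sqrt{\gz}\,\nz^i$ together with the Laplace-Young condition \eqref{stp.bc}, the corresponding boundary integral splits into (i) a nonnegative $\sigma$-weighted surface-tension piece of the form $\sigma\frac{d}{dt}\int_\Gamma\sqrt{\gz}\gz^{\alpha\beta}(\hd_\alpha \p^5_t\vz\cdot\nz)(\hd_\beta \p^5_t\vz\cdot\nz)$, (ii) a $\p^6_t\beta_\sigma(t)$ piece which by \eqref{beta.sigma} is a polynomial in the initial data and hence absorbed in $\RS$, and (iii) geometric error integrals handled exactly as in the analysis of $\i_1$--$\i_5$ of Proposition~\ref{prop: divergence: energy estimates}. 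The positive piece in (i) is kept in $\ES$ at the $\sqrt{\sigma}$-weighted level.

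Third, Taylor-sign coercivity. To expose the $\nu$-coercive term needed to replace surface-tension control in the $\sigma\to 0$ limit, I separately test $\p^5_t$ of \eqref{stp.m} against $\az_i^m\az_\ell^m(\p^5_t\ez\cdot\nz)\nz^i$ (so as to project onto the normal direction) and integrate by parts in $\p_k$. After using $\az_i^kN^k=\sqrt{\gz}\nz^i$, the boundary integral becomes
\[
-\int_\Gamma \frac{1}{\sqrt{\gz}}N^j\az_\ell^j\az_\ell^k(\rho_0^2\Jz^{-2})\cp{k}\,|\p^5_t\ez\cdot\nz|^2\,dS,
\]
which by \eqref{Taylor.t} is bounded below by $\nu|\p^5_t\ez\cdot\nz|_0^2$. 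The analogous projections at lower time-differentiation levels $a=0,\ldots,4$, combined with tangential-derivatives up to the appropriate half-integer order dictated by $\ES$, yield $\nu|\hd^{4-a/2}\p^a_t\ez\cdot\nz|_0^2$ coercivity on the boundary at each level.

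Fourth, Hodge upgrading and bootstrap. The divergence identity $\Jz_t=\az_r^s\vz^r\cp{s}$ converts the $\|\p^6_t\Jz\|_0$-bound from Step~2 into a $\Div\p^5_t\vz$-bound in $L^2(\Omega)$. Together with the curl estimate of Step~1 and the Taylor-sign-derived normal-trace bound of Step~3, Proposition~\ref{prop:Hodge} yields $\|\p^5_t\vz\|_1^2$, and hence $\|\p^6_t\ez\|_1^2$. Rewriting the momentum equation in the form \eqref{Euler for d} (with $\kappa=0$) shows that a bound on $\p^a_t\vz_t$ is equivalent to a bound on $D\p^a_t\Jz$, which iteratively produces the $H^{4.5-a/2}$-estimates for $\p^a_t\ez$ at $a=0,\ldots,7$ via repeated Hodge upgrades. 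The $\sigma$- and $\sqrt{\sigma}$-weighted boundary summands in $\ES$ are recovered by viewing \eqref{stp.bc} as an elliptic equation on $\Gamma$ for $g^{\alpha\beta}\ez\cp{\alpha\beta}\cdot\nz$ with small parameter $\sigma$, yielding elliptic regularity gains of one and two derivatives weighted by $\sqrt{\sigma}$ and $\sigma$ respectively. Summing all estimates and applying the polynomial-type inequality \eqref{poly-type ineq} after choosing $\delta$ small yields \eqref{est: ES} for $T$ small and independent of $\sigma$.

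The main obstacle will be the careful bookkeeping in Step~3 at intermediate differentiation levels $a=1,\ldots,4$, where the boundary integrals produced by testing mix a $\sigma$-weighted surface-tension contribution with the $\sigma$-independent Taylor-sign contribution; disentangling these so that the Taylor-sign coercivity is actually realized requires decomposing along normal and tangential modes and tracking each at its correct half-integer regularity. A secondary difficulty is that the commutators generated by differentiating \eqref{stp.bc} involve products of $\ez\cp{\alpha\beta}\cdot\nz$ and $\vz\cp{\alpha\beta}\cdot\nz$ that must be estimated uniformly in $\sigma$; the structure of the surface-tension contribution (always positive and always bearing a factor $\sigma$) is what ultimately permits such uniformity.
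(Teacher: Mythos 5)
Your Step~1 (curl estimates) and the final polynomial-inequality closing argument match the paper. However, your Steps~2 and~3 diverge from the paper's proof in ways that leave genuine gaps. For Step~2 you propose to test $\p^6_t$ of the momentum equations \eqref{stp.m} against $\p^6_t\vz$, which would deliver $\norm{\p^6_t\vz}_0$ and $\norm{\p^6_t\Jz}_0$, but not the two quantities that the paper's Proposition~\ref{z.prop.z8} actually produces, namely $\norm{\p^7_t\Jz}_0$ and, crucially, $\norm{\p^6_t\Jz}_1$. The paper obtains these by first deriving the nonlinear wave equation \eqref{wv.J} for the Lagrangian pressure $\fz^2=\rho_0^2\Jz^{-2}$,
\[
\p^2_t\fz^2 - 2\fz\Az_i^j\bbset{\Az_i^k\fz^2\cp{k}}\cp{j}=\Fz,
\]
and testing its sixth time-derivative against $\rho_0^{-2}\Jz^3\p^7_t\fz^2$; the wave-equation energy is exactly what yields a full spatial derivative on $\p^6_t\Jz$. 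Without this mechanism your subsequent Hodge bootstrap (your Step~4) does not close at the half-integer regularity levels demanded by $\ES$: the identification "bound on $\p^a_t\vz_t$ $\Leftrightarrow$ bound on $D\p^a_t\Jz$" via \eqref{Euler for d} with $\kappa=0$ is one-sided and does not substitute for the wave-equation gain.

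Your Step~3 proposal — testing $\p^5_t$ of \eqref{stp.m} against the normal-projected test function $\az_i^m\az_\ell^m(\p^5_t\ez\cdot\nz)\nz^i$ — is also not what the paper does, and it is unclear it can be made to work: the paper never runs a projected test; instead, the Taylor-sign coercivity emerges as the sign-definite boundary contribution of the \emph{commutator} term $\mathscr{K}=\int_\Omega \hd\p^6_t\az_i^k(\rho_0^2\Jz^{-2})\cp{k}\,\hd\p^6_t\vz^i$ that arises when $\hd\p^{8-2a}_t$ of the momentum equations is tested against $\hd\p^{8-2a}_t\vz$; after decomposing the cofactor matrix on $\Gamma$ as in \eqref{at.dcom} and using \eqref{Taylor.2}, the piece $\k_2$ becomes $\frac{1}{2}\frac{d}{dt}$ of the coercive form on $\abs{\hd\p^{7-2a}_t\vz\cdot\nz}^2$. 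Your ad hoc projection would require a separate verification that the interior terms it generates are controlled at the level of $\ES$ and that no sign problem arises; the paper's commutator route bypasses this entirely. Both discrepancies go to the heart of the argument rather than to bookkeeping, so the proposal as written does not constitute a proof.
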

 
We will establish  Lemma~\ref{lem.ES} in the following nine steps:  

\subsection*{Step 1: The $\sigma$-independent
  curl-estimates} \label{sec:curl-estimates.s} 
We infer the following lemma
from the arguments proving Lemma~\ref{lem: curl estimates}.
\begin{lem}[The $\sigma$-independent curl-estimates]
	\label{z.lem.curl} 
	\begin{align*}
\Sup \sum_{a=0}^7&\norm{\curl \p^a_t\ez
  (t)}_{3.5-\frac{1}{2}a}^2
+
\Sup\sum_{a=0}^1\norm{\sqrt{\sigma}\curl \p^{3+a}_t\vz
  (t)}_{2.5-\frac{1}{2}a}^2
+
\Sup\norm{\sqrt{\sigma}\curl \p^5_t\vz
  (t)}_{1}^2
\\
&
+
\Sup			\sum_{a=0}^3\norm{\sigma\curl \p^a_t\ez
  (t)}_{5.5-\frac{1}{2}a}^2
+
\Sup \norm{\sigma\curl \p^{3}_t\vz (t)}_{3}^2
\le\int_{0}^{T}\RS.
	\end{align*}
\end{lem}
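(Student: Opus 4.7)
The plan is to adapt the proof of Lemma~\ref{lem: curl estimates} to the $\sigma$-problem, using the half-derivative time-space scaling dictated by the zero-surface-tension regime (see Remark~\ref{scaling}). The starting point will be the observation that the Lagrangian curl of the momentum equations \eqref{stp.m} yields the homogeneous Lagrangian vorticity equation $\curl_{\ez}\vz_t=0$ in $\Omega$, which upon setting $B(\Az,D\vz)=\varepsilon_{\cdot ji}\Az_t{}_j^s\vz^i\cp{s}$ integrates to $\curl_{\ez}\vz(t)=\curl u_0+\int_0^t B(\Az,D\vz)$. Applying $D$ followed by a time-integration then yields an identity for $D\curl\ez(t)$ of exactly the form \eqref{curl D eta}. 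All three families of bounds in the statement will stem from this master identity, from $\curl_{\ez}\vz_t=0$, and from their time-differentiated versions.

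For the $\sigma$-independent bounds on $\norm{\curl\p^a_t\ez(t)}_{3.5-\frac{1}{2}a}$, $a=0,\ldots,7$, the strategy is to apply $D^{3.5-\frac{1}{2}a}\p^a_t$ to the master identity for $D\curl\ez$ in the low-$a$ cases, and to the differentiated vorticity identity $\curl\p^a_t\vz=\varepsilon_{j\cdot i}\p^a_t\vz^i\cp{s}\int_0^t\Az_t{}_j^s+\text{l.o.t.}$ in the high-$a$ cases, then to estimate each term in $L^\infty(0,T;L^2(\Omega))$. The topmost term inside $\int_0^t\int_0^{t'}B(\Az,D\vz)$ will be treated by a single integration-by-parts in time, exactly as in the proof of Lemma~\ref{lem: curl estimates}, which trades a derivative on $\vz$ for a derivative on $\ez$. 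Fractional regularity levels will be handled by interpolation of $H^s(\Omega)$-norms; the half-integer scaling is consistent because each additional $\p_t$ consumes half a space-derivative, which is precisely what the energy function $\ES$ budgets.

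For the $\sqrt{\sigma}$- and $\sigma$-weighted bounds, I will apply $\sqrt{\sigma}D^{2.5-\frac{1}{2}a}\p^{3+a}_t$ for $a=0,1$, then $\sqrt{\sigma}D\p^5_t$, then $\sigma D^{5.5-\frac{1}{2}a}\p^a_t$ for $a=0,\ldots,3$, and finally $\sigma D^3\p^3_t$ to the same pair of identities. The weights $\sqrt{\sigma}$ and $\sigma$ will be distributed to land on precisely those high-time-differentiated quantities that are controlled only in the corresponding weighted pieces of $\ES$ (e.g.\ $\sqrt{\sigma}\p^5_t\vz\in H^2$ and $\sigma\vz_{ttt}\in H^4$). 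Cauchy-Schwarz together with the time-integration that defines $\int_0^T\RS$ will then absorb every error term with either a small constant $\delta$, an $\MS$-contribution, or a term of the form $T\,\PS(\Sup\ES)$.

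The main obstacle will be the careful accounting of the top-regularity terms inside the double time integrals: one must verify, for each of the three families of bounds, that a single integration-by-parts in time always lands the uncontrolled derivative onto a quantity controlled by $\ES$. Equivalently, one must check that after integration by parts the pairing (derivative count on $\vz$, derivative count on $\ez$) is never outside the budget encoded in \eqref{defn:ES}. This bookkeeping, together with systematic use of Remark~\ref{scaling} to convert between time and space derivatives, is what makes the statement work out; there are no genuinely new estimates beyond those appearing in the proof of Lemma~\ref{lem: curl estimates}.
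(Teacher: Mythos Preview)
Your proposal is correct and follows the same approach as the paper, which simply states that the lemma is inferred from the arguments proving Lemma~\ref{lem: curl estimates}. In fact your outline is considerably more detailed than the paper's one-line proof, but the underlying strategy---derive the homogeneous Lagrangian vorticity equation from \eqref{stp.m}, integrate in time to obtain the analogues of \eqref{curl v} and \eqref{curl D eta}, and then allocate derivatives and $\sqrt{\sigma}$, $\sigma$ weights according to the budget in $\ES$---is exactly what the paper intends.
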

\subsection*{Step 2: The $\sigma$-independent
 estimates for $\p^7_t\Jz$, $\p^6_t\Jz$ and
 $\sqrt{\sigma}\p^6_t\vz\cdot \nz$} 
We recall that
\begin{align*}
  \fz = \rho_0\Jz ^{-1}
\end{align*}
is the Lagrangian density. Using the identity $\Jz^{-1}\Jz_t
= \Div_{\ez}\vz$, we have that 
\begin{align*}
\p_t  \fz^2 &= -2 \fz ^2\Div_{\ez}\vz,
\\
\p^2_t\fz^2& = -2 \fz^2 \Div_{\ez}\vz_t - 2(\fz^2\Az_r^s)_t\vz^r \cp{s}.
\end{align*}
Letting the operator $-2\bset{ \fz \Az_i^j \p_j}\Jz ^{-1} $ act in the
Euler equations \eqref{stp.m} yields
\begin{align*}
-2\fz^2 \Div_{\ez} \vz _t -2 \fz\Az_i^j \bbset{ \Az_i^k (\rho_0^2
  \Jz ^{-2}) \cp{k}}\cp{j}=  \vz_t^i \Az_i^j\fz^2 \cp{j}.
\end{align*}
Using the Euler equations \eqref{stp.m}  
to write $\vz_t^i=-\rho_0^{-1} \az_i^k \fz^2\cp{k}$, we infer
that $\fz ^ 2$ satisfies
\begin{align}
  \label{wv.J}
\p^2_t\fz^2- 2\fz\Az_i^j \bbset{ \Az_i^k \fz^2\cp{k}}\cp{j}= \underbrace{
-\rho_0^{-1} \az_i^k \fz^2\cp{k}  \Az_i^j\fz^2
\cp{j}-2(\fz^2\Az_i^j)_t  \vz^i  \cp{j}
}_{\Fz}. 
\end{align}
Since $\Fz$ scales like $D\Jz+D\vz$, it follows that
$\p^6_t\Fz$ is in $L^2(\Omega)$.

Similar to the tangential identity \eqref{indentity: smoothed tangential},
\begin{align} 
	\label{z.tid} 
		\rho_0 \Jz ^{-1} \vz_t\cdot\ez\cp{\gamma}
                =\hd_\gamma\Big[
                {\sigma\gz^{\mu\nu}\ez\cp{\mu\nu}\cdot\nz} 
-\beta_ \sigma(t) \Big]\ \ \ \text{on } \Gamma,
\end{align}
thanks to  the Euler equations
\eqref{stp.m} and the Laplace-Young boundary condition \eqref{stp.bc}.
 \begin{prop}
	[Energy estimates for  the action of $\p^6_t$ in
        the wave-type equation \eqref{wv.J}]
\label{z.prop.z8}
	\begin{align*}
			 \Sup \norm{
                           \p^7_t\Jz(t)}_0^2&+\Sup\norm{\p^6_t\Jz(t)}_1^2
+\Sup\abs{\sqrt{\sigma}\p^6_t\vz\cdot
                           \nz(t)}_1^2
\le \int_{0}^{T}\RS .
	\end{align*}
\end{prop}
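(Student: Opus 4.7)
The plan is to apply $\p^6_t$ to the wave-type equation \eqref{wv.J}, test against $\p^7_t\fz^2$ in the $L^2(\Omega)$-inner product, and carefully analyze the resulting boundary integral using the Laplace-Young condition \eqref{stp.bc} together with the momentum equations \eqref{stp.m}. Writing the interior identity in divergence form, integration by parts with respect to $\p_j$ yields
\begin{align*}
\tfrac{1}{2}\tfrac{d}{dt}\!\int_\Omega|\p^7_t\fz^2|^2 \,+\, \tfrac{d}{dt}\!\int_\Omega \fz\,\Az_i^j\Az_i^k\,\p^6_t\fz^2\cp{k}\,\p^6_t\fz^2\cp{j} \;+\; \II \;=\; \RS,
\end{align*}
where the symmetry of $\Az_i^j\Az_i^k$ in $(j,k)$ produces the exact time-derivative of the coercive bulk term, and all lower-order commutators of $\p^6_t$ with the coefficients $\fz\Az_i^j\Az_i^k$ and the forcing $\Fz$ (which scales like $D\Jz+D\vz$) are absorbed into $\RS$ by the a priori bounds~\eqref{stp.est} and the definition of $\ES$. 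The boundary integral is
\[
\II \;=\; -2\!\int_\Gamma \fz\,\Az_i^j\Az_i^k\,\p^6_t\fz^2\cp{k}\,N^j\,\p^7_t\fz^2.
\]

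The key step is to rewrite $\II$. Since $\beta_\sigma(t)$ is a polynomial of degree six in $t$, formula \eqref{beta.sigma} yields $\p^7_t\beta_\sigma = 0$, so by the Laplace-Young boundary condition~\eqref{stp.bc},
\[
\p^7_t\fz^2\big|_\Gamma = -\sigma\,\p^7_t\bigl(\gz^{\alpha\beta}\ez\cp{\alpha\beta}\cdot\nz\bigr) = -\sigma\,\gz^{\alpha\beta}\,\p^6_t\vz\cp{\alpha\beta}\cdot\nz + (\RS\text{-type}).
\]
On the other hand, the momentum equations \eqref{stp.m} provide the interior identity $\az_i^k(\fz^2)\cp{k}=-\rho_0\vz^i_t$, so applying $\p^6_t$ gives $\az_i^k\,\p^6_t\fz^2\cp{k} = -\rho_0\,\p^7_t\vz^i$ modulo $\RS$. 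Using $\Az_i^jN^j=\Jz^{-1}\sqrt{\gz}\,\nz^i$ and these two identities reduces the highest-order part of $\II$ to
\[
-\,2\sigma\!\int_\Gamma \fz\,\rho_0\,\sqrt{\gz}\,\Jz^{-2}\,\gz^{\alpha\beta}\,(\p^7_t\vz\cdot\nz)\,(\p^6_t\vz\cp{\alpha\beta}\cdot\nz) + \RS.
\]
Two tangential integrations by parts (one in $\hd_\alpha$, then commuting with $\p_t$) together with symmetry in $(\alpha,\beta)$ of $\gz^{\alpha\beta}$ convert this into
\[
-\tfrac{d}{dt}\!\int_\Gamma \sigma\,\fz\,\rho_0\,\sqrt{\gz}\,\Jz^{-2}\,\gz^{\alpha\beta}\,(\p^6_t\vz\cp{\alpha}\cdot\nz)(\p^6_t\vz\cp{\beta}\cdot\nz) \;+\; \RS,
\]
where derivatives falling on $\nz$, on the coefficients, and on the quantity $\gz^{\alpha\beta}\sqrt{\gz}\,\fz\rho_0\Jz^{-2}$ give only $\RS$-terms thanks to the a priori control in $\ES$ and Lemma~\ref{lem.j} for the boundary absorption.

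After integrating in time from $0$ to $T$ and invoking the polynomial-type inequality structure, we extract:
\begin{itemize}
\item $\Sup\norm{\p^7_t\fz^2(t)}_0^2$ from the first time-derivative term, which is equivalent to $\Sup\norm{\p^7_t\Jz(t)}_0^2$ via the chain rule $\fz^2=\rho_0^2\Jz^{-2}$ modulo $\RS$;
\item $\Sup\norm{D\p^6_t\fz^2(t)}_0^2 \sim \Sup\norm{\p^6_t\Jz(t)}_1^2$ from the coercive bulk term, using $\tfrac12\le\Jz\le\tfrac32$ for uniform ellipticity of $\Az_i^j\Az_i^k$ (together with the fundamental-theorem-of-calculus $L^2$-estimate for $\p^6_t\Jz$ itself);
\item $\Sup\abs{\sqrt\sigma\,\p^6_t\vz\cdot\nz(t)}_1^2$ from the boundary time-derivative, since $\gz^{\alpha\beta}(\p^6_t\vz\cp\alpha\cdot\nz)(\p^6_t\vz\cp\beta\cdot\nz)$ controls $|\hd(\p^6_t\vz\cdot\nz)|_0^2$ modulo~$\RS$ (commutators with $\nz$-derivatives).
\end{itemize}

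The main obstacle will be the boundary analysis, specifically justifying the substitution $\az\cdot D\p^6_t\fz^2\sim -\rho_0\p^7_t\vz$ on $\Gamma$ (which requires tracking commutators of $\p^6_t$ with $\az$ and the pressure chain rule) and showing that the cascade of tangential integrations by parts, along with the commutations $[\p_t,\hd_\alpha]$ on the moving geometry, produces only $\RS$-terms. These manipulations are analogous to the $\i_2$-analysis of Proposition~\ref{prop: divergence: energy estimates}, but the Taylor-sign setting here replaces the surface-tension-based coercivity of the boundary integrand with the $\sigma$-weighted coercivity extracted from the Laplace-Young condition.
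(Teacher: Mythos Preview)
Your proposal is correct and follows essentially the same approach as the paper: test $\p^6_t$ of the wave equation \eqref{wv.J} against $\p^7_t\fz^2$ (the paper uses the weighted test function $\rho_0^{-2}\Jz^3\,\p^7_t\fz^2$ to streamline the conversion from $\fz^2$ to $\Jz$), form the exact time-derivative for the bulk coercive term, and on the boundary use the momentum equation \eqref{stp.m} together with the Laplace-Young condition \eqref{stp.bc} to extract the $\sigma$-weighted coercive term. You are right that the boundary remainder analysis is the main obstacle; the paper carries it out in detail via the tangential identity \eqref{z.tid}, temporal integration by parts, and a cancellation analogous to the $\i_3$-analysis of Proposition~\ref{prop: divergence: energy estimates}, and closing these terms genuinely requires the $\sigma$- and $\sqrt{\sigma}$-weighted norms built into $\ES$.
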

\begin{proof}
	Testing six time-derivatives of \eqref{wv.J} against $ \rho_0^{-2}\Jz ^{3}\p^7_t \fz^2$ in the
        $L^2(\Omega)$-inner product, and integrating by parts with
        respect to $\p_j$ in the  integral $-2\int_\Omega
\rho_0^{-1}       \az_i^j \p^6_t\bbset{ \az_i^k\fz^2\cp{k} }\cp{j}\p^7_t\fz^2$ yields
	\begin{align}
\label{z8.p}
\begin{aligned}[b]
\frac{1}{2}\frac{d}{dt}  \int_\Omega &\rho_0 ^{-2}\Jz ^{3}  \abs{\p^7_t\fz^2}^2 
+ 
2
  \underbrace{
\int_\Omega \rho_0^{-1} \p^6_t\bbset{ \az_i^k
      \fz^2\cp{k} }\az_i^j\p^7_t\fz^2 \cp{j}
}_{\I }
   -
2 \underbrace{\int_\Gamma  \rho_0^{-1}\p^6_t\bbset{ \az_i^k
      \fz^2\cp{k} }\az_i^j N^j\p^7_t\fz^2  }_{\II}
\\
&=\underbrace{\frac{1}{2}\int_{\Omega}(\rho_0 ^{-2}\Jz ^{3} )_t \abs{\p^7_t\fz^2}^2 }_{\RS}+
2  \underbrace{
\sum_{\counter=1}^6c_\counter\int_\Omega \fz\Az_i^j \bpset{ \p^\counter_t\Jz ^{-1}\p^{6-\counter}_t\bbset{ \az_i^k
      \fz^2\cp{k} }} \cp{j}\rho_0^{-2}\Jz ^{3}\p^7_t\fz^2 
}_{\RS}
\\
&+
2  \underbrace{
\sum_{\counter=1}^6c_\counter\int_\Omega \p^\counter_t\pset{\fz\Az_i^j }\p^{6-\counter}_t\bpset{  \Az_i^k
      \fz^2\cp{k} } \cp{j} \rho_0^{-2}\Jz ^{3}\p^7_t\fz^2 
}_{\RS}
+
\underbrace{
\int_\Omega \p^6_t\Fz\rho_0^{-2}\Jz^3\p^7_t\fz^2 
}_{\RS}
\\
&+
2\underbrace{\int_{\Omega} \rho_0^{-1}\az_i^j (\Jz ^{-1})\cp{j}
  \p^6_t\bset{\at_i^k\fz^2 \cp{k}}\Jz \p^7_t\fz^2
}_{\RS}
.
\end{aligned}
	\end{align}
We have
used the Cauchy-Schwarz inequality to analyze all of the terms in the
right-hand side of \eqref{z8.p}  except for the
highest-order terms of 
$\int_{\Omega} \p^4_t\pset{\fz\Az_i^j} \p^{2}_t\bpset{  \Az_i^k
      \fz^2\cp{k} } \cp{j} \rho_0^{-2}\Jz ^{3}\p^7_t\fz^2 $, where we
have used an $L^4$--$L^4$--$L^2$ H\"{o}lder inequality.

Writing $\p_t\fz^2 = -2 \rho_0^2 \Jz ^{-3}\Jz_t$, it follows that
\begin{align}
  \label{fz.Jz}
\p^7_t \fz ^2 =-2 \rho_0^2 \Jz ^{-3}\p^7_t\Jz-2\sum_{\counter=1}^{6}
c_\counter \p^\counter_t\pset{
\rho_0^2 \Jz ^{-3}}\p^{7-\counter}_t\Jz.
\end{align}
The identity \eqref{fz.Jz} provides that the equation
\eqref{z8.p} multiplied by $\frac{1}{2}$ is equivalent to
	\begin{align}
\label{z8}
\begin{aligned}[b]
\frac{d}{dt}  \int_\Omega\rho_0^2\Jz ^{-3}  \abs{\p^7_t\Jz}^2 
&+ 
\I 
   -
\II
=
\RS.
\end{aligned}
	\end{align}

\subsection*{Analysis of $\I$ in \eqref{z8}}
We equivalently write
\begin{align*}
\mathcal{I}= 
\underbrace{
\int_\Omega \rho_0^{-1}\az_i^k \p^6_t  \fz ^2\cp{k} \az_i^j\p^7_t\fz^2 \cp{j}
}_{\mathcal{I}_a }
+
\sum_{\counter=0}^5c_\counter\underbrace{\int_\Omega
\rho_0^{-1}
    \p^{6-\counter}_t\az_i^k \p^\counter_t\fz^2\cp{k} \az_i^j\p^7_t\fz^2 \cp{j}
}_{\mathcal{I}_{b,\counter }} .
\end{align*}
We have that
\begin{align*}
  \I_a = \frac{1}{2}\frac{d}{dt}\int_{\Omega} \rho_0^{-1}
  \abs{\az_\cdot^k \p^6_t\fz^2 \cp{k}}^2 
-
\int_{\Omega}\rho_0^{-1}
\az_i^k \p^6_t  \fz ^2\cp{k} \p_t\az_i^j\p^6_t\fz^2 \cp{j}
.
\end{align*}
Similar to \eqref{fz.Jz}, we have that $\p^6_t\fz^2$ is equal to
$-2 \fz^2\Jz ^{-1}\p^6_t\Jz$ plus lower-order terms. Thus,
\begin{align*}
  \I_a
=
2\frac{d}{dt}\int_{\Omega} 
\rho_0^{-1}  \fz^4\abs{\Az_\cdot^k \p^6_t\Jz \cp{k}}^2 +\RS.
\end{align*}
 For $\int_{0}^{T}\I_{b,\counter}$, we integrate by parts with respect to a
time-derivative of $\p^7_t\fz^2 \cp{j}$. For example,
\begin{align*}
  \int_{0}^{T} \I_{b,0} = \int_{\Omega} \rho_0^{-1}\p^6_t\az_i^k \fz ^2\cp{k} \az_i^j\p^6_t\fz^2 \cp{j} \big|_0^T  - \int_{0}^{T} \int_{\Omega} \rho_0^{-1}\p_t\bbset{ \p^6_t\az_i^k \fz ^2\cp{k} \az_i^j}\p^6_t\fz^2 \cp{j}  = \int_{0}^{T}\RS.
\end{align*}
Since $\p^6_t\az$ scales like $D\p^5_t\vz$,   the fundamental
theorem of calculus  ensures a good estimate for the term evaluated at time $t=T$.
The terms $\int_{0}^{T}\I_{b,\counter}$, $\counter=1,\dots,5$, are similarly analyzed:
\begin{align*}
  \sum_{\counter=0}^5 \I_{b,\counter} = \RS.
\end{align*}
This establishes that
\begin{align}
\label{z8.I}
  \mathcal{I}= 2 \frac{d}{dt}\int_{\Omega} \rho_0^{-1}\fz^4\abs{\Az_\cdot^k \p^6_t\Jz \cp{k}}^2 + \RS.
\end{align}

	\subsection*{Rewriting the boundary integral in
          \eqref{z8}} We use the trace of the action of $\p^6_t$ in the Euler
        equations \eqref{stp.m} to write $\II=
        \int_\Gamma\p^7_t\fz^2\p^7_t\vz^i\az_i^j N^j$, or equivalently,
        \begin{align*}
\II=
\int_\Gamma \p^7_t\fz^2\sqrt{\gz}\p^7_t\vz\cdot\nz .
        \end{align*}
 Using the Laplace-Young boundary
        condition \eqref{stp.bc},  we find
        that 
	\begin{align}
		\label{z7.i} 
		\begin{aligned}[b]
			{\II}
			&=\frac{1}{2}\frac{d}{dt}\int_\Gamma
                        \sqrt{\gz} \gz^{\alpha\beta} \,\sqrt{\sigma}
                        \p^6_t\vz\cp{\alpha}\cdot \nz\,\sqrt{\sigma}
                        \p^6_t\vz\cp{\beta}\cdot \nz  
 \\
			&\quad \underbrace{ -\sigma\int_\Gamma
                          \sqrt{\gz} \gz^{\alpha\beta}
                          \p^6_t\vz\cp{\alpha}\cdot \nz_t
                          \,\p^6_t\vz\cp{\beta}\cdot \nz 
}_{\j_1}- \frac{1}{2}
\underbrace{\int_\Gamma \pset{\sqrt{\gz}\gz^{\alpha\beta}}_t\,\sqrt{\sigma} \p^6_t\vz\cp{\alpha}\cdot \nz \,\sqrt{\sigma}\p^6_t\vz\cp{\beta}\cdot \nz }_{\RS} \\
			&\quad+ \underbrace{ \sigma\int_\Gamma
                          \p^6_t\vz\cp{\beta}\cdot\nz                            \sqrt{\gz}\gz^{\alpha\beta}
                          \p^7_t\vz \cdot \nz \cp{\alpha}
+
\sigma\int_\Gamma         \p^6_t\vz\cp{\beta}\cdot\nz \cp{\alpha}                            \sqrt{\gz}\gz^{\alpha\beta}
                          \p^7_t\vz \cdot \nz 
                       }_{ \j_{2} }\\
	&\quad+ \underbrace{ \sigma\int_\Gamma  \p^6_t\vz\cp{\beta}\cdot \nz(
          \sqrt{\gz}\gz^{\alpha\beta})\cp{\alpha} \p^7_t\vz \cdot \nz
-
\sigma \sum_{\counter=0}^{6}c_\counter\int_\Gamma \gz^{\alpha\beta} \p^\counter_t \ez ^j\cp{\alpha\beta} \p^{7-\counter}_t\nz^j\,\sqrt{\gz}\p^7_t\vz\cdot \nz }_{\j_{3}}\\
			&\quad- \underbrace{\sigma
                          \sum_{\counter=1}^7c_\counter\int_\Gamma\p^\counter_t{\gz^{\alpha\beta}}\p^{7-\counter}_t\bset{\ez
                            \cp{\alpha\beta}\cdot \nz
                          }\sqrt{\gz}\p^7_t\vz\cdot\nz }_{\j_{4}} 
+
\underbrace{
                        \int_{\Gamma} \p^7_t\beta_
                        \sigma(t) \sqrt{\gz}\p^7_t\vz\cdot \nz}_0. 
		\end{aligned}
	\end{align}

	\subsection*{Analysis of $\int_0^T \j_1$ in the time-integral
          of \eqref{z7.i}} The action of $\hd_ \alpha\p^5_t$ in the  tangential
        identity \eqref{z.tid} provides that 
        \begin{subequations}
            \label{z.tid.61} 
          \begin{align}\p^6_t\vz\cp{\alpha}\cdot \ez
            \cp{\gamma}=\rho_0^{-1}\Jz
            \bset{\hd_{\alpha\gamma}\p^5_t\big(\sigma\gz^{\mu\nu}\ez\cp{\mu\nu}\cdot\nz
              - \p^5_t\beta_ \sigma(t) \big) - \elz_{\gamma\alpha}},
          \end{align}
          where $\elz_{\gamma\alpha}$ is such that $\elz_{\gamma\alpha} \in H^{-0.5}(\Gamma)$, $\sqrt{\sigma}\elz_{\gamma\alpha} \in H^{0.5}(\Gamma)$ and is given by
          \begin{align}
            \label{z.tid.61.l} \elz_{\gamma\alpha}=
            \p^6_t\vz\cdot\pset{\ez\cp{\gamma}\rho_0
              \Jz^{-1}}\cp{\alpha}+\sum_{\counter=0}^4c_\counter\bset{\p^a_t\vz_t\cdot\p^{5-\counter}_t\pset{\ez\cp{\gamma}\rho_0
                \Jz^{-1}} }\cp{\alpha}.
          \end{align}
        \end{subequations}
	Setting
        $\ellz_{\gamma}^{\alpha\beta}=\rho_0^{-1}\Jz\sqrt{\gz}\gz^{\alpha\beta}\gz^{\gamma\delta}\vz\cp{\delta}\cdot\nz$
        we use the tangential identity \eqref{z.tid.61},
        together with the outward normal differentiation formula  \eqref{t derivative: outward normal vector}, to find that 
	\begin{align}
\label{z7.j1.e}
          \begin{aligned}[b]
            \j_1= \underbrace{ \sigma\int_\Gamma \ellz_{\gamma}^{\alpha\beta}
              \hd_{\gamma\alpha} \p^5_t\pset{\gz^{\mu\nu}\ez\cp{\mu\nu}\cdot\nz}\,
              \p^6_t\vz\cp{\beta}\cdot\nz}_{\j_1{}'} +
\underbrace{ \int_{\Gamma} \ellz_{\gamma}^{\alpha\beta}\sqrt{\sigma} \elz_{\gamma\alpha}\,\sqrt{\sigma}
              \p^6_t\vz\cp{\beta}\cdot\nz}_{\RS} 
+\RS.
          \end{aligned}
	\end{align}
We integrate by parts with respect to a time-derivative of $\p^6_t\vz\cp{\beta}\cdot\nz$ to write 
	\begin{align*}
&			\int_0^T\j_1{}'=
\\
&-
\underbrace{
\int_\Gamma
                          \hd_{\gamma}\p^5_t\bset{\gz^{\mu\nu}\ez\cp{\mu\nu}\cdot\nz}\,
                    \sigma^2      \hd_\alpha\bset{\ellz_{\gamma}^{\alpha\beta}
                            \p^5_t\vz\cp{\beta}\cdot\nz
                          }}_{\j_1{}'_{A}}\Big|_0^T
 +
 \int_0^T\underbrace{\sigma^2\int_\Gamma \hd_{\gamma}\p^5_t\bset{\gz^{\mu\nu}\ez\cp{\mu\nu}\cdot\nz}\, \hd_\alpha\bset{\ellz_{\gamma}^{\alpha\beta} \p^5_t\vz\cp{\beta}\cdot\nz_t}}_{\j_1{}'_{B}}\\
			& + \int_0^T \underbrace{\sigma^2\int_\Gamma \hd_{\gamma}\p^5_t\bset{\gz^{\mu\nu}\ez\cp{\mu\nu}\cdot\nz}\, \hd_\alpha\bset{\p_t\ellz_{\gamma}^{\alpha\beta} \p^5_t\vz\cp{\beta}\cdot\nz}}_{\j_1{}'_{C}} + \int_0^T \underbrace{ \sigma^2\int_\Gamma \hd_{\gamma}\p^6_t\bset{\gz^{\mu\nu}\ez\cp{\mu\nu}\cdot\nz}\, \hd_\alpha\bset{\ellz_{\gamma}^{\alpha\beta} \p^5_t\vz\cp{\beta}\cdot\nz}}_{\j_1{}'_{D}}.
	\end{align*} 
Since $\sigma\p^5_t\vz\cdot \nz$ is in $H^{2.5}(\Gamma)$, we may take
$\sigma$ sufficiently small so that
\begin{align*}
  \abs{\sigma \p^5_t\vz \cdot \nz(T)}_2^2 \le\sqrt{\sigma} C   \abs{ \sqrt{\sigma}
    \p^5_t\vz (T)}_{1.5}   \abs{\sigma \p^5_t\vz
    \cdot \nz(T)}_{2.5}\le \int_{0}^{T}\RS.
\end{align*}
Thus, the Cauchy-Schwarz inequality provides that
  \begin{align}
    \label{z8.j1Ai}    \j_1{}'_{A}\big|_0^T &=- \int_{\Gamma} \gz^{\mu\nu}
      \hd_{\gamma}\bset{\p^4_t\vz\cp{\mu\nu}\cdot\nz}\, \sigma^2
      \ellz_{\gamma}^{\alpha\beta} \hd_\alpha\bset{
        \p^5_t\vz\cp{\beta}\cdot\nz} \big|_0^T+
    \int_{0}^{T}\RS=    \int_{0}^{T}\RS.
  \end{align}
Since $\sigma\p^5_t\vz\cdot \nz$ is in $H^{3.5}(\Gamma)$, we find by use of an $H^{-0.5}(\Gamma)$-duality pairing that
\begin{align}
\label{z8.j1Aii}
\j_1{}'_{B}=
\underbrace{\sqrt{\sigma}  \int_\Gamma
  \hd_{\gamma}\bset{\gz^{\mu\nu}\sigma\p^4_t\vz\cp{\mu\nu}\cdot\nz}\,
\ellz_{\gamma}^{\alpha\beta}
\sqrt{\sigma}\p^5_t\vz\cp{\alpha\beta}\cdot\nz_t}_{\RS }+ \int_{0}^{T}\RS.
\end{align} 
We employ the Cauchy-Schwarz inequality to
conclude that
\begin{align} 
  \label{z8.j1Aiii}
\int_{0}^{T}      \j_1{}'_C  \le \int_{0}^{T}\RS.
\end{align}
We employ the Cauchy-Schwarz inequality or  an $H^{-0.5}(\Gamma)$-duality pairing to
conclude that
\begin{align}
  \label{z8.j1Aiv}
\int_{0}^{T}      \j_1{}'_{D}  \le \int_{0}^{T}\RS.
\end{align} 
The inequalities \eqref{z8.j1Ai}, \eqref{z8.j1Aii},
\eqref{z8.j1Aiii}, \eqref{z8.j1Aiv} establish
 that
\begin{align}
  \label{z8.j1}
\int_{0}^{T}\j_{1} \le \int_{0}^{T}\RS.
\end{align}	 
	\subsection*{Analysis of $\int_0^T\j_2$  in the time-integral
          of \eqref{z7.i}} We equivalently
        write $\j_2$ as 
 	\begin{align}
		\label{z8.p.j2} 
		\begin{aligned}[b]
			\j_2&= \underbrace{ \sigma\int_\Gamma
                          \p^6_t\vz\cp{\beta}\cdot\nz                            \sqrt{\gz}\gz^{\alpha\beta}
                          \p^7_t\vz \cdot \nz \cp{\alpha}
}_{\j_{2a}} 
+
\underbrace{\sigma
\int_\Gamma         \p^6_t\vz\cp{\beta}\cdot\nz \cp{\alpha}                            \sqrt{\gz}\gz^{\alpha\beta}
                         \p^7_t\vz \cdot \nz 
                       }_{ \j_{2b} }.
		\end{aligned} 
	\end{align}
The action of $\p^6_t$ in the tangential identity \eqref{z.tid} yields
        \begin{subequations}
            \label{z.tid.7}
          \begin{align}
\p^7_t\vz \cdot \ez
            \cp{\gamma}=\rho_0^{-1}\Jz
            \bset{\hd_{\gamma}\p^6_t\big(\sigma\gz^{\mu\nu}\ez\cp{\mu\nu}\cdot\nz
              - \p^6_t\beta_ \sigma(t) \big) - \elz_{\gamma}},
          \end{align}
          where $\elz_\gamma $ is such that $\sqrt{\sigma} \elz_
          \gamma$ is in  $H^{0.5}(\Gamma)$ and is given by
          \begin{align}
            \label{z.tid.7.l} \elz_\gamma =
            \sum_{\counter=0}^5c_\counter\p^\counter_t\vz_t\cdot\p^{6-\counter}_t\pset{\ez\cp{\gamma}\rho_0
              \Jz^{-1}} .
          \end{align}
        \end{subequations} 
		Letting
                $\ellz_\gamma^\beta=-\rho_0^{-1}\Jz\sqrt{\gz}\gz^{\alpha\beta}\gz^{\gamma\delta}\ez
                \cp{\delta\alpha}\cdot\nz$ and using the tangential
                identity \eqref{z.tid.7},
	we have that 
	\begin{align*}
		\j_{2a} &
		= \underbrace{- \sigma^2\int_\Gamma                   \bset{\p^6_t\vz\cp{\beta}\cdot\nz\ellz_\gamma^\beta }\cp{\gamma}
\p^6_t
                  \bset{\gz^{\mu\nu}\ez\cp{\mu\nu}\cdot\nz}}_{ \j_{2a}{}'}  +\RS.
	\end{align*}
 We integrate by parts with respect to a time-derivative of $\p^6_t\vz\cp{\beta}$ in order to write 
	\begin{align*}
		- \int_0^T &\j_{2a}{}' =-\sigma^2\int_\Gamma
                \bset{\ellz_\gamma^\beta\,\p^5_t\vz\cp{\beta}\cdot\nz}\cp{\gamma}\,
                \p^6_t\bset{\gz^{\mu\nu}\ez\cp{\mu\nu}\cdot\nz}
                \Big|_0^T + \int_0^T \sigma^2\int_\Gamma \bset{\ellz_\gamma^\beta\,
                \p^5_t\vz\cp{\beta}\cdot\nz_t} \cp{\gamma}\, \p^6_t\bset{\gz^{\mu\nu}\ez\cp{\mu\nu}\cdot\nz} \\
		&+ \int_0^T \sigma^2\int_\Gamma \p_t\bset{\ellz_\gamma^\beta\,\p^5_t\vz\cp{\beta}\cdot\nz}\cp{\gamma}\, \p^6_t\bset{\gz^{\mu\nu}\ez\cp{\mu\nu}\cdot\nz} + \int_0^T \sigma^2\int_\Gamma \bset{\ellz_\gamma^\beta\,\p^5_t\vz\cp{\beta}\cdot\nz}\cp{\gamma} \,\p^7_t\bset{\gz^{\mu\nu}\ez\cp{\mu\nu}\cdot\nz}\\
		=& \int_0^T\underbrace{ \sigma^2\int_\Gamma \bset{\ellz_\gamma^\beta\,\p^5_t\vz\cp{\beta}\cdot\nz}\cp{\gamma} \,\gz^{\mu\nu}\p^6_t\vz\cp{\mu\nu}\cdot\nz }_{\j_{2a}{}''}+ \int_0^T\RS. 
	\end{align*}
We write
\begin{align*}
\int_{0}^{T}\j_{2a}{}''=
  \int_0^T\underbrace{ \sigma^2\int_\Gamma
    \ellz_\gamma^\beta\,\p^5_t\vz\cp{\beta\gamma}\cdot\nz\,\gz^{\mu\nu}\p^6_t\vz\cp{\mu\nu}\cdot\nz
  }_{\j_{2a}{}'''}
+
  \int_0^T\underbrace{ \sigma^2\int_\Gamma \p^5_t\vz\cp{\beta}\cdot
    \bset{\nz\ellz_\gamma^\beta}\cp{\gamma}
    \,\gz^{\mu\nu}\p^6_t\vz\cp{\mu\nu}\cdot\nz }_{\j}.
\end{align*}
Using integration by parts with respect to a time-derivative of $\p^6_t\vz
\cp{\mu\nu}$, we conclude that $\int_{0}^{T}\j=\int_{0}^{T}\RS$.
	Letting $\ellz=\rho_0^{-1}\Jz\sqrt{\gz}\,\gz^{\gamma\delta}\ez\cp{\gamma\delta}\cdot\nz$ we utilize the symmetry of $\ellz_\gamma^\beta$ to exchange $\hd_\alpha$ and $\hd_\gamma$ via integration by parts for 
	\begin{align}
		\label{z.boundary: symmetry} 
		\begin{aligned}[b]
			\int_0^T&\j_{2a }{}'''= \frac{1}{2}\int_\Gamma \ellz\,\abs{\gz^{\mu\nu}\sigma\p^5_t\vz\cp{\mu\nu}\cdot\nz}^2\Big|_0^T \\
			& - \int_0^T \sigma^2\int_\Gamma \ellz\,\p^5_t\vz\cp{\alpha\beta}\cdot\nz\, \p^5_t\vz\cp{\mu\nu}\cdot\pset{ n\gz^{\mu\nu}}_t - \frac{1}{2}\int_0^T\int_\Gamma \ellz_t\,\abs{\gz^{\mu\nu}\sigma\p^5_t\vz\cp{\mu\nu}\cdot\nz}^2=\int_0^T\RS. 
		\end{aligned}
	\end{align}
	We have thus established that 
	\begin{align*}
		\int_0^T \j_{2a}=\int_0^T\RS. 
	\end{align*}
	
	The analysis of $\int_0^T\j_{2b}$ is similar. We set $\ellz^{\gamma\beta}=\sqrt{\gz}\gz^{\alpha\beta}\gz^{\gamma\delta}\ez \cp{\delta\alpha}\cdot\nz$ and write 
	\begin{align*}
		\int_0^T\j_{2b} = -\sigma^{\frac{3}{2}} \int_\Gamma
\p^6_t\vz\cdot\Bbset{\nz\cp{\alpha}\,\sqrt{\gz}\gz^{\alpha\beta\,}
                \sqrt{\sigma}\p^6_t\vz\cdot\nz} \cp{\beta}\Big|_0^T &- \int_0^T\sigma^2 \int_\Gamma\p^6_t\vz^j\cp{\beta}\bset{\nz^j\cp{\alpha}\sqrt{\gz}\gz^{\alpha\beta} \nz^i }_t\, \p^6_t\vz^i \\
		+ \int_0^T \underbrace{ \sigma^2\int_\Gamma \ellz^{\gamma\beta}\,\p^7_t\vz\cp{\beta}\cdot\ez \cp{\gamma}\,\p^6_t\vz\cdot\nz}_{\j_{2b} {}'} &= \int_0^T\RS+\int_0^T \j_{2b} {}'. 
	\end{align*}
	Regarding $\int_0^T \j_{2b} {}'$, we use the identity
	\begin{align}
		\label{z.tid.71} 
			\p^7_t\vz\cp{\beta}\cdot\ez \cp{\gamma} =
                        \rho_0^{-1}\Jz\bset{\hd_{\beta\gamma} \p^6_t
                          \big( \gz^{\mu\nu}\ez\cp{\mu\nu}\cdot\nz
                          -\beta_ \sigma(t) \big) -\elz'_{\beta\gamma}}, 
	\end{align}
	where
        $\elz'_{\beta\gamma}=\p^7_t\vz\cdot\ez\cp{\gamma\beta}\rho_0
        \Jz^{-1}+ \p^7_t\vz\cdot\ez\cp{\gamma}\pset{\rho_0
          \Jz^{-1}}\cp{\beta}+\hd_\beta \elz_\gamma$, with
        $\elz_\gamma$ given by \eqref{z.tid.7.l}. We integrate by parts with respect to $\hd_{\beta\gamma}$ in $\sigma^3\int_0^T \int_\Gamma\rho_0^{-1}\Jz
        \hd_{\beta\gamma} \p^6_t[
        \gz^{\mu\nu}\ez\cp{\mu\nu}\cdot\nz]\,
        \ellz^{\gamma\beta}\,\p^6_t\vz\cdot\nz$, where the
        highest-order term produced by $\hd_{\beta\gamma}$-integration
        by parts is (\ref{z.boundary: symmetry}). To estimate the
        integral where $\hd_\beta \elz_\gamma$ appears, we have the
        choice of integration by parts with respect to $\hd_\beta$ or
        an $H^{-0.5}(\Gamma)$-duality pairing. In the integral where
        $\p^7_t\vz\cdot\ez\cp{\gamma}\pset{\rho_0
          \Jz^{-1}}\cp{\beta}$ appears, we use the identity
\eqref{z.tid.7}.  Thus, we conclude that $
        \int_0^T\j_{2b} {}'= \int_0^T\RS$ and
	\begin{align}
		\label{z8.j2} \int_0^T \j_{2}\, \le \int_{0}^{T}\RS. 
	\end{align}

        \subsection*{Analysis of $\int_{0}^{T}\j_{3}$ in the time-integral
          of \eqref{z7.i}}
We equivalently write $\j_3$   as
        \begin{align*}
          \j_3= \underbrace{ \sigma\int_\Gamma  \p^6_t\vz\cp{\beta}\cdot \nz(
          \sqrt{\gz}\gz^{\alpha\beta})\cp{\alpha} \p^7_t\vz \cdot \nz }_{\j_{3a}}- \sum_{\counter=0}^{6}c_\counter\underbrace{\sigma\int_\Gamma \sqrt{\gz}\gz^{\alpha\beta} \p^\counter_t \ez ^j\cp{\alpha\beta} \p^{7-\counter}_t\nz^j\,\p^7_t\vz\cdot \nz }_{\j_{3b,\counter}}.
        \end{align*}
We infer from our analysis of \eqref{h.o. counter=0 term} that 
	\begin{align*}
			-\int_0^T\j_{3b,0} &= \int_0^T \underbrace{ \sigma\int_\Gamma \sqrt{\gz}\,\gz^{\alpha\beta} \gz^{\gamma\delta}\ez \cp{\alpha\beta}\cdot \ez \cp{\gamma} \p^6_t\vz\cp{\delta}\cdot\nz\,\p^7_t\vz\cdot\nz }_{-\j_{3a}} + \int_0^T\RS.
	\end{align*}
Hence,
	\begin{align*}
          \j_3= -\sum_{\counter=1}^{6}c_\counter\, \j_{3b,\counter}
+\RS. 
	\end{align*}
		The  terms $\int_{0}^{T}\j_{3b,\counter}$ for $\counter=1,\dots,5$,  are analyzed by integrating by
        parts with respect to a time-derivative of $\p^7_t\vz$ and
        then using elementary estimates.
Thus,
\begin{align*}
  \j_3= -\j_{3b,6}+\RS.
\end{align*}
Integration by parts with respect to a time-derivative of $\p^7_t\vz $
 yields
	\begin{align*}
\int_{0}^{T}\j_{3b,6} &=- \int_{0}^{T}\sigma\int_\Gamma
\sqrt{\gz}\gz^{\alpha\beta}
\p^6_t\vz\cp{\alpha\beta}\cdot\nz_t\,\p^6_t\vz\cdot\nz +
\int_{0}^{T}\RS \\
&= \int_{0}^{T}\underbrace{\sigma\int_\Gamma \sqrt{\gz}\gz^{\alpha\beta} \p^6_t\vz\cp{\beta}\cdot\nz_t\,\p^6_t\vz\cp{\alpha}\cdot\nz }_{\j_{3b,6}{}'} +\int_{0}^{T} \RS.
	\end{align*}
Letting
$\ellz^\gamma_{\alpha\beta}=\sqrt{\gz}\gz^{\alpha\beta}\gz^{\gamma\delta}\vz\cp{\delta}\cdot\nz$,
we once again integrate by parts with respect to time:
	\begin{align*}
&\int_{0}^{T}			\j_{3b,6}{}' =- \int_\Gamma
\p^6_t\vz\cp{\beta}\cdot\Bbset{
\ez \cp{\gamma}\ellz^\gamma_{\alpha\beta}
\sigma\p^5_t\vz\cp{\alpha}\cdot\nz} \cp{\beta}\Big|_0^T
\\
 &-
\int_0^T \sigma\int_\Gamma \p^6_t\vz ^j\cp{\beta}\p^5_t\vz^i \cp{\alpha}\cdot \pset{\nz^i\, \ez ^j\cp{\beta}\, \ellz^\gamma_{\alpha\beta} }_t 
			- \int_0^T \underbrace{\sigma \int_\Gamma \ellz^\gamma_{\alpha\beta} \,\p^7_t\vz\cp{\beta}\cdot \ez \cp{\gamma}\p^5_t\vz\cp{\alpha}\cdot\nz }_{\j} = - \int_0^T\j + \int_0^T\RS. 
	\end{align*}
We conclude 	via  the tangential identity \eqref{z.tid.71} that
        $\j=\RS$. 
	Hence, 
	\begin{align}
		\label{z8.j3} \int_0^T \j_{3}\, \le \int_{0}^{T}\RS. 
	\end{align}
	
	\subsection*{Analysis of $\int_0^T\j_4$  in the time-integral
          of \eqref{z7.i}} We integrate by parts with respect to a
        time-derivative of $\p^7_t\vz$  in the
        $\int_0^T\j_4$-terms and, if need be, spatially integrate by parts. For example, letting $\j_{4}=\sum_{\counter=1}^7 \j_{4,\counter}$, we find that after integration by parts with respect to time, 
	\begin{align*}
		\int_0^T\j_{4,1}&= \int_0^T\sigma\int_\Gamma
                \pset{\sqrt{\gz} \gz^{\alpha\beta}}_t \p^7_t\pset{\ez
                  \cp{\alpha\beta}\cdot\nz }  \,\p^6_t\vz \cdot\nz+\int_0^T\RS \\
		&= - \int_0^T \sigma\int_\Gamma \p^6_t\vz
                \cp{\alpha}\cdot\hd_\beta[\nz \,\pset{\sqrt{\gz}
                  \gz^{\alpha\beta}}_t \p^6_t\vz \cdot\nz]+\int_0^T\RS =\int_0^T\RS. 
	\end{align*}	
	Similarly, integration by parts with respect to time provides for the expression 
	\begin{align*}
		\sum_{\counter=2}^6\int_0^T\j_{4,\counter}=\int_0^T\RS. 
	\end{align*}
	Finally, using the differentiation formulas (\ref{formula: derivative: metric inverse}) and (\ref{formula: derivative: Jacobian determinant}), 
	\begin{align*}
		\int_0^T \j_{4,7}= \int_{0}^{T}\int_\Gamma \sqrt{\gz} (2 \gz^{\alpha\mu} \gz^{\nu\beta}-\gz^{\alpha\beta} \gz^{\mu\nu})\,\p^6_t\vz\cp{\mu}\cdot \ez \cp{\nu}\,\ez \cp{\alpha\beta}\cdot\nz\, \p^7_t\vz\cdot\nz + \int_0^T\RS = \int_0^T\RS, 
	\end{align*}
	where the second equality follows from our above analysis of $\int_0^T\j_{2b}$.
	
	Hence, 
	\begin{align}
		\label{z8.j4} \int_0^T \j_4\, \le \int_{0}^{T}\RS. 
	\end{align}
	
	\subsection*{Rewriting  equation \eqref{z8}} 
The inequalities \eqref{z8.j1}, \eqref{z8.j2},
        \eqref{z8.j3}, \eqref{z8.j4} provide that the boundary
        integral $\II$ expressed as \eqref{z7.i} satisfies
        \begin{align}
          \label{z8.i.0}
          \II = 	\frac{1}{2}\frac{d}{dt}\int_\Gamma
                        \sqrt{\gz} \gz^{\alpha\beta} \,\sqrt{\sigma}
                        \p^6_t\vz\cp{\alpha}\cdot \nz\,\sqrt{\sigma}
                        \p^6_t\vz\cp{\beta}\cdot \nz  
+\RS.
        \end{align}
Using
        the identities \eqref{z8.I} and \eqref{z8.i.0}, we have that
        \eqref{z8} is equivalently written as
        \begin{align}
          \label{z8.2}
\frac{d}{dt}  \int_\Omega \rho_0^2\Jz ^{-3}  \abs{\p^7_t\Jz}^2 
+
           2 \frac{d}{dt}\int_{\Omega} \rho_0^{-1}\fz^4\abs{\Az_\cdot^k \p^6_t\Jz
             \cp{k}}^2
 + 
\frac{1}{2}\frac{d}{dt}\int_\Gamma
                        \sqrt{\gz} \gz^{\alpha\beta} \,\sqrt{\sigma}
                        \p^6_t\vz\cp{\alpha}\cdot \nz\,\sqrt{\sigma}
                        \p^6_t\vz\cp{\beta}\cdot \nz  =\RS.
        \end{align}
The time-integral of \eqref{z8.2} completes the proof.
\end{proof}
\subsection*{Step 3: The  energy estimates for the
  action of $\hd^a\p^{8-2a}_t$, $a=1,2,3,4$} 
We define the vector $\az_\alpha^k$  as $ \ez\cp{\alpha}\cdot\az_\cdot^k$
and let the vector $\az_{\nz}^k$ be defined as
$\nz\cdot\az_\cdot^k$. Using these vector identities, we decompose the cofactor
matrix $\az$ as
\begin{align}
  \label{at.dcom}
  \az_r^k =\az_ \alpha^k  \gz^{\alpha\beta} \ez^r \cp{\beta}+
  \az_{\nz}^k\nz^r\ \ \ \text{on } \Gamma.
\end{align}
Since $ \az_\cdot^k N^k=\sqrt{\gz}\nz$ by the formula \eqref{formula: outward normal},
it follows that
\begin{align}
\label{anz}
  \az_{\nz}^\cdot = \frac{1}{\sqrt{\gz}} N^j\az_\ell^j\az_\ell^\cdot.
\end{align} 
According to  the identity \eqref{anz},
  the lower-bound \eqref{Taylor.t} is equivalently
stated as
\begin{align}
  \label{Taylor.2}
  0< \frac{\nu}{2}\le-  \az_{\nz} ^k (\rho_0^2 \Jz ^{-2})\cp{k}.
\end{align}

\begin{prop}
	[Energy estimates for the action of $ \hd\p^6_t$ in
        the Euler equations \eqref{stp.m}]\label{z.prop.z6}\begin{align*}
			\Sup \norm{\hd\p^6_t\vz (t)}_0^2
&+
\Sup
\abs{\sqrt{\sigma} \hd\p^5_t\vz\cdot \nz(t)}_1^2
+
 \Sup
\abs{ \hd\p^5_t\vz\cdot \nz(t)}_0^2\le \int_{0}^{T}\RS .
	\end{align*}
\end{prop}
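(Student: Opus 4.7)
The proof adapts the strategy of Proposition~\ref{z.prop.z8}, now using the tangential-temporal operator $\hd\p^6_t$ in place of $\p^7_t$. Apply $\hd\p^6_t$ to the Euler momentum equations \eqref{stp.m} and test against $\hd\p^6_t\vz^i$ in the $L^2(\Omega)$-inner product. The time-derivative piece immediately yields $\tfrac{1}{2}\tfrac{d}{dt}\int_\Omega\rho_0|\hd\p^6_t\vz|^2$, furnishing the estimate on $\Sup\norm{\hd\p^6_t\vz(t)}_0^2$. After integration by parts in $\p_k$ in the remaining interior integral, one recovers the $\tfrac{d}{dt}\int_\Omega\rho_0^2\Jz^{-3}|\hd\p^6_t\Jz|^2$ contribution (already controlled by Proposition~\ref{z.prop.z8}) together with a boundary integral $\II=\int_\Gamma\hd\p^6_t(\rho_0^2\Jz^{-2})\sqrt{\gz}\hd\p^6_t\vz\cdot\nz+(\text{l.o.t.})$ after use of $\az_i^kN^k=\sqrt{\gz}\nz^i$.

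Substituting the Laplace--Young boundary condition \eqref{stp.bc}, and noting that $\hd\p^6_t\beta_\sigma(t)=0$ since $\beta_\sigma(t)$ is a degree-six polynomial in time independent of $x$, the surface-tension contribution to $\II$ is $-\sigma\int_\Gamma\hd\p^6_t[\gz^{\alpha\beta}\ez\cp{\alpha\beta}\cdot\nz]\sqrt{\gz}\hd\p^6_t\vz\cdot\nz$. Tangentially integrating by parts in $\hd_\alpha$ and using $\hd\p^6_t\vz\cp{\alpha}\cdot\nz=\p_t(\hd\p^5_t\vz\cdot\nz)\cp{\alpha}+(\text{l.o.t.})$, the leading term produces
\begin{align*}
\tfrac{1}{2}\tfrac{d}{dt}\int_\Gamma\sqrt{\gz}\gz^{\alpha\beta}(\sqrt{\sigma}\hd\p^5_t\vz\cdot\nz)\cp{\alpha}(\sqrt{\sigma}\hd\p^5_t\vz\cdot\nz)\cp{\beta},
\end{align*}
which delivers the $\Sup\abs{\sqrt{\sigma}\hd\p^5_t\vz\cdot\nz(t)}_1^2$ estimate on time-integration. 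The lower-order error terms arising along the way are controlled by $H^{-0.5}(\Gamma)$-duality pairings, integration by parts in time, and the tangential identity \eqref{z.tid} applied at the $\hd\p^4_t$-level, exactly paralleling the analysis of $\i_1,\i_2,\j_1,\j_2$ in Proposition~\ref{z.prop.z8}.

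The $\sigma$-independent coercive estimate $\abs{\hd\p^5_t\vz\cdot\nz}_0^2$ originates from the commutator in which $\hd\p^6_t$ differentiates the cofactor $\az_i^k$ within $\az_i^k(\rho_0^2\Jz^{-2})\cp{k}$. Substituting $\p_t\az_i^k=\Jz^{-1}(\az_r^s\az_i^k-\az_i^s\az_r^k)\vz^r\cp{s}$, one spatial integration by parts in $\p_s$ moves a derivative off $\hd\p^5_t\vz$, and a further integration by parts in time produces, upon applying the cofactor decomposition \eqref{at.dcom}, a boundary integral with integrand $-\tfrac{1}{\sqrt{\gz}}N^j\az_\ell^j\az_\ell^k(\rho_0^2\Jz^{-2})\cp{k}\,|\hd\p^5_t\vz\cdot\nz|^2$. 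By the Taylor sign condition \eqref{Taylor.2}, this coefficient is pointwise bounded below by $\nu/2>0$, so after time-integration, $\nu\Sup\abs{\hd\p^5_t\vz\cdot\nz(t)}_0^2\le\int_0^T\RS$. The main obstacle is precisely this Taylor-sign extraction without regularity loss: any high-order term of the form $\hd^2\p^5_t\vz\cdot\nz$ lies outside the energy class of $\ES$ and must be absorbed by trading normal derivatives for $\sigma$-weighted tangential derivatives via the tangential identity \eqref{z.tid} and the $H^{-0.5}(\Gamma)$-duality framework developed in Proposition~\ref{z.prop.z8}.
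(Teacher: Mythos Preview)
Your approach is essentially the paper's: test $\hd\p^6_t$ of \eqref{stp.m} against $\hd\p^6_t\vz$, split into the commutator $\mathscr{K}=\int_\Omega\hd\p^6_t\az_i^k(\rho_0^2\Jz^{-2})\cp{k}\hd\p^6_t\vz^i$, the interior term $\I$, and the boundary term $\II$; the Taylor-sign coercivity comes from $\mathscr{K}$ and the $\sqrt{\sigma}$-weighted $H^1(\Gamma)$ coercivity from $\II$ via \eqref{stp.bc}. Two points need correction.

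First, your claim that $\beta_\sigma(t)$ is independent of $x$ is false: the coefficients $\p^a_t[\rho_0^2\Jz^{-2}-\beta+\sigma\gz^{\alpha\beta}\ez\cp{\alpha\beta}\cdot\nz]|_{t=0}$ are functions on $\Gamma$. What is true, and what the paper uses, is identity \eqref{beta.sigma}: after the compatibility conditions \eqref{z.compatibility conditions}, $\beta_\sigma(t)-\beta$ carries an explicit factor of $\sigma$, so $\hd\p^6_t\beta_\sigma(t)$ scales like $\sigma$ in $L^\infty(\Gamma)$ and the corresponding boundary integral is $\RS$ by the presence of $\sqrt{\sigma}\hd\p^6_t\vz\cdot\nz$ in $\ES$. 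The same $\sigma$-scaling is needed to dispatch the tangential piece $\k_1$ of the boundary term arising from $\mathscr{K}$, where $(\rho_0^2\Jz^{-2})\cp{\alpha}$ appears; you do not mention this explicitly.

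Second, the mechanism you describe for the Taylor-sign term is slightly scrambled. The boundary integral does not come from integration by parts in time; it comes from the \emph{spatial} integration by parts in $\p_s$ applied to $-\int_\Omega\Jz^{-1}\az_r^k(\rho_0^2\Jz^{-2})\cp{k}\hd\p^5_t\vz^r\cp{s}\az_i^s\hd\p^6_t\vz^i$. That produces an interior remainder $\mathscr{K}'$ (handled by temporal integration by parts and Lemma~\ref{lem.tech}) and the boundary term $\k$. After the decomposition \eqref{at.dcom}, the normal part $\k_2$ has integrand $\sqrt{\gz}\Jz^{-1}\az_{\nz}^k(\rho_0^2\Jz^{-2})\cp{k}\,\hd\p^5_t\vz\cdot\nz\,\hd\p^6_t\vz\cdot\nz$, and one simply recognizes $\hd\p^6_t\vz\cdot\nz=\p_t(\hd\p^5_t\vz\cdot\nz)+\hd\p^5_t\vz\cdot\nz_t$ to write $-\k_2$ as the exact derivative $\tfrac{1}{2}\tfrac{d}{dt}\int_\Gamma\sqrt{\gz}\Jz^{-1}[-\az_{\nz}^k(\rho_0^2\Jz^{-2})\cp{k}]|\hd\p^5_t\vz\cdot\nz|^2$ plus $\RS$.
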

 
\begin{proof}
Testing the action of $\hd\p^6_t$ in the Euler equations
        \eqref{stp.m} against $\hd\p^6_t\vz$ in the $L^2(\Omega)$-inner product, and integrating
        by parts  in the integral $\int_\Omega \az_i^k \hd\p^6_t(\rho_0
        ^2\Jz^{-2})\cp{k} \hd\p^6_t\vz^i$  yields
	\begin{align}
\label{z6}
\begin{aligned}[b]
  {\int_\Omega \hd\p^6_t\bset{\rho_0\vz^i_t } \hd\p^6_t\vz^i} +
  \underbrace{\int_\Omega \hd\p^6_t \az_i^k\bset{\rho_0^2\Jz^{-2}
    }\cp{k}\hd\p^6_t\vz^i}_{\mathscr{K}} & - \underbrace{\int_\Omega
    \hd\p^6_t\bset{\rho_0^2\Jz^{-2} }\az_i^k\hd\p^6_t\vz^i\cp{k}}_{\I }
  \\
  & + \underbrace{\int_\Gamma \hd\p^6_t\bset{\rho_0^2\Jz^{-2}
    }\az_i^k\hd\p^6_t\vz^iN^k}_{\II}=\RS.
\end{aligned}
	\end{align}
We used the Cauchy-Schwarz inequality or an 
$L^4$--$L^4$--$L^2$ H\"{o}lder inequality to analyze the lower-order terms in \eqref{z6}.
        \subsection*{Analysis of $\mathcal{I}$ in \eqref{z6}}
We have that 
\begin{align*}
-  \mathcal{I}= \frac{d}{dt} \underbrace{
\int_{\Omega}\rho_0^2 \Jz ^{-3}
\abs{ \hd\p^6_t\Jz}^2 }_{\int_{0}^{T}\RS}
+
\underbrace{
 \int_{\Omega}\hd\p^6_t(\rho_0^2 \Jz ^{-2})
\hd\p^6_t\az_i^k \vz^i \cp{k}}_{\mathcal{I}_1 }+
\underbrace{
 \int_{\Omega}\hd\p^6_t(\rho_0^2 \Jz ^{-2})
\p_t\az_i^k \hd\p^5_t\vz^i \cp{k}}_{\mathcal{I}_2 }+ \RS.
\end{align*}
Integration by parts with respect to a time-derivative of
$\p^6_t(\rho_0^2 \Jz ^{-2})$ yields
\begin{align}
\label{z6.I1.IBP}
  \int_{0}^{T}\mathcal{I}_1= - \underbrace{
\int_{\Omega}\hd\Bbset{\hd\p^5_t(\rho_0^2 \Jz ^{-2})
 \vz^i \cp{k} }\p^6_t\az_i^k\big|_0^T}_{\mathcal{I}_{1A}} - \int_{0}^{T}
\underbrace{ \int_{\Omega}\hd\p^5_t(\rho_0^2 \Jz ^{-2})
\p_t\bpset{\hd\p^6_t\az_i^k \vz^i \cp{k}}}_{\mathcal{I}_{1B}}.
\end{align}
Since $ \p^5_t\Jz$ is in $H^2(\Omega)$ and the highest-order term of
$ \p^6_t\az_i^k$ scales like $  D\p^5_t\vz$, the term $\mathcal{I}_{1A}$ is
bounded by $\int_{0}^{T}\RS $ thanks to the fundamental theorem of
calculus. Since the analysis of $\mathcal{I}_{1B}$ is similar, we have
established that 
$
\mathcal{I}_1=  
 \RS.
$
Similarly, $\mathcal{I}_2= \RS$. Thus,
   \begin{align}
\label{z6.I}  \mathcal{I}= 
 \RS.
\end{align}

\subsection*{Analysis of $\mathscr{K}$ in \eqref{z6}}

From the differentiation formula \eqref{formula: a_t} we infer that
\begin{align*}
\mathscr{K} = \int_{\Omega} \Jz ^{-1}\bpset{\az_r^s\az_i^k 
  -\az_i^s\az_r^k}\hd\p^5_t\vz^r \cp{s}(\rho_0^2 \Jz
^{-2})\cp{k}\hd \p^6_t\vz^i+ \RS.
\end{align*}
Since
$ \az_r^s\hd\p^5_t\vz^r\cp{s}$ is equal to
$\hd\p^6_t\Jz$ plus lower-order
terms, we write  
 \begin{align*}
\begin{aligned}[b]
  \mathscr{K} &=  -\int_{\Omega} \Jz ^{-1} \az_r^k (\rho_0^2 \Jz
  ^{-2})\cp{k}
  \hd\p^5_t\vz^r \cp{s} \az_i^s\hd\p^6_t\vz^i+ \RS\\
  &= \underbrace{ \int_{\Omega} \Jz ^{-1}\az_r^k (\rho_0^2 \Jz
    ^{-2})\cp{k} \hd\p^5_t\vz^r \az_i^s\hd\p^6_t\vz^i\cp{s}}_{\mathscr{K}'} -
   \underbrace{ \int_{\Gamma} \Jz ^{-1} \az_r^k (\rho_0^2 \Jz
    ^{-2})\cp{k} \hd\p^5_t\vz^r \az_i^s\hd\p^6_t\vz^iN^s
}_{\k}+ \RS.
\end{aligned}
\end{align*}
Integrating by parts with respect to a time-derivative of
$\hd\p^6_t\vz ^i\cp{s}$ yields
\begin{align*}
&  \int_{0}^{T}\mathscr{K}' = \underbrace{ \int_{\Omega} 
\Jz ^{-1}\az_r^k (\rho_0^2 \Jz
    ^{-2})\cp{k} \hd\p^5_t\vz^r
    \az_i^s\hd\p^5_t\vz^i\cp{s}\big|_0^T}_{\int_{0}^{T}\RS}
\\
&-\int_{0}^{T}\underbrace{ \int_{\Omega}\Jz ^{-1}\az_r^k (\rho_0^2 \Jz
    ^{-2})\cp{k} \hd\p^5_t\vz^r
    \az_i^s\hd\p^5_t\vz^i\cp{s}}_{\RS}
-
\int_{0}^{T}
\underbrace{ 
\int_{\Omega}
\Jz ^{-1}\az_r^k (\rho_0^2 \Jz
    ^{-2})\cp{k} \hd\p^5_t\vz^r (\az_i^s)_t\hd\p^5_t\vz^i\cp{s}
 }_{\mathscr{K}''},
\end{align*}
where we have again used that $\az_i^s\hd\p^5_t\vz^i \cp{s}$ is equal to $\hd\p^6_t\Jz$ plus lower-order
terms. 
Lemma~\ref{lem.tech} provides that
\begin{align*}
  \int_{0}^{T}\mathscr{K}'' \le
C
  \int_{0}^{T}\norm{\hd\p^5_t\vz}_{0.5}\norm{\hd D\p^5_t\vz}_{H^{0.5}(\Omega)'}\le
  C\int_{0}^{T}\norm{\p^5_t\vz}_{1.5}^2 \le \int_{0}^{T}\RS.
\end{align*}
The decomposition \eqref{at.dcom} 
 provides that
\begin{align*}
-  \k = -\underbrace{ 
\int_{\Gamma}\sqrt{\gz} (\rho_0^2 \Jz
    ^{-2})\cp{\alpha} \hd\p^5_t\vz\cdot \ez \cp{\beta}\gz ^{\alpha\beta} \hd\p^6_t\vz\cdot \nz}_{\k_1}
 -
\underbrace{ 
 \int_{\Gamma}\sqrt{\gz}\Jz ^{-1} \az_{\nz}^k (\rho_0^2 \Jz
    ^{-2})\cp{k} \hd\p^5_t\vz\cdot \nz\, \hd\p^6_t\vz\cdot\nz}_{\k_2}.
\end{align*}
We have used the identities $\Jz ^{-1}\az=\Az$ and $\ez
\cp{\alpha}\cdot \Az_\cdot ^k= \delta_ \alpha^k$, where $\delta_
\alpha^k$ is the Kronecker delta, in writing $\k_1$. Thanks to the
Laplace-Young 
boundary condition \eqref{stp.bc}, we have that
\begin{align*}
  \k_1 = \underbrace{
\int_{\Gamma} \sqrt{\gz} (\beta_\sigma(t))\cp{\alpha}  \hd \p^5_t\vz \cdot \ez
  \cp{\beta} \gz ^{\alpha\beta} \hd \p^6_t\vz \cdot \nz
-
\int_{\Gamma} \sqrt{\gz} (\gz
  ^{\mu\nu}\ez\cp{\mu\nu}\cdot \nz)\cp{\alpha} \sqrt{\sigma}\hd \p^5_t\vz \cdot \ez
  \cp{\beta} \gz ^{\alpha\beta}   \sqrt{\sigma} \hd \p^6_t\vz \cdot \nz
}_{\RS}.
\end{align*}
We have used the identity \eqref{beta.sigma} to deduce that  $\hd
\beta_\sigma(t)$ scales like $\sigma$ in $L^\infty (\Gamma)$ in the
first term on the right-hand side and we have used an $L^4$--$L^4$--$L^2$
H\"{o}lder inequality in the second term on the
right-hand side.
We also have that
\begin{align*}
  -\k_2 = &\frac{1}{2}\frac{d}{dt}\int_{\Gamma} \sqrt{\gz}\Jz
  ^{-1}[-\az_{\nz}^k (\rho_0^2 \Jz ^{-2}) \cp{k} ]\abs{ \hd\p^5_t\vz \cdot
    \nz}^2 
\\
+
&\frac{1}{2}\underbrace{\int_{\Gamma} \p_t\bpset{\sqrt{\gz} \Jz ^{-1}\az_{\nz}^k (\rho_0^2 \Jz ^{-2})
\cp{k} } \abs{\hd \p^5_t\vz\cdot \nz}^2
}_{\RS}
+
\underbrace{\int_{\Gamma} \sqrt{\gz} \Jz ^{-1}\az_{\nz}^k (\rho_0^2 \Jz ^{-2})
\cp{k} \hd \p^5_t\vz\cdot \nz \hd \p^5_t\vz\cdot \nz_t}_{\RS} .
\end{align*} 
Hence, 
 	\begin{align}
		\label{z6.K}
                \begin{aligned}[b]
\mathscr{K}=\frac{1}{2}\frac{d}{dt}\int_{\Gamma} \sqrt{\gz}\Jz
  ^{-1}[-\az_{\nz}^k (\rho_0^2 \Jz ^{-2}) \cp{k} ]\abs{\hd\p^5_t\vz \cdot
    \nz}^2 +\RS.
          \end{aligned}
	\end{align}  
	\subsection*{Analysis of $\II$ in
          \eqref{z6}} Using the Laplace-Young boundary
        condition \eqref{stp.bc},  we find
        that 
	\begin{align}
		\label{z6.i.0} 
		\begin{aligned}[b]
			&{\II}
			=\frac{1}{2}\frac{d}{dt}\int_\Gamma
                        \sqrt{\gz} \gz^{\alpha\beta} \,\sqrt{\sigma}
                        \hd\p^5_t\vz\cp{\alpha}\cdot \nz\,\sqrt{\sigma}
                        \hd\p^5_t\vz\cp{\beta}\cdot \nz  
-\underbrace{ \sigma\int_{\Gamma}\sqrt{\gz} \p^5_t\vz
\cp{\alpha\beta}\cdot \hd(\nz\gz ^{\alpha\beta}) \hd\p^6_t \vz \cdot
\nz
}_{\j_1}
 \\
			& \underbrace{ -\sigma\int_\Gamma
                          \sqrt{\gz} \gz^{\alpha\beta}
                          \hd\p^5_t\vz\cp{\alpha}\cdot \nz_t
                          \,\hd\p^5_t\vz\cp{\beta}\cdot \nz 
}_{\j_2}
 - \frac{1}{2}\underbrace{
 \int_\Gamma \pset{\sqrt{\gz}\gz^{\alpha\beta}}_t\,\sqrt{\sigma} \hd\p^5_t\vz\cp{\alpha}\cdot \nz \,\sqrt{\sigma}\hd\p^5_t\vz\cp{\beta}\cdot \nz }_{\RS} \\
			&+ \underbrace{\int_\Gamma  \sigma
                          \hd\p^5_t\vz\cp{\beta}\cdot\nz                            \sqrt{\gz}\gz^{\alpha\beta}
                          \hd\p^6_t\vz \cdot \nz \cp{\alpha}
}_{\RS}
+
\underbrace{\sigma\int_\Gamma         \hd\p^5_t\vz\cp{\beta}\cdot\nz \cp{\alpha}                            \sqrt{\gz}\gz^{\alpha\beta}
                          \hd\p^6_t\vz \cdot \nz 
                       }_{ \j_3 }
\\
	&+ \underbrace{ \sigma\int_\Gamma  \hd\p^5_t\vz\cp{\beta}\cdot \nz(
          \sqrt{\gz}\gz^{\alpha\beta})\cp{\alpha} \hd\p^6_t\vz \cdot
          \nz
}_{\RS}
+
\underbrace{\sum_{\counter=0}^{5}c_\counter\int_\Gamma\gz^{\alpha\beta}
  \p^\counter_t \ez ^j\cp{\alpha\beta}
  \p^{6-\counter}_t\nz^j\,\hd\bpset{\sigma\sqrt{\gz}\hd\p^6_t\vz\cdot
    \nz }
}_{\j_4}\\
			&+\underbrace{
                          \sum_{\counter=1}^6c_\counter\int_\Gamma\p^\counter_t{\gz^{\alpha\beta}}\p^{6-\counter}_t\bset{\ez
                            \cp{\alpha\beta}\cdot \nz
                          }
                          \hd\bpset{\sigma\sqrt{\gz}\hd\p^6_t\vz\cdot\nz
                          }
}_{\j_5}
+
\underbrace{
                        \int_{\Gamma} \hd\p^6_t\beta_
                        \sigma(t)\hd\p^6_t\vz\cdot \nz}_{\RS}. 
		\end{aligned}
	\end{align}
We integrate by parts with respect to a time-derivative of
$\hd\p^6_t\vz\cdot \nz$ to write
\begin{align*}
&  \int_{0}^{T}\j_1 =
\\
& - \int_{\Gamma}\sqrt{\gz} \p^5_t\vz
\cp{\alpha}\cdot\Bbset{ \hd(\nz\gz ^{\alpha\beta}) \sigma \hd\p^5_t \vz \cdot
\nz} \cp{\beta}\big|_0^T
-
 \int_{0}^{T} \int_{\Gamma}\sqrt{\gz} \sqrt{\sigma}\p^5_t\vz
\cp{\alpha\beta}\cdot \hd(\nz\gz ^{\alpha\beta}) \sqrt{\sigma}\hd\p^5_t \vz \cdot
\nz_t
\\
&
+
 \int_{0}^{T} \int_{\Gamma}\p^5_t\vz 
\cp{\alpha}\cdot\Bbset{ \bset{\hd(\nz\gz ^{\alpha\beta}) \sqrt{\gz} }_t \sigma\hd\p^5_t \vz \cdot
\nz } \cp{\beta}
+
 \int_{0}^{T} \int_{\Gamma}\sqrt{\gz} \p^6_t\vz
\cp{\alpha}\cdot \Bbset{\hd(\nz\gz ^{\alpha\beta}) \sigma\hd\p^5_t \vz \cdot
\nz} \cp{\beta}.
\end{align*}
Since $\sigma\p^5_t\vz \cdot \nz
\in H^{2.5}(\Gamma)$, it follows that
\begin{align*}
  \int_{0}^{T}\j_1= \int_{0}^{T}\RS.
\end{align*}
Similarly, integration by parts with respect to a time-derivative of
$\hd\p^5_t\vz\cp{\beta} \cdot \nz$ yields
\begin{align*}
  \int_{0}^{T}\j_2= \int_{0}^{T}\RS.
\end{align*}
Since $\sqrt{\sigma}\p^5_t\vz \cdot \nz
\in H^2(\Gamma)$ and $\sqrt{\sigma}\p^4_t\vz\in H^{2.5}(\Gamma)$, for the $\counter=5$ term of $\j_4$, we have that
\begin{align*}
&  \int_{0}^{T} \int_\Gamma\gz^{\alpha\beta}
\p^4_t\vz ^j\cp{\alpha\beta}
  \p_t\nz^j\,\hd\bpset{\sigma\sqrt{\gz}\hd\p^6_t\vz\cdot
    \nz }=
\\
&\int_\Gamma\gz^{\alpha\beta}
\sqrt{\sigma}\p^4_t\vz ^j\cp{\alpha\beta}
  \p_t\nz^j\,\hd\bpset{ \sqrt{\sigma}\sqrt{\gz}\hd\p^5_t\vz\cdot
    \nz }\big|_0^T
-
\int_{0}^{T}\int_\Gamma\gz^{\alpha\beta}
\sqrt{\sigma}\p^4_t\vz ^j\cp{\alpha\beta}
  \p_t\nz^j\,\hd\bpset{\sqrt{\sigma}\sqrt{\gz}\hd\p^5_t\vz\cdot
    \nz _t}
\\
&-
\int_{0}^{T}\int_\Gamma
\p^4_t\vz ^j\cp{\alpha\beta}
(  \p_t\nz^j\gz^{\alpha\beta})_t\,\hd\bpset{\sigma\sqrt{\gz}\hd\p^5_t\vz\cdot
    \nz }
-
\int_{0}^{T}\int_\Gamma\gz^{\alpha\beta}
\p^5_t\vz ^j\cp{\alpha\beta}
  \p_t\nz^j\,\hd\bpset{\sigma\sqrt{\gz}\hd\p^5_t\vz\cdot
    \nz }
\\
&=\int_{0}^{T}\RS-
\int_{0}^{T}\int_\Gamma\gz^{\alpha\beta}
\p^5_t\vz ^j\cp{\alpha\beta}
  \p_t\nz^j\,\hd\bpset{\sigma\sqrt{\gz}\hd\p^5_t\vz\cdot
    \nz }= \int_{0}^{T}\RS
.
\end{align*}
The last equality follows from the analysis of
$\int_{0}^{T}\j_2$. Since the analysis of $\j_5$ and the
$\counter=0,\dots,4$ terms of $\j_4$ are similarly established, we
infer that \eqref{z6.i.0} is equally written as
	\begin{align}
		\label{z6.i} 
		\begin{aligned}[b]
			&{\II}
			=\frac{1}{2}\frac{d}{dt}\int_\Gamma
                        \sqrt{\gz} \gz^{\alpha\beta} \,\sqrt{\sigma}
                        \hd\p^5_t\vz\cp{\alpha}\cdot \nz\,\sqrt{\sigma}
                        \hd\p^5_t\vz\cp{\beta}\cdot \nz  +\RS.
		\end{aligned}
	\end{align}

        \subsection*{The time-integral of \eqref{z6}}

        Using
        the identities \eqref{z6.I}, \eqref{z6.K} and \eqref{z6.i} in
        the time-integral of the equation \eqref{z6} completes the proof.
\end{proof}

\begin{prop}
	[Energy estimates for the action of $ \hd^2\p^4_t$ in
        the Euler equations \eqref{stp.m}]\label{z.prop.z4}\begin{align*}
			\Sup \norm{\hd^2\p^4_t\vz (t)}_0^2
+
\Sup
\abs{\sqrt{\sigma} \hd^2\vz_{ttt}\cdot \nz(t)}_1^2
+
 \Sup
\abs{ \hd^2\vz_{ttt}\cdot \nz(t)}_0^2\le \int_{0}^{T}\RS .
	\end{align*}
\end{prop}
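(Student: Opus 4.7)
\textbf{Proof plan for Proposition~\ref{z.prop.z4}.}  The plan is to follow the same template as the proof of Proposition~\ref{z.prop.z6}, but with the derivative operator $\hd\p^6_t$ replaced by $\hd^2\p^4_t$, noting that, thanks to the scaling of Remark~\ref{scaling}, both operators carry the same ``weight'' $4$. I would test $\hd^2\p^4_t$ of the Euler equations \eqref{stp.m} against $\hd^2\p^4_t\vz$ in the $L^2(\Omega)$-inner product and integrate by parts in $\p_k$ in the integral $\int_\Omega \hd^2\p^4_t(\rho_0^2\Jz^{-2})\cp{k}\,\az_i^k\hd^2\p^4_t\vz^i$ (treating the commutator separately), leading to the analog of \eqref{z6}:
\begin{align*}
\frac{d}{dt}\int_\Omega \tfrac{1}{2}\rho_0|\hd^2\p^4_t\vz|^2+\mathscr{K}-\mathcal{I}+\mathcal{II}=\RS,
\end{align*}
where $\mathcal{I}$ is the interior divergence-type integral, $\mathscr{K}$ is the commutator term produced when $\hd^2\p^4_t$ is moved across $\az_i^k$, and $\mathcal{II}$ is the boundary integral.

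For $\mathcal{I}$, I would mimic the analysis producing \eqref{z6.I}: since $\az_i^s\hd^2\p^4_t\vz^i\cp{s}$ equals $\hd^2\p^4_t\Jz$ up to terms scaling like $D\hd\p^3_t\vz$ or lower, integration by parts in a time-derivative reduces $\mathcal{I}$ to boundary terms at $t=0,T$ (bounded by $\MS$ via the fundamental theorem of calculus) plus $\RS$-type integrals. For the commutator $\mathscr{K}$, I would use the differentiation formula \eqref{formula: a_t} (which applies equally to $\hd$) to expose a term whose highest-order factor is $D\hd^2\p^3_t\vz$ contracted against $\az(\rho_0^2\Jz^{-2})\cp{k}$; after one interior integration by parts (moving a $\p_s$ from $\hd^2\p^3_t\vz$ to $\hd^2\p^4_t\vz$) and one time-differentiation by parts for the resulting bulk term, the only surviving boundary piece is, via the cofactor decomposition \eqref{at.dcom} and the Taylor-sign identity \eqref{Taylor.2},
\begin{align*}
\mathscr{K}=\tfrac{1}{2}\tfrac{d}{dt}\int_\Gamma \sqrt{\gz}\Jz^{-1}\bbset{-\az_{\nz}^k(\rho_0^2\Jz^{-2})\cp{k}}|\hd^2\vz_{ttt}\cdot\nz|^2+\RS,
\end{align*}
exactly as in \eqref{z6.K}.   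The tangential part of the cofactor decomposition is absorbed into $\RS$ using that $\hd\beta_\sigma(t)$ scales like $\sigma$ by \eqref{beta.sigma} and an $L^4$--$L^4$--$L^2$ H\"older bound.

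The main obstacle is the boundary integral $\mathcal{II}$, for which I would apply the Laplace-Young boundary condition \eqref{stp.bc} to extract the curvature factor $-\sigma\gz^{\alpha\beta}\ez\cp{\alpha\beta}\cdot\nz$ and expand $\hd^2\p^4_t$ onto the various factors. The leading term is
\begin{align*}
\tfrac{1}{2}\tfrac{d}{dt}\int_\Gamma\sqrt{\gz}\gz^{\alpha\beta}\sqrt{\sigma}\hd^2\vz_{ttt}\cp{\alpha}\cdot\nz\,\sqrt{\sigma}\hd^2\vz_{ttt}\cp{\beta}\cdot\nz,
\end{align*}
and all the remaining terms must be shown to be of $\RS$-type. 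These error terms fall into the same four families $\j_1,\ldots,\j_4$ encountered in the proof of Proposition~\ref{z.prop.z6}. The tangential identity \eqref{z.tid} (applied at levels $\p^3_t$ and $\p^4_t$ to produce identities analogous to \eqref{z.tid.61} and \eqref{z.tid.7}, with one fewer time-derivative but an extra tangential derivative) would be used to convert troublesome normal components $\p^4_t\vz\cdot\ez\cp{\gamma}$ and $\vz_{ttt}\cdot\ez\cp{\gamma}$ into surface-tension-damped quantities. Wherever a term of the form $\sigma\,\hd^3\vz_{ttt}\cdot\nz$ appears without a $\sqrt{\sigma}$ coming from the energy, I would pair it with an $H^{-0.5}(\Gamma)$-duality bound, using the available regularity $\sigma\p^3_t\vz\cdot\nz\in H^4(\Gamma)$ and $\sqrt{\sigma}\p^4_t\vz\cdot\nz\in H^{1.5}(\Gamma)$ recorded in $\ES$.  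The symmetry trick \eqref{z.boundary: symmetry} (exchange of $\hd_\alpha,\hd_\gamma$) will be needed to form an exact time-derivative of a lower-order boundary energy when analyzing the analog of $\j_{2a}$.

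Integrating the identity in time and applying the Taylor sign condition \eqref{Taylor.t} together with \eqref{stp.bc} to extract the nonnegative coefficient of $|\hd^2\vz_{ttt}\cdot\nz|^2$ then yields the three claimed bounds. The hardest step I anticipate is  verifying that the boundary error terms involving $\hd^2\p^4_t\az$ and $\hd^2\p^4_t\nz$ (which scale one order higher than in the $\hd\p^6_t$ case in the spatial direction, though with one less time-derivative) can still be absorbed using only the norms already controlled by $\ES$; this is where the careful redistribution of derivatives via time-integration by parts combined with the $H^{-0.5}(\Gamma)$-duality pairings, mirroring the estimates \eqref{z8.j1Ai}--\eqref{z8.j1Aiv}, will be essential.
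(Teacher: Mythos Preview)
Your proposal is correct and follows essentially the same template as the paper; indeed the paper's own proof is even terser than your plan, simply declaring that ``the analysis of $\mathscr{K}$ and $\II$ follows from the proof of Proposition~\ref{z.prop.z6}'' and detailing only the interior term $\mathcal{I}$.

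The one place where the paper takes a slightly different route is in handling $\mathcal{I}$. You propose to mimic \eqref{z6.I} via time-integration by parts, which would work. The paper instead observes that in this case $\p^4_t\Jz\in H^{2.5}(\Omega)$ (from the $\ES$-norm), so $\hd^2\p^4_t\Jz\in H^{0.5}(\Omega)$, and then applies Lemma~\ref{lem.tech} (the bound $\norm{\hd w}_{H^{0.5}(\Omega)'}\le C\norm{w}_{H^{0.5}(\Omega)}$) directly to absorb the cross-terms $\int_\Omega\hd^2\p^4_t(\rho_0^2\Jz^{-2})\,\hd^2\p^4_t\az_i^k\,\vz^i\cp{k}$ and $\int_\Omega\hd^2\p^4_t(\rho_0^2\Jz^{-2})\,\p_t\az_i^k\,\hd^2\p^3_t\vz^i\cp{k}$ into $\RS$ without any temporal integration by parts. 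This shortcut is unavailable in Proposition~\ref{z.prop.z6} because there $\hd\p^6_t\Jz$ is only in $L^2(\Omega)$; the extra half-derivative of regularity here is exactly what buys the simplification.
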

        \begin{proof}
          Testing the action of $\hd^2\p^4_t$ in the Euler equations
        \eqref{stp.m} against $\hd^2\p^4_t\vz$ in the $L^2(\Omega)$-inner product, and integrating
        by parts  in the integral $\int_\Omega \az_i^k \hd^2\p^4_t(\rho_0
        ^2\Jz^{-2})\cp{k} \hd^2\p^4_t\vz^i$  yields
	\begin{align*}
\begin{aligned}[b]
  {\int_\Omega \hd^2\p^4_t\bset{\rho_0\vz^i_t } \hd^2\p^4_t\vz^i} +
  \underbrace{\int_\Omega \hd^2\p^4_t \az_i^k\bset{\rho_0^2\Jz^{-2}
    }\cp{k}\hd^2\p^4_t\vz^i}_{\mathscr{K}} & - \underbrace{\int_\Omega
    \hd^2\p^4_t\bset{\rho_0^2\Jz^{-2} }\az_i^k\hd^2\p^4_t\vz^i\cp{k}}_{\I }
  \\
&   + \underbrace{\int_\Gamma \hd^2\p^4_t\bset{\rho_0^2\Jz^{-2}
    }\az_i^k\hd^2\p^4_t\vz^iN^k}_{\II}=\RS.
\end{aligned}
	\end{align*}
The analysis of $\mathscr{K}$ and $\II$ follows from the proof of
Proposition~\ref{z.prop.z6}. Hence,
	\begin{align}
\label{z4}
\begin{aligned}[b]
\frac{1}{2}\frac{d}{dt} \int_{\Omega}\rho_0 \abs{\hd^2\p^4_t\vz}^2  &+\frac{1}{2}\frac{d}{dt}\int_{\Gamma} \sqrt{\gz}\Jz
  ^{-1}[-\az_{\nz}^k (\rho_0^2 \Jz ^{-2}) \cp{k} ]\abs{\hd^2\vz_{ttt} \cdot
    \nz}^2
\\
			&+\frac{1}{2}\frac{d}{dt}\int_\Gamma
                        \sqrt{\gz} \gz^{\alpha\beta} \,\sqrt{\sigma}
                        \hd^2\vz_{ttt}\cp{\alpha}\cdot \nz\,\sqrt{\sigma}
                        \hd^2\vz _{ttt} \cp{\beta}\cdot \nz   =\RS+\mathcal{I}.
\end{aligned}
	\end{align}
We have that 
\begin{align*}
-  \mathcal{I}= \frac{d}{dt} \underbrace{
\int_{\Omega}\rho_0^2 \Jz ^{-3}
\abs{ \hd^2\p^4_t\Jz}^2 }_{\int_{0}^{T}\RS}
+
 \int_{\Omega}\hd^2\p^4_t(\rho_0^2 \Jz ^{-2})
\hd^2\p^4_t\az_i^k \vz^i \cp{k} 
+
 \int_{\Omega}\hd^2\p^4_t(\rho_0^2 \Jz ^{-2})
\p_t\az_i^k \hd^2\p^3_t\vz^i \cp{k} + \RS.
\end{align*}
Since $\hd^2\p^4_t\Jz$ is in $H^{0.5}(\Omega)$, we use
Lemma~\ref{lem.tech} to conclude that
   \begin{align}
\label{z4.I}  \mathcal{I}= 
 \RS.
\end{align}
Using \eqref{z4.I} in the time-integral of \eqref{z4} completes the proof.        \end{proof}
 We infer the following two
        propositions from the proof of Proposition~\ref{z.prop.z4}:
\begin{prop}
	[Energy estimates for the action of $ \hd^3\p^2_t$ in
        the Euler equations \eqref{stp.m}]\label{z.prop.z2}\begin{align*}
			\Sup \norm{\hd^3\vz_{tt} (t)}_0^2
+
\Sup
\abs{\sqrt{\sigma} \hd^3\vz_{t}\cdot \nz(t)}_1^2
+
 \Sup
\abs{ \hd^3\vz_{t}\cdot \nz(t)}_0^2\le \int_{0}^{T}\RS .
	\end{align*}
\end{prop}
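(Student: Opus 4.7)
The proof of Proposition~\ref{z.prop.z2} will parallel the proofs of Propositions~\ref{z.prop.z6} and~\ref{z.prop.z4}, simply replacing the differential operator $\hd^2\p^4_t$ by $\hd^3\p^2_t$. The plan is to test the action of $\hd^3\p^2_t$ in the Euler equations \eqref{stp.m} against $\hd^3\vz_{tt}$ in the $L^2(\Omega)$-inner product, and to integrate by parts with respect to $\p_k$ in the interior integral $\int_\Omega \az_i^k \hd^3\p^2_t(\rho_0^2\Jz^{-2})\cp{k}\hd^3\vz_{tt}^i$. This will yield an identity of the form
\begin{align*}
\int_\Omega \hd^3\p^2_t\bset{\rho_0\vz^i_t}\hd^3\vz_{tt}^i
+\mathscr{K}-\mathcal{I}+\mathcal{II}=\RS,
\end{align*}
where $\mathscr{K}$ collects the terms in which all three tangential and both time derivatives hit $\az_i^k$, $\mathcal{I}$ is the bulk divergence-type integrand, and $\mathcal{II}$ is the boundary integral arising from integration by parts.

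First, I would analyze $\mathcal{I}$ exactly as in Proposition~\ref{z.prop.z4}: the leading term yields $\frac{d}{dt}\int_\Omega \rho_0^2 \Jz^{-3}\abs{\hd^3\vz_{tt}\Jz \text{-type}}^2$, which is absorbed in $\int_0^T\RS$ thanks to a Lemma~\ref{lem.tech}-style duality pairing estimating $\hd^3\p^2_t\Jz$ in $H^{0.5}(\Omega)'$ against $\hd^3\p^2_t(\rho_0^2\Jz^{-2})$ in $H^{0.5}(\Omega)$, paired with the curl- and lower-order divergence-estimates already available through $\ES$. Next, for $\mathscr{K}$, I would use the differentiation formula \eqref{formula: a_t}, the decomposition \eqref{at.dcom}, and the Taylor-sign lower bound \eqref{Taylor.2} exactly as in the proof of Proposition~\ref{z.prop.z6}, integrating by parts with respect to $\p_s$ to move a derivative off of $\hd^3\vz_{tt}$ and produce the Taylor-sign boundary integral
\begin{align*}
\mathscr{K}=\frac{1}{2}\frac{d}{dt}\int_\Gamma \sqrt{\gz}\Jz^{-1}[-\az_{\nz}^k(\rho_0^2\Jz^{-2})\cp{k}]\abs{\hd^3\vz_t\cdot\nz}^2+\RS,
\end{align*}
where the tangential piece $\k_1$ of the boundary contribution is again absorbed into $\RS$ via the fact that $\hd\beta_\sigma(t)$ scales like $\sigma$ and $\sqrt{\sigma}\hd^3\vz_t\cdot\ez\cp{\beta}$ is controlled, and the error coming from $\mathscr{K}''$ is again handled by Lemma~\ref{lem.tech}.

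For $\mathcal{II}$, I would use the Laplace-Young boundary condition \eqref{stp.bc} as in the proof of Proposition~\ref{z.prop.z6} to produce the surface-energy time-derivative
\begin{align*}
\mathcal{II}=\frac{1}{2}\frac{d}{dt}\int_\Gamma \sqrt{\gz}\gz^{\alpha\beta}\sqrt{\sigma}\hd^3\vz_t\cp{\alpha}\cdot\nz\,\sqrt{\sigma}\hd^3\vz_t\cp{\beta}\cdot\nz+\RS.
\end{align*}
The residual boundary terms $\j_1,\dots,\j_5$ analogous to those in \eqref{z6.i.0} are treated by integrating by parts with respect to a time-derivative of $\hd^3\vz_{tt}\cdot\nz$ (to move one time-derivative off the top-order factor) together with elementary Cauchy–Schwarz or $H^{-0.5}(\Gamma)$-duality pairings; the regularity budget works because $\sqrt{\sigma}\hd^3\vz_t\cdot\nz\in H^1(\Gamma)$ and $\sqrt{\sigma}\hd^3\ez\cdot\nz\in H^{1.5}(\Gamma)$ are already controlled by $\ES$, while the very top-order normal-trace terms $\hd^3\vz_{tt}\cdot\nz$ appear only paired with $\sqrt{\gz}\gz^{\alpha\beta}$-factors that admit temporal integration by parts.

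The main obstacle I expect is the derivative-counting for the error term $\mathcal{I}$ and the companion $\mathscr{K}''$-type term in $\mathscr{K}$: with three tangential derivatives the top-order Jacobian factor $\hd^3\p^2_t\Jz$ lives only in $H^{0.5}(\Omega)'$-type spaces, so the successful closure relies on the Lemma~\ref{lem.tech} duality bound $\norm{\hd w}_{H^{0.5}(\Omega)'}\le C\norm{w}_{H^{0.5}(\Omega)}$ to trade a tangential derivative for a loss of half a derivative, exactly as was done in \eqref{z6.I1.IBP}. Once these two estimates are in place, the time-integral of the resulting identity, combined with the Taylor-sign lower bound \eqref{Taylor.2} and the positivity of the surface-tension quadratic form, yields the claim.
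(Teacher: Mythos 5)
Your proposal follows exactly the paper's intended route: the paper gives no separate proof for this proposition, stating only that it is "inferred from the proof of Proposition~\ref{z.prop.z4}," and you correctly reproduce that argument with $\hd^3\p^2_t$ in place of $\hd^2\p^4_t$, identifying the same $\mathscr{K}$/$\mathcal{I}$/$\mathcal{II}$ decomposition, the Taylor-sign boundary energy, the $\sqrt{\sigma}$-surface-tension boundary energy, and the use of Lemma~\ref{lem.tech} for the $\mathcal{I}$-terms. One small correction to your derivative accounting: since $\ES$ controls $\norm{\p^2_t\Jz(t)}_{3.5}^2$, the top-order factor $\hd^3\p^2_t\Jz$ lies in $H^{0.5}(\Omega)$ itself, not merely in $H^{0.5}(\Omega)'$; it is the \emph{additional} tangential derivative produced by the integration by parts that Lemma~\ref{lem.tech} places in $H^{0.5}(\Omega)'$, precisely mirroring the paper's statement that $\hd^2\p^4_t\Jz\in H^{0.5}(\Omega)$ in the proof of Proposition~\ref{z.prop.z4}.
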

 \begin{prop}
	[Energy estimates for the action of $ \hd^4$ in
        the Euler equations \eqref{stp.m}]\label{z.prop.z0}\begin{align*}
			\Sup \norm{\hd^4\vz (t)}_0^2 
+
\Sup
\abs{\sqrt{\sigma} \hd^4\ez\cdot \nz(t)}_1^2
+
 \Sup
\abs{ \hd^4\ez\cdot \nz(t)}_0^2\le \int_{0}^{T}\RS .
	\end{align*}
\end{prop}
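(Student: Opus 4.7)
The plan is to follow the template set by the proofs of Propositions~\ref{z.prop.z6}, \ref{z.prop.z4}, and \ref{z.prop.z2}, but now testing four purely tangential derivatives instead of a mixture of tangential and temporal derivatives. Concretely, I apply $\hd^4$ to the Euler equations \eqref{stp.m}, test against $\hd^4\vz$ in the $L^2(\Omega)$-inner product, and integrate by parts with respect to $\p_k$ to extract the three principal terms
\begin{align*}
\tfrac{1}{2}\tfrac{d}{dt}\!\int_\Omega \rho_0\abs{\hd^4\vz}^2
+\underbrace{\int_\Omega \hd^4\az_i^k\,(\rho_0^2\Jz^{-2})\cp{k}\,\hd^4\vz^i}_{\mathscr{K}}
-\underbrace{\int_\Omega \hd^4(\rho_0^2\Jz^{-2})\,\az_i^k\hd^4\vz^i\cp{k}}_{\I}
+\underbrace{\int_\Gamma \hd^4(\rho_0^2\Jz^{-2})\,\az_i^k\hd^4\vz^iN^k}_{\II}=\RS.
\end{align*}

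For the interior term $\I$, I use the identity $\az_i^k\hd^4\vz^i\cp{k}=\hd^4(\Jz)_t+$ lower-order to rewrite it as $\tfrac{d}{dt}\int_\Omega\rho_0^2\Jz^{-3}\abs{\hd^4\Jz}^2$ modulo error terms. These errors contain $\hd^4\Jz\in H^{0.5}(\Omega)$ paired with $\hd D \hd^3\vz$; exactly as in the proof of Proposition~\ref{z.prop.z4}, I integrate by parts in time and invoke Lemma~\ref{lem.tech} together with the polynomial estimate \eqref{poly-type ineq} to absorb them into $\RS$.

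The decisive step is the analysis of $\mathscr{K}$, which produces the Taylor-sign boundary term. The tangential analog of the differentiation formula \eqref{formula: a_t} gives that the highest-order contribution of $\hd^4\az_i^k$ is $\Jz^{-1}[\az_r^s\az_i^k-\az_i^s\az_r^k]\hd^4\ez^r\cp{s}$. Integrating by parts in $\p_s$ transfers the derivative onto $\hd^4\vz^i$ and, using $\az_i^s\hd^4\vz^i\cp{s}\sim\hd^4(\Jz)_t$, produces interior remainders controlled via Lemma~\ref{lem.tech} exactly as for $\I$. The boundary contribution that survives uses the decomposition \eqref{at.dcom} of $\az$ and the outward-normal identity \eqref{formula: outward normal} to isolate the scalar coefficient $-\az_{\nz}^k(\rho_0^2\Jz^{-2})\cp{k}$, which is bounded below by $\nu/2$ thanks to the Taylor sign condition \eqref{Taylor.2}. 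Writing $\hd^4\vz\cdot\nz=\p_t(\hd^4\ez\cdot\nz)-\hd^4\ez\cdot\nz_t$, together with the initial condition $\hd^4\ez(0)=\hd^4 e=0$, I obtain
\begin{align*}
\mathscr{K}=\tfrac{1}{2}\tfrac{d}{dt}\!\int_\Gamma \sqrt{\gz}\Jz^{-1}\bbset{-\az_{\nz}^k(\rho_0^2\Jz^{-2})\cp{k}}\abs{\hd^4\ez\cdot\nz}^2+\RS.
\end{align*}

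The boundary integral $\II$ is handled by exactly the same argument used in Propositions~\ref{z.prop.z6} and \ref{z.prop.z4}: substitute the Laplace-Young condition \eqref{stp.bc}, use \eqref{beta.sigma} so that $\hd\beta_\sigma(t)$ carries a factor of $\sigma$, and treat the tangential commutators through integration by parts in $\hd_\alpha$ together with the tangential identity analogous to \eqref{z.tid}. This yields the remaining sign-definite surface-tension term $\tfrac{1}{2}\tfrac{d}{dt}\!\int_\Gamma \sqrt{\gz}\gz^{\alpha\beta}\sqrt{\sigma}\hd^4\ez\cp{\alpha}\cdot\nz\,\sqrt{\sigma}\hd^4\ez\cp{\beta}\cdot\nz$ up to $\RS$. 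Combining all four terms and integrating in time completes the proof, the main obstacle being the absence of a time-derivative on the test function: in contrast with Propositions~\ref{z.prop.z6}--\ref{z.prop.z2}, I cannot integrate by parts in time to gain a full time-derivative on $\hd^4\ez$, and must instead rely on the fundamental theorem of calculus $\hd^4\ez(t)=\int_0^t\hd^4\vz$ together with Lemma~\ref{lem.tech} to control residual $H^{0.5}(\Omega)$-duality pairings.
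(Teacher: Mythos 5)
Your proposal is correct and follows the same template the paper intends: apply $\hd^4$ to \eqref{stp.m}, test against $\hd^4\vz$, and extract the Taylor-sign term via $\mathscr{K}$ (using the decomposition \eqref{at.dcom} and the lower bound \eqref{Taylor.2}), the surface-tension term via $\II$ (using \eqref{stp.bc} and \eqref{beta.sigma}), and absorb $\I$ into $\RS$ via Lemma~\ref{lem.tech} on the $H^{0.5}(\Omega)$-quantity $\hd^4\Jz$; the substitution $\hd^4\vz\cdot\nz=\p_t(\hd^4\ez\cdot\nz)-\hd^4\ez\cdot\nz_t$ with $\hd^4\ez(0)=0$ is exactly the right way to produce the exact time-derivatives, and the FTC gives the residual $L^2(\Gamma)$ control.

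One small inaccuracy in the closing remark: it is not true that you ``cannot integrate by parts in time'' — precisely because $\hd^4\vz=\p_t(\hd^4\ez)$, the time-derivative is available, and the identity you actually invoke (writing $\hd^4\vz\cdot\nz$ as $\p_t(\hd^4\ez\cdot\nz)$ plus a commutator) \emph{is} the needed integration by parts in time. Likewise, the paper's treatment of $\I$ in Proposition~\ref{z.prop.z4} does not actually integrate by parts in time; it goes directly via Lemma~\ref{lem.tech} once $\hd^2\p^4_t\Jz\in H^{0.5}(\Omega)$ is noted, and the same shortcut works here with $\hd^4\Jz\in H^{0.5}(\Omega)$. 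These imprecisions do not affect the validity of the argument, whose substantive steps are correct.
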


        \subsection*{Step~4: The $\sigma$-independent higher-order
          estimates  via Proposition~\ref{prop:Hodge}}
        
        \begin{lem}
          [The $\sigma$-independent lower-order    estimates for $\p^a_t\Jz$,
          $a=0,\dots, 7$]\label{z.lem.J}
          \begin{align*}
            \Sup \sum_{a=0}^7
            \norm{\p^a_t\Jz(t)}_{3.5-\frac{1}{2}a}^2\le \int_{0}^{T}\RS.
          \end{align*}
        \end{lem}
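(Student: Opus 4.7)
The estimates for $a=7$ and $a=6$ are immediate consequences of Proposition~\ref{z.prop.z8}, the former directly and the latter via the embedding $H^1(\Omega)\hookrightarrow H^{0.5}(\Omega)$. For $a\in\{0,1,\dots,5\}$, I would proceed by downward induction on $a$, combining the fundamental theorem of calculus with the algebraic structure of the Euler momentum equations.

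The key structural identity is obtained by expanding $(\rho_0^2\Jz^{-2})\cp{k}$ in \eqref{stp.m} and rearranging:
\begin{align*}
\az_i^k\Jz\cp{k} = \frac{\Jz^3}{2\rho_0}\vz^i_t + \Jz\az_i^k\frac{\rho_0\cp{k}}{\rho_0}.
\end{align*}
Inverting $\az$ pointwise expresses $D\Jz$ in terms of $\vz_t=\p^2_t\ez$ plus a smooth-coefficient correction. Applying $\p^a_t$ and the Leibniz rule, $D\p^a_t\Jz$ decomposes schematically as $\frac{\Jz^3}{2\rho_0^2}\Az\,\p^{a+1}_t\vz$ plus products of $\p^b_t\Jz$, $\p^c_t\ez$ with $b+c\le a$. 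Hence $\norm{\p^a_t\Jz(t)}_{3.5-a/2}^2\le\norm{\p^a_t\Jz(t)}_0^2+\norm{D\p^a_t\Jz(t)}_{2.5-a/2}^2$, where the first term is bounded via the fundamental theorem of calculus applied to $\Jz_t=\az_r^s\vz^r\cp{s}$ combined with $\norm{\p^c_t\ez}_{4-c/2}\in \ES$, and the second term is bounded, up to inductively-controlled lower-order products, by $\norm{\p^{a+2}_t\ez}_{2.5-a/2}^2$. A second application of the fundamental theorem of calculus,
\begin{align*}
\p^{a+2}_t\ez(t)=\p^{a+2}_t\ez(0)+\int_0^t\p^{a+3}_s\ez\,ds,
\end{align*}
yields $\Sup\norm{\p^{a+2}_t\ez(t)}_{2.5-a/2}^2\le\MS+T^2\,\Sup\norm{\p^{a+3}_t\ez(t)}_{2.5-a/2}^2$, and since for $a\le 5$ we have $2.5-a/2\le 3-a/2=4.5-(a+3)/2$, the last term is controlled by $\Sup\ES$. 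Aggregating, the full estimate takes the form $\Sup\norm{\p^a_t\Jz(t)}_{3.5-a/2}^2\le \MS+T\,\PS(\Sup\ES)$, which is admissible as $\int_0^T\RS$ by the defining inequality $\int_0^T\RS\le\MS+\delta\Sup\ES+T\PS(\Sup\ES)$.

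The main obstacle is the careful matching of regularity indices imposed by the $\p_t\sim \p_x^{1/2}$ scaling of the zero surface tension limit: the target $\Jz$-regularity $3.5-a/2$ is precisely one half-derivative below the $\ez$-regularity $4.5-a/2$ in $\ES$, and the Euler-equation rewriting supplies exactly that half-derivative so that the Moser-type product estimates close at each induction step. The $T^2$-factor generated by the double application of the fundamental theorem of calculus is what prevents the bound from degenerating into an uncontrolled constant times $\Sup\ES$, enabling the polynomial-type inequality \eqref{poly-type ineq} to eventually absorb the $\delta\Sup\ES$ and $T\PS(\Sup\ES)$ contributions.
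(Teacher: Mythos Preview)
Your approach differs from the paper's and contains a genuine gap at $a=5$. In the step where you apply the fundamental theorem of calculus to $\p^{a+2}_t\ez$ and claim that $\norm{\p^{a+3}_t\ez}_{2.5-a/2}$ is controlled by $\Sup\ES$ ``since for $a\le 5$ we have $2.5-a/2\le 4.5-(a+3)/2$,'' you have overlooked that the $\ez$-sum in $\ES$ runs only over $a=0,\dots,7$. For $a=5$ you would need $\p^8_t\ez\in L^2(\Omega)$, and no such term appears anywhere in $\ES$. The gap is easily repaired---for $a=5$ simply bypass the Euler equations and use $\p^5_t\Jz(t)=\p^5_t\Jz(0)+\int_0^t\p^6_t\Jz$ together with the $H^1$ bound on $\p^6_t\Jz$ from Proposition~\ref{z.prop.z8}---but as written the argument is incomplete.

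More importantly, your route is considerably more elaborate than the paper's. The paper does not use the Euler equations at all in this lemma (that structure is reserved for the \emph{higher}-order Jacobian estimate in Proposition~\ref{z.prop.J}, which comes later). Instead, for $a\le 5$ the paper simply interpolates
\[
\norm{\p^a_t\Jz}_{3.5-a/2}^2\le C_\delta\norm{\p^a_t\Jz}_{3-a/2}^2+\delta\norm{\p^a_t\Jz}_{4.5-a/2}^2,
\]
bounds the first term by $\MS+T\PS(\Sup\ES)$ via the fundamental theorem of calculus (using the already-established estimate for $\p^{a+1}_t\Jz$ one step up the induction), and absorbs the second term into $\delta\Sup\ES$ since $\norm{\p^a_t\Jz}_{4.5-a/2}^2$ sits directly in $\ES$. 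This is a two-line argument that avoids all product estimates and the careful bookkeeping of lower-order commutator terms that your Euler-equation rewriting would require.
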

        \begin{proof} The $a=6$ and $a=7$ cases are provided by
          Proposition~\ref{z.prop.z8}. The higher-order estimates for
          $a=0,\dots,5$, are established by interpolation and the
          fundamental theorem of calculus. For example, using that
          $\p^6_t\Jz$ is in $H^1(\Omega)$
\begin{align*}
  \Sup \norm{\p^5_t\Jz(t)}_{1.5}^2 \le C_\delta   \Sup
  \norm{\p^5_t\Jz(t)}_{1}^2 + \delta  \Sup \norm{\p^5_t\Jz(t)}_{2}^2
  \le \int_{0}^{T}\RS.
\end{align*}
        \end{proof}
        \begin{lem}
          [The $\sigma$-independent normal trace-estimates for
          $\p^{a}_t\ez$, $a=0,\dots,7$]\label{z.lem.nt}
          \begin{align*}
            \Sup \sum_{a=0}^7
            \abs{\p^{a}_t\ez(t)\cdot N}_{4-\frac{1}{2}a}^2 \le \int_{0}^{T}\RS.
          \end{align*}
        \end{lem}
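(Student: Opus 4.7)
The plan is to deduce these normal-trace estimates for $\p^a_t\ez\cdot N$ from the Taylor-sign-condition boundary bounds on tangential derivatives of the moving-frame normal component $\hd^{4-a}\p^a_t\ez\cdot\nz$ established in Propositions~\ref{z.prop.z8}--\ref{z.prop.z0}, by (i) fundamental-theorem-of-calculus reductions in time, (ii) conversion from the moving normal $\nz$ to the fixed normal $N$ via the bookkeeping identity $N = \nz + \int_0^t(-\p_t\nz)$, and (iii) interpolation to produce the required fractional-order norms. The argument is organized by descending in $a$, so that later cases can borrow the boundary regularity already established.

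I would begin with $a=7$, where the target is $\abs{\p^6_t\vz\cdot N}_{1/2}$. Proposition~\ref{z.prop.z6} supplies $\abs{\hd\p^5_t\vz\cdot\nz}_0$ in $L^\infty_t$; via the Leibniz identity $\hd(\p^5_t\vz\cdot\nz)=\hd\p^5_t\vz\cdot\nz+\p^5_t\vz\cdot\hd\nz$ and the interior trace of $\p^5_t\vz$ coming from the $\norm{\p^6_t\ez}_{3/2}^2$ term in $\ES$, this gives an $L^\infty_tH^1(\Gamma)$ bound on $\p^5_t\vz\cdot\nz$. Time-differentiating and using $\p_t\nz\sim \hd\vz$ then delivers an $L^\infty_tH^{1/2}(\Gamma)$ bound on $\p^6_t\vz\cdot\nz$ that is $\sigma$-independent (here one may interpolate with the $\sqrt{\sigma}$-weighted $H^1$ bound from Proposition~\ref{z.prop.z8} only if needed for the sharp order). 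Writing $N=\nz+(N-\nz)$ and using $N-\nz=-\int_0^t\p_t\nz$ converts this to $\abs{\p^6_t\vz\cdot N}_{1/2}$, the correction term being an $\RS$-contribution thanks to the smallness factor $T$.

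For $a=1,\ldots,6$ the argument is analogous, matching Propositions~\ref{z.prop.z4}, \ref{z.prop.z2}, \ref{z.prop.z0} to the appropriate tangential-derivative count; the interpolation between $L^2$-bounds on $\hd^{4-a}\p^a_t\ez\cdot\nz$ and the interior regularity of $\p^a_t\ez$ in $\ES$ produces the half-integer $H^{4-a/2}$ norm. For the base case $a=0$, I write $\ez\cdot N=e\cdot N+\int_0^t\vz\cdot N$: the initial term lies in $H^4(\Gamma)$ by the smoothness of the initial data invoked in Section~\ref{z.sec:assum-cinfty-class-1}, and the time-integral reduces to $L^1_tH^4(\Gamma)$-control of $\vz\cdot N$, which is the $a=1$ case just established (gaining an extra half-derivative via the time-integration and bootstrap off of $\hd^4\ez\cdot\nz$ in $L^\infty_tL^2$ from Proposition~\ref{z.prop.z0}).

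The main obstacle is controlling the correction term $\p^a_t\ez\cdot(N-\nz)$ at the top regularity level $H^{4-a/2}(\Gamma)$: although $N-\nz$ is $O(T)$ in $L^\infty$, distributing all $4-a/2$ tangential derivatives on the product is delicate because $N-\nz$ inherits only the Sobolev regularity of $\p_t\nz\sim\hd\vz$, which is itself the object one is trying to estimate. I would resolve this by distributing derivatives using the Sobolev-algebra property at the available regularity, absorbing borderline terms via Young's inequality into $\delta\,\Sup\ES$, and exploiting the time-integral factor $T$ so that every error produced is of the form $T\,\PS(\Sup\ES)$, consistent with the $\RS$ convention.
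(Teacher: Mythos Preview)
Your approach has a genuine gap at the top order. Consider $a=7$, where the target is $\abs{\p^6_t\vz\cdot N}_{1/2}$. You correctly obtain $\abs{\hd\p^5_t\vz\cdot\nz}_0$ from Proposition~\ref{z.prop.z6} and upgrade it to $\abs{\p^5_t\vz\cdot\nz}_1$ via Leibniz. But your next step, ``time-differentiating \dots\ delivers an $L^\infty_tH^{1/2}(\Gamma)$ bound on $\p^6_t\vz\cdot\nz$,'' is not a valid inference: knowing a function is in $H^1(\Gamma)$ pointwise in $t$ says nothing about its time derivative in $H^{1/2}(\Gamma)$. The heuristic $\p_t\sim D^{1/2}$ from Remark~\ref{scaling} is not a theorem you can invoke here. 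Interpolating with the $\sqrt{\sigma}$-weighted $H^1$ bound from Proposition~\ref{z.prop.z8} does not help either, since it leaves a negative power of $\sigma$. And borrowing the ``interior trace of $\p^5_t\vz$ from $\norm{\p^6_t\ez}_{3/2}^2$ in $\ES$'' is circular: that term of $\ES$ is precisely what Lemma~\ref{z.lem.nt} is meant to feed into, via Proposition~\ref{z.prop.ez} and Proposition~\ref{prop:Hodge}.

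The paper's proof avoids this circularity by bringing in two ingredients you omit entirely: the divergence estimates of Lemma~\ref{z.lem.J} and the normal trace theorem (Lemma~\ref{lem: normal trace}). For the odd-$a$ cases ($\p^{2b}_t\vz$, $b=0,1,2,3$) one applies Lemma~\ref{lem: normal trace} to $w=\hd^{3-b}\p^{2b}_t\vz$, using $\norm{\hd^{4-b}\p^{2b}_t\vz}_0$ from the energy propositions and $\norm{\Div\hd^{3-b}\p^{2b}_t\vz}_0$ from Lemma~\ref{z.lem.J}; this yields $\abs{\hd^{4-b}\p^{2b}_t\vz\cdot N}_{-1/2}$ directly, with no appeal to $\ES$. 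The even-$a$ cases then follow by the fundamental theorem of calculus and the Taylor-sign boundary terms, along the lines you sketched. Your conversion $N=\nz+\int_0^t(-\p_t\nz)$ is the right device for those residual steps, but it cannot substitute for the divergence input at top order.
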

        \begin{proof}

          By Lemma~\ref{lem: normal trace},
  \begin{align*}
    \Sup \abs{\hd\p^6_t\vz(t)\cdot N}_{-0.5}^2 \le
    C\Sup\Bpset{\norm{\hd\p^6_t\vz (t)}_0^2 + \norm{ \Div  \p^6_t\vz (t)}_0^2}\le\int_{0}^{T}\RS,
  \end{align*}
thanks to the estimates stated in 
Proposition~\ref{z.prop.z6} and  Lemma~\ref{z.lem.J}. 
Using the same argument,   the $L^2(\Omega)$-estimates for $\hd^3\vz_{tt}$ and $\hd^4\vz$
respectively given in Propositions~\ref{z.prop.z2} and 
\ref{z.prop.z0}, and the divergence-estimates given by
Lemma~\ref{z.lem.J} provide    that
\begin{align*}
  \Sup \sum_{a=0}^3 \abs{\p^{2a}_t\vz(t)\cdot N}_{3.5-a}^2 \le
  \int_{0}^{T}\RS.
\end{align*}
Using the fundamental theorem of calculus, the normal trace-estimates
stated in Propositions~\ref{z.prop.z6}, \ref{z.prop.z4} and \ref{z.prop.z0} complete the proof.
        \end{proof}

Via Proposition~\ref{prop:Hodge}, the estimates stated in Lemmas~\ref{z.lem.curl},
\ref{z.lem.J} and \ref{z.lem.nt} establish
        \begin{prop}
          [The $\sigma$-independent estimates for $\p^a_t\ez$, $a=0, \dots,7$]\label{z.prop.ez}
          \begin{align*}
            \Sup \sum_{a=0}^7 \norm{\p^a_t\ez(t)}_{4.5- \frac{1}{2}a}^2 \le \int_{0}^{T}\RS.
          \end{align*}
        \end{prop}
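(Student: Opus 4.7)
\textbf{Proof proposal for Proposition~\ref{z.prop.ez}.} The plan is to apply Proposition~\ref{prop:Hodge} with $w=\p^a_t\ez$ and $s=s_a:=4.5-\tfrac{1}{2}a$ for each $a\in\{0,1,\dots,7\}$. Note $s_a\in[1,4.5]$, so the range of $s$ in Proposition~\ref{prop:Hodge} is respected (and matches the $H^{4.5}$-regularity of $\Omega$ declared in the zero surface tension setup). The Hodge-type estimate then bounds $\norm{\p^a_t\ez}_{s_a}$ by the sum of four quantities: the $L^2$-norm of $\p^a_t\ez$, its curl in $H^{s_a-1}$, its divergence in $H^{s_a-1}$, and the tangential derivative of its normal trace in $H^{s_a-3/2}$.

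Each of the four quantities is already controlled. First, for $a\ge 1$ the $L^2$-norm of $\p^a_t\ez$ follows from the fundamental theorem of calculus using that $\p^a_t\ez|_{t=0}$ is determined by the initial data, while for $a=0$ we use $\ez=e+\int_0^t\vz$; in each case $\Sup\norm{\p^a_t\ez(t)}_0^2\le\int_0^T\RS$. Second, the curl-terms $\norm{\curl\p^a_t\ez}_{s_a-1}^2=\norm{\curl\p^a_t\ez}_{3.5-a/2}^2$ are exactly the estimates stated in Lemma~\ref{z.lem.curl}. Third, via the identities in Section~\ref{sec:diff-cofact-matr} and the fundamental-theorem-of-calculus expression \eqref{identity: div and Lagrangian}, the divergence $\Div\p^a_t\ez$ equals $\az_r^s\p^a_t\ez^r\cp{s}$ modulo a remainder controlled by $\int_0^t\p_t\az$ and lower-order temporal derivatives of $\ez$; coupled with $\az_r^s\ez^r\cp{s}=3\Jz$ (cf.\ \eqref{identity: J}), the estimate $\norm{\p^a_t\Jz}_{3.5-a/2}^2\le\int_0^T\RS$ from Lemma~\ref{z.lem.J} yields $\norm{\Div\p^a_t\ez}_{s_a-1}^2\le\int_0^T\RS$. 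Fourth, the normal trace estimate $\abs{\p^a_t\ez\cdot N}_{4-a/2}^2\le\int_0^T\RS$ of Lemma~\ref{z.lem.nt} dominates $\abs{\hd\p^a_t\ez\cdot N}_{s_a-3/2}^2=\abs{\hd\p^a_t\ez\cdot N}_{3-a/2}^2$ trivially (one loses half a tangential derivative).

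Summing these four contributions inside Proposition~\ref{prop:Hodge} gives
\begin{equation*}
\Sup\norm{\p^a_t\ez(t)}_{4.5-a/2}^2 \le \int_0^T\RS\qquad\text{for }a=0,1,\dots,7,
\end{equation*}
and summing over $a$ completes the proof. No step is a substantive obstacle; the only points requiring care are the book-keeping of the $\sigma$-independence (ensured because Lemmas~\ref{z.lem.curl}, \ref{z.lem.J}, \ref{z.lem.nt} are all $\sigma$-independent) and the verification that when $a=7$ and $s_a=1$ we still have $\abs{\hd\p^7_t\ez\cdot N}_{-0.5}$ available from the trace estimates, which is the case since $H^{0.5}(\Gamma)\hookrightarrow H^{-0.5}(\Gamma)$.
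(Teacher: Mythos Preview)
Your proposal is correct and follows exactly the paper's approach: the proof in the paper consists of the single sentence ``Via Proposition~\ref{prop:Hodge}, the estimates stated in Lemmas~\ref{z.lem.curl}, \ref{z.lem.J} and \ref{z.lem.nt} establish'' the claim, which is precisely your plan of feeding the curl, divergence, and normal trace bounds into the Hodge estimate. One minor remark: the margin between the normal trace bound $|\p^a_t\ez\cdot N|_{4-a/2}$ of Lemma~\ref{z.lem.nt} and the required $|\hd\p^a_t\ez\cdot N|_{3-a/2}$ is a full tangential derivative, not half a derivative as you write, but this only strengthens your conclusion.
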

                    Via Proposition~\ref{prop:Hodge}, the estimates stated in
          Lemma~\ref{z.lem.curl} and Proposition~\ref{z.prop.z6}
          establish  
        \begin{prop}
          [The $\sigma$-independent estimate for
          $\sqrt{\sigma}\p^5_t\vz$]\label{z.prop.s.v5}
          \begin{align*}
\norm{\sqrt{\sigma}\p^5_t\vz(t)}_{2}^2\le \int_{0}^{T}\RS.
          \end{align*}
        \end{prop}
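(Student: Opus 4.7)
The plan is to apply the Hodge-type elliptic estimate of Proposition~\ref{prop:Hodge} at $s=2$ to the vector field $w(t)=\sqrt{\sigma}\p^5_t\vz(t)$, yielding
\begin{align*}
\norm{\sqrt{\sigma}\p^5_t\vz}_2\le C\Bbset{\norm{\sqrt{\sigma}\p^5_t\vz}_0+\norm{\sqrt{\sigma}\curl\p^5_t\vz}_1+\norm{\sqrt{\sigma}\Div\p^5_t\vz}_1+\abs{\sqrt{\sigma}\hd\p^5_t\vz\cdot N}_{0.5}}.
\end{align*}
It then suffices to bound each of the four summands on the right-hand side by $\int_0^T\RS$, using the estimates already in hand.

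The curl term is supplied directly by Lemma~\ref{z.lem.curl}. For the normal-trace term, I would use $N=\nz(t)-\int_0^t\p_t\nz$ to split $\sqrt{\sigma}\hd\p^5_t\vz\cdot N$ into $\sqrt{\sigma}\hd\p^5_t\vz\cdot\nz$, controlled in $H^1(\Gamma)$ by Proposition~\ref{z.prop.z6} (which is stronger than the required $H^{0.5}$), plus a lower-order piece $\sqrt{\sigma}\hd\p^5_t\vz\cdot\int_0^t\p_t\nz$ handled by the fundamental theorem of calculus together with Proposition~\ref{z.prop.ez}. For the $L^2$ term, the identity $\sqrt{\sigma}\p^5_t\vz(t)=\sqrt{\sigma}\p^5_t\vz(0)+\int_0^t\sqrt{\sigma}\p^6_t\vz$ combined with the $C^\infty$-smoothness of the initial data from Section~\ref{z.sec:assum-cinfty-class-1} and an $L^\infty(0,T;L^2(\Omega))$ bound on $\p^6_t\vz$, obtained by an auxiliary application of Proposition~\ref{prop:Hodge} at $s=1$ using the curl-estimate of Lemma~\ref{z.lem.curl}, the divergence-estimate coming from $\p^7_t\Jz\in L^2$ in Proposition~\ref{z.prop.z8}, and the tangential bound of Proposition~\ref{z.prop.z6}, yields the desired control.

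The only term requiring care is the divergence piece. I would use the identity $\Div\vz=\Jz^{-1}\Jz_t+(\delta_r^s-\Az_r^s)\vz^r\cp{s}$ and differentiate five times in $t$. The leading contribution is $\Jz^{-1}\p^6_t\Jz$, which Proposition~\ref{z.prop.z8} controls in $L^\infty(0,T;H^1(\Omega))$ by $\int_0^T\RS$; all remaining contributions are lower order, since $\delta_r^s-\Az_r^s=-\int_0^t\p_t\Az_r^s$ gains a small factor via the fundamental theorem of calculus and the Leibniz commutator terms place fewer than six time-derivatives on the highest-regularity factor. Multiplication by $\sqrt{\sigma}\le 1$ does not enlarge any bound. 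No step involves a new energy identity or delicate cancellation, so the main work is purely bookkeeping: the proof is a straightforward assembly of the curl-, divergence-, $L^2$-, and normal-trace controls already established for $\p^5_t\vz$ and feeding them into the Hodge decomposition.
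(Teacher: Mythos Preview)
Your approach is correct and matches the paper's one-line proof: feed the curl estimate (Lemma~\ref{z.lem.curl}), the normal-trace estimate (Proposition~\ref{z.prop.z6}), and the divergence estimate (via $\p^6_t\Jz\in H^1$ from Proposition~\ref{z.prop.z8}) into Proposition~\ref{prop:Hodge} at $s=2$. Your treatment of the $L^2$ piece is more involved than necessary and has a small circularity (Hodge at $s=1$ for $\p^6_t\vz$ still needs $\norm{\p^6_t\vz}_0$ as input); the clean fix is to use Proposition~\ref{z.prop.ez}, already established, which gives $\p^5_t\vz\in H^{1.5}$ and $\p^6_t\vz\in H^1$ directly.
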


        \subsection*{Step~5: The $\sigma$-independent higher-order
          estimates  via interpolation}

        \begin{prop}
          [The $\sigma$-independent estimates for $\sqrt{\sigma}\p^a_t\ez$, $a=0, \dots,3$]\label{z.prop.s.ez}
          \begin{align*}
            \Sup \sum_{a=0}^3\norm{\sqrt{\sigma}\p^a_t\ez(t)}_{5.5- \frac{1}{2}a}^2 \le \int_{0}^{T}\RS.
          \end{align*}
        \end{prop}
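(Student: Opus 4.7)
The plan is to improve upon Proposition~\ref{z.prop.ez} by half a spatial derivative at the cost of a factor $\sqrt{\sigma}$, using the Laplace--Young boundary condition \eqref{stp.bc} as the source of the extra regularity and then closing via Proposition~\ref{prop:Hodge}.  Differentiating \eqref{stp.bc} in time $a$ times and isolating the leading tangential-Hessian term gives
\begin{align*}
\sigma\, g^{\alpha\beta}\p^a_t\ez\cp{\alpha\beta}\cdot\nz = \p^a_t\beta_\sigma(t) - \p^a_t(\rho_0^2\Jz^{-2}) + \text{l.o.t.},
\end{align*}
whose right-hand side lies in $H^{3-a/2}(\Gamma)$ by Lemma~\ref{z.lem.J} and the trace theorem.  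Elliptic regularity for the Laplace--Beltrami-type operator acting on $\ez\cdot\nz$ then yields $\sigma\,\p^a_t\ez\cdot\nz\in H^{5-a/2}(\Gamma)$, which is two tangential derivatives above the right-hand side.

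Interpolating this $\sigma$-weighted boundary bound against the uniform trace $\p^a_t\ez\cdot\nz\in H^{4-a/2}(\Gamma)$ supplied by Proposition~\ref{z.prop.ez} produces the $\sqrt{\sigma}$-weighted boundary regularity needed as the normal-trace input in Proposition~\ref{prop:Hodge} at level $s=5.5-a/2$.  For the curl input, I would interpolate the uniform bound $\curl\p^a_t\ez\in H^{3.5-a/2}$ of Lemma~\ref{z.lem.curl} against the $\sigma$-weighted bound $\sigma\,\curl\p^a_t\ez\in H^{5.5-a/2}$ from the same lemma to obtain $\sqrt{\sigma}\,\curl\p^a_t\ez\in H^{4.5-a/2}$, and proceed analogously for the divergence by writing $\Div\p^a_t\ez$ in terms of $\p^a_t\Jz$ via \eqref{identity: div and Lagrangian}, combining Lemma~\ref{z.lem.J} with the $\sigma$-weighted boundary control on $\Jz$ extracted from \eqref{stp.bc} and propagated inward through the momentum equations \eqref{stp.m}.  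The $L^2(\Omega)$-seed required by Proposition~\ref{prop:Hodge} is immediate from Proposition~\ref{z.prop.ez}.

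The main obstacle is the divergence piece, since the uniform Jacobian bound of Lemma~\ref{z.lem.J} is half a spatial derivative short of what Proposition~\ref{prop:Hodge} requires at level $5.5-a/2$; the $\sqrt{\sigma}$-gain must therefore be produced by a careful interpolation that uses the elliptic boundary-regularity of $\Jz|_\Gamma$ furnished by the Laplace--Young identity, rather than from the Jacobian estimates alone.  Once $a=0$ is settled, the cases $a=1,2,3$ follow by the same argument applied to the time-differentiated boundary condition and Euler equations, with the $\p^a_t\Jz$-regularity budget of Lemma~\ref{z.lem.J} decreasing at the correct half-integer rate to match the stated Sobolev exponent $5.5-a/2$.
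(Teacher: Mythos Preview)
Your approach is substantially more involved than what is needed, and as written it has a genuine gap in the normal-trace count.  From the Laplace--Young identity differentiated $a$ times, with the right-hand side controlled only by Lemma~\ref{z.lem.J} (the stronger Proposition~\ref{z.prop.J} comes \emph{after} this step), you get at best $\sigma\,\p^a_t\ez\cdot\nz\in H^{5-\frac{a}{2}}(\Gamma)$.  Interpolating this against the uniform trace $\p^a_t\ez\cdot\nz\in H^{4-\frac{a}{2}}(\Gamma)$ at $\theta=\tfrac12$ yields only $\sqrt{\sigma}\,\p^a_t\ez\cdot\nz\in H^{4.5-\frac{a}{2}}(\Gamma)$, which is half a derivative short of the $H^{5-\frac{a}{2}}(\Gamma)$ input that Proposition~\ref{prop:Hodge} requires at level $s=5.5-\tfrac{a}{2}$.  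The divergence piece has the same deficit, as you yourself flag.

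The paper bypasses Hodge, the boundary condition, and the momentum equations entirely.  The key observation you are missing is that $\norm{\sigma\p^a_t\ez}_{6.5-\frac{a}{2}}^2$ is already a summand of the energy $\ES$ (see \eqref{defn:ES}), so by the very definition of $\RS$ it may appear on the right with a factor $\delta$.  One then interpolates directly in the full interior norm:
\[
\norm{\sqrt{\sigma}\,\p^a_t\ez}_{5.5-\frac{a}{2}}^2
\;\le\; C_\delta\,\norm{\p^a_t\ez}_{4.5-\frac{a}{2}}^2
\;+\;\delta\,\norm{\sigma\,\p^a_t\ez}_{6.5-\frac{a}{2}}^2,
\]
the first term being controlled by Proposition~\ref{z.prop.ez} and the second by $\delta\,\ES$.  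That is the entire proof.  Your Hodge route could be salvaged only by invoking the trace of this same $\sigma$-weighted term from $\ES$, at which point the detour is pointless.
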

        \begin{proof}
We note that by use of interpolation and Young's inequality,
\begin{align*}
  \Sup \norm{\sqrt{\sigma}\ez(t)}_{5.5}^2 \le C_ \delta   \Sup
  \norm{\ez(t)}_{4.5}^2 + \delta  \Sup \norm{\sigma\ez(t)}_{5.5}^2.
\end{align*}
It follows that the   estimates given in Proposition~\ref{z.prop.ez} complete the proof.
        \end{proof}

        \subsection*{Step~6: The $\sigma$-independent  higher-order  
          estimates via the Euler equations \eqref{stp.m}}
        \begin{prop}
          [The $\sigma$-independent estimates for $\p^a_t\Jz$,
          $a=0,\dots, 5$]\label{z.prop.J}
          \begin{align*}
            \Sup\sum_{a=0}^5 \norm{\p^a_t\Jz(t)}_{4.5-\frac{1}{2}a}^2
\le \int_{0}^{T}\RS.
          \end{align*}
        \end{prop}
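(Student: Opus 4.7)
The plan is to use the $\sigma$-problem momentum equations \eqref{stp.m} as a pointwise relation that trades one space derivative of $\Jz$ for one time derivative of $\vz$, and then to import the $H^{4.5-a/2}$-regularity of $\p^{a+2}_t\ez$ already established in Proposition \ref{z.prop.ez}.

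Concretely, contracting \eqref{stp.m} with $\ez^i\cp{\ell}$ and using the cofactor identity $\ez^i\cp{\ell}\az_i^k=\Jz\delta_\ell^k$ produces the pointwise relation
\begin{align*}
\Jz\cp{\ell}=\tfrac{\Jz^2}{2\rho_0}\,\ez^i\cp{\ell}\,\vz^i_t+\Jz\,\rho_0^{-1}\rho_0\cp{\ell}.
\end{align*}
Applying $\p^a_t$ and expanding by Leibniz yields a leading contribution $\tfrac{\Jz^2}{2\rho_0}\,\ez^i\cp{\ell}\,\p^{a+1}_t\vz^i$ together with a finite sum of commutators of the form $\p^b_t(\mathrm{coef})\cdot\p^{a+1-b}_t\vz^i$ with $b\ge 1$, plus $\p^a_t\bbset{\Jz\rho_0^{-1}\rho_0\cp{\ell}}$. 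For each $a\in\{0,1,\dots,5\}$, I would estimate the leading term in $H^{3.5-a/2}$ by writing $\p^{a+1}_t\vz=\p^{a+2}_t\ez$ and invoking Proposition \ref{z.prop.ez}, which supplies exactly the norm $H^{4.5-(a+2)/2}=H^{3.5-a/2}$ in the required range $a+2\le 7$. Each commutator is bounded in $H^{3.5-a/2}$ by standard Sobolev product estimates, arranged so that at least one factor sits above the algebra threshold $H^{3/2}$, and then fed by Proposition \ref{z.prop.ez} and Lemma \ref{z.lem.J} for the lower-order time derivatives of $\ez$ and $\Jz$. Combining the resulting bound on $\norm{\p^a_t D\Jz(t)}_{3.5-a/2}^2$ with the zeroth-order control $\norm{\p^a_t\Jz(t)}_0^2\le\int_0^T\RS$ from Lemma \ref{z.lem.J} and the norm equivalence $\norm{f}_{4.5-a/2}^2\sim\norm{f}_0^2+\norm{Df}_{3.5-a/2}^2$ delivers the asserted estimate.

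The main bookkeeping difficulty is the extreme case $a=5$, whose target $H^2$ sits only half a derivative above the algebra threshold; here one must carefully verify for every $b$ that the product of $\p^b_t\ez\in H^{4.5-b/2}$ and $\p^{a+1-b}_t\vz\in H^{3.5-(a-b)/2}$ has at least one factor strictly exceeding $H^{3/2}$. A seemingly circular contribution also arises when every time derivative of $\p^a_t(\Jz^2)$ accumulates on a single factor, producing $2\Jz\,\p^a_t\Jz$ multiplied by bounded coefficients involving $\vz_t$; this is not actually circular, because its $H^{3.5-a/2}$-norm only requires $\p^a_t\Jz$ in $H^{3.5-a/2}$, which is precisely the regularity already supplied by Lemma \ref{z.lem.J}, and the one-derivative gap between that lemma and the target norm is exactly what closes the estimate.
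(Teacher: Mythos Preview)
Your approach is correct and essentially identical to the paper's: both use the Euler equations \eqref{stp.m} to express $D\Jz$ in terms of $\vz_t$ (the paper phrases this as controlling $\norm{\p^a_t\Jz}_{4.5-a/2}$ by $\norm{\p^a_t\vz_t}_{3.5-a/2}$ plus $\norm{\p^a_t\Az}_{3.5-a/2}$), and then close via Proposition~\ref{z.prop.ez}. Your contraction with $\ez\cp{\ell}$ is simply an explicit way of implementing what the paper does in one line, and your ``seemingly circular'' term is exactly the $\p^a_t\Az$ contribution the paper absorbs using the lower-order estimates of Lemma~\ref{z.lem.J} and Proposition~\ref{z.prop.ez}.
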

        \begin{proof}
We infer from           the $a$th time-derivative of the Euler equations
          \eqref{stp.m}   that
          \begin{align*}
            \Sup  \norm{\p^a_t\Jz(t)}_{4.5-\frac{1}{2}a}^2 \le
C\Sup           \norm{\p^a_t\Az(t)}_{3.5-\frac{1}{2}a}^2 + C \Sup
\norm{\p^a_t\vz_t(t)}_{3.5-\frac{1}{2}a}^2+ \int_{0}^{T}\RS.
          \end{align*}
The highest-order term of $\p^a_t\Az$ scales like
$\Az\Az D\p^{a}_t\ez$. Hence, the estimates stated in
Proposition~\ref{z.prop.ez} complete the proof.
        \end{proof}
 
Given
  Proposition~\ref{z.prop.J}, we now establish 
        \begin{prop}
          [The $\sigma$-independent estimates for
           $\sqrt{\sigma}\p^4_t\vz$                         and          $\sqrt{\sigma}\vz_{ttt}$   via Proposition~\ref{prop:Hodge}]\label{z.prop.s.v34}\begin{align*}
            \Sup\sum_{a=0}^1\norm{\sqrt{\sigma}\p^{3+a}_t\vz(t)}_{3.5-
            \frac{1}{2}a}^2\le \int_{0}^{T}\RS.
          \end{align*}
        \end{prop}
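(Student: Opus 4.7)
The plan is to apply the Hodge-type elliptic estimate of Proposition~\ref{prop:Hodge} to $w=\sqrt{\sigma}\vz_{ttt}$ at regularity index $s=3.5$ and to $w=\sqrt{\sigma}\p^4_t\vz$ at $s=3$. Each application reduces the proof to bounding four quantities by $\int_0^T\RS$: the $L^2$-norm of $w$, the curl in $H^{s-1}$, the divergence in $H^{s-1}$, and the boundary term $\abs{\hd w\cdot N}_{s-3/2}$.

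The $L^2$ bounds are immediate from Proposition~\ref{z.prop.ez}, which gives $\p^{3+a}_t\vz=\p^{4+a}_t\ez\in L^\infty_tH^{2.5-a/2}(\Omega)$ for $a=0,1$. The curl bounds $\norm{\sqrt{\sigma}\curl\p^{3+a}_t\vz}_{2.5-a/2}$ are exactly the relevant summands of Lemma~\ref{z.lem.curl}. For the divergence, the identity $\p^{4+a}_t\Jz=\az_r^s\p^{3+a}_t\vz^r\cp{s}$ modulo lower-order time derivatives of $\az$, combined with $\az|_{t=0}=I$ and the fundamental theorem of calculus, yields $\Div\p^{3+a}_t\vz=\Jz^{-1}\p^{4+a}_t\Jz$ up to terms absorbed into $\RS$; the required divergence bounds then follow from the $\sqrt{\sigma}\p^{4+a}_t\Jz\in L^\infty_t H^{2.5-a/2}$-regularity supplied by Proposition~\ref{z.prop.J}.

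The boundary estimate is the delicate input. Writing $\hd_\alpha w\cdot N=\hd_\alpha(w\cdot N)-w\cdot\hd_\alpha N$, the lower-order summand $w\cdot\hd N$ is controlled in $H^{s-3/2}(\Gamma)$ via the interior trace of $w$ coming from Proposition~\ref{z.prop.ez}. For the leading summand, I use the identity $N=\nz-\int_0^t \p_t\nz$ to exchange $N$ for $\nz$ modulo an error absorbed into $\RS$. For $a=0$, Proposition~\ref{z.prop.z4} supplies $\abs{\sqrt{\sigma}\hd^2\vz_{ttt}\cdot\nz}_1\in L^\infty_t$, which after tangential integration furnishes $\sqrt{\sigma}\vz_{ttt}\cdot\nz\in L^\infty_tH^3(\Gamma)$ and hence the required $H^2$-bound on $\hd(\sqrt{\sigma}\vz_{ttt}\cdot\nz)$. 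For $a=1$, Proposition~\ref{z.prop.z6} supplies $\abs{\sqrt{\sigma}\hd\p^5_t\vz\cdot\nz}_1\in L^\infty_t$, equivalently $\sqrt{\sigma}\p^5_t\vz\cdot\nz\in L^\infty_tH^2(\Gamma)$; applying the fundamental theorem of calculus in time converts this into $\sqrt{\sigma}\p^4_t\vz\cdot\nz\in L^\infty_tH^2(\Gamma)$, and combining this with the interior bound $\sqrt{\sigma}\p^5_t\vz\in L^\infty_tH^2(\Omega)$ from Proposition~\ref{z.prop.s.v5} through trace-interpolation produces the $H^{2.5}(\Gamma)$-regularity needed to close the $H^{1.5}$-bound on $\hd(\sqrt{\sigma}\p^4_t\vz\cdot\nz)$.

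The main obstacle will be the $a=1$ normal-trace step, where the direct boundary bound from Proposition~\ref{z.prop.z6} is a half-derivative below the Hodge threshold. Bridging this gap requires carefully combining the fundamental-theorem-of-calculus time integration of $\sqrt{\sigma}\p^5_t\vz\cdot\nz$ with the interior $H^2$-bound from Proposition~\ref{z.prop.s.v5} via trace-interpolation, together with the $\nz$-to-$N$ conversion. Once all four Hodge inputs are estimated, Proposition~\ref{prop:Hodge} and summation over $a=0,1$ yield the stated inequality.
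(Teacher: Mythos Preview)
Your overall Hodge-decomposition strategy is correct and matches the paper for $a=0$: curl from Lemma~\ref{z.lem.curl}, divergence from Proposition~\ref{z.prop.J}, and the boundary trace from Proposition~\ref{z.prop.z4} (which gives $\sqrt{\sigma}\,\vz_{ttt}\cdot\nz\in H^3(\Gamma)$) suffice for $\sqrt{\sigma}\vz_{ttt}\in H^{3.5}(\Omega)$.

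For $a=1$, however, your boundary step does not close. Proposition~\ref{z.prop.z6} gives $\sqrt{\sigma}\,\p^5_t\vz\cdot\nz\in H^2(\Gamma)$, and integrating in time produces at best $\sqrt{\sigma}\,\p^4_t\vz\cdot\nz\in H^2(\Gamma)$; the Hodge estimate at $s=3$ requires $H^{2.5}(\Gamma)$. Your proposed ``trace-interpolation'' with $\sqrt{\sigma}\,\p^5_t\vz\in H^2(\Omega)$ from Proposition~\ref{z.prop.s.v5} cannot supply the missing half-derivative: the trace of $H^2(\Omega)$ is only $H^{1.5}(\Gamma)$, and there is no interpolation pair here that lands at $H^{2.5}$. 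You are genuinely half a derivative short.

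The paper closes this gap by a different interpolation, directly on $\p^4_t\vz\cdot\nz$:
\[
\abs{\sqrt{\sigma}\,\p^4_t\vz\cdot\nz}_{2.5}^2
\;\le\;
C_\delta\,\abs{\p^4_t\vz\cdot\nz}_{1.5}^2
\;+\;\delta\,\abs{\sigma\,\p^4_t\vz\cdot\nz}_{3.5}^2.
\]
The first term is controlled by the trace of $\p^4_t\vz=\p^5_t\ez\in H^2(\Omega)$ from Proposition~\ref{z.prop.ez}. The second term is \emph{not} something already proved at this stage; rather, $\abs{\sigma\,\p^4_t\vz\cdot\nz}_{3.5}^2$ is one of the summands of the energy $\ES$ itself, and the $\delta$-prefactor allows it to be absorbed into $\RS$ (recall $\int_0^T\RS$ is permitted to contain $\delta\,\Sup\ES$). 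This interpolation between the unweighted estimate and the $\sigma$-weighted energy term is the device you are missing.
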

        \begin{proof}
                              Via Proposition~\ref{prop:Hodge}, the estimates stated in
          Lemma~\ref{z.lem.curl} and Propositions~\ref{z.prop.z4} and \ref{z.prop.J}
          establish  the estimate for $\sqrt{\sigma}\vz_{ttt}$. For
          $\sqrt{\sigma}\p^4_t\vz$, we
          have that
          \begin{align*}
            \abs{\sqrt{\sigma} \p^4_t\vz \cdot \nz(t)}_{2.5}^2\le C_
            \delta \abs{\p^4_t\vz \cdot \nz(t)}_{1.5}^2+
            \delta \abs{\sigma \p^4_t\vz \cdot \nz(t)}_{3.5}^2
            \le \int_{0}^{T}\RS,
          \end{align*}
thanks to the estimate for $\p^4_t\vz$ stated in
Proposition~\ref{z.prop.ez}. Hence, we infer via Proposition~\ref{prop:Hodge}  the estimate for
$\sqrt{\sigma}\p^4_t\vz$ 
from the estimates stated in Lemma~\ref{z.lem.curl} and Proposition~\ref{z.prop.J}.
        \end{proof}
        \subsection*{Step~7: The $\sigma$-independent higher-order
          estimates for $\sigma\p^a_t\ez$, $a=0,\dots,4$}
         \begin{lem}
          [The $\sigma$-independent normal trace-estimates for
          $\sigma\p^{a}_t\ez$, $a=0,\dots,4$]\label{z.lem.s.nt}
          \begin{align*}
            \Sup \sum_{a=0}^3
            \abs{\sigma \hd^2\p^{a}_t\ez\cdot
              \nz(t)}_{4-\frac{1}{2}a}^2 +\Sup
            \abs{\sigma\p^3_t\vz\cdot \nz(t)}_{3.5}^2\le \int_{0}^{T}\RS.
          \end{align*}
        \end{lem}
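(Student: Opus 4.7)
The proof plan is to read off each normal trace from an appropriately time-differentiated copy of the Laplace–Young boundary condition \eqref{stp.bc}, which we rearrange as
\begin{equation*}
\sigma\, \gz^{\alpha\beta} \ez\cp{\alpha\beta} \cdot \nz \,=\, \beta_\sigma(t) \,-\, \rho_0^2 \Jz^{-2} \quad \text{on } \Gamma.
\end{equation*}
Applying $\p^a_t$ for $a=0,1,2,3$ and distributing the derivatives via the product rule yields an identity of the form
\begin{equation*}
\sigma\, \gz^{\alpha\beta}\hd^2\p^a_t\ez \cdot \nz \,=\, \p^a_t\beta_\sigma(t) \,-\, \p^a_t(\rho_0^2\Jz^{-2}) \,+\, \tilde R_a,
\end{equation*}
where $\tilde R_a$ collects the commutator terms in which at least one time-derivative has been placed on $\gz^{\alpha\beta}$ or $\nz$.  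Since $\gz^{\alpha\beta}$ is uniformly positive-definite by \eqref{z.assum}, we may solve for the principal term and estimate its $H^{4-a/2}(\Gamma)$-norm.

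The right-hand side is then controlled as follows: (i) the trace theorem, together with $\p^a_t\Jz \in H^{4.5-a/2}(\Omega)$ from Proposition \ref{z.prop.J}, bounds $\p^a_t(\rho_0^2\Jz^{-2})$ in $H^{4-a/2}(\Gamma)$; (ii) by the structural identity \eqref{beta.sigma}, each summand of $\p^a_t\beta_\sigma(t)$ carries an extra factor of $\sigma$ for $a\geq 1$, so its contribution is bounded independently of $\sigma$; (iii) the commutator $\tilde R_a$ is handled by splitting the coefficient $\sigma=\sqrt\sigma\cdot\sqrt\sigma$ and placing one $\sqrt\sigma$ factor on the high-order $\p^b_t\ez$-factor, which is then estimated using the bounds for $\sqrt\sigma\p^b_t\ez \in H^{5.5-b/2}(\Omega)$ from Proposition \ref{z.prop.s.ez} and the trace theorem.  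This produces the first sum.

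For the second estimate $\abs{\sigma\vz_{ttt}\cdot\nz}_{3.5}$, we use $\p^3_t\vz = \p^4_t\ez$ and apply the same procedure at $a=4$ to obtain $\sigma\hd^2\vz_{ttt}\cdot\nz$ bounded in $H^2(\Gamma)$ (using $\p^4_t\Jz\in H^{2.5}(\Omega)$ from Proposition \ref{z.prop.J} and the $\sqrt\sigma$-splitting again for the commutators). The product-rule identity
\begin{equation*}
\hd^2(\sigma\vz_{ttt}\cdot\nz) \,=\, \sigma\hd^2\vz_{ttt}\cdot\nz \,+\, 2\,\sigma\hd\vz_{ttt}\cdot\hd\nz \,+\, \sigma\vz_{ttt}\cdot\hd^2\nz
\end{equation*}
then transfers this control to $\hd^2(\sigma\vz_{ttt}\cdot\nz)$ in $H^2(\Gamma)$, the two commutator corrections being absorbed via the $\sqrt\sigma\vz_{ttt}\in H^{3.5}(\Omega)$ bound of Proposition \ref{z.prop.s.v34}. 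Combining this with the $L^2(\Gamma)$-control of $\sigma\vz_{ttt}\cdot\nz$, itself a consequence of the $\vz_{ttt}\in H^{2.5}(\Omega)$ bound in Proposition \ref{z.prop.ez}, yields the desired $H^{3.5}(\Gamma)$-bound.

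The chief obstacle is the bookkeeping of the $\sqrt\sigma$- and $\sigma$-weights on the commutator terms in $\tilde R_a$: because Proposition \ref{z.prop.s.ez} only provides $\sqrt\sigma\p^a_t\ez\in H^{5.5-a/2}(\Omega)$, each commutator must carry a spare $\sqrt\sigma$ to close the estimate uniformly in $\sigma$, and in the second estimate it is important that the $\hd^2$-commutators $\hd\vz_{ttt}\cdot\hd\nz$ and $\vz_{ttt}\cdot\hd^2\nz$ see enough tangential regularity from $\nz$ (which is of $H^{3.5}$-class via the $\ez\in H^{4.5}$ bound of Proposition \ref{z.prop.ez}) to remain in $H^2(\Gamma)$ after multiplication by a $\sqrt\sigma$-weighted derivative of $\vz_{ttt}$.
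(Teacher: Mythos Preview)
Your approach is the same as the paper's: differentiate the Laplace--Young boundary condition \eqref{stp.bc} in time and read off $\sigma\gz^{\alpha\beta}\p^a_t\ez\cp{\alpha\beta}\cdot\nz$ from the $\p^a_t\Jz$-bounds of Proposition~\ref{z.prop.J}. The paper's own proof is a single sentence doing exactly this, so the strategy is correct.

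There is, however, a half-derivative gap in your commutator bookkeeping. Consider, for instance, $a=3$ and the commutator in which a single $\p_t$ lands on $\gz^{\alpha\beta}\nz$, namely $\sigma\,\p_t(\gz^{\alpha\beta}\nz^j)\,\vz_t^j\cp{\alpha\beta}$, which must lie in $H^{2.5}(\Gamma)$. Splitting $\sigma=\sqrt\sigma\cdot\sqrt\sigma$ as you propose, Proposition~\ref{z.prop.s.ez} gives $\sqrt\sigma\,\p_t(\gz^{\alpha\beta}\nz)\in H^{3}(\Gamma)$ (the factor $\hd\ez\in H^3(\Gamma)$ is the ceiling here) and $\sqrt\sigma\,\hd^2\vz_t\in H^{2}(\Gamma)$ (from $\sqrt\sigma\vz_t\in H^{4.5}(\Omega)$). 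The product rule $H^3\cdot H^2\hookrightarrow H^2$ then yields only $H^2(\Gamma)$, not $H^{2.5}(\Gamma)$; the same shortfall of $0.5$ occurs for the analogous commutator at each $a\ge 1$. Propositions~\ref{z.prop.s.ez} and \ref{z.prop.s.v34} alone do not close this.

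This is precisely what the paper's invocation of the fundamental theorem of calculus handles: writing, say, $\vz_t\cp{\alpha\beta}(t)=\vz_t\cp{\alpha\beta}(0)+\int_0^t\vz_{tt}\cp{\alpha\beta}$, the initial term is smooth data and the time-integral carries a factor of $T$, so that the commutator is bounded by $\MS+T\,\PS(\Sup\ES)$ and hence by $\int_0^T\RS$ (here one may freely use the $\sigma$-weighted quantities in $\ES$ itself, since the $T$-factor permits this within the polynomial-inequality framework of \S\ref{sec:polyn-type-ineq}). Equivalently, you could interpolate your $H^2$ bound against an $H^3$ bound obtained by placing the full $\sigma$ on the high-order factor and invoking $\delta\Sup\ES$; either device repairs the gap, but your write-up should include one of them.
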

        \begin{proof}
The estimates for $\p^a_t\Jz$ given in
Proposition~\ref{z.prop.J} and the fundamental theorem of calculus
provide that the $a$th time-derivative of the Laplace-Young boundary
condition \eqref{stp.bc} yields the desired estimates.
       \end{proof}

        \begin{prop}
          [The $\sigma$-independent estimates for $ \sigma\p^a_t\ez$, $a=0, \dots,4$]\label{z.prop.ss.ez}
          \begin{align*}
            \Sup \sum_{a=0}^3 \norm{ \sigma\p^a_t\ez(t)}_{6.5-
              \frac{1}{2}a}^2
+
\Sup   \norm{ \sigma\p^{3}_t\vz(t)}_{4}^2\le \int_{0}^{T}\RS.
          \end{align*}
        \end{prop}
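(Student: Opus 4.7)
The plan is to apply the Hodge-type elliptic estimate of Proposition~\ref{prop:Hodge} to the vector field $\sigma\p^a_t\ez$ at the scale $H^{6.5-\frac{1}{2}a}$ for each $a=0,1,2,3$, and separately to $\sigma\vz_{ttt}$ at the scale $H^4$. All four ingredients required by Proposition~\ref{prop:Hodge} -- the baseline $L^2$-bound, the curl, the divergence, and the normal-trace of $\hd\,w\cdot N$ -- have effectively already been assembled in Lemmas~\ref{z.lem.curl} and \ref{z.lem.s.nt} and in Propositions~\ref{z.prop.ez} and \ref{z.prop.J}, so the argument is structurally identical to the one proving Proposition~\ref{z.prop.ez}.

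Specifically, Lemma~\ref{z.lem.curl} supplies $\sup_{t\in[0,T]}\norm{\sigma\curl\p^a_t\ez(t)}_{5.5-\frac{1}{2}a}^2$ and $\sup_{t\in[0,T]}\norm{\sigma\curl\vz_{ttt}(t)}_{3}^2$, which are at precisely the scales Proposition~\ref{prop:Hodge} demands. For the divergence, the identity \eqref{identity: div and Lagrangian} yields $\Div\p^a_t\ez = \p^a_t\Jz + \mathrm{l.o.t.}$, so multiplying the bounds of Proposition~\ref{z.prop.J} by $\sigma$ gives control of $\sigma\Div\p^a_t\ez$ in $H^{4.5-\frac{1}{2}a}$ (and of $\sigma\Div\vz_{ttt}$ in $H^{3}$, again exactly what is needed). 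The baseline $L^2$-control of $\sigma\p^a_t\ez$ and $\sigma\vz_{ttt}$ in $\Omega$ is immediate from Proposition~\ref{z.prop.ez} multiplied by $\sigma$.

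The remaining ingredient is the normal-trace of $\hd(\sigma\p^a_t\ez)\cdot N$ in $H^{5-\frac{1}{2}a}(\Gamma)$, and of $\hd(\sigma\vz_{ttt})\cdot N$ in $H^{2.5}(\Gamma)$. Writing $N=\nz-\int_0^t\p_t\nz$ reduces these to normal traces with respect to the moving normal $\nz$ plus time-integrated error terms bounded through Proposition~\ref{z.prop.ez}. The traces against $\nz$ are then provided by Lemma~\ref{z.lem.s.nt}: its bound on $\sigma\hd^2\p^a_t\ez\cdot\nz$ in $H^{4-\frac{1}{2}a}(\Gamma)$ is equivalent, modulo lower-order terms, to a bound on $\hd(\sigma\p^a_t\ez)\cdot\nz$ in $H^{5-\frac{1}{2}a}(\Gamma)$ (the discrepancy between $\hd^2\p^a_t\ez\cdot\nz$ and $\hd[\hd\p^a_t\ez\cdot\nz]$ involves one tangential derivative of $\nz$, which is controlled by Proposition~\ref{z.prop.ez}); analogously for $\sigma\vz_{ttt}\cdot\nz$ in $H^{3.5}(\Gamma)$.

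The main obstacle is bookkeeping rather than analytic: one must verify that at each level the available normal-trace and divergence inputs match the scales required by the Hodge estimate, and in particular that the derivation of the top-scale normal trace from the Laplace-Young boundary condition \eqref{stp.bc} (as carried out in Lemma~\ref{z.lem.s.nt}) provides the $\sigma$-gain needed to close the loop without any implicit constant blow-up as $\sigma\to 0$. Since every estimate being combined has already been shown to satisfy $\int_0^T\RS$-type bounds, Proposition~\ref{prop:Hodge} then delivers the claimed estimates for $\sigma\p^a_t\ez$, $a=0,1,2,3$, and for $\sigma\vz_{ttt}$, completing the proof.
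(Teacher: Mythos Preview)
Your curl and normal-trace inputs are correct, but the divergence step has a genuine gap. To apply Proposition~\ref{prop:Hodge} at the scale $H^{6.5-\frac{1}{2}a}$ for $\sigma\p^a_t\ez$, you need $\sigma\Div\p^a_t\ez$ in $H^{5.5-\frac{1}{2}a}$. Multiplying Proposition~\ref{z.prop.J} by $\sigma$ only gives $\sigma\p^a_t\Jz$ in $H^{4.5-\frac{1}{2}a}$, which is one full derivative short. Similarly, for $\sigma\vz_{ttt}$ in $H^4$ you need $\sigma\Div\vz_{ttt}$ in $H^3$, but Proposition~\ref{z.prop.J} places $\p^4_t\Jz$ only in $H^{2.5}$, not $H^3$ as you state.

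The paper closes this gap by going back to the \emph{proof} of Proposition~\ref{z.prop.J} rather than its statement. The Euler equations \eqref{stp.m} give $D\p^a_t\Jz \sim \p^{a+1}_t\vz$ (plus lower order), so $\norm{\sigma\p^a_t\Jz}_{s+1}$ is controlled by $\norm{\sigma\p^{a+1}_t\vz}_s$. Since $\sigma<\sqrt{\sigma}$, the latter is bounded by $\norm{\sqrt{\sigma}\p^{a+1}_t\vz}_s$, and the $\sqrt{\sigma}$-weighted velocity estimates in Propositions~\ref{z.prop.s.ez}, \ref{z.prop.s.v5}, and \ref{z.prop.s.v34} sit exactly one derivative higher than the unweighted ones. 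Feeding those in recovers $\sigma\p^a_t\Jz$ in $H^{5.5-\frac{1}{2}a}$ for $a=0,1,2,3$ and $\sigma\p^4_t\Jz$ in $H^3$, which is precisely what the Hodge estimate requires. Without this extra use of the $\sqrt{\sigma}$-scale, your bootstrap does not close.
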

        \begin{proof}
Since $\sigma<\sqrt{\sigma}$,    
we infer from the proof of Proposition~\ref{z.prop.J} that  the
estimates for $\sqrt{\sigma}\p^a_t\vz$, $a=3,4,5$, stated in
Propositions~\ref{z.prop.s.v5} and \ref{z.prop.s.v34} establish the
divergence-estimates for $\sigma\vz _{t}$, $\sigma\vz _{tt}$ and
$\sigma\vz _{ttt}$. Similarly, the estimates for
$\sqrt{\sigma}\p^a_t\vz$, $a=1,2$, stated in
Proposition~\ref{z.prop.s.ez} establish the
divergence-estimates for $\sigma\vz $ and
$\sigma\ez $.
Via Proposition~\ref{prop:Hodge}, the estimates stated in
Lemmas~\ref{z.lem.curl} and \ref{z.lem.s.nt} therefore complete the proof.         \end{proof}
        \subsection*{Step~8: The $\sigma$-independent improved boundary-regularity
          estimates}
Considering  the action of $\p^a_t$, $a=4,5,6,$ in  the Laplace-Young
boundary condition \eqref{stp.bc}, we notice that the estimates stated
in       Propositions~\ref{z.prop.J}  and
       \ref{z.prop.ss.ez} establish
        \begin{prop}
          [The $\sigma$-independent estimates for
          $\sigma\p^a_t\vz\cdot\nz$, $a=3,4,5$]\label{prop.z.last} 
          \begin{align*}
            \Sup\sum_{a=0}^1 \abs{\sigma\p^{3+a}_t\vz \cdot\nz(t)}_{4-\frac{1}{2}a}^2
+
            \Sup \abs{\sigma\p^5_t\vz \cdot\nz(t)}_{2.5}^2 \le \int_{0}^{T}\RS.
          \end{align*}
        \end{prop}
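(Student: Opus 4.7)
The plan is to exploit the structure of the Laplace-Young boundary condition \eqref{stp.bc},
\begin{align*}
\sigma \gz^{\alpha\beta}\ez\cp{\alpha\beta}\cdot\nz = \beta_\sigma(t) - \rho_0^2\Jz^{-2}\ \ \text{on } \Gamma,
\end{align*}
and to take successive time derivatives $\p^a_t$ for $a=4,5,6$ in order to isolate the normal components $\sigma\vz_{ttt}\cdot\nz$, $\sigma\p^4_t\vz\cdot\nz$ and $\sigma\p^5_t\vz\cdot\nz$. Writing $\p^a_t\ez=\p^{a-1}_t\vz$, the dominant spatial-temporal term that arises on the right-hand side is $-\sigma\gz^{\alpha\beta}\p^{a-1}_t\vz\cp{\alpha\beta}\cdot\nz$. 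By the identity $\gz^{\alpha\beta}\xi\cp{\alpha\beta}\cdot\nz=\gz^{\alpha\beta}(\xi\cdot\nz)\cp{\alpha\beta}$ modulo terms in which tangential derivatives fall on $\nz$, this leading term equals, up to lower-order contributions, the surface Laplace-Beltrami operator $\Delta_{\gz}$ applied to $\p^{a-1}_t\vz\cdot\nz$.

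The first step is therefore to record, for each $a\in\set{4,5,6}$, an identity of the schematic form
\begin{align*}
\sigma\Delta_{\gz}\bpset{\p^{a-1}_t\vz\cdot\nz}=\p^a_t\beta_\sigma(t)-\p^a_t\pset{\rho_0^2\Jz^{-2}}+\mathscr{R}_a\ \ \text{on }\Gamma,
\end{align*}
where $\mathscr{R}_a$ collects every commutator summand produced when redistributing the tangential derivatives from $(\p^{a-1}_t\vz\cdot\nz)\cp{\alpha\beta}$ to $\p^{a-1}_t\vz\cp{\alpha\beta}\cdot\nz$, together with every term in which the $a$ time derivatives are split across $\gz^{\alpha\beta}$, $\ez\cp{\alpha\beta}$ and $\nz$ rather than concentrated on $\ez\cp{\alpha\beta}$. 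Every such summand carries a factor of $\sigma$ (recall by \eqref{beta.sigma} that $\p^a_t\beta_\sigma(t)$ is itself of order $\sigma$), and pairs a strictly lower time derivative of $\ez$ with a factor of $\vz$ or its time derivatives; via Propositions~\ref{z.prop.ez}, \ref{z.prop.s.ez}, \ref{z.prop.ss.ez}, \ref{z.prop.s.v5} and \ref{z.prop.s.v34}, combined with the trace theorem, each such product is controlled by $\int_0^T\RS$ in the relevant boundary-norm.

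The three estimates then follow from standard elliptic regularity for $\Delta_{\gz}$ on the two-dimensional manifold $\Gamma$, which gains two tangential derivatives. For $a=4$, Proposition~\ref{z.prop.J} provides $\p^4_t\Jz\in L^\infty(0,T;H^{2.5}(\Omega))$, so its trace belongs to $H^2(\Gamma)$, yielding $\sigma\vz_{ttt}\cdot\nz\in L^\infty(0,T;H^4(\Gamma))$. For $a=5$, the $L^\infty(0,T;H^2(\Omega))$-estimate for $\p^5_t\Jz$ from Proposition~\ref{z.prop.J} yields a trace in $H^{1.5}(\Gamma)$, giving $\sigma\p^4_t\vz\cdot\nz\in L^\infty(0,T;H^{3.5}(\Gamma))$. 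For $a=6$, the $L^\infty(0,T;H^1(\Omega))$-estimate for $\p^6_t\Jz$ from Proposition~\ref{z.prop.z8} yields a trace in $H^{0.5}(\Gamma)$, giving $\sigma\p^5_t\vz\cdot\nz\in L^\infty(0,T;H^{2.5}(\Gamma))$.

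The principal technical obstacle will be the bookkeeping of the commutator error $\mathscr{R}_a$: expanding $\p^a_t\bpset{\gz^{\alpha\beta}\ez\cp{\alpha\beta}\cdot\nz}$ produces a large number of summands, and for the $a=6$ case in particular one must verify that products of high time derivatives of $\gz$ (or $\nz$) with $\ez\cp{\alpha\beta}$ still lie in $H^{0.5}(\Gamma)$. Using an $L^4$--$L^4$--$L^2$ H\"older inequality on the boundary in conjunction with the trace theorem and the already-established estimates for $\p^a_t\ez$ (for $a\le 7$) and the surface-tension-weighted quantities $\sqrt{\sigma}\p^a_t\vz$, these products are absorbed into $\int_0^T\RS$; the polynomial-type inequality \eqref{poly-type ineq} then closes the argument for $T>0$ sufficiently small.
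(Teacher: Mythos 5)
Your proof is correct and takes essentially the same approach as the paper: apply $\p^a_t$ for $a=4,5,6$ to the Laplace--Young boundary condition \eqref{stp.bc}, identify the principal term $\sigma\gz^{\alpha\beta}\p^{a-1}_t\vz\cp{\alpha\beta}\cdot\nz$ as a variable-coefficient Laplace--Beltrami operator acting on $\sigma\p^{a-1}_t\vz\cdot\nz$, control the right-hand side via the already-established estimates for $\p^a_t\Jz$ (Proposition~\ref{z.prop.J} for $a\le 5$, Proposition~\ref{z.prop.z8} for $a=6$), and gain two tangential derivatives by elliptic regularity on $\Gamma$. The paper compresses this into a one-sentence citation of Propositions~\ref{z.prop.J} and \ref{z.prop.ss.ez}; you make the $\Delta_{\gz}$-ellipticity step and the $\sqrt{\sigma}$-weighted commutator accounting explicit, which is exactly what is needed to fill the gap.
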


\subsection*{Step~9: Concluding the proof of Lemma~\ref{lem.ES}} \label{sec:asymptotic-analysis.z}  The sum of
the 
estimates given in Propositions~\ref{z.prop.z8}--\ref{prop.z.last} completes the proof of  Lemma~\ref{lem.ES}. Taking $\delta$ sufficiently small in the inequality \eqref{est: ES}
yields a polynomial-type inequality of the form \eqref{poly-type
  ineq}. Hence, there
exists $T>0$ that is independent of $\sigma   >0$  and verifies
\begin{align}
	\label{sigma independent} \Sup\ES\le2\MS. 
\end{align}

\subsection{The proof of Theorem~\ref{thm.z.main}}
\label{sec:concl-proof-theor-1}

\subsubsection{Existence}
\label{sec:exist-solut-surf-1.z}
Assuming that the  initial data   $(\rho_0,u_0,
\Omega)$ is of $C^\infty$-class, as in Section~\ref{z.sec:assum-cinfty-class-1},
we have that  $M_0=P(E(0))<
\infty $ where the
higher-order energy function    $E(t)$ is defined by \eqref{s.energy function}. Since the difference of the compatibility conditions
\eqref{s.compatibility conditions} and \eqref{stp: compat cond} is in
$C^\infty(\Gamma)$, we repeat the proof of
Theorem~\ref{thm.s.main} to obtain the existence of
a solution $\vz$ to the $\sigma$-problem~\eqref{stp}.
The $\sigma$-independent estimate (\ref{sigma independent}) and
 standard compactness arguments establish  the
 strong convergence, as $\sigma$ tends to zero,
\begin{alignat*}{2}
  \ez\to\eta&\ &\text{in $L^2\pset{0,T;H^{3.5}(\Omega)}$},
\\
\vz_{t}\to v_t&&\text{in $L^2\pset{0,T;H^{2.5}(\Omega)}$}.
\end{alignat*}
Letting
$\phi\in L^2\pset{0,T;H^1(\Omega)}$, we have that the variational form
of \eqref{stp} is 
\begin{align*}
  \int_0^T\int_\Omega\rho_0\vz _t\cdot\phi
-
 \int_0^T\int_\Omega \rho_0^2\Jz^{-2}\az_i^k\phi^i\cp{k} 
&+
\int_0^T\int_\Gamma \beta_\sigma(t)
 \sqrt{\gz}\phi\cdot \nz
=
\sigma\int_0^T\int_\Gamma
\sqrt{\gz}\gz^{\mu\nu}\ez\cp{\mu\nu}\cdot\nz\,\phi\cdot \nz.
\end{align*}
The strong convergence of the sequences
$\pset{\ez,\vz_{t} }$ and the pointwise convergence $\beta_\sigma(t)\to \beta$
provide that the limit $\pset{\eta,v_t,\beta}$ satisfies
\begin{align*}
  \int_0^T\int_\Omega \rho_0 v_t\cdot\phi
-
\int_0^T\int_\Omega \rho_0^2J^{-2}a_i^k\phi^i\cp{k}
+
\int_0^T\int_\Gamma\beta \sqrt{g}\phi\cdot n=0. 
\end{align*}
Thus, $v$ is a solution of the zero surface tension limit of \eqref{Euler} on a nonempty
time-interval $[0,T]$. Standard arguments provide that
$v(0)=u_0$ and $\eta(0)=e$. Furthermore, 
letting $\sigma=0$ in the a priori estimates
in Section~\ref{sec:sigma independent estimates}, we conclude that the right-hand side of  inequality \eqref{sigma independent}
depends only on  $\MZ=P(\mathscr{E}(0))$. Hence, 
  for sufficiently small $T>0$ the higher-order energy function
  $\EZ$ defined in \eqref{z.energy function} satisfies
\begin{align}
\label{z.go.a}
  \sup_{t\in[0,T]}\EZ\le2\MZ.
\end{align}
 The
bounds \eqref{z.assum} remain valid by taking $T>0$ even smaller if necessary. By the arguments in Section~\ref{sec:limit-as-kappato0}, the boundedness of $J$ in assumption \eqref{z.assum.Jt} implies that 
\begin{align*}
	\rho(t) \ge \lambda \ \ \text{in } \overline{\Omega}(t). 
\end{align*}
Similarly,   the lower-bound  \eqref{Taylor.t} provides that
$\rho(t)=f \circ \eta ^{-1}(t)$ satisfies
   \begin{align*}
    0< \nu\le-\frac{\p \rho^2(t)}{\p n(t)} \ \ \text{on
      } \Gamma(t).
  \end{align*}

\subsubsection{Optimal regularity for the initial
  data} \label{sec:optim-regul-init.z}In order to obtain the $H^{4.5}(\Omega)$-regularity of our
existence theory, we  assumed that the given
initial data is of  $C^\infty$-class in 
Section~\ref{z.sec:assum-cinfty-class-1}. 
By virtue  of the estimate \eqref{z.go.a}, it in fact
  suffices for the regularity of the initial data to be such that $\mathscr{E}(0)<\infty$.

\subsubsection{Uniqueness} \label{sec:exitence-uniqueness.z} 
We infer the uniqueness of the solution to the zero surface tension
limit of \eqref{Euler} by repeating the arguments given in Section~\ref{sec:exitence-uniqueness}.

\

\subsection*{Acknowledgements.} DC was supported by the Centre for Analysis and Nonlinear PDEs funded by the UK
EPSRC grant EP/E03635X and the Scottish Funding Council. SS  and JH were supported by the National
Science Foundation under grant DMS-1001850.

\

\appendix

\section{Constructing $C^\infty$-class initial data}
\label{sec:appendix:init}

We   demonstrate how given  initial data $(\rho_0, u_0, \Omega)$ of
optimal regularity satisfying the conditions
\eqref{l.boundedness.r0}  and
\eqref{s.compatibility conditions} (or the conditions
\eqref{l.boundedness.r0}, \eqref{Taylor} and \eqref{z.compatibility conditions})
we are able to produce
asymptotically consistent $C^\infty$-class data $(\vre,\bv_0,
\Ore)$ which satisfy similar statements of the conditions
\eqref{l.boundedness.r0} and \eqref{s.compatibility conditions} (or the conditions
\eqref{l.boundedness.r0}, \eqref{Taylor} and \eqref{z.compatibility
  conditions}).  

\subsection{The $C^\infty$-class data for the surface tension problem \eqref{Euler}}
\label{sec:cinfty-class-data}
We suppose the  initial data $(\rho_0, u_0, \Omega)$ is of
the optimal regularity stated in Theorem~\ref{thm.s.main}  and satisfy the conditions
\eqref{l.boundedness.r0} and
\eqref{s.compatibility conditions}. Letting
\begin{align}
\label{a.st.Ja.Ha}
  \Jre_a = \p^a_t(J ^{-2})|_{t=0}\ \ \ \text{and}\ \ \  \Hre_a= \p^a_t
  H(\eta)|_{t=0},
\end{align}
we will construct $C^\infty$-class
data $(\vre,\bv_0,
\Ore)$ which satisfy 
\begin{subequations}
\label{a.st}
  \begin{alignat}{2}
    \label{a.st.H}
 \sigma\Hre_0 & \ge 4 \lmr^2- \beta&\ \ &\text{for } \sigma, \beta>0,\\
    \label{a.st.r}
    \vre&\ge 2\lmr>0 &\ \ &\text{in }\overline{ \Ore},\\
    \label{a.st.J}
    \vre^2 \Jre_a& = \p^a_t\beta + \sigma \Hre_a &\ \ \ &\text{on }
    \Gre\ \ \text{for } a=0,1,2,3.
  \end{alignat}
\end{subequations}
\subsubsection{Defining the $C^\infty$-class initial domain $\Ore$}
\label{sec:defin-smooth-init}
For
$\epr>0$, we let $0\le \varphi^\epr\in C^\infty_0(\mathbb{R}^3)$
denote the standard family of mollifiers with
$\supp(\varphi^\epr)\subset\overline{B(0,\epr)}$. Recalling the local
coordinates near $\Gamma $   defined in
Section~\ref{sec:local-coord-near}, we let $\Ore\subset
 \mathbb{R}^3$ denote the $C^\infty$-class
domain defined by the $C^\infty$-class charts 
\begin{align*}
  \tre=\varphi^{\epr} *\bset{\mathcal{E}_{\mathcal{B}_l
      ^{\epr}}(\theta_l )} \ \ \text{in } \mathcal{B}_l\ \
  \text{for } 1\le \local \le L,
\end{align*}
where for $X\subset \mathbb{R}^3$ the operator $\mathcal{E}_{X}$ denotes a Sobolev extension
operator mapping $H^s(X)$ to $H^s (\mathbb{R}^3)$ and where
$\supp (\xi_l\circ \theta_l)\subset\mathcal{B}_l ^{\epr}\subset
\mathcal{B}_l$. 
We assume that  $\epr>0$ is taken sufficiently small so that the collection of open
set $\set{U_l}_{l=1}^L$ introduced in Section \ref{sec:local-coord-near}
 is an
open covering of $\Ore$.
  Setting $\Gre=\p \Ore$ defines a $C^\infty$-class surface.
We 
let $\ere$ denote the 
$C^\infty(\Gre)$-vector 
describing the geometry of $\Gre$. In other words,
$\hd\ere$ spans the tangent space of $\Gre$
and, by letting $\Nre$ 
denote the 
outward-pointing unit normal vector to the surface $\Gre$, 
we have that 
$\Nre= \ere\cp{1}\times \ere\cp{2}/\abs{\ere\cp{1}\times \ere\cp{2}}$. We let $\gre $ denote the
surface metric induced by $\ere$ and let $\Hre_0
$ denote twice the mean curvature of
$\Gre$. Thus,
\begin{align*}
  \Hre_0 =- \gre ^{\alpha\beta} \ere
  \cp{\alpha\beta} \cdot \Nre \text{ 
    in } C^ \infty(\Gre).
\end{align*}
We assume that the given $H^5$-class domain $\Omega \subset \mathbb{R}^3$ is such that
\eqref{s.compatibility conditions} is satisfied. Thus, for 
$\epr>0$ taken sufficiently small, standard properties of convolution provide that
  \begin{align}
  \label{boundedness.H.p}
\beta+  \sigma\Hre_0 & \ge 2\lambda^2\ \ \text{for } \sigma, \beta>0.
  \end{align}
Hence, \eqref{a.st.H} is satisfied.
\subsubsection{Defining the $C^\infty$-class data $(\rre,\bV_0, \Ore)$
to satsify \eqref{a.st}  for $a=0$}
\label{sec:defin-tang-smooth} 
  Given $u_0 \in H^4(\Omega)$, we define $\mathrm{u}_0$   in the $C^\infty$-class domain $\Ore$
  via the equation
\begin{align}
\label{u0.Ore}
\mathrm{u}_0= \sum_{\local=1}^L [\xl u_0  \circ \thetal ] \circ
 \tre^{-1}\ \ \ \ \text{in } H^4 (\Ore).
\end{align} 
Using the operator $\LE$ defined in
Section~\ref{sec:horiz-conv-layers}, for $\epr>0$
we  define the vector field   $\bV_0  \in 
C^\infty(\Ore)$  as the solution of the following elliptic Dirichlet problem:
\begin{subequations}
  \label{a.pV0i}
  \begin{alignat}{2}
    \bV_0 - \epr \Delta\bV_0&=\varphi^{\epr}*
    \mathcal{E}_{\Ore}(\mathrm{u}_0)&\ \ \  \ &\text{in }
    \Ore,\\
\bV_0&=\sum_{\local=1}^K \Lambda_{\epr}\bbset{\xl
  \mathrm{u}_0\circ\tre}\circ\tre ^{-1}&& \text{on } \Gre.
  \end{alignat}
\end{subequations}
  Given $\rho_0 \in H^4(\Omega)$, we define $\varrho_0$   in the $C^\infty$-class domain $\Ore$
  via the equation
\begin{align}
\label{r0.Ore}
\varrho_0=  \sum_{\local=1}^L [\xl  \rho_0  \circ \thetal ] \circ
 \tre^{-1} \ \ \ \text{in } H^4 (\Ore).
\end{align}  Assuming  that
$\rho_0\ge 2\lambda >0$ in $\overline{\Omega}$, we let  $\lmr>0$ be
such that $2\lambda^2\ge 9\lmr^2$. It follows for
$\varrho_0$ defined by \eqref{r0.Ore} that
\begin{align} 
\label{rr.0.O}
\varrho_0 \ge3 \lmr\ \ \text{in } \overline{\Ore}.
\end{align}
For $\mur>0$ and $\epr>0$, we define $\rre \in 
 C^\infty (\Ore)$ to be the solution of 
 \begin{subequations}
\label{a.pr}
   \begin{alignat}{2}
\label{a.pr.int}
     \rre- \mur \Delta \rre&= \varphi
     ^{\mur}*\mathcal{E}_{\Ore}(\varrho_0)&\ \ \ \
     &\text{in
     } \Ore,\\
\label{a.pr.b}
     \rre&=\sqrt{\beta+ \sigma\Hre_0}&&\text{on
     } \Gre.
    \end{alignat}
 \end{subequations}
The boundary condition
\eqref{a.pr.b} implies for $a=0$  that  \eqref{a.st.J} is satisfied, since \eqref{a.pr.b} is equivalently stated as
\begin{align}
  \label{Jre0}
  \rre ^2\Jre_0 = \beta+ \sigma\Hre_0\ \ \ \text{on }\Gre.
\end{align}
Elliptic regularity provides  for $s\ge 2$ and a positive
constant $C$ independent of $\mur$ that
\begin{align*}
  \norm{\sqrt{\mur} 
&  \rre}_{H^{s}(\Ore)}^2 \le
C\Bpset{ \norm{\varphi ^{\mur}* \mathcal{E}_{\Ore}(\varrho_0)}_{H^{s-2}(\Ore)}^2
+
 \norm{\rre
}_{H^{s-\frac{1}{2}} (\Gre)}^2
}
.
\end{align*}
The boundary forcing function  used in defining $\rre$ is in
$C^\infty(\Gre)$. Since the function $\varrho_0$ appearing in the
right-hand side of \eqref{a.pr.int} is in $H^4(\Ore)$,
there exists a constant $C$ which is independent of $\mur$ verifying 
\begin{align*}
 \norm{ \sqrt{\mur}
  \rre}_{H^{6}(\Ore)}^2
\le C.  
\end{align*}
The   equation \eqref{a.pr.int} provides that $\rre = \mur \Delta \rre+\varphi^{\mur} *
  \mathcal{E}_{\Ore} (\varrho_0) $ in $\Ore$. Hence, by standard
properties of convolution and taking $\mur$ sufficiently
small, we conclude  that
\begin{align*}
 \norm{ \rre-\varrho_0 }_{L^\infty(\Ore )}&\le
\sqrt{ \mur} \norm{\sqrt{\mur}\Delta\rre}_{H^2(\Ore)}+ \norm{\varphi
   ^{\mur}* \mathcal{E}_{\Ore}
   (\varrho_0)- \varrho_0}_{L^\infty(\Ore)}
\le \lmr.
\end{align*} 
Recalling the lower-bound \eqref{rr.0.O},
we conclude that 
$ 
  \rre\ge 2\lmr>0 $ in $\Ore.  
$
Letting $\epr$ be sufficiently small, the  boundary condition
\eqref{a.pr.b} provides that 
\begin{align}
\label{a.pr.bound}
  \rre\ge 2\lmr>0 \ \ \text{in }\overline{ \Ore}.  
\end{align} 
By \eqref{a.pr.bound} and \eqref{boundedness.H.p}, the boundary
condition \eqref{a.pr.b} provides that
  \begin{align}
  \label{boundedness.H}
\beta+  \sigma\Hre_0 & \ge 4\lmr^2\ \ \text{for } \sigma, \beta>0.
  \end{align}
Hence,  the $C^\infty$-class data $(\rre,\bV_0,\Ore)$ satisfy
the conditions
\eqref{a.st} for $a=0$, as verified by \eqref{Jre0},
\eqref{a.pr.bound} and \eqref{boundedness.H}.

\subsubsection{Formal definitions}
We formally set $\vre$ equal to $\rre$ defined by \eqref{a.pr}. We define
\begin{align}
  \label{a.st.va}
  \bv_{a+1}=-2\p^a_t\bpset{A_\cdot^k(\vre J ^{-1})}|_{t=0}\ \ \
  \text{for } a=0,\dots,5,
\end{align}
and assume that \eqref{a.st.va} yields $\bv_0=\bV_0$ defined by \eqref{a.pV0i}. We make the following definitions:
\begin{align}
  \label{a.st.ja.ka}
  \jre_a = \Div \bv_a\ \ \ \text{and}\ \ \ \kre_a=
  \p^a_t(A_r^sv^r \cp{s})|_{t=0}\ \ \ \text{for }a=0,\dots,6.
\end{align}
Using the notation
\begin{align}
  \label{a.st.Aa}
 \bset{ \Are_a}_r^s = \p^a_tA_r^s|_{t=0}\ \ \ \text{for } a=0,\dots,5,
\end{align}
it follows that $\kre_a$ defined in \eqref{a.st.ja.ka} satisfies
\begin{align}
\label{a.st.k1.k2.k3}
  \kre_0= \jre_0,\ \ \
  \kre_1= \bset{\Are_1}_r^s \bv_0^r \cp{s}+\jre_1
\ \ \text{and}\ \ 
  \kre_2 = \bset{\Are_2}_r^s \bv_0^r \cp{s}+2 \bset{\Are_1}_r^s \bv_1^r \cp{s}+\jre_2
.
\end{align}
We also have that $\Jre_a$ defined by \eqref{a.st.Ja.Ha} is
equivalently given by
\begin{align}
  \label{a.st.J0.J1.J2.J3}
  \Jre_0=1, \ \ \ \Jre_1 = -2\kre_0,\ \ \ \Jre_2= 4\kre_0^2 -2\kre_1\
  \ \text{and}\ \ \Jre_3= -8 \kre_0^3+12 \kre_0\kre_1 -2\kre_2.
\end{align}
Using \eqref{a.st.k1.k2.k3} and  \eqref{a.st.J0.J1.J2.J3},  the
condition \eqref{a.st.J} that $\vre^2\Jre_a= \sigma \Hre_a$ is equivalently
stated as
\begin{align}
  \label{eq.kre}
\Div\bv_{a-1}=\jre_{a-1} =  \underbrace{-\frac{1}{2}\bbset{\sigma\Hre_a -\vre^2\Jre
    -2\kre_{a-1}} +\dre_{a-1}}_{\fre_{a-1}},
\ \ \ \text{for }a=1,2,3,
\end{align} 
where $\dre_{a-1}$ represents the lower-order terms defining $\kre_{a-1}$.
We notice that $\bv_a$, $a=1,2$, defined by \eqref{a.st.va} is equivalently
given by
$\bv_1=-2 D \vre$ and 
$    \bv_2=-2 \bset{\Are_1}_\cdot^k \vre \cp{k} -2D(\vre \jre_0) $. Thus,
\begin{align}
\label{j.f.rel}
  \Div\bv_0  =\fre_0,\ \ \
  \Div\bv_1  =\fre_1= - 2 \Delta\vre,
\ \ \text{and}\ \
  \Div\bv_2  =\fre_2\approx -2 \Delta\Div \bv_0.
\end{align}

 \subsubsection{Defining $\bv_0$ so that the $C^\infty$-class data $(\rre,\bv_0, \Ore)$
  satisfy \eqref{a.st}  for $a=0,1$}\label{sec:a.v1} Extending $\Nre$ and $\Tre=\ere
\cp{\alpha}/\abs{\ere\cp{\alpha}}$, for $\alpha=1,2,$
into $\Ore$,  we decompose any vector $\xi\in \mathbb{R}^3 $  into normal and
tangential components as
\begin{align*}
  \xi= \xi  ^ \alpha\Tre+ \xi ^3 \Nre \ \
  \text{in } \Ore. 
\end{align*}
 It follows for sufficiently regular $\xi$ that $  \Div \xi = \xi ^i \cp{i}$ is
equivalently written as
\begin{align}
\label{div.DD}
  \Div \xi  
&=\underbrace{
\xi ^ \alpha\cp{\alpha} +  \xi ^3\cp{3} + \xi ^ \alpha\Div \Tre+ \xi ^3
  \Div \Nre}_{j_0}\ \  \ \text{on }\Gre.
\end{align}
This provides  the following identity for $(\Nre\cdot D) \xi^3 = \xi^3
\cp{3}$:
\begin{align}
\label{DN}
\frac{ \p\xi ^3}{\p\Nre}
&=  \underbrace{
j_0  -\bbset{\xi ^ \alpha\cp{\alpha} + \xi ^ \alpha\Div \Tre+ \xi ^3
  \Div \Nre }}_{\varphi_0}\ \  \ \text{on } \Gre.
\end{align}
We  define   $\bv_0^3
\in C^\infty(\Ore)$ by 
\begin{subequations}
  \label{bv1.3}
  \begin{alignat}{2}
    \bv_0^3 +\epr \Delta^2 \bv_0^3&=\varphi^{\epr}*
    \mathcal{E}_{\Ore}(\mathrm{u}_0^ 3)&\ \ \  \ &\text{in }
    \Ore,
\\
\label{bv1.3.bc2}
\frac{\p \bv_0^3}{\p\Nre}&=\phrep_0
&& \text{on } \Gre ,\\
\label{bv1.3.bc1}
\bv_0^3&=\bV_0^3&& \text{on } \Gre,
  \end{alignat}
\end{subequations}
where the boundary forcing function $\phrep_0$ is defined as
\begin{align}
  \label{phrep}
  \phrep_0=\fre_0-[\bV_0^ \alpha\cp{\alpha} + \bV_0^ \alpha\Div \Tre+
  \bV_0^3 \Div \Nre],
\end{align}
with the function $\fre_0$ appearing in the right-hand side of
\eqref{phrep}   defined by \eqref{eq.kre}. 
With $\bV_0^ \alpha$ denoting the tangential component of the vector
defined by \eqref{a.pV0i}, we define the vector $\bv_0\in C^\infty(\Omega)$ as
\begin{align}
  \label{bv0}
  \bv_0=\bV_0^\alpha\Tre +\bv_0^3 \Nre\ \ \ \text{in }\Ore.
\end{align}
The boundary condition \eqref{bv1.3.bc1} and the definition \eqref{bv0}
imply that
\begin{align}
  \label{bv0.bV0}
  \bv_0=\bV_0\ \ \ \text{on } \Gre.
\end{align}
Thanks to \eqref{div.DD}, the boundary condition
\eqref{bv1.3.bc2} and the definition \eqref{phrep} yield
\begin{align}
  \label{p.Jre1}
  \Div\bv_0 = \fre_0.
\end{align}
Thanks to the definition  \eqref{eq.kre} of $\fre_0$,  we equivalently write
\eqref{p.Jre1} as
\begin{align}
  \label{Jre1}
\rre^2 \Jre_1=\sigma \Hre_1\ \ \ \text{on }\Gre.
\end{align}
 Hence, the $C^\infty$-class data $(\rre,\bv_0,\Ore)$ satisfy
the conditions
\eqref{a.st} for $a=0,1$, as verified by \eqref{boundedness.H}, \eqref{Jre0}, \eqref{a.pr.bound} and \eqref{Jre1}.

 \subsubsection{Defining the $C^\infty$-class data $(\vre,\bv_0, \Ore)$
to satisfy \eqref{a.st}  for $a=0,1,2$}\label{sec.poly.vre}
According to \eqref{j.f.rel}, the condition \eqref{a.st.J} for $a=2$
may be imposed by prescribing a Dirichlet boundary condition for
$\Delta\vre$. We define 
$\vre \in
 C^\infty (\Ore)$ to be the solution of the polyharmonic problem
 \begin{subequations}
\label{sp.r2}   \begin{alignat}{2}
\label{sp.r2.int}
     \vre- \mur \Delta^3 \vre&= \varphi
     ^{\mur}*\mathcal{E}_{\Ore}(\varrho_0)&\ \ \ \
     &\text{in
     } \Ore,\\
\label{sp.r2.bc3}
-      2 \Delta\vre&= \fre_1  &&\text{on }
\Gre,\\
\label{sp.r2.bc2}
  -2   \frac{\p\vre}{\p \Nre}&=
     \bV_1^3 &&\text{on }
     \Gre,\\
\label{sp.r2.bc1}
     \vre&=\rre &&\text{on
     } \Gre.
   \end{alignat}
 \end{subequations}
The boundary-forcing function $\fre_1$  appearing in the right-hand
side of \eqref{sp.r2.bc3} is defined as
\begin{align}
  \label{fre1}
  \fre_1= -\frac{\sigma \Hre_2}{2 \rre^2} + 2\kre_0^2 - \bset{\Are_1}_r^s \bv_0^r \cp{s}.
\end{align}
Using the boundary conditions of the polyharmonic problem
\eqref{bv1.3}, we may express the right-hand
side of \eqref{fre1} in terms of $\bV_0$, $\bV_1$ and $\phrep_0$,
where 
     \begin{align}
\label{bV1}
       \bV_1=-2D\rre \ \ \ \text{on } \Gre.
     \end{align}
The boundary-forcing function $\bV_1^3$  appearing in
\eqref{sp.r2.bc2} is   the normal component of $\bV_1$
and the function $\rre$ is the solution of the elliptic Dirichlet
problem \eqref{a.pr}.
The boundary conditions \eqref{sp.r2.bc2} and \eqref{sp.r2.bc1} imply
that $\bv_1=-2D\vre$ defined by \eqref{a.st.va} satisfies
\begin{align}
\label{bv1.bV1}
  \bv_1=\bV_1 \ \ \ \text{on }\Gre.
\end{align}
 Polyharmonic regularity provides  for $s\ge 0$ a positive
constant $C$ independent of $\mur$ that
\begin{align*}
  \norm{\sqrt{\mur}
&  \vre}_{H^{s+6}(\Ore)}^2 \\
&\le
C\Bpset{ \norm{\varphi ^{\mur}* \mathcal{E}_{\Ore}(\varrho_0)}_{H^{s}(\Ore)}^2
 +
 \norm{\Delta\vre
 }_{H^{s+3.5} (\Gre)} ^2
+
 \Bnorm{
\frac{\p \vre}{\p \Nre}}_{H^{s+4.5} (\Gre)}^2
+
 \norm{\vre
}_{H^{s+5.5} (\Gre)}^2
}
.
\end{align*}
Recalling Section~\ref{sec:defin-tang-smooth}, the arguments establishing
the lower-bound \eqref{a.pr.bound} provide that
\begin{align}
\label{a.pr.bound.2}
  \vre\ge 2\lmr>0 \ \ \text{in }\overline{ \Ore}.  
\end{align} 
 Similarly, we infer from the arguments given in Section~\ref{sec:a.v1}
that the boundary condition \eqref{sp.r2.bc3} and the identity \eqref{bv1.bV1} establish that
\begin{align}
  \label{Jre2}
  \vre^2 \Jre_2 = \sigma \Hre_2\ \ \ \text{on }\Gre.
\end{align}
Hence, 
    the $C^\infty$-class data $(\vre,\bv_0,\Ore)$, where $\vre$ is the
    solution of \eqref{sp.r2} and $\bv_0$ is given by
    \eqref{bv0}, satisfy
the conditions
\eqref{a.st} for $a=0,1,2$, as verified by \eqref{boundedness.H},
\eqref{Jre0}, \eqref{Jre1},  \eqref{a.pr.bound.2}  and \eqref{Jre2}.

\begin{rem}
According to \eqref{j.f.rel}, the condition \eqref{a.st.J} for $a=3$
may be imposed by prescribing a Dirichlet boundary condition for
$\frac{\p}{\p\Nre}\Delta\bv_0^3$  in a polyharmonic problem for
$\bv_0^3$ satisfying $    \bv_0^3 +\epr \Delta^4\bv_0^3=\varphi^{\epr}*
    \mathcal{E}_{\Ore}(\mathrm{u}_0^ 3)$.   
\end{rem}
 \subsection{The $C^\infty$-class data for the zero surface tension
   limit of \eqref{Euler}}\label{sec:cinfty-class-data.z}

We suppose the  initial data $(\rho_0, u_0, \Omega)$ is of the
optimal regularity stated in Theorem~\ref{thm.z.main}  and satisfy the conditions
\eqref{l.boundedness.r0}, \eqref{Taylor} and
\eqref{z.compatibility conditions}. Then setting $\sigma=0$ in the
construction of the
$C^\infty$-class  data of
Section~\ref{sec:cinfty-class-data},
$\Jre_a = \p^a_t (J^{-2})|_{t=0}$ satisfies 
\begin{align*}
  \Jre_0=\beta,\ \ \ 
  \Jre_1=0,\ \ \ 
  \Jre_2
=0 \ \ \text{and} \ \ 
  \Jre_3=0\ \ \ \text{on }\Gre.
\end{align*}
For $\mur>0$, and with $\varrho_0$ defined by \eqref{r0.Ore}, $\rre$
solving \eqref{a.pr},
  we  define $\vre \in
 C^\infty (\Ore)$ to be the solution of the polyharmonic problem
 \begin{subequations}
\label{z.re}   \begin{alignat}{2}
\label{z.re.int}
     \vre- \mur \Delta^5 \vre&= \varphi
     ^{\mur}*\mathcal{E}_{\Ore}(\varrho_0)&\ \ \ \
     &\text{in
     } \Ore,\\
\Delta^2\vre&=\fre_3
&& \text{on }\Gre,
\\
\label{z.r1}
\frac{\p\Delta\vre}{\p\Nre}&=\frac{\p\Delta\rre}{\p\Nre}
&& \text{on }\Gre,
\\
\label{z.re.bc.h1}
-      2 \Delta\vre&= \fre_1  &&\text{on }
\Gre,\\
\label{z.re.bc.N}
  -2   \frac{\p\vre}{\p \Nre}&=
     \bV_1^3 &&\text{on }
     \Gre,\\
\label{z.r2}
     \vre&=\sqrt{\beta} &&\text{on
     } \Gre.
   \end{alignat}
 \end{subequations}
where the boundary-forcing function $\fre_3$ is defined below in
\eqref{z.f3}.
By Section~\ref{sec.poly.vre}, 
\begin{align}
\label{z.boundedness.r} 
  \vre\ge \lmr>0 \ \ \text{in }\overline{ \Ore}.  
\end{align}

According to the definition  \eqref{a.st.va} of $\bv_a$,
\begin{align*}
  \bv_3=-2 \bset{\Are_2}_\cdot^k \vre \cp{k}-4 \bset{\Are_1}_\cdot^k \bpset{\vre
    \Kre_0}\cp{k} -2D\bpset{\vre \Kre_1}\ \ \ \text{in } \Ore,
\end{align*}
where
  $\Kre_a=\p^a_t\bpset{ J ^{-1}(J ^{-1}J_t)}|_{t=0}=\p^a_t \bpset{J^{-1} (A_r^sv^r\cp{s})}|_{t=0}$. 
We compute 
\begin{align*}
  \Jre_4&= 16\kre_0^4  -48 \kre_0^2\kre_1 +12  \kre_1^2+ 16
 \kre_0 \kre_2-2  \kre_3.
\end{align*} 
Setting $\Jre_4=0$ yields an equation for $\kre_3$. Since $\kre_3=\bset{\Are_2}_r^s \bv_0^r \cp{s}+3\bset{\Are_2}_r^s \bv_1^r \cp{s}+3\bset{\Are_1}_r^s \bv_2^r \cp{s}+\jre_3$,
\begin{align*}
\jre_3 = \frac{1}{2}\Bbset{16\kre_0^4  -48 \kre_0^2\kre_1 +12  \kre_1^2+ 16
 \kre_0 \kre_2}
-
\Bbset{ \bset{\Are_2}_r^s \bv_0^r \cp{s}+3\bset{\Are_2}_r^s \bv_1^r \cp{s}+3\bset{\Are_1}_r^s \bv_2^r \cp{s}}.
\end{align*}
On the other hand, the divergence of
$  \bv_3=-2 \bset{\Are_2}_\cdot^k \vre \cp{k}-4 \bset{\Are_1}_\cdot^k \bpset{\vre \Kre_0}\cp{k} -2D\bpset{\vre \Kre_1}$
yields
\begin{align*}
  \Div \bv_3 = \Div\bbset{-2 \bset{\Are_2}_\cdot^k \vre \cp{k}-4
    \bset{\Are_1}_\cdot^k \bpset{\vre \Kre_0}\cp{k}-2D\vre \Kre_1}
  -2\vre\cp{i} \Kre_1\cp{i} - 2 \vre \Delta\Kre_1.
\end{align*}
We solve for $\Delta\Kre_1$ to write
\begin{align*}
 \Delta\Kre_1=\underbrace{
 \frac{1}{2\vre}\Bbset{-\jre_3+   \Div\bbset{-2 \bset{\Are_2}_\cdot^k \vre \cp{k}-4
    \bset{\Are_1}_\cdot^k \bpset{\vre \Kre_0}\cp{k}-2D\vre \Kre_1}
  -2\vre\cp{i} \Kre_1\cp{i} }}_{\JRE_3}.
\end{align*}
Using that $\Kre_1= - \kre_0^2 + \kre_1$ and $\kre_1 = \bset{\Are_1}_r^s
\bv_0^r \cp{s}+\jre_1$, we find that
\begin{align}
\label{D.jre.1}
  \Delta\jre_1= \JRE_3+ \Delta\kre_0^2 - \Delta(\bset{\Are_1}_r^s\bv_0^r \cp{s}).
\end{align}
Since $\bv_1= -2D\vre$ implies that $\jre_1 =-2 \Delta\vre $ we
equivalently write \eqref{D.jre.1} as 
\begin{align}
\label{z.f3}  \Delta^2\vre=\underbrace{
-\frac{1}{2}\bbset{ \JRE_3+ \Delta\kre_0^2 -
  \Delta(\bset{\Are_1}_r^s\bv_0^r \cp{s})}
}_{\fre_3}\ \ \ \text{on }\Gre.  
\end{align}
The quantity $D\Kre_1$ may be expressed using
the boundary conditions \eqref{z.r1}--\eqref{z.r2} and  $D^a\bv_0$,
$a=0,1,2,3$ may be expressed using the boundary conditions of the polyharmonic problems
defining $\bv_0$.
It follows from \eqref{a.st.va} and  \eqref{z.f3}   that 
$\Jre_a = \p^a_t (J^{-2})|_{t=0}$ satisfies
\begin{align}
\label{z.collect}
\Jre_a=\p^a_t\beta\ \ \ \text{on }\Gre\ \text{for $a=0,1,2,3,4$}.
\end{align}

With $0<2\nu\le - \frac{\p \rho_0^2}{\p N}$, we may take $\epr$
sufficiently small so that $\rre$ defined in \eqref{a.pr} satisfies\begin{align*}
0< \nu  \le - \frac{\p \rre^2}{\p \Nre}\ \ \text{on }\Gre.
\end{align*}
We then have that \eqref{bV1}
defining   the vector field $\bV_1\in C^\infty(\Gre)$
   is
   \begin{align*}
     \bV_1=-\frac{\p \rre ^2}{\p \Nre
  } \Nre
 \ \ \text{on } 
 \Gre.
   \end{align*}
The boundary condition  \eqref{z.re.bc.N} of the polyharmonic
problem \eqref{z.re} defining $\vre$ provides that 
\begin{align*}
0< \nu  \le - \frac{\p \vre^2}{\p \Nre}\ \ \text{on }\Gre.
\end{align*}
The conditions  \eqref{z.re.bc.N},  \eqref{z.boundedness.r} and \eqref{z.collect}
 establish that the  $C^\infty$-class initial data  $(\vre,\bv_0,
\Ore)$   satisfy
\begin{subequations}
\label{B.cc}  \begin{alignat}{2}
    \label{boundedness.r.z}
    \vre&\ge \lmr>0 &\ \ &\text{in }\overline{ \Ore},\\
- \frac{\p \vre^2}{\p \Nre}&\ge   \nu>0  &\ \ &\text{on }\Gre,\\
    \label{cc.r.z}
    \vre^2 \Jre_a &= \p^a_t \beta&\ \ & \text{on } \Gre \ \ \text{for }
    a=0,1,2,3,4.
  \end{alignat}
\end{subequations}
The conditions \eqref{B.cc} are analogous statements of the conditions
\eqref{l.boundedness.r0}, \eqref{Taylor} and
\eqref{z.compatibility conditions}.
\begin{rem}According to \eqref{a.st.va}, 
  \begin{align*}
  \Div \bv_4 =\Div\bbset{-2 \bset{\Are_3}_\cdot^k \vre \cp{k}-6
    \bset{\Are_2}_\cdot^k  \bpset{\vre \Kre_0}\cp{k}-2 \bset{\Are_1}_\cdot^k
    \bpset{\vre \Kre_1}\cp{k} -2D\vre \Kre_2} 
- 2 \vre \cp{i} \Kre_2
  \cp{i} \\
- 2 \vre \Delta\Kre_2.
\end{align*} 
Since $\Delta\Kre_2$ is approximately $\Delta^2\Div\bv_0$, the
boundary condition $\Jre_5=0$
may be imposed by prescribing a Dirichlet boundary condition for
$\frac{\p}{\p\Nre}\Delta^2 \bv_0^3$ in a polyharmonic problem for
$\bv_0^3$ satisfying $    \bv_0^3 +\epr \Delta^6 \bv_0^3=\varphi^{\epr}*
    \mathcal{E}_{\Ore}(\mathrm{u}_0^ 3)$.  

The boundary condition $\Jre_6=0$ is obtained by defining a
Dirichlet boundary condition for $\Delta^3 \vre$ in a polyharmonic
problem for $\vre$ satisfying
$     \vre- \mur \Delta^7 \vre= \varphi
     ^{\mur}*\mathcal{E}_{\Ore}(\varrho_0)$.
\end{rem}
\section{Solutions  to the $\mu$-problem \eqref{shp}}
\label{sec:appendix:fp}

In this appendix, we construct a fixed-point solution $\vr$ to the $\mu$-problem
\eqref{shp}:

\begin{prop}
	[Solutions to the
$\mu$-problem] \label{prop.shp}
For        $C^\infty$-class initial data $(\rho_0, u_0,  \Omega )$  satisfying the conditions  \eqref{l.boundedness.r0} and
\eqref{s.compatibility conditions}, and for some
        $T=T_\kappa(\epsilon\mu)>0$, there exists a unique $\vr\in
        L^2(0,T;H^9(\Omega))$ solving the $\mu$-problem  \eqref{shp} on a time-interval
        $[0,T]$, with $\p^a_t \vr\in
L^2(0,T;H^{9-2a}(\Omega))$ for $a=1,2,3,4$, $\vr_{tttt}\in
L^\infty(0,T;L^2(\Omega))$ and $(\vr,\vr_t,\dots,\vr_{tttt} )|_{t=0} =(u_0, \mathbf{v}_1,\dots,\mathbf{v}_4)$. 
\end{prop}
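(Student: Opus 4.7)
The plan is to obtain $\vr$ as a fixed point of a map arising from a suitable linearization of the nonlinear heat-type system \eqref{shp}. Since $\kappa,\epsilon,\mu>0$ are now all fixed and nonzero, the convolution operator $\Lambda_\mu$ gains a full derivative of regularity (see \eqref{hc.est}) and the coefficient $\rro=\kappa\rho_0\Jr$ provides uniform parabolicity in the interior for $T>0$ small. Working in the function space
\begin{align*}
\mathbf{X}_T=\Bset{v\,:\,\sum_{a=0}^4\norm{\p^a_t v}_{L^2(0,T;H^{9-2a}(\Omega))}^2+\norm{\p^4_t v}_{L^\infty(0,T;L^2(\Omega))}^2<\infty},
\end{align*}
I would let $\mathcal{B}_R\subset\mathbf{X}_T$ denote the closed ball of radius $R$ centered at the natural time-polynomial extension of the initial data $(u_0,\mathbf{v}_1,\dots,\mathbf{v}_4)$ defined in Section~\ref{sec:comp-cond-mu}, with $R$ chosen via the a priori estimate \eqref{EE.ME} of Lemma~\ref{lem.e.i}.

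The first step is to construct a linearization map $\Phi\colon\mathcal{B}_R\to\mathbf{X}_T$. Given $\bar v\in\mathcal{B}_R$, form $\bar\eta=e+\int_0^t\bar v$, define $\bar\eta_\epsilon$ on $\Gamma$ by horizontal convolution \eqref{horizon.convolved.v}, solve the elliptic problem \eqref{elliptic.er} for $\bar\eta_\epsilon$ in $\Omega$, and form all the derived $\epsilon$-Lagrangian quantities $\bar A_\epsilon,\bar J_\epsilon,\bar a_\epsilon,\bar g_\epsilon,\bar n_\epsilon$ and $\bar\varrho=\kappa\rho_0\bar J_\epsilon$. Substitute these $\bar{}\ $-variables into $\Kr,\hrm,\crr$ everywhere except in the unknown $v$ itself; this produces source terms $\bar K,\bar h^\mu,\bar c^\mu(t)$ and the linear heat-type Neumann problem
\begin{align*}
v_t-\bar\varrho\,\bar A{}_r^j\bpset{\bar A{}_r^k v\cp{k}}\cp{j}&=\bar K&&\text{in }\Omega\times(0,T],\\
\bar\varrho\,N^j\bar A{}_r^j\bar A{}_r^s v\cp{s}-\mathcal{L}_\mu(v)&=\bar h^\mu_{\mathrm{low}}+\bar c^\mu(t)&&\text{on }\Gamma\times(0,T],\\
v|_{t=0}&=u_0,
\end{align*}
where $\mathcal{L}_\mu(v)$ is the mollified second-order tangential operator $\sum_\local\sqrt{\xl}\Lambda_\mu[\gfrk^{\alpha\beta}\Lambda_\mu((\sqrt{\xl}v\cp{\alpha\beta}\cdot\bar n_\epsilon)\circ\thetal)]\circ\thetal^{-1}\bar n_\epsilon$. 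Thanks to \eqref{hc.est}, for fixed $\mu>0$ the operator $\mathcal{L}_\mu$ is bounded from $H^{s}(\Gamma)$ into itself with norm $O(\mu^{-2})$ and hence is a compact perturbation of the principal conormal operator on the left-hand side.

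The second step is to solve the above linear parabolic system for $v$. The symbol of the interior operator is uniformly elliptic (by \eqref{z.assum.Jt} applied to $\bar J_\epsilon$), the boundary operator is a standard conormal (Lopatinski\u\i-elliptic) condition up to the compact perturbation $\mathcal{L}_\mu$, and the compatibility conditions for time-differentiated data at $t=0$ are exactly those encoded in the polynomial correction \eqref{shp.c}, which was designed so that
$\p_t^a(\bar\varrho N^j\bar A_r^j\bar A_r^s v\cp{s}-\mathcal{L}_\mu(v)-\bar h^\mu_{\mathrm{low}}-\bar c^\mu(t))|_{t=0}=0$ for $a=0,1,2$. Standard Agmon--Douglis--Nirenberg/Solonnikov parabolic theory applied level-by-level in $\p_t$ then yields a unique $v\in\mathbf{X}_T$ with $v|_{t=0}=u_0$, $\p_t^a v|_{t=0}=\mathbf{v}_a$ for $a=1,\dots,4$ (the latter obtained by differentiating the equations in $t$ and invoking Remark~\ref{rem.e.va}). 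Set $\Phi(\bar v)=v$.

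The third step is to close the fixed-point argument. Choosing $R=2\ME^{1/2}$ as dictated by \eqref{EE.ME}, the a priori estimates of Lemma~\ref{lem.e.i} applied to the linear problem (the proof of which never used the coincidence $\vr=\bar v$ at leading order, only at the level of lower-order forcing) show $\Phi(\mathcal{B}_R)\subset\mathcal{B}_R$ for $T=T_\kappa(\epsilon\mu)$ sufficiently small. For the contraction estimate, the difference $w=\Phi(\bar v_1)-\Phi(\bar v_2)$ satisfies a linear parabolic system with the same principal part and with right-hand side controlled by $\|\bar v_1-\bar v_2\|_{\mathbf{X}_T}$ times a factor $C\,T^{1/2}$ arising because all coefficient differences vanish at $t=0$; hence $\Phi$ is a strict contraction on $\mathcal{B}_R$ after a further reduction of $T$. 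The Banach fixed-point theorem then produces the unique solution $\vr$ of \eqref{shp}.

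The main obstacle I foresee is the interplay between the high number of time-derivatives required (four, to match \eqref{mu.cont.est.v}) and the compatibility of the linear parabolic problem at $t=0$. In particular, ensuring that $\p_t^a\Phi(\bar v)|_{t=0}=\mathbf{v}_a$ \emph{for every} $\bar v\in\mathcal{B}_R$ (not only at the fixed point) requires that the polynomial $\bar c^\mu(t)$ be independent of $\bar v$ at $t=0$ through order three in $t$, which is precisely why \eqref{shp.c} is defined with the evaluation at $t=0$ done using the intrinsic data $(u_0,\rho_0)$ via the equations. Verifying that this bookkeeping is consistent with the fixed-point iteration, and that the resulting compatibility conditions lie in the correct compatibility classes for conormal parabolic systems, will be the delicate part.
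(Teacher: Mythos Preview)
Your fixed-point strategy is the same as the paper's, but two concrete choices differ and one of them creates a small gap.

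First, the paper linearizes more aggressively than you do: in \eqref{lv.h} the mollified second-order tangential term
\[
\sum_{\local}\sqrt{\xl}\,\Lambda_\mu\!\Bbset{\gbfk^{\alpha\beta}\Lambda_\mu\bbset{(\sqrt{\xl}\,\vb\cp{\alpha\beta}\cdot\nbm)\circ\thetal}}\circ\thetal^{-1}\,\nbm
\]
is evaluated at the \emph{given} $\vb\in\CT$, not at the unknown $v$. The resulting linear problem \eqref{lv} is therefore a textbook heat equation with a pure conormal Neumann condition, and the standard parabolic regularity theory applies with no need for Lopatinski\u\i\ or compact-perturbation arguments. The price is that the contraction estimate must absorb the difference $\Lambda_\mu\hd^2(\vb_1-\vb_2)$, which is harmless for fixed $\mu>0$ since $\Lambda_\mu$ gains two full derivatives. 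Your choice of keeping $\mathcal{L}_\mu(v)$ on the left also works in principle, but it buys nothing here and costs you the extra machinery you invoke.

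Second, and more substantively, the paper runs the fixed point in the smaller space $\XT=\{v:\p_t^a v\in L^2(0,T;H^{5-2a}(\Omega)),\ a=0,1,2\}$ with only two time derivatives (Lemma~\ref{lem.shp}), using a ball of radius $\MB=\NB+1$ where $\NB$ is allowed to depend on $1/\mu$. The full $H^9$ regularity with four time derivatives is then recovered a posteriori in one line by repeating the argument with two more time derivatives. Your direct approach in the full $\mathbf{X}_T$ is conceptually fine, but your appeal to Lemma~\ref{lem.e.i} and \eqref{EE.ME} to size the ball does not close: the energy $E^\mu(t)$ in \eqref{def.EE} only controls $\sum_{a=0}^2\|\p_t^a\vr\|_{5-2a}^2$, not the $H^9$ norm with four time derivatives that your $\mathbf{X}_T$ demands. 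You would need a higher-order analogue of that lemma to make $\Phi(\mathcal{B}_R)\subset\mathcal{B}_R$ hold in your space, which is precisely the bootstrap the paper postpones until after the fixed point is already in hand.
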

\begin{rem}
  We recall that the initial data $\mathbf{v}_a$, $a=1,2,3,4,$ is
  defined in Section~\ref{sec:comp-cond-mu}.
\end{rem}
\subsection{The functional framework for the fixed-point scheme}
\label{sec:funct-fram-fixed}
To establish the existence and uniqueness of solutions to the
$\mu$-problem \eqref{shp}, we define for $T>0$ the Hilbert space\label{n:XT} 
\begin{align*}
  \begin{aligned}[b]
	\XT&= \set{v\in L^2\pset{0,T;H^{5}(\Omega)}\mid 
          \p^a_tv\in L^2(0,T;H^{5-2a}(\Omega)) \text{ for } a=1,2},
      \end{aligned}
\end{align*}
endowed with the natural Hilbert norm
\begin{align*}
	\norm{ v}_{\XT}^2&=\sum_{a=0}^2\norm{\p^a_t
          v}^2_{L^2(0,T;H^{5-2a}(\Omega))}  .
\end{align*} 
For $\MB>0$ (where the particular value of $\MB$ is
specified later), we define the  closed, bounded, convex subset  $\CT$
of  $\XT \label{n:CT}$ 
\begin{align}
v\in\CT\subset \XT 
\label{space: CT} 
\end{align}
to be all $v\in \XT$ that satisfy each of the following conditions:  
\begin{enumerate}[label=(X\arabic*)]
\item \label{CT.va} $(v,v_t,v_{tt})|_{t=0} =(u_0, \mathbf{v}_1,\mathbf{v}_2)$, and
\item \label{CT.M} $  \norm{ v}_{\XT}^2
\le \MB$.
\end{enumerate}

\begin{lem}
	[Solutions to the $\mu$-problem in $\XT$] \label{lem.shp}   For
        $C^\infty$-class initial data $(\rho_0, u_0,\Omega )$  satisfying  the conditions \eqref{l.boundedness.r0} and
\eqref{s.compatibility conditions}, and for some
        $T=T_\kappa(\epsilon\mu)>0$, there exists a unique $\vr\in
        \XT$ that solves the $\mu$-problem  \eqref{shp} on a time-interval
        $[0,T]$ and verifies $(\vr,\vr_t,\vr_{tt})|_{t=0} =(u_0, \mathbf{v}_1,\mathbf{v}_2)$. 
\end{lem}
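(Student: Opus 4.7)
The plan is to establish Lemma~\ref{lem.shp} by a fixed-point argument for a map $\Theta\colon\CT\to\CT$ defined as follows. Given $v\in\CT$, set $\mathring\eta=e+\int_0^t v$ in $\Omega$ and $\mathring\eta_{\epsilon}=e+\int_0^t v_\epsilon$ on $\Gamma$, then define $\er$ as the solution of the time-dependent elliptic Dirichlet problem \eqref{elliptic.er}. This yields coefficients $\Ar,\Jr,\ar,\rro,\gr,\nr$ whose $\epsilon$-regularization makes them smoother than $v$ itself. Use these coefficients to \emph{freeze} all nonlinearities in \eqref{shp} except for the unknown $\vr$ on which the problem is now linear: in the interior $\vr$ satisfies a non-autonomous parabolic equation with right-hand side $\Kr[v]$ defined by \eqref{shp.K}, and on $\Gamma$ the boundary condition reads
\begin{align*}
\rro N^j\Aru_r^j\Aru_r^s\vr\cp{s}
-\sum_{\local=1}^K\sqrt{\xl}\Bpset{\LM\bbset{\gfrk^{\alpha\beta}\LM[(\sqrt{\xl}\vr\cp{\alpha\beta}\cdot\nr)\circ\thetal]}}\circ\thetal^{-1}\nr
=\tilde{\hrm}[v]+\crr,
\end{align*}
where $\tilde{\hrm}[v]$ is the $\vr$-independent part of $\hrm$. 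Define $\Theta(v)=\vr$.

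Existence for this linear problem is the first key step. After writing the equation in weak form, testing against $\vr$, and tangentially integrating by parts on $\Gamma$, the $\LM$-smoothed boundary term yields the non-negative quadratic form
\begin{align*}
\sum_{\local=1}^K\int_{\DL}\LM\bbset{(\sqrt{\xl}\vr\cp{\alpha}\cdot\nr)\circ\thetal}\gfrk^{\alpha\beta}\LM\bbset{(\sqrt{\xl}\vr\cp{\beta}\cdot\nr)\circ\thetal}
\end{align*}
(up to error terms controlled by $H^{-1/2}$--$H^{1/2}$ duality pairings exactly as in Step~1 of Lemma~\ref{lem.e.i}). Combined with the coercivity $\rro\ge c\kappa>0$ of the interior principal part, this gives the energy identity that underlies Steps~1--3 of Lemma~\ref{lem.e.i} and hence $\XT$-estimates for $\vr$. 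Existence of $\vr\in\XT$ then follows from a standard Galerkin approximation using a smooth eigenbasis, with the passage to the limit identical to Section~\ref{sec:concl-proof-p}.

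The self-map property $\Theta(\CT)\subset\CT$ follows from the $\mu$-independent a priori estimate established in Lemma~\ref{lem.e.i}: the inequality \eqref{EE.ME} shows that $\Sup\EE\le 2\ME$ for $T=T_\kappa(\epsilon\mu)$ small, which upon restriction to the norms appearing in $\norm{\vr}_{\XT}^2$ yields a bound depending only on the initial data and on $\kappa,\epsilon,\mu$. Choosing $\MB$ larger than this bound and $T$ sufficiently small makes $\Theta(v)$ satisfy \ref{CT.M}. For the initial conditions \ref{CT.va}, the construction of $c(t)$ via \eqref{shp.c}, together with the definition of $\mathbf{v}_a$ in Section~\ref{sec:comp-cond-mu} (equivalently, Remark~\ref{rem.e.va}), ensures that the time-derivatives of the boundary equation are satisfied at $t=0$, so differentiating the equation in time and evaluating at $t=0$ recovers $\vr_t(0)=\mathbf{v}_1$ and $\vr_{tt}(0)=\mathbf{v}_2$ from $v(0)=u_0$.

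For uniqueness and continuity of $\Theta$, I would take two iterates $v_1,v_2\in\CT$ with corresponding images $\vr_1,\vr_2$; then the difference $w=\vr_1-\vr_2$ satisfies a linear heat-type problem with the same principal part but forcing proportional to $v_1-v_2$ (measured in an appropriately weaker norm than $\XT$). A parabolic energy estimate at a lower level of regularity, together with Young's inequality absorbing the highest-order $\LM$-smoothed boundary contribution as in Lemma~\ref{lem.e.i}, yields
\begin{align*}
\norm{\Theta(v_1)-\Theta(v_2)}_{Y_T}\le C(\kappa,\epsilon,\mu)\sqrt{T}\,\norm{v_1-v_2}_{Y_T}
\end{align*}
on a suitable intermediate Banach space $Y_T\supset\XT$. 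Taking $T$ even smaller makes $\Theta$ a contraction, and the Banach fixed-point theorem in $\CT$ (which is complete in $Y_T$) produces the unique $\vr\in\CT$ that solves \eqref{shp} with the required initial values. The main obstacle is arranging the energy identity for the linear problem so that the nonlocal smoothed boundary term produces a sign-definite contribution rather than an uncontrolled one; this is handled precisely by the $\sqrt{\xl}\LM(\sqrt{\xl}\cdot)$ structure built into \eqref{shp.h}, which symmetrizes the operator and allows the $H^{-1/2}$--$H^{1/2}$ bound used in Step~1 of Lemma~\ref{lem.e.i}.
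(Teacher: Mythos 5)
Your proposal is a correct argument that takes a genuinely different route from the paper on one structural point. The paper's fixed-point scheme (Definition~\ref{defn.lv}, Lemmas~\ref{lem.lv.1} and~\ref{lem.fp}) linearizes the entire boundary condition at the iterate $\vb$, so that the $\LM$-smoothed $\hd^2$-term also appears on the right-hand side, frozen at $\vb$; the resulting problem is a purely local Neumann-type parabolic system and existence follows immediately from standard parabolic theory, with no Galerkin machinery needed. The derivative loss one would normally incur by placing a second-order tangential term in the Neumann data is exactly compensated by the two orders of gain from the double $\LM$-convolution (via \eqref{hc.est}), which is why the estimate in Lemma~\ref{lem.lv.1} closes in $\XT$. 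By contrast, you keep the $\LM$-smoothed term acting on the unknown $\vr$ as a nonlocal, coercive, Wentzell-type boundary operator; this forces you into a Galerkin construction, but buys you a transparently sign-definite boundary contribution so that the energy identity is essentially that of Step~1 of Lemma~\ref{lem.e.i}. Your choice of running the contraction in a strictly weaker intermediate norm $Y_T$ is also a valid alternative to the paper's contraction directly in $\XT$.

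There is, however, one logical slip you should fix. You justify $\Theta(\CT)\subset\CT$ by citing Lemma~\ref{lem.e.i} and its consequence \eqref{EE.ME}; but those results assume that $\vr$ already \emph{solves} the nonlinear $\mu$-problem \eqref{shp}, which is precisely what the fixed-point scheme is trying to produce. The object $\Theta(v)$ solves only the linearized problem, so the a priori bound on the nonlinear solution does not apply to it. What is actually needed is an estimate for the linear iterate in terms of $\norm{v}_{\XT}$ and the data, namely the analogue of the paper's Lemma~\ref{lem.lv.1}, which is then combined with $\MB=\NB+1$, small $\delta$, and small $T$ to get the self-map. The preceding paragraph of your proof sketches precisely the energy identity that yields this linear estimate, so the fix is to invoke that rather than Lemma~\ref{lem.e.i}. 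A smaller issue: your reference to Section~\ref{sec:concl-proof-p} for the Galerkin passage is misplaced, as that section concerns the $\mu\to 0$ limit and not a Galerkin approximation.
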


\subsection{Linearizing   the $\mu$-problem  \eqref{shp}}
\label{sec:line-kappa-epsil}  
Letting $\vb\in \CT$, we set\label{n.vb}
$\bar{\eta}= e+\int_{0}^{t}\vb$ and
$\bar{\eta}_{\epsilon}  =e+\int_0^t{\vb}_{\epsilon} $ on $\Gamma$. We define
 $\bar{\zeta}_{\epsilon}$   to be the solution of the following time-dependent
elliptic Dirichlet problem:
\begin{subequations}
\label{elliptic.ebm}
  \begin{alignat}{2}
    \label{ebm.m}
  \Delta\bar{\zeta}_{\epsilon}&=\Delta\bar{\eta}&& \text{in } \Omega,
\\
\bar{\zeta}_{\epsilon}&=\bar{\eta}_{\epsilon}&\ \ \ \ &\text{on }\Gamma.
  \end{alignat}
\end{subequations}
We define the following  $\epsilon$-approximate Lagrangian variables:
\begin{align*} 
	\Abm  =\bset{D\bar{\zeta}_{\epsilon}  }^{-1},\ \ \Jbm  =\det
        D\bar{\zeta}_{\epsilon}  ,\ \ \abm =\Jbm \Abm  ,\ \gbmu_{\alpha\beta}=\bar{\zeta}_{\epsilon} \cp{\alpha}\cdot \bar{\zeta}_{\epsilon} \cp{\beta}, \ \text{and}\  \sqrt{\gbm } \nbm  =\bset{\abm }^TN.
\end{align*}
We assume that $T>0$ is given such that independently of the choice of
$\vb \in \CT$, the $\epsilon$-approximate Lagrangian map $\bar{\zeta}_{\epsilon} $ is injective for all $t\in [0,T]$, and that 
\begin{align}
 	\label{assumption: J} 
\frac{1}{2}\le \Jb(t) \le \frac{3}{2}\ \ \text{and}\ \ \frac{1}{2}\le \Jb_{\epsilon}(t) \le \frac{3}{2}\ \ \text{for all $t\in [0,T]$ and $x\in \overline\Omega$.} 
\end{align}
This is possible by the inequality \eqref{universal constant} as $\vb\in
\CT$ satisfies $\norm{\vb}_{\XT}^2\le \MB$.

  \begin{defn}[The  system  of linear heat-equations for
    $v$] \label{defn.lv}
For $\vb\in\CT$,  $\kappa>0$, $\epsilon>0  $ and $\mu>0$
given, we   define $v$ to be the
    solution of the 
system of linear   equations
\begin{subequations}
		\label{lv} 
		\begin{alignat}
			{4} \label{lv.momentum}     
v_t -\rb  \Abmu_r^j\bpset{\Abmu_r^k v
      \cp{k}}\cp{j}&=\Kb&\ &\text{in }
      \Omega\times(0,T_\kappa(\epsilon\mu)],\\
\label{lv.bc}
  \rb  N^j\Abmu_r^j
 \Abmu_r^s v \cp{s}
&=
\hbe+\crr
&\ \ &\text{on } \Gamma\times(0,T_\kappa(\epsilon\mu)],\\ 			\label{lv.ic} v|_{t=0}&=u_0&&on\ \Omega.
		\end{alignat}
	\end{subequations}    
The bounded, nonnegative function $\rb$ is defined as
\begin{align}
\label{lv.rb}
\rb   = \kappa\rho_0\Jbm. 
\end{align} The vector field $\Kb$ appearing in the right-hand
side of \eqref{lv.momentum} is given by
  \begin{align}
    \label{lv.kcal}
    \begin{aligned}[b]
      \Kb=\rb 
      \curl_{\ebm}\pset{\curl
          u_0+\varepsilon_{\cdot
            ji}\int_0^t\p_t\Abmu_j^s \vb^i\cp{s}} 
+ \Div_{\ebm} \vb \Abmu_\cdot^k\rb\cp{k}-2 \Abmu_\cdot^k(\rho_0\Jb
   ^{-1})\cp{k}.
    \end{aligned} 
  \end{align}
  The vector field $\hbe$  appearing in the right-hand side of
  \eqref{lv.bc} is given by 
  \begin{align}
    \label{lv.h}
    \begin{aligned}[b]
     \hbe=  \hb_{\curl}
+
 \hb_{\Div}
 +\sum_{\local=1}^K \sqrt{\xl} \Bpset{\LM\Bbset{\gbfk
    ^{\alpha\beta}\LM\bbset{\pset{\sqrt{\xl}\vb\cp{\alpha\beta}\cdot \nbm}\circ
  \thetal}}}\circ\thetal^{-1}\nbm
 +
\rb\Bbset{\berr^{\mu}_{\curl}+\berr^{\mu}_{\Div}},
   \end{aligned}
 \end{align}
 where 
 \begin{align}
   \label{lv.gbfk}
\gbfk ^{\alpha\beta}=  \kappa\frac{\sqrt{\gbm}
}{\rho_0 }\gbm^{\alpha\beta}\circ\thetal,
 \end{align}
and the vectors   $\hb_{\curl}$, $\hb_{\Div}$,
 $\berr^{\mu}_{\curl}$ and $\berr^{\mu}_{\Div}$ are defined as 
 \begin{subequations}
\label{hb.summands}
   \begin{align}
     \hb_{\curl}&= \rb \sqrt{\gbm}(\Jbm)^{-1} {\bpset{\curl
         u_0+\varepsilon_{\cdot ji}\int_0^t\p_t \Abmu_j^s
         \vb^i\cp{s}}\times \nbm},
     \\
     \hb_{\Div}&=  \frac{\sqrt{\gbm}
}{\rho_0}\Bbset{ \rho_0^2
        (\Jbm)^{-2} - \beta_{\epsilon}(t) +\sigma { \gbm
         ^{\alpha\beta}  {\ebm\cp{\alpha\beta}\cdot \nbm}}
     }\nbm,
     \\
     \berr^{\mu}_{\curl} &= \frac{\sqrt{\gbm}}{\Jbm} \Bbset{
 \sum_{\local=1}^K      \gbm^{\alpha\beta}\sqrt{\xl}\LM[ (\sqrt{\xl}\vb\cdot
       \nbm)\cp{\beta} \circ\thetal]\circ\thetal ^{-1}+ \vb^\gamma
       \gbm^{\gamma\delta}\gbm^{\alpha\beta}\ebm \cp{\delta\beta} \cdot
       \nbm}\ebm\cp{\alpha},\\
     \berr^{\mu}_{\Div}&= -\sum_{\local=1}^K \sqrt{\xl}\Bpset{\LM\Bbset{\sqrt{\xl}\frac{\sqrt{\gbm}}{\Jbm} \bbset{
       \gbm^{\alpha\beta} \vb^ \alpha\cp{\beta} + \vb ^{\alpha}
       (\sqrt{\gbm}\gbm^{\alpha\beta})\cp{\beta} + \vb^3H(\ebm)}
     \circ\thetal} }\circ\thetal ^{-1}\nbm.
   \end{align}
 \end{subequations}
  The vector field $\crr$  appearing in the right-hand side of
  \eqref{lv.bc} is given by \eqref{shp.c}.
  \end{defn}

\subsection{Implementation of  the fixed-point scheme to solve the
  $\mu$-problem \eqref{shp}}
\label{sec:implementation-fixed}

We will
allow constants to depend on $1/\delta\kappa\epsilon\mu$ in our fixed-point scheme.
\begin{defn}
	[Notational convention for constants depending on $1/
        \delta\kappa\epsilon\mu  >0$] \label{n:dk} 
Given $\vb\in \CT$,
we let $\PB\label{n:eB}$ denote a generic polynomial with constant and
coefficients depending on $1/\delta\kappa\epsilon\mu  >0$.
	We define the constant $\NB\label{n:bbN0}>0$ by 
	\begin{align*}
          \NB=\PB\pset{\norm{u_0}_{100}, \norm{\rho_0}_{100}}. 
	\end{align*}
		We let $\RB\label{n:bbR}$  denote generic lower-order terms satisfying 
	\begin{align*}
		\int_0^T\RB\le \NB + \delta
\norm{\vb}_{\XT}^2
+T\,\PB (\norm{\vb}_{\XT}^2 
)               . 
	\end{align*}
\end{defn}
\begin{lem} 
\label{lem.lv.1}Given $\vb\in \CT$, $\kappa>0$, $\epsilon>0$ and $\mu>0$,
there exists a unique $v\in \XT$ solving \eqref{lv} and satisfying {\rm
\ref{CT.va}}. Furthermore, 
  \begin{align}
\label{lv.est}
  \norm{v}_{\XT}^2 \le  \int_{0}^{T}\RB.
\end{align}
\end{lem}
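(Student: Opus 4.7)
Problem \eqref{lv} is a \emph{linear} parabolic initial-boundary value problem for the single unknown $v$: all of $\rb$, $\Abmu$, $\ebm$, $\nbm$, $\Jbm$, $\gbfk$, $\Kb$, $\hbe$, and $\crr$ are functions determined by the given $\vb\in\CT$ and play the role of fixed data. By \eqref{l.boundedness.r0} and \eqref{assumption: J}, $\rb=\kappa\rho_0\Jbm\ge\kappa\lambda>0$, so the principal part $-\rb\Abmu_r^j(\Abmu_r^k\,\cdot\,\cp{k})\cp{j}$ is uniformly elliptic; and because $\Abmu$ is close to $\delta_i^k$ on $[0,T]$, the associated bilinear form is coercive on $H^1(\Omega;\mathbb{R}^3)$. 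The elliptic smoother \eqref{elliptic.ebm} together with the horizontal mollification $\LM$ ensure that the boundary datum $\hbe+\crr$ belongs to $L^2(0,T;H^s(\Gamma))$ for every finite $s$, and that $\Kb\in L^2(0,T;H^3(\Omega))$, with norms controlled polynomially by $\norm{\vb}_{\XT}$ and by $1/\mu,1/\epsilon$. Crucially, \emph{no} $v$-dependence appears in the right-hand side or in the boundary datum of \eqref{lv}, so it is a genuine forward linear problem.

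The plan is to construct $v$ by a Galerkin approximation. I would select an $L^2$-orthogonal basis $\{w_k\}$ of $H^1(\Omega;\mathbb{R}^3)$ with $u_0\in\mathrm{span}\{w_1\}$, project the variational formulation
\begin{align*}
\int_\Omega v_t\cdot\phi+\int_\Omega\rb\,\Abmu_r^kv\cp{k}\,\Abmu_r^j\phi\cp{j}
=\int_\Omega\Kb\cdot\phi+\int_\Gamma(\hbe+\crr)\cdot\phi
\end{align*}
of \eqref{lv} onto $V_n=\mathrm{span}\{w_1,\dots,w_n\}$, and solve the resulting linear ODE on $[0,T]$ with $v^n(0)=u_0$. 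Testing the projected equation successively against $v^n$, $\p_t v^n$, $\p^2_t v^n$, $\hd^2\p_t v^n$, and $\hd^4 v^n$ reproduces, mutatis mutandis, the chain of energy estimates carried out in Steps~1--3 of the proof of Lemma~\ref{lem.e.i} --- with the nonlinear unknown $\vr$ replaced by the linear $v$ and with all $\vb$-dependent quantities now playing the role of data. In every such identity the boundary integral $\int_\Gamma(\hbe+\crr)_\ast\cdot v^n_\ast$ is handled by Cauchy--Schwarz (the fixed-data case), and the $\LM$-regularization provides the $\mu,\epsilon$-dependent uniform bounds on its $H^s(\Gamma)$-norms needed at each order. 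Viewing time-derivatives of \eqref{lv.momentum} as elliptic Dirichlet problems for $\p^a_t v$ with forcing $\p^{a+1}_t v-\p^a_t\Kb$, and then \eqref{lv.momentum} itself as one for $v$, upgrades the tangential bounds to full Sobolev bounds and yields, uniformly in $n$,
\begin{align*}
\sum_{a=0}^2\norm{\p^a_t v^n}_{L^2(0,T;H^{5-2a}(\Omega))}^2\le\int_0^T\RB.
\end{align*}
A weak-limit argument as $n\to\infty$ produces $v\in\XT$ solving \eqref{lv} and inheriting the bound \eqref{lv.est}; uniqueness is immediate from the linearity together with the basic energy identity applied to the difference of two solutions.

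The matching $(v,v_t,v_{tt})|_{t=0}=(u_0,\mathbf{v}_1,\mathbf{v}_2)$ is secured as follows: $v(0)=u_0$ by Galerkin construction, while the values of $v_t(0)$ and $v_{tt}(0)$ are forced to equal $\mathbf{v}_1,\mathbf{v}_2$ by the Taylor-correction $\crr$ defined in \eqref{shp.c}, which is tailored precisely to make the boundary condition \eqref{lv.bc} and its first two time-derivatives at $t=0$ consistent with these prescribed data (cf.~Remark~\ref{rem.e.va}, where $\mathbf{v}_a$ is identified as the $(a-1)$th time-derivative at $t=0$ of the momentum equations). The main technical obstacle is arranging that the $\vb$-dependent remainders generated by the energy estimates actually fit the $\RB$-form of Definition~\ref{n:dk}: one must systematically invoke the fundamental theorem of calculus to express high-order time-derivatives of $\vb$-dependent quantities as integrals in $t$ (producing the $T\,\PB(\norm{\vb}_{\XT}^2)$ structure) and split cross-terms via Young's inequality with small parameter $\delta$ (producing the $\delta\norm{\vb}_{\XT}^2$ contribution), treating initial-data contributions as part of the $\NB$ constant. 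Once this bookkeeping is arranged consistently across all orders of the energy hierarchy, the final inequality \eqref{lv.est} follows.
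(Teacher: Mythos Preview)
Your approach is correct but differs from the paper's in how the regularity estimate \eqref{lv.est} is obtained. The paper dispatches existence and uniqueness in one line by invoking ``standard parabolic theory'' (your Galerkin sketch is a legitimate way to unpack that phrase). For the estimate, however, the paper does \emph{not} run a full hierarchy of tangential energy identities in the style of Lemma~\ref{lem.e.i}. Instead it performs a single energy estimate --- testing two time-derivatives of \eqref{lv.momentum} against $v_{tt}$ --- to obtain $v_{tt}\in L^\infty(0,T;L^2)\cap L^2(0,T;H^1)$, and then treats the time-differentiated system as a \emph{Neumann} elliptic problem for $v_t$ (with forcing $\Gb_1\sim \Kb_t - v_{tt}+\ldots$ and Neumann datum $\jb_1\sim \p_t\hbe + \ldots$), invoking directly the estimate $\norm{v_t}_s^2\le C\bigl(\norm{v_t}_0^2+\norm{\Gb_1}_{s-2}^2+\abs{\jb_1}_{s-3/2}^2\bigr)$ for $s=2,3$. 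A second application of the same Neumann regularity, now to \eqref{lv} itself, gives $v\in L^\infty(0,T;H^4)\cap L^2(0,T;H^5)$.

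Your route --- tangential energy estimates at each order followed by Dirichlet elliptic regularity once the trace is controlled --- is the template from Lemma~\ref{lem.e.i}, designed there because $\mu$-independence precludes using the full $H^s(\Gamma)$ norm of the Neumann datum. Here no such independence is required (constants may depend on $1/\mu$), so the paper exploits the simpler Neumann route. Both arguments work; the paper's is shorter and avoids the bookkeeping of multiple tangential identities. A minor point: your explanation that $v_t(0),v_{tt}(0)$ are ``forced to equal $\mathbf{v}_1,\mathbf{v}_2$ by the Taylor-correction $\crr$'' inverts the causality --- these values are determined by the interior equation \eqref{lv.momentum} at $t=0$ (using $\vb(0)=u_0$ from \ref{CT.va}), and $\crr$ is what makes the boundary condition \eqref{lv.bc} \emph{compatible} with those values, not what generates them.
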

\begin{proof}
  Standard parabolic theory provides for the existence and uniqueness of
  $v\in \XT$ solving \eqref{lv} and satisfying \ref{CT.va}.
For the purpose of establishing the estimate \eqref{lv.est}, it is useful to note   the scaling relations 
\begin{align*}
\Kb\sim D\vb + \int_{0}^{t}D^2 \vb \ \ \ \text{and}\ \ \
  \hbe \sim \vb + \int_{0}^{t}D\vb + \LM \hd^2 \vb + \hb _{\epsilon},
\end{align*}
where $\hb _{\epsilon}\in C^\infty(\Gamma)$ with $\abs{\hb
  _{\epsilon}}_s\le C _{\epsilon}\abs{\vb}_s$ thanks to the inequality \eqref{hc.est}.
We will establish the inequality \eqref{lv.est} in the following three steps:

\subsection*{Step~1: Parabolic estimates for $v_{tt}$}
\label{sec:step-1:-parabolic}
Testing two time-derivatives of the equations \eqref{lv.momentum}
against $v_{tt}$ in the $L^2(\Omega)$-inner  product and integrating
by parts with respect to $\p_j$ in the integral $- \int_{\Omega}  \p^2_t\bpset{\rb\Abmu_r^j\bpset{\Abmu_r^k v
      \cp{k}}\cp{j}}v_{tt}$ yields
  \begin{align}
    \label{lv.est.0}
    \frac{1}{2}\frac{d}{dt}\int_{\Omega} \abs{
 v_{tt}}^2 + \int_{\Omega}\rb\abs{\Abmu_\cdot^k v_{tt}
      \cp{k} }^2 = \underbrace{\int_{\Omega} \Kb_{tt}v_{tt}}_{\RB}+ \underbrace{\int_{\Gamma}
    \hbe_{tt}v_{tt}}_{\II}+ \RB.
  \end{align}
We   used the boundary condition \eqref{lv.bc} in writing the
boundary integral $\II$. Thanks to Lemma~\ref{lem.j}, we have that
$\abs{v_{tt}}_0^2=\RB$. Hence, $\II=\RB$. Taking the
time-integral of \eqref{lv.est.0} and using that $\rb\ge
\lambda_{\epsilon}>0$,
\begin{align}
  \label{lv.est.v2}
  \Sup \norm{v_{tt}(t)}_0^2 + \int_{0}^{T}\norm{v_{tt}}_1^2 \le \int_{0}^{T}\RB.
\end{align}

\subsection*{Step~2: Elliptic estimates for $v_t$}
\label{sec:step-2:-elliptic}
A time-derivative of the equations \eqref{lv} yields the following
linear 
Neumann-type elliptic problem for $v_t$:
\begin{subequations}
\label{lvt}
		\begin{alignat}
			{4} \label{lvt.momentum}     
-\rb  \Abmu_r^j\bpset{\Abmu_r^k v_t
      \cp{k}}\cp{j}&=\Gb_1
 &\ &\text{in } 
      \Omega ,\\
\label{lvt.bc}
  \rb  N^j\Abmu_r^j
 \Abmu_r^s v_t \cp{s}
&=\jb_1
& &\text{on } \Gamma ,
		\end{alignat}
	\end{subequations}
where the forcing functions $\Gb_1$ and $\jb_1$ are defined as
\begin{align*}
  \Gb_1&=\Kb_t-v_{tt}+\pset{\rb  \Abmu_r^j}_t\bpset{\Abmu_r^k v
      \cp{k}}\cp{j}
+ \rb  \Abmu_r^j\bpset{\p_t\Abmu_r^k v
      \cp{k}}\cp{j},
\\
\jb_1&=
\p_t\bbset{\hbe+\crr}- \bpset{ \rb  N^j\Abmu_r^j
 \Abmu_r^s}_t v \cp{s}.
\end{align*}
Since $\rb  \Abmu_r^j \p_j [\Abmu_r^k \p_k]
$ is a uniformly elliptic operator,  standard elliptic regularity
theory provides that for $s\ge 2$
\begin{align}
\label{lvt.est.ellip}
  \norm{v_t}_s^2 \le C\bpset{\norm{v_t}_0^2 + \norm{\Gb_1}_{s-2}^2 +
    \abs{\jb_1}_{s-\frac{3}{2}} ^2},
\end{align}
with the constant $C$ depending on the coefficients of the elliptic
and Neumann-type operators. The fundamental theorem of calculus
provides that
\begin{align*}
  \norm{v_t}_0^2 \le \NB + C\,T \Sup \norm{v_{tt}(t)}_0^2\le \int_{0}^{T}\RB.
\end{align*}
Thanks to the estimate \eqref{lv.est.v2}, we conclude that
\begin{align*}
  \Sup \norm{\Gb_1(t)}_0^2 + \int_{0}^{T}\norm{\Gb_1}_1^2 \le \int_{0}^{T}\RB. 
\end{align*}
Since $\jb_1$ scales like $Dv +D\vb+\vb_t$, the fundamental theorem of
calculus provides that 
\begin{align*}
  \Sup \abs{\jb_1(t)}_{0.5}^2 + \int_{0}^{T}\abs{\jb_1}_{1.5}^2 \le
  \int_{0}^{T}\RB+ \Sup \norm{\vb_t(t)}_1^2 .
\end{align*}
Since interpolation and Young's inequality provide that
$\norm{\vb_t}_1^2\le C_ \delta\norm{\vb_t}_0^2+
\delta\norm{\vb_t}_2^2$, we conclude by use of the fundamental theorem
of calculus that
\begin{align*}
  \Sup \abs{\jb_1(t)}_{0.5}^2 + \int_{0}^{T}\abs{\jb_1}_{1.5}^2 \le
  \int_{0}^{T}\RB.
\end{align*}
It therefore follows from the inequality \eqref{lvt.est.ellip} that 
\begin{align}
  \label{lv.est.v1}
  \Sup \norm{v_{t}(t)}_2^2 + \int_{0}^{T}\norm{v_{t}}_3^2 \le \int_{0}^{T}\RB.
\end{align}

\subsection*{Step~3: Concluding the proof of Lemma \ref{lem.lv.1}}
\label{sec:step-3:-concluding}
By repeating Step~2, we infer the following elliptic estimate for $v$:
\begin{align}
  \label{lv.est.v}
  \Sup \norm{v(t)}_4^2 + \int_{0}^{T}\norm{v}_5^2 \le \int_{0}^{T}\RB.
\end{align}
The sum of the inequalities \eqref{lv.est.v2}, \eqref{lv.est.v1} and
\eqref{lv.est.v} establishes the inequality \eqref{lv.est}. 
\end{proof}

\begin{lem}\label{lem.fp}
Let $v$ be the solution of \eqref{lv} determined by $\vb\in \CT$. The solutions map
  \begin{align*}
    \mathfrak{S}\colon \CT\to \CT\colon \vb\mapsto v
  \end{align*}
is a well-defined map for some $T=T_\kappa(\epsilon\mu)$ and
possesses a unique fixed-point.
\end{lem}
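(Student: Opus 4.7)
The plan is to apply Banach's fixed-point theorem to $\mathfrak{S}$ after choosing $\MB$ and $T$ appropriately, so that existence, uniqueness, and the $\XT$-regularity of the fixed point follow simultaneously. For the self-map property, Lemma~\ref{lem.lv.1} produces $v = \mathfrak{S}(\vb) \in \XT$ solving \eqref{lv} with $v(0)=u_0$, and the estimate \eqref{lv.est} together with the structural bound on $\int_0^T\RB$ gives
\begin{equation*}
\norm{v}_{\XT}^2 \le \NB + \delta \MB + T\,\PB(\MB).
\end{equation*}
Choosing $\MB = 4\NB$, $\delta = 1/4$, and $T=T_\kappa(\epsilon\mu)>0$ so small that $T\,\PB(\MB) \le \MB/2$ yields $\norm{v}_{\XT}^2 \le \MB$, which is \ref{CT.M}. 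Condition \ref{CT.va} follows from $v(0)=u_0$ together with the observation that one and two time-derivatives of \eqref{lv.momentum} at $t=0$ reduce, using $\ebm(0)=e$ and $\vb(0)=u_0$, to exactly the algebraic identities \eqref{defn.va.bf} that define $\mathbf{v}_1$ and $\mathbf{v}_2$.

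For the contraction, given $\vb_1,\vb_2 \in \CT$ let $v_i = \mathfrak{S}(\vb_i)$ and set $w = v_1 - v_2$, $\bar w = \vb_1 - \vb_2$. Subtracting the two instances of \eqref{lv} yields a heat-type equation for $w$ with zero initial data, whose forcing is linear in $\bar w$ and in the differences of coefficients $\Abmu_1-\Abmu_2$, $\Jbm_1-\Jbm_2$, $\gbm_1-\gbm_2$, $\nbm_1-\nbm_2$, $\ebm_1-\ebm_2$; each such coefficient difference inherits a factor $\int_0^t$ from $\bar\eta = e + \int_0^t\vb$ and from the elliptic extension \eqref{elliptic.ebm}, and is therefore bounded in $L^\infty$ by $C_\epsilon\int_0^t\norm{\bar w}_k$ for the appropriate $k$. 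Testing the difference equation against $w$ in $L^2(\Omega)$, integrating by parts in the principal part exactly as in Step~1 of the proof of Lemma~\ref{lem.lv.1} (retaining but then discarding the nonnegative $\LM$-boundary quadratic form on $\Gamma$), and applying Cauchy-Schwarz to each $\int_0^t$-prefactor, produces an estimate of the form
\begin{equation*}
\sup_{t\in[0,T]}\norm{w(t)}_0^2 + \int_0^T \norm{w}_1^2 \le C_{\kappa\epsilon\mu}\,T\,\PB(\MB)\,\Bpset{\sup_{t\in[0,T]}\norm{\bar w(t)}_0^2 + \int_0^T \norm{\bar w}_1^2}.
\end{equation*}
Denoting the right-hand norm by $\norm{\cdot}_{Y_T}$, $\mathfrak{S}$ is a strict contraction on $(\CT,\norm{\cdot}_{Y_T})$ for $T$ small; since the $\XT$-bound defining $\CT$ passes to weak limits, $\CT$ is closed, hence complete, in the $Y_T$-metric, and Banach's theorem delivers a unique $\vr\in\CT$ with $\mathfrak{S}(\vr)=\vr$, which is the desired solution of the $\mu$-problem \eqref{shp}.

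The main obstacle is verifying that every summand in the differences $\Kb^{(1)}-\Kb^{(2)}$ and $\hbe^{(1)}-\hbe^{(2)}$ built from \eqref{lv.kcal}, \eqref{lv.h}, \eqref{hb.summands} indeed yields a $Y_T$-level bound carrying a positive power of $T$. The delicate contributions are those that depend linearly on $\bar w$ through a coefficient that is not of the form $\int_0^t(\cdot)$, namely the $\Div_{\ebm}\vb$ piece of $\Kb$ and the tangential $\vb^\gamma\gbm^{\gamma\delta}\cdots$ piece of $\berr^{\mu}_{\curl}$ in $\hbe$; these must be absorbed either into the dissipation $\int_0^T\norm{w}_1^2$ on the left of the energy inequality, or, after an integration by parts in time against the zero initial condition $w(0)=0$, into the $\sup_{[0,T]}\norm{w}_0^2$ term. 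The $\epsilon$-mollification in \eqref{elliptic.ebm} and the $\LM$-smoothing in $\hbe$ are essential: they make all coefficients of the principal part of the difference PDE $L^\infty$-bounded uniformly on $\CT$ with constants depending only on $\kappa,\epsilon,\mu$, which is what allows the $T$-gain to be propagated without any high-regularity loss in $\bar w$.
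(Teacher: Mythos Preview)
Your strategy is sound but takes a genuinely different route from the paper. The paper establishes the contraction directly in the full $\XT$-norm: it tests \emph{two time-derivatives} of the difference system against $w_{tt}$ to obtain $\sup_t\norm{w_{tt}}_0^2+\int_0^T\norm{w_{tt}}_1^2$, and then reruns the elliptic Steps~2--3 of the proof of Lemma~\ref{lem.lv.1} on the difference problem to recover $\norm{w}_{\XT}^2\le(\delta+CT)\norm{\vb_1-\vb_2}_{\XT}^2$; no weak-compactness argument is needed, since $\CT$ is trivially complete in the $\XT$-metric. Your ``bounded-in-high, contract-in-low'' alternative saves the time-differentiation and the elliptic bootstrap, at the cost of the auxiliary completeness step. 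Two points to tighten. First, your closedness argument for $\CT$ in $Y_T$ addresses only \ref{CT.M}; the initial condition \ref{CT.va}, in particular $v_{tt}(0)=\mathbf{v}_2$, is not obviously preserved under weak $\XT$-limits since $\XT$ alone does not place $v_{tt}$ in $C([0,T];X)$ for any $X$---the clean fix is to recover \ref{CT.va} a posteriori by applying Lemma~\ref{lem.lv.1} with $\vb$ equal to the fixed point itself. Second, your stated mechanism for extracting a power of $T$ from the non-time-integrated $D\bar w$ contributions (``absorb into the dissipation'' or ``integrate by parts in time against $w(0)=0$'') is not what actually produces the factor: it comes from Cauchy--Schwarz in time, via $\int_0^T\norm{\bar w}_1\norm{w}_0\le\bigl(\int_0^T\norm{\bar w}_1^2\bigr)^{1/2}\cdot\sqrt{T}\,\sup_t\norm{w}_0$.
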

\begin{proof}
We recall that $v\in \XT$ is unique and satisfies \ref{CT.va} by
Lemma~\ref{lem.lv.1} .  Setting $\MB=\NB+1$, we
take $\delta$   so that 
  $\delta 
    \norm{\vb}_{\XT}^2<\frac{1}{2}$ and let
  $T=T_\kappa(\epsilon\mu)$ be sufficiently small so that
the inequality given in Lemma~\ref{lem.lv.1} reads
\begin{align*} 
  \norm{v}_{\XT}^2 \le \MB.
\end{align*}
Hence, for some
$T=T_{\kappa}(\epsilon\mu) >0$, the solutions 
map  $\mathfrak{S}$ is well-defined. 

Letting $\vb_l\in \CT$ for $l=1,2$, we set
$\bar{\eta}_l= e+\int_{0}^{t}\vb_l$ in $\Omega$ and
$\bar{\eta}_{\epsilon l}  =e+\int_0^t{\vb}_{\epsilon l} $ on
$\Gamma$. We define
$\bar{\zeta}_{\epsilon l}$   to be the solution of the following time-dependent
elliptic Dirichlet problem:
\begin{subequations}
\label{elliptic.eblm}
  \begin{alignat}{2}
    \label{eblm.m}
  \Delta\bar{\zeta}_{\epsilon l}&=\Delta\bar{\eta}_l&& \text{in } \Omega,
\\
\bar{\zeta}_{\epsilon l}&=\bar{\eta}_{\epsilon l}&\ \ \ \ &\text{on }\Gamma.
  \end{alignat}
\end{subequations}
We define the following  $\epsilon$-approximate Lagrangian variables:
\begin{align*} 
	\Abm{}_l  =\bset{D\ebm{}_l  }^{-1},\;\Jb_l  =\det
        D\ebm{}_l  ,\; \abm{}_l =\Jbm{}_l \Abm{}_l
        ,\;\bset{\gbm{}_l}_{\alpha\beta}=\ebm{}_l \cp{\alpha}\cdot
        \ebm{}_l \cp{\beta},\;  \sqrt{\gbm{}_l } \nbm{}_l  =\bset{\abm{}_l }^TN.
\end{align*}
For $l=1,2,$ we also take the quantities $\rb_l$, $\Kb_l$, $\hbe_l$ to be respectively formed via the definitions \eqref{lv.rb}, \eqref{lv.kcal},
\eqref{lv.h} with $\vb=\vb_l$.  Letting $v_l$ denote the solution of \eqref{lv} formed using $\vb=\vb_l$,
we have that the difference
\begin{align*}
w=v_1-v_2  
\end{align*}
 satisfies
\begin{subequations}
		\label{lw} 
		\begin{alignat}
			{4} \label{lw.momentum}     
w_t -\rb_1  \bset{\Abm{}_1}_r^j\bpset{\bset{\Abm{}_1}_r^k w
      \cp{k}}\cp{j}&=F&\ \ \  &\text{in }
      \Omega\times(0,T_\kappa(\epsilon\mu)],\\
\label{lw.bc}
\begin{aligned}[b]
  \rb_1  N^j\bset{\Abm{}_1}_r^j
 \bset{\Abm{}_1}_r^s w \cp{s}
\end{aligned}
&=
G
&\ \ &\text{on } \Gamma\times(0,T_\kappa(\epsilon\mu)],\\
			\label{lvw.ic} w|_{t=0}&=0&&\text{on }\Omega,
		\end{alignat}
	\end{subequations}    
where the vector field $F$ appearing in the right-hand side of
\eqref{lw.momentum} is given by
\begin{align}
  \label{lw.F}
  F=\Kb_1-\Kb_2 +\underbrace{
\rb_1  \bset{\Abm{}_1}_r^j\bpset{\bset{\Abm{}_1}_r^k v_2
      \cp{k}}\cp{j}
-\rb_2  \bset{\Abm{}_2}_r^j\bpset{\bset{\Abm{}_2}_r^k v_2
      \cp{k}}\cp{j} 
}_{\mathscr{J}},
\end{align}
and the vector field $G$ appearing in the right-hand side of
\eqref{lw.bc} is given by
\begin{align}
  \label{lw.G}
  G=\hbe_1-\hbe_2- \underbrace{
\bbset{
\rb_1 N^j  \bset{\Abm{}_1}_r^j\bset{\Abm{}_1}_r^k v_2
      \cp{k}
+\rb_2 N^j  \bset{\Abm{}_2}_r^j\bset{\Abm{}_2}_r^k v_2
      \cp{k}
}}_{\j }.  
\end{align}

Testing two time-derivatives of \eqref{lw.momentum} against $w_{tt}$
in the $L^2(\Omega)$-inner product and integrating by parts in the integrals $-\int_{\Omega}\p^2_t\bpset{\rb_1
  \bset{\Abm{}_1}_r^j\bpset{\bset{\Abm{}_1}_r^k w \cp{k}}\cp{j}}w_{tt}$ and
$\int_{\Omega} \mathscr{J}_{tt}w_{tt}$ yields
\begin{align}
  \label{tw.tt}
  \begin{aligned}[b]
&    \frac{1}{2}\frac{d}{dt}\int_{\Omega} \abs{w_{tt}}^2+
    \int_{\Omega}\p^2_t\bpset{\rb_1 \bset{\Abm{}_1}_r^j\bset{\Abm{}_1}_r^k w
      \cp{k}}w_{tt}\cp{j} + \int_{\Omega}\p^2_t\bpset{(\rb_1 \bset{\Abm{}_1}_r^j) \cp{j} \bset{\Abm{}_1}_r^k w
      \cp{k}}w_{tt}
\\
&\qquad\qquad \qquad\qquad\ \ \ - \int_{\Gamma} 
 ( \hbe_1-\hbe_2)_{tt}w_{tt}
\\
&= 
\int_{\Omega} \p^2_t\Bbset{\Kb_1-\Kb_2
-
\Bpset{(\rb_1  \bset{\Abm{}_1}_r^j) \cp{j} \bset{\Abm{}_1}_r^k 
-(\rb_2  \bset{\Abm{}_2}_r^j) \cp{j} \bset{\Abm{}_2}_r^k }v_2
      \cp{k} }w_{tt}
\\
&
\qquad\qquad\qquad\qquad\quad -\int_{\Omega} \p^2_t\bbset{ \bpset{\rb_1  \bset{\Abm{}_1}_r^j\bset{\Abm{}_1}_r^k 
-\rb_2  \bset{\Abm{}_2}_r^j\bset{\Abm{}_2}_r^k }v_2
      \cp{k} }w_{tt}\cp{j}.
  \end{aligned}
\end{align}
We have used that two time-derivatives of the boundary condition
\eqref{lw.bc} is equivalent to
\begin{align*}
\p^2_t\bbset{
\rb_1  N^j\bset{\Abm{}_1}_r^j
 \bset{\Abm{}_1}_r^s w \cp{s} +\j
}
= ( \hbe_1-\hbe_2)_{tt}.
\end{align*}
Since $\p^2_t(\rb_1   
-\rb_2 ) =\int_{0}^{t}\p^3_t(\rb_1-\rb_2)$
 scales like
$\int_{0}^{T}D(\vb_1-\vb_2)_{tt}$, we  infer that    the time-integral
of \eqref{tw.tt} yields 
\begin{align}
\label{tw.tt.v1v2}
\begin{aligned}[b]
\Sup \norm{w_{tt}(t)}_{0}^2+ \int_{0}^{T}\norm{w_{tt}}_1^2  \le \delta \int_{0}^{T}   \norm{\vb_1-\vb_2}_{\XT}^2
  + C_{\delta\kappa\mu}\, T  \norm{\vb_1-\vb_2}_{\XT}^2.
\end{aligned}
\end{align}
We use the estimate  \eqref{tw.tt.v1v2} and follow Steps~2 and 3 of the proof of Lemma~\ref{lem.lv.1} to
obtain   estimates for $w_t$ in $L^2(0,T; H^3(\Omega))$ and  $w$ in
$L^2(0,T;H^5(\Omega))$.  Thus,
\begin{align}
  \label{w.XT.v1v2}
\begin{aligned}[b]
 \norm{w}_{\XT}^2 \le \delta \int_{0}^{T}   \norm{\vb_1-\vb_2}_{\XT}^2
  + C_{\delta\kappa\mu}\, T  \norm{\vb_1-\vb_2}_{\XT}^2.
\end{aligned}
\end{align}
We recall that $w=v_1-v_2$. The inequality \eqref{w.XT.v1v2} therefore
provides that
$\mathfrak{S}$ is a contraction mapping for
sufficiently small $\delta>0$ and
$T=T_\kappa(\epsilon\mu)>0$. Hence, the solutions mapping
$\mathfrak{S}$ possess  a unique fixed-point $\vr\in \CT$.
\end{proof}

\subsection{The proof of Lemma~\ref{lem.shp}}

The proof of Lemma~\ref{lem.shp} is complete since
the
fixed-point $\vr\in \CT$ of  Lemma~\ref{lem.fp} verifies \ref{CT.va} and  is
the unique solution of the $\mu$-problem  \eqref{shp}.
\subsection{The proof of Proposition~\ref{prop.shp}}

We infer from the proof of Lemma~\ref{lem.shp} that   considering two more time-derivatives in the functional framework
of our fixed-point scheme
 establishes Proposition~\ref{prop.shp}.

\section{Equivalence of the $\kappa\epsilon$-problem \eqref{mkp} and
  the heat-type $\kappa\epsilon$-problem \eqref{hp}}
\label{sec:appendix:equiv}
 
In this appendix, we  prove the following
\begin{lem}\label{lem.equiv}
The $\kappa\epsilon$-problem
\eqref{mkp}  and the  heat-type $\kappa\epsilon$-problem  \eqref{hp} are equivalent.
\end{lem}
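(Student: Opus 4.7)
The plan is to establish equivalence by proving each implication separately.

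The forward implication, \eqref{mkp} implies \eqref{hp}, is essentially the content of Section~\ref{sec:deriving-heat-type}. If $\vc$ solves the $\kappa\epsilon$-problem, taking the Lagrangian curl of \eqref{mkp.momentum} yields $\curl_{\ec}\vc_t=0$ and its time-integrated form \eqref{mkp.curl.v}; substituting this vorticity identity into the momentum equation rewritten via the algebraic identity \eqref{HH.Lagrangian} produces \eqref{hp.momentum} with $\Kc$ given by \eqref{mkp.m.21}. Similarly, combining \eqref{mkp.bc} with the tangential/normal decomposition \eqref{DN.5}, the formula \eqref{mkp.div.bc} for $\kappa\rho_0^2\Div_{\ec}\vc$, and \eqref{mkp.curl.v} yields \eqref{hp.bc} with corrector $c(t)\equiv 0$; the initial conditions for $\p^a_t\vc|_{t=0}$ match by Remark~\ref{rem.e.va}.

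For the reverse implication, let $\vc$ solve \eqref{hp} and set
\begin{align*}
W=\curl_{\ec}\vc-\curl u_0-\varepsilon_{\cdot ji}\int_0^t\p_t\Amu_j^s\,\vc^i\cp{s}.
\end{align*}
Applying \eqref{HH.Lagrangian} to \eqref{hp.momentum}, using $\re\Div_{\ec}\vc=\kappa\rho_0\JT$, and recalling the definition \eqref{mkp.m.21} of $\Kc$, the momentum equation rearranges into
\begin{align*}
\vc_t+2\Amu_\cdot^k(\rho_0\Jm^{-1})\cp{k}-\kappa\Amu_\cdot^k(\rho_0\JT)\cp{k}+\re\,\curl_{\ec}W=0\quad\text{in }\Omega,
\end{align*}
so that \eqref{mkp.momentum} holds if and only if $\curl_{\ec}W=0$. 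Taking the Lagrangian curl of this identity, and using that the Lagrangian curl of a Lagrangian gradient of a scalar vanishes (via the chain rule $\Amu^k\p_k\phi=(\nabla\phi)\circ\ec$ applied to $\phi=\rho_0\Jm^{-1}$ and $\rho_0\JT$, combined with $\curl\nabla\equiv 0$), together with the identity $W_t=\curl_{\ec}\vc_t$, produces the parabolic system
\begin{align*}
W_t+\curl_{\ec}\bset{\re\,\curl_{\ec}W}=0\quad\text{in }\Omega\times(0,T_\kappa(\epsilon)],
\end{align*}
with $W(0)=0$ in $\Omega$ since $\ec(0)=e$ and $\vc(0)=u_0$.

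The boundary condition for $W$ is extracted from \eqref{hp.bc} by decomposing via \eqref{DN.2} into tangential and normal components and using the definition \eqref{DN.3} of $\hc_{\curl}$: the tangential part reduces to $\re\sqrt{\gm}\Jm^{-1}W\times\nm=c(t)_{\mathrm{tan}}$ on $\Gamma$, while the normal part, via the definitions \eqref{DN.gfk}--\eqref{DN.hn}, is algebraically equivalent to \eqref{mkp.bc} up to the normal contribution $c(t)\cdot\nm$. The compatibility conditions \eqref{mkp: compat cond} encode precisely the vanishing of $\re N^j\Amu_r^j\Amu_r^s\vc\cp{s}-\hc$ at $t=0$ through its first three time-derivatives, so each of the three coefficients defining the polynomial $c(t)$ in \eqref{hp.c} is zero on $\Gamma$. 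Consequently $W\times\nm=0$ on $\Gamma$; testing the parabolic equation for $W$ against $W$, integrating by parts the curl-curl term (whose boundary contribution vanishes thanks to $W\times\nm=0$), and invoking Gronwall's inequality together with $W(0)=0$ gives $W\equiv 0$, which recovers \eqref{mkp.momentum}, and simultaneously the normal part of \eqref{hp.bc} yields \eqref{mkp.bc}.

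The main obstacle is the careful bookkeeping required to conclude $c(t)\equiv 0$ on $\Gamma$. The identity must be verified separately for each coefficient $\p^a_t[\re N^j\Amu_r^j\Amu_r^s\vc\cp{s}-\hc]|_{t=0}$, $a=0,1,2$, appearing in \eqref{hp.c}, matching the tangential and normal parts of the $a$-th time-derivative of the decomposition \eqref{DN.2} against the corresponding item in \eqref{mkp: compat cond} and propagating the identities $\curl_{\ec}\vc|_{t=0}=\curl u_0$ and \eqref{mkp.div.bc} at $t=0$ through successive time-derivatives. The integration-by-parts identity underlying the energy estimate for $W$, while routine for $\ec$ sufficiently close to the identity, must also be justified in the $\epsilon$-regularized Lagrangian setting under the mixed tangential-trace boundary condition $W\times\nm=0$.
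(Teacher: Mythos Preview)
Your overall architecture matches the paper's: define $W=\int_0^t\curl_{\ec}\vc_t$, derive the parabolic equation $W_t+\curl_{\ec}(\re\,\curl_{\ec}W)=0$, extract the boundary condition $W\times\nm=0$, and close with an $L^2$ energy estimate.

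The gap lies in your justification that $c(t)\equiv 0$. The compatibility conditions \eqref{mkp: compat cond} are \emph{scalar} identities on $\Gamma$; via the decomposition \eqref{DN.2} and the identity \eqref{hp.bc.n.1} they account precisely for the \emph{normal} component of $\p^a_t\bset{\re N^j\Amu_r^j\Amu_r^s\vc\cp{s}-\hc}\big|_{t=0}$. They say nothing about the tangential component, which after the same decomposition reduces to $\p^a_t\bset{\re\sqrt{\gm}\Jm^{-1}\,W\times\nm}\big|_{t=0}$ for $a=0,1,2$. The $a=0$ term vanishes trivially, but for $a=1,2$ one needs $\curl\mathbf{v}_1=0$ and $\curl\mathbf{v}_2=\varepsilon_{\cdot ji}u_0^s\cp{j}\mathbf{v}_1^i\cp{s}$. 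The paper obtains these by direct computation from \eqref{defn.va.bf}: $\mathbf{v}_1=D(\kappa\rho_0\Div u_0-2\rho_0)$ is an exact gradient, and the curl of $\mathbf{v}_2$ receives only the commutator contribution from $\p_t\Amu\big|_{t=0}=-Du_0$. Your phrase ``propagating $\curl_{\ec}\vc|_{t=0}=\curl u_0$ through successive time-derivatives'' gestures toward this, but the identity you would need to propagate is $\curl_{\ec}\vc_t=0$, which is exactly what is being proved; the argument has to go through the explicit structure of the $\mathbf{v}_a$ instead. Without this computation the tangential part of $c(t)$ is not shown to vanish, the boundary condition $W\times\nm=0$ is not available, and the energy argument for $W$ cannot close.
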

\begin{proof}
 Sections 
\ref{sec:rew1} and \ref{sec:rew2} provide that a solution of the $\kappa\epsilon$-problem
\eqref{mkp}  satisfies the heat-type $\kappa\epsilon$-problem  \eqref{hp}.  We now establish  the converse. Using the   identity
  \eqref{HH.Lagrangian} stating $  -\Amu_r^j[\Amu_r^k \vc \cp{k}]\cp{j}= \curl_{\ec}\curl_{\ec} \vc - \Amu_\cdot ^s
(  \Div_{\ec} \vc)\cp{s}
$, the nonlinear heat-type  equations \eqref{hp.momentum} are
  equivalently written as
  \begin{align*}
    \vc_t + \check{\varrho}\curl_{\ec}\curl_{\ec} \vc -\check{\varrho} \Amu_\cdot ^s (\Div_{\ec} \vc)\cp{s} = \Kc.
  \end{align*}
The identity
\begin{align*}
   -\check{\varrho} \Amu_\cdot ^s (\Div_{\ec} \vc)\cp{s}=
 - \Amu_\cdot ^k (\check{\varrho}\Div_{\ec} \vc)\cp{k}+ \Div_{\ec} \vc \Amu_\cdot ^k\check{\varrho}\cp{k},
\end{align*}
and the definition   \eqref{mkp.m.21} of $\Kc$ imply that the equations \eqref{hp.momentum} are
  equivalently written as
  \begin{align}
\label{pre.lve}
    \vc_t  - \Amu_\cdot ^k (\check{\varrho} \Div_{\ec} \vc -2 \rho_0\Jm^{-1})\cp{k}=-   \check{\varrho}
    \curl_{\ec}\bbset{\curl_{\ec} \vc -\pset{\curl u_0+\varepsilon_{\cdot ji}\int_0^t\p_t \Amu_j^s \vc^i\cp{s}} }.
  \end{align}
Using the identity  $\curl _{\ec} \vc= \curl u_0+
\int_{0}^{t}\p_t(\curl_{\ec}\vc)=\curl u_0+
\int_{0}^{t}\p_t(\varepsilon_{\cdot
  ji}\Amu_j^s\vc^i \cp{s})$,
\begin{align}
\label{i.c.vt}
\int_{0}^{t}\curl_{\ec} \vc_t=  \curl_{\ec} \vc -\pset{\curl u_0+\varepsilon_{\cdot ji}\int_0^t\p_t \Amu_j^s \vc^i\cp{s}}.
\end{align}
The identity \eqref{i.c.vt} implies that the equations \eqref{pre.lve} are equivalently written as
  \begin{align}
\label{pre.lve.2}
    \vc_t  - \Amu_\cdot ^k (\check{\varrho} \Div_{\ec} \vc -2 \rho_0\Jm^{-1})\cp{k}=-   \check{\varrho}
    \curl_{\ec}\bbset{\int_{0}^{t}\curl_{\ec} \vc_t}.
  \end{align}
Applying the $\epsilon$-approximate  Lagrangian curl operator $\curl_{\ec}$ to \eqref{pre.lve.2} yields
  \begin{align}
    \label{ch.in.0}
    \curl_{\ec}\vc_t + \curl_{\ec} \bpset{\check{\varrho} \curl_{\ec}
      \int_{0}^{t}\curl_{\ec} \vc_t}=0 \ \ \ \text{in }
    \Omega.
  \end{align}
We recall $c(t)$ defined in \eqref{hp.c}. Using the definition
\eqref{DN.h} of $h$, we equivalently have that
\begin{align*}
  c(t) &=    \sum_{a=0}^2\frac{t^a}{a!} \p^a_t\bset{ \re N^j\Amu_r^j
\Amu_r^s \vc \cp{s} - \check{\varrho} [\err_{\curl}+\err_{\Div}]}
|_{t=0}
\\
&\qquad - \sum_{a=0}^2\frac{t^a}{a!} \p^a_t\bset{ {\hc_{\curl}
    +\hc_{\Div}}+\gfk^{\alpha\beta}  \bset{\vc\cp{\alpha\beta}\cdot \nm}\nm
}
|_{t=0}.
\end{align*}
By the identities \eqref{DN.2} and \eqref{hp.bc.n.1},
 we have that
\begin{align*}
c(t)
&=    \sum_{a=0}^2\frac{t^a}{a!} \p^a_t\bset{ \underbrace{
\re \sqrt{\gm}
  \Jm^{-1}(\curl_{\ec} \vc)\times \nm 
+
 \re \sqrt{\gm}
  \Jm^{-1}(\Div_{\ec}\vc)\nm
}_{ \re N^j\Amu_r^j
\Amu_r^s \vc \cp{s} - \check{\varrho}\err}}
|_{t=0}
\\
&\qquad - \sum_{a=0}^2\frac{t^a}{a!} \p^a_t\bset{ {\hc_{\curl}+
    \underbrace{
\re \Jm^{-1}\sqrt{\gm}( \Div _{\ec} \vc )n
}_{\hc_{\Div}+\gfk^{\alpha\beta}  \bset{v\cp{\alpha\beta}\cdot \nm}\nm}
}
}|_{t=0}\\
&=    \sum_{a=0}^2\frac{t^a}{a!} \p^a_t\bset{ \re \sqrt{\gm}
  \Jm^{-1}(\curl_{\ec} \vc)\times \nm -\hc_{\curl}
}
|_{t=0}.
\end{align*}
Recalling that $  \hc_{\curl}= \check{\varrho}
  \sqrt{\gm}\Jm^{-1} \bpset{\curl u_0+\varepsilon_{\cdot ji}\int_0^t\p_t
    \Amu_j^s \vc^i\cp{s}}\times \nm$,
we conclude that
\begin{align*}
  c(t) =       \sum_{a=0}^2\frac{t^a}{a!} \p^a_t\bset{  \re \sqrt{\gm} \Jm
  ^{-1}(\curl_{\ec} \vc-\curl u_0-\varepsilon_{\cdot ji}\int_0^t\p_t
    \Amu_j^s \vc^i\cp{s})\times \nm 
}|_{t=0}.
\end{align*}
By the
  identity \eqref{i.c.vt} for $\int_{0}^{t}\curl_{\ec} \vc_t$, we have
  established that
\begin{align}\label{f.c}
  c(t) =       \sum_{a=0}^2\frac{t^a}{a!} \p^a_t\bset{  \re \sqrt{\gm} \Jm
  ^{-1}( \int_{0}^{t}\curl_{\ec} \vc_t)\times \nm 
}|_{t=0}.
\end{align}
We will now verify that $c(t)=0$. With $(\int_{0}^{t}\curl_{\ec} \vc_t)|_{t=0}=0$, we  equivalently write \eqref{f.c} as
\begin{align*}
  c(t) = t\kappa\rho_0\sqrt{\gm}
\curl \mathbf{v}_1\times N + \frac{t^2}{2}\kappa\rho_0\sqrt{\gm}[\curl \mathbf{v}_2+\varepsilon_{\cdot ji}\p_t \Amu_j^s(0) \mathbf{v}_1^i \cp{s} ]\times N . 
\end{align*}
 According to \eqref{defn.va.bf}, 
we have that
  $\mathbf{v}_1= D(\kappa\rho_0\Div u_0 - 2 \rho_0)$. Hence,  $\curl
  \mathbf{v}_1=0$. Using the identity $\p_t\Amu_i^k|_{t=0}=- (\Amu_i^s \Amu_r^k
  \vc^r\cp{s}\!)|_{t=0}=-u_0^k\cp{j}$, 
we conclude  that
\begin{align*}
  c(t) =   \frac{t^2}{2}\kappa\rho_0 \sqrt{\gm}[\curl \mathbf{v}_2-\varepsilon_{\cdot ji}u_0^s\cp{j} \mathbf{v}_1^i \cp{s} ]\times N . 
\end{align*}
 According to \eqref{defn.va.bf},  $\mathbf{v}_2= D\p_t(\kappa \rho_0 \JT - 2 \rho_0 \Jm^{-1}
  )|_{t=0} + \p_t\Amu_\cdot^k|_{t=0} \p_k(\kappa\rho_0\Div u_0 - 2
  \rho_0)$. Thus,
  \begin{align*}
\curl \mathbf{v}_2&=- \varepsilon_{\cdot ji} 
\bbset{ u_0^k \cp{i} (\kappa\rho_0\Div u_0 - 2
  \rho_0)\cp{k}}\cp{j}=- \varepsilon_{\cdot ji} 
u_0^k \cp{i} (\kappa\rho_0\Div u_0 - 2
  \rho_0)\cp{jk}
=- \varepsilon_{\cdot ji}u_0^k
  \cp{i}\mathbf{v}_1^j \cp{k}.    
  \end{align*}
That is,
\begin{align*}
  \curl \mathbf{v}_2 = \varepsilon_{\cdot ji}u_0^s
  \cp{j}\mathbf{v}_1^i \cp{s},
\end{align*} 
so that the identity $  c(t) =
\frac{t^2}{2}\kappa\rho_0 \sqrt{\gm}[\curl
\mathbf{v}_2-\varepsilon_{\cdot ji}u_0^s\cp{j} \mathrm{v}_1^i \cp{s}
]\times N $ implies that
  \begin{align*}
    c(t)=0.
  \end{align*}
The boundary condition \eqref{hp.bc} is therefore equivalently
  written as 
  \begin{align}
\label{hp.bc.c0}
\begin{aligned}[b]
  \re N^j\Amu_r^j \Amu_r^s \vc \cp{s} &=
\\
\hc_{\curl} +\check{\varrho}\err
&+
 \frac{\sqrt{\gm}}{\rho_0}\Bbset{ \rho_0^2 \Jm ^{-2} - \beta_{\epsilon}(t) +
    \sigma { \gm
          ^{\alpha\beta}  {\ec\cp{\alpha\beta}\cdot \nm}}
  + \gfk ^{\alpha\beta}
  {\vc\cp{\alpha\beta}\cdot \nm}}\nm.
\end{aligned}
  \end{align}
  Taking the scalar product of \eqref{hp.bc.c0} with $\re
  ^{-1}\hd\ec$ yields
  \begin{align*}
\underbrace{\sqrt{\gm} \Jm^{-1}[(\curl_{\ec} \vc)\times
    \nm]\cdot \hd\ec 
}_{    N ^j \Amu_r^j \Amu_r^k \vc \cp{k}\cdot \hd\ec}
=
\underbrace{ 
\sqrt{\gm}\Jm^{-1}[ \bpset{\curl
      u_0+\varepsilon_{\cdot ji}\int_0^t\p_t \Amu_j^s \vc^i\cp{s}}\times
    \nm]\cdot \hd\ec }_{ \check{\varrho} ^{-1} \hc_{\curl} \cdot \hd \ec}.
  \end{align*}
Using that $[(\curl_{\ec} \vc)\times \nm]\cdot \nm=0$ and $\hc_{\curl}\cdot
  \nm=0$, we have that
  \begin{align}
\label{ch.pre.bc}
    (\curl_{\ec} \vc)\times \nm&=\bpset{\curl u_0+\varepsilon_{\cdot
        ji}\int_0^t\p_t \Amu_j^s \vc^i\cp{s}}\times \nm\ \ \ \text{on } \Gamma.  
  \end{align}
We note that \eqref{ch.pre.bc} is
equivalently stated as $    (\int_{0}^{t}\curl_{\ec} \vc_t)\times \nm=0$.
Hence, by the equation \eqref{ch.in.0}, we record that $\int_{0}^{t}\curl_{\ec} \vc_t$ satisfies\begin{subequations}
\label{h.ch}
  \begin{alignat}{2}
\label{ch.in}
    \curl_{\ec}\vc_t + \curl_{\ec} \pset{\check{\varrho} \curl_{\ec}
      \int_{0}^{t}\curl_{\ec} \vc_t}&=0 &\ \ \ &\text{in }
    \Omega\times[0,T],\\
    \label{ch.bc}
    (\int_{0}^{t}\curl_{\ec} \vc_t)\times \nm&=0&\ \ \ &\text{on }
    \Gamma\times [0,T],\\
  \label{ch.ic}
  \pset{\int_{0}^{t}\curl_{\ec}\vc_t}|_{t=0}&=0&\ \ \ &\text{on } \Omega.
  \end{alignat} 
\end{subequations}
Testing \eqref{ch.in} against $\int_{0}^{t}\curl _{\ec} \vc_t$ in the
$L^2(\Omega)$-inner product and integrating by parts with respect to
the operator $\curl_{\ec}$ in 
$\int_{\Omega}\curl_{\ec} \bpset{\check{\varrho} \curl_{\ec}
  \int_{0}^{t}\curl_{\ec} \vc_t}\int_{0}^{t}\curl_{\ec} \vc_t$ yields
\begin{align}
\label{t.ch.in}
\begin{aligned}[b]
\frac{1}{2}\frac{d}{dt} \norm{\int_{0}^{t}\curl_{\ec} \vc_t}_0^2&+
\norm{\sqrt{\check{\varrho}} \curl_{\ec}
  \int_{0}^{t}\curl_{\ec} \vc_t}_0^2
\\
&+ \int_{\Gamma}N^s \Amu_j^s \varepsilon_{ijk}\bpset{\check{\varrho} \curl_{\ec}
  \int_{0}^{t}\curl_{\ec} \vc_t}^k(\int_{0}^{t}\curl_{\ec} \vc_t)^i=0.
\end{aligned}
\end{align}
With the boundary condition
\eqref{ch.bc}, $\int_{\Gamma}N^s \Amu_j^s \varepsilon_{ijk}\pset{\check{\varrho} \curl_{\ec}
  \int_{0}^{t}\curl_{\ec} \vc_t}^k(\int_{0}^{t}\curl_{\ec}
\vc_t)^i=\int_{\Gamma}\sqrt{\gm} \Jm^{-1}[(\int_{0}^{t}\curl_{\ec} \vc_t)\times
n ]\cdot \pset{\check{\varrho} \curl_{\ec}
  \int_{0}^{t}\curl_{\ec} \vc_t}=0$. Integrating the identity
\eqref{t.ch.in} in time from $0$ to $t\in(0,T]$ produces $
\norm{  \int_{0}^{t}\curl_{\ec} \vc_t}_0^2=0
$. So according to \eqref{ch.in},
$\norm{\curl_{\ec} \vc_t}_0=0$. The regularity of $\vc$ stated in
Proposition~\ref{prop.hp} then provides  that almost everywhere in $\Omega$,
\begin{align*}
\curl_{\ec}\vc_t=0.
\end{align*}
The  equations \eqref{pre.lve.2} are thus written as
\begin{subequations}
\label{hp.mkp}
  \begin{align}
    \label{hp.mkp.momentum}
    \vc_t - \Amu_\cdot ^k (\kappa\rho_0\Jm\Div_{\ec} \vc -2 \rho_0\Jm ^{-1})\cp{k}=0.
  \end{align}
  As the solution $\vc$ of the heat-type $\kappa\epsilon$-problem  \eqref{hp}
 verifies the $\epsilon$-approximate Lagrangian vorticity equation
  \eqref{mkp.curl.v}, we infer that the boundary condition \eqref{hp.bc.c0} of the
heat-type problem    is equivalently written
  as
  \begin{align}
    \label{hp.mkp.bc}
    \begin{aligned}[b]
    \kappa  \rho_0^2 \Div _{\ec} \vc= 
  \rho_0^2  \Jm ^{-2} - \beta_{\epsilon}(t) + \sigma { \gm
          ^{\alpha\beta} {\ec\cp{\alpha\beta}\cdot \nm}} 
      +\kappa \gm ^{\alpha\beta}
     \vc\cp{\alpha\beta}\cdot \nm.
    \end{aligned}
  \end{align}
\end{subequations}
The equations \eqref{hp.mkp} are equivalent to the momentum equations and boundary
condition of the $\kappa\epsilon$-problem \eqref{mkp}.
\end{proof}

\newpage

\section*{List of Notation} 
\begin{tabular}
	{clr} \notation{u}{Eulerian velocity}{n:u}\\
	\notation{p}{Eulerian pressure}{n:p}\\
	\notation{\rho}{Eulerian density}{n:rho}\\
	\notation{u_0}{Initial velocity}{n:initial data}\\
	\notation{\rho_0}{Initial density}{n:initial data}\\
	\notation{\sigma}{Surface tension}{n:sigma}\\
	\notation{\beta}{A parameter to the equation of state $p=\alpha\rho^\gamma-\beta$ for $\gamma>1$}{n:beta}\\
	\notation{H(\eta)}{Twice the mean curvature of the moving
          surface $\eta(\Gamma)$}{n:H}\\
	\notation{\eta}{The particle flow map}{n:eta}\\
	\notation{v}{Lagrangian velocity}{n:Lagrangian variables}\\
	\notation{f}{Lagrangian density}{n:Lagrangian variables}\\
	\notation{A}{Inverse of the deformation tensor $D\eta$}{n:Lagrangian variables}\\
	\notation{J}{Jacobian determinant of the deformation tensor $D\eta$}{n:Lagrangian variables}\\
	\notation{a}{Cofactor matrix of the deformation tensor $D\eta$}{n:Lagrangian variables}\\
	\notation{e}{The identity function $e(x)=x$}{n:e}\\
	% \notation{E(t)}{The higher-order energy function of solutions to \eqref{Euler}}{n:E}\\
	% \notation{M_0}{A positive constant given by a polynomial function $P$ of $E(0)$}{n:M0}\\
	\notation{D}{The three-dimensional gradient operator}{n:D}\\
	\notation{\hd}{The surface gradient operator}{n:D}\\
\notation{\Div _{ \eta},\curl _{ \eta}}{The Lagrangian divergence and curl operators}{n.div.curl}\\
	\notation{n}{The outward-pointing unit normal to the moving surface $\eta(\Gamma)$}{n:n}\\
	\notation{g}{The surface metric induced by the moving surface $\eta(\Gamma)$}{n:g}\\
	\notation{g_0}{The surface metric of the initial surface $\Gamma$}{n:g0}\\
	\notation{\sqrt{g}}{The square root of the determinant of the
          metric $g$}{n:sqrt g}\\
	\notation{\Delta_g}{The Laplace-Beltrami operator}{n:BL}\\ 
	\notation{\norm{\cdot}_s}{The norm of the Hilbert space $H^s(\Omega)$}{n:interior norm}\\
	\notation{\abs{\cdot}_s}{The  norm of the Hilbert space $H^s(\Gamma)$}{n:boundary-norm}\\
	\notation{N}{The outward-pointing unit normal to $\Gamma$}{n:N}\\
	\notation{\vt}{A solution of the  $\kappa$-problem \eqref{akp}}{n:EK}\\
	\notation{ \et}{The Lagrangian map of the solution $\vt$ to the $\kappa $-problem}{n:EK}\\
	\notation{\EK}{The higher-order energy function for solutions $\vt$ to the $\kappa $-problem}{n:EK}\\
		\notation{\PK,\NK,\RK }{The generic system of constants
used in the $\kappa$-independent estimates}{n:kP}\\
	\notation{\LE}{The horizontal-convolution operator for $\epsilon>0$}{n.ve}\\ 
	\notation{\vc}{A solution of the
          $\kappa\epsilon$-problem \eqref{mkp}}{EM.l}\\
	\notation{\EM}{The higher-order energy function for solutions $\vc$ to the $\kappa\epsilon $-problem}{EM.l}\\
		\notation{\PM,\MM,\RM }{The generic system of constants
used in the $\epsilon$-independent estimates}{EM.l}\\
	\notation{\vr}{A solution of the
          $\mu$-problem \eqref{shp}}{def.EE}\\
	\notation{\EE}{The higher-order energy function for solutions
          $\vr$ to the  $\mu$-problem \eqref{shp}}{def.EE}\\
	\notation{\PE,\ME,\RE }{The generic system of
          constants used in the $\mu$-independent estimates}{n:eP}\\
	\notation{\vz}{A solution of the
$\sigma$-problem \eqref{stp}}{defn:ES}\\
	\notation{\ES}{The higher-order energy function for solutions
          $\vz$ to the surface tension problem}{defn:ES}\\
		\notation{\PS,\MS,\RS }{The generic system of constants
used in the $\sigma$-independent estimates}{defn:ES}\\
	\notation{\XT} 
{The Hilbert space used in the fixed-point scheme} {n:XT}\\ 
	\notation{\CT}{A closed, bounded, convex subset of
          $\XT$}{n:CT}\\
\notation{\vb }{An arbitrary vector  in $\CT$}{n.vb}\\
 	\notation{\PB,\NB,\RB }{The generic system of
          constants used in the fixed-point scheme}{n:dk}\\
\end{tabular}
 
  \newpage

\end{document}